\newdimen\arrowsize
\newcommand{\independent}{\mbox{${}\perp\mkern-11mu\perp{}$}}
\newcommand{\notindependent}{\mbox{${}\not\!\perp\mkern-11mu\perp{}$}}
\DeclareMathOperator*{\argmin}{argmin}
\newcommand{\pr}{{\mathbb P}}
\newcommand{\R}{{\mathbb R}}
\newcommand{\E}{{\mathbb E}}
\newcommand{\Cov}{\mathrm{Cov}}
\newcommand{\Var}{\mathrm{Var}}
\newcommand{\tr}{{\mathrm{tr}}}
\newcommand{\cond}{\,|\,}
\newcommand{\inprob}{\stackrel{P}{\to}}
\newcommand{\ind}{\mathbbm{1}}
\newcommand{\dist}{P}
\newcommand{\HS}{\textrm{HS}}
\newcommand{\TR}{\textrm{TR}}
\newcommand{\op}{\textrm{op}}
\newcommand{\OLS}{\textrm{OLS}}
\newcommand{\BL}{\textrm{BL}}
\newtheorem{theorem}{Theorem}
\newtheorem{lemma}{Lemma}
\newtheorem{corollary}{Corollary}
\newtheorem{definition}{Definition}
\newtheorem{proposition}{Proposition}
\newtheorem{remark}{Remark}
\tikzset{every picture/.style={line width=0.6pt}}
\tikzset{every picture/.style={outer sep=.4mm}}
\tikzstyle{graphnode} = 
\tikzstyle{observed}   =[graphnode, fill=white, text=black]
\tikzstyle{unobserved}   =[graphnode, fill=white, text=black, style=dashed]
\tikzstyle{graphnodesmall} = 
\tikzstyle{observedsmall}   =[graphnodesmall, fill=white, text=black]
\tikzstyle{unobservedsmall}   =[graphnodesmall, fill=white, text=black, style=dashed]
\tikzstyle{graphnodestiny} = 
\tikzstyle{observedtiny}   =[graphnodestiny, fill=white, text=black]
\tikzstyle{unobservedtiny}   =[graphnodestiny, fill=white, text=black, style=dashed]
\newcommand{\RN}[1]{%
  \textup{\uppercase\expandafter{\romannumeral#1}}%
}
\newtheorem*{rep@theorem}{\rep@title}
\newcommand{\newreptheorem}[2]{%
\newenvironment{rep#1}[1]{%
 \def\rep@title{#2 \ref{##1}}%
 \begin{rep@theorem}}%
 {\end{rep@theorem}}}
\title{Conditional Independence Testing in Hilbert Spaces with Applications to Functional Data Analysis}
\author{Anton Rask Lundborg\thanks{Part of this work was done while ARL was at the University of Copenhagen. ARL was supported by the Cantab Capital Institute for the Mathematics of Information.
}\\
University of Cambridge, UK\\
\url{a.lundborg@statslab.cam.ac.uk}
\and
Rajen D.\ Shah\thanks{RDS was supported by an EPSRC Programme Grant EP/N031938/1 and an EPSRC First Grant EP/R013381/1.}\\
University of Cambridge, UK\\
\url{r.shah@statslab.cam.ac.uk}
\and
Jonas Peters\thanks{JP was supported by the Carlsberg Foundation and a research grant (18968) from the VILLUM Foundation.} \\
University of Copenhagen, Denmark\\
\url{jonas.peters@math.ku.dk}}
\date{\today}
\begin{document}

\maketitle
\begin{cbunit}
\begin{abstract}
	We study the problem of testing the null hypothesis that $X$ and $Y$ are conditionally independent given $Z$, where each of $X$, $Y$ and $Z$ may be functional random variables. This generalises testing the significance of $X$ in a
	regression model of scalar response $Y$ on functional regressors $X$ and $Z$. We show however that even in the idealised setting where additionally $(X, Y, Z)$ has a Gaussian distribution, the power of any test cannot exceed its size.
	Further modelling assumptions are needed and we argue that a convenient way of specifying these assumptions is based on choosing methods for regressing each of $X$ and $Y$ on $Z$.
	We propose a test statistic involving inner products of the resulting residuals that is simple to compute and calibrate: type I error is controlled uniformly when the in-sample prediction errors are sufficiently small. We show this requirement is met by ridge regression  in functional linear model settings without requiring any eigen-spacing conditions or lower bounds on the eigenvalues of the covariance of the functional regressor.
	We apply our test in constructing confidence intervals for truncation points in truncated functional linear models and testing for edges in a functional graphical model for EEG data.
\end{abstract}

\section{Introduction}
In a variety of application areas, such as meteorology, neuroscience, linguistics, and chemometrics, we observe samples containing random functions \citep{ullah2013applications,Ramsay2005}. The field of functional data analysis (FDA) has a rich toolbox of methods for the study of such data. For instance, there are a number of regression methods for different functional data types, including
linear function-on-scalar \citep{Reiss2010}, scalar-on-function \citep{hall2007methodology, Goldsmith2011, Shin2009, Reiss2007, Yuan2010, delaigle2012methodology} and function-on-function \citep{Ivanescu2015, Scheipl2015} regression; there are also nonlinear and nonparametric variants \citep{ferraty2006, Ferraty2011, Fan2015, Yao2010}, and versions able to handle potentially large numbers of functional predictors \citep{fan2015functional}, to give a few examples; see \citet{Wang2016, Morris2015} for  helpful reviews and a more extensive list of relevant references.
The availability of software packages for functional regression methods, such as the \texttt{R}-packages \texttt{refund} \citep{refund} and \texttt{FDboost} \citep{FDboost}, allow practitioners to easily adopt the FDA framework for their particular data.

One area of FDA that has received less attention is that of conditional independence testing.
Given random elements $X, Y, Z$, the conditional independence $X \independent Y \cond Z$ formalises the idea that $X$ contains no further information about $Y$ beyond that already contained in $Z$. A precise definition is given in Section~\ref{sec:prelim}.
Inferring conditional independence from observed data is of central importance
in causal inference \citep{Pearl2009, Spirtes2000, Peters2017}, graphical 
modelling \citep{lauritzen1996, Koller2009}
and
variable selection. 
For example, consider the linear scalar-on-function regression model
\begin{equation} \label{eq:fun_reg}
Y = \int_0^1 \theta_X(t) X(t) dt + \int_0^1 \theta_Z(t) Z(t) dt + \varepsilon,
\end{equation}
where $X, Z$ are random covariate functions taking values in $L^2([0, 1], \mathbb{R})$,
$\theta_X, \theta_Z$ are unknown parameter functions, $Y\in \R$ is a scalar response and $\varepsilon \in \mathbb{R}$ satisfying $\varepsilon \independent (X, Z)$  represents stochastic error. In this model, conditional independence $X \independent Y \cond Z$ is equivalent to $\theta_X = 0$, i.e., whether the functional predictor $X$ is significant.

For nonlinear regression models, the conditional independence $X \independent Y \cond Z$ still characterises whether $X$ is useful for predicting $Y$ given $Z$. Indeed, consider a more general setting where $Y$ is a potentially infinite-dimensional response, and $X_1, \ldots, X_p$ are predictors, some or all of which may be functional. Then a set of predictors $S \subseteq \{1,\ldots,p\}$ that contain all useful information for predicting $Y$, that is such that $Y \independent \{X_j\}_{j \notin S} \cond \{X_j\}_{j \in S}$, is known as a Markov blanket of $Y$ in the graphical modelling literature \citep[Sec.~3.2.1]{pearl2014probabilistic}. If $Y \notindependent X_j \cond \{X_k\}_{k \neq j}$, then $j$ is contained in every Markov blanket, 
and under mild conditions (e.g., 
the intersection property \citep{Pearl2009, Peters2014jci}),
the smallest Markov blanket (sometimes called the Markov boundary) is unique
and coincides exactly with those variables $j$ satisfying this conditional dependence. 
This set may thus be inferred by applying conditional independence tests. Conditional independence tests may also be used to test for edge presence in conditional independence graphs and are at the heart of several
	methods for causal discovery \citep{Spirtes2000, Peters2016jrssb}.

Recent work \citep{GCM} however has shown that in the setting where $X, Y$ and $Z$ are random vectors where $Z$ is absolutely continuous (i.e., has a density with respect to Lebesgue measure), testing the conditional independence $X \independent Y \cond Z$ is fundamentally hard in the sense that any test for conditional independence must have power at most its size. Intuitively, the reason for this is that given any test, there are potentially highly complex joint distributions for the triple $(X, Y, Z)$ that maintain conditional independence but yield rejection rates as high as for any alternative distribution. Lipschitz constraints on the joint density, for example, preclude the presence of such distributions \citep{neykov2020minimax}.

In the context of functional data however, the problem can be more severe, and we show in this work that even in the idealised setting where $(X, Y, Z)$ are jointly Gaussian in the functional linear regression model \eqref{eq:fun_reg}, testing for $X \independent Y \cond Z$ is fundamentally impossible: any test must have power at most its size. In other words, any test with power $\beta$ at some alternative cannot hope to control type I error at level $\alpha < \beta$ across the entirety of the null hypothesis, even if we are willing to assume Gaussianity. Perhaps more surprisingly, this fundamental problem persists even if additionally we allow ourselves to know the precise null distribution of the infinite-dimensional $Z$.

Consequently, there is no general purpose conditional independence test even for Gaussian functional data, and we must necessarily make some additional modelling assumptions to proceed. We argue that this calls for the need of conditional independence tests whose suitability for any functional data setting can be judged more easily.

Motivated by the Generalised Covariance Measure \citep{GCM}, we propose a simple test we call the Generalised Hilbertian Covariance Measure (GHCM) that involves regressing $X$ on $Z$ and $Y$ on $Z$ (each of which may be functional or indeed collections of functions),
and computing a test statistic formed from inner products of pairs of residuals.
We show that the validity of this form of test relies primarily on the relatively weak requirement that the regression procedures have sufficiently small in-sample prediction errors.
We thus aim to convert the problem of conditional independence testing into the
more familiar task of regression with functional data, for which well-developed methods are readily available.
These features mark out our test as rather different from existing approaches for assessing conditional independence in FDA, which we review in the following. 

One approach to measuring conditional dependence with functional data is based on the Gaussian graphical model. \citet{zhu2016bayesian} propose a Bayesian approach for learning a graphical model for jointly Gaussian multivariate functional data. \citet{Qiao2019} and \citet{Zapata2019} study approaches based on generalisations of the graphical Lasso \citep{yuan2007model}. These latter methods do not aim to perform statistical tests for conditional independence, but rather provide a point estimate of the graph, for which the authors establish consistency results valid in potentially high-dimensional settings.

As discussed earlier, conditional independence testing is related to significance testing in regression models. There is however a paucity of literature on formal significance tests for functional predictors. The \texttt{R} implementation \citep{refund} of the popular functional regression methodology of \citet{Greven2017} produces $p$-values for the inclusion of a functional predictor based on significance tests for generalised additive models developed in \citet{wood2013p}. These tests, whilst being computationally efficient, however do not have formal uniform level control guarantees.

\subsection{Our main contributions and organisation of the paper}
\subsubsection*{It is impossible to test conditional independence with  Gaussian functional data.}
In Section~\ref{sec:hardness} we present our formal hardness result on conditional independence testing for Gaussian functional data. The proof rests on a new result on the maximum power attainable at any alternative when testing for conditional independence with multivariate Gaussian data. The full technical details are given in Section~\ref{app:hardness} of the supplementary material. As we cannot hope to have level control uniformly over the entirety of the null of conditional independence, it is important to establish, for any given test, subsets $\tilde{\mathcal{P}}_0$ of null distributions $\mathcal{P}_0$ over which we do have uniform level control. 

\subsubsection*{We provide new tools allowing for the development of uniform results in FDA.}

Uniform results are scarce in functional data analysis; we develop the tools for deriving such results in
	Section~\ref{app:uniform-convergence} of the supplementary material which studies uniform convergence of Hilbertian and Banachian random variables.

\subsubsection*{Given sufficiently good methods for regressing each of $X$ and $Y$ on $Z$, the GHCM can test conditional independence with certain uniform level guarantees.}
In Section~\ref{sec:method_all} we describe our new GHCM testing framework for testing $X \independent Y \cond Z$, where each of $X$, $Y$ and $Z$ may be collections of functional and scalar variables.
In Section~\ref{sec:theory} we show that for the GHCM, an effective null hypothesis $\tilde{\mathcal{P}}_0$ may be characterised as one where in addition to some tightness and moment conditions, the conditional expectations $\E(X \cond Z)$ and $\E(Y \cond Z)$ can be estimated at sufficiently fast rates, such that the product of the corresponding in-sample mean squared prediction errors (MSPEs) decay faster than $1/n$ uniformly, where $n$ is the sample size. Note that this does not contradict the hardness result: it is well known that there do not exist regression methods with risk converging to zero uniformly over all distributions for the data \citep[Thm.~3.1]{gyorfi}. Thus, the regression methods must be chosen appropriately in order for the GHCM to perform well.
In Section~\ref{sec:power} we show that a version of the GHCM incorporating sample-splitting has uniform power against alternatives where the expected conditional covariance operator $\E \{\Cov(X, Y \cond Z)\}$ has Hilbert--Schmidt norm of order $n^{-1/2}$, and is thus rate-optimal.

\subsubsection*{The regression methods are only required to perform well on the observed data.}
The fact that control of the type I error of the GHCM depends on an in-sample MSPE rather than a more conventional out-of-sample MSPE, has important consequences. Whilst in-sample and out-of-sample errors may be considered rather similar, in the context of function regression, they are substantially different. We demonstrate in Section~\ref{sec:ex} that bounds on the former are achievable under significantly weaker conditions than equivalent bounds on the latter by considering ridge regression in the functional linear model. In particular the required prediction error rates are satisfied over classes of functional linear models where the eigenvalues of the covariance operator of the functional regressor are dominated by a summable sequence; no additional eigen-spacing conditions, or lower bounds on the decay of the eigenvalues are needed, in contrast to existing results on out-of-sample error rates \citep{cai2006,hall2007methodology,Crambes2013}.

\subsubsection*{The GHCM has several uses.}
Section~\ref{sec:experiments} presents the results of numerical experiments on the GHCM. We study the following use cases.  
(i) Testing for significance of functional predictors in functional regression models.
We are not aware of other approaches that provide significance statements in functional regression models and come with statistical guarantees. 
For example, in comparison to the $p$-values from \texttt{pfr}, which are highly anti-conservative in challenging setups, the type I error of the GHCM test is 
well-controlled (see Figure~\ref{fig:level-rejection-rates}).
(ii) Deriving confidence intervals for truncation points in truncated functional linear model.
We demonstrate in Section~\ref{sec:ci-truncated} the use of the GHCM in the construction of a confidence interval for the truncation point in a truncated functional linear model, a problem which we show may be framed as one of testing certain conditional independencies.
(iii) Testing for edge presence in functional graphical models.
In Section~\ref{sec:real-data}, we use the GHCM to learn functional graphical models for EEG data from a study on alcoholism.\\

We conclude with a discussion in Section~\ref{sec:conclusion} outlining potential follow-on work and open problems. The supplementary material contains the proofs of all results presented in the main text and some additional numerical experiments, as well as the uniform convergence results mentioned above. An \texttt{R}-package \texttt{ghcm} \citep{ghcm} implementing the methodology is available on CRAN. 

\subsection{Preliminaries and notation} \label{sec:prelim}
For three random elements $X$, $Y$ and $Z$ defined on the same probability space $(\Omega, \mathcal{F}, \mathbb{P})$ with values in measurable spaces $(\mathcal{X}, \mathcal{A})$, $(\mathcal{Y}, \mathcal{G})$ and $(\mathcal{Z}, \mathcal{K})$  respectively, we say that $X$ is conditionally independent of $Y$ given $Z$ and write $X \independent Y \cond Z$ when
\[
\mathbb{E}(f(X)g(Y) \cond Z) \overset{a.s}{=} \mathbb{E}(f(X) \cond Z) \mathbb{E}(g(Y) \cond Z) 
\]
for all bounded and Borel measurable $f: \mathcal{X} \to \mathbb{R}$ and $g:\mathcal{Y} \to \mathbb{R}$. Several equivalent definitions are given in \citet[][Proposition~2.3]{Constantinou2017}.
As with Euclidean variables, the interpretation of $X \independent Y \cond Z$ is that `knowing $Z$ renders $X$ irrelevant for predicting $Y$' \citep{lauritzen1996}.

Throughout the paper we consider families of probability distributions $\mathcal{P}$ of the triplet $(X, Y, Z)$, which we partition into the null hypothesis  $\mathcal{P}_0$  of those $P \in \mathcal{P}$ satisfying $X \independent Y \cond Z$, and set of alternatives $\mathcal{Q} := \mathcal{P} \setminus \mathcal{P}_0$ where the conditional independence relation is violated.
 We consider data $(x_i, y_i, z_i)$, $i=1,\ldots,n$, consisting of i.i.d.\ copies of $(X, Y, Z)$, and write $X^{(n)} := (x_i)_{i=1}^n$ and similarly for $Y^{(n)}$ and $Z^{(n)}$. We apply to this data a test $\psi_n : (\mathcal{X} \times \mathcal{Y} \times \mathcal{Z})^n \to \{0,1\}$, with a value of $1$ indicating rejection. We will at times write $\E_P(\cdot)$ for expectations of random elements whose distribution is determined by $P$, and similarly $\pr_P(\cdot) = \E_P(\ind_{\{\cdot\}})$. Thus, the size of the test $\psi_n$ may be written as $\sup_{P \in \mathcal{P}_0} \pr_P (\psi_n = 1)$. 

We  always take $\mathcal{X}= \mathcal{H}_X$ and $\mathcal{Y}=\mathcal{H}_Y$ for separable Hilbert spaces $\mathcal{H}_X$ and $\mathcal{H}_Y$ and write $d_X$ and $d_Y$ for their dimensions, which may be $\infty$. When these are finite-dimensional, as will typically be the case in practice, $X^{(n)}$ will be a $n \times d_X$ matrix and similarly for $Y^{(n)}$. Similarly, we will take $\mathcal{Z} = \R^{d_Z}$ in the finite-dimensional case and then $Z^{(n)} \in \R^{n \times d_Z}$.
However, in order for our theoretical results to be relevant for settings where $d_X$ and $d_Y$ may be arbitrarily large compared to $n$, our theory must also accommodate infinite-dimensional settings, for which we introduce the following notation.

For $g$ and $h$ in a Hilbert space $\mathcal{H}$, we write $\langle g , h \rangle$ for the inner product of $g$ and $h$
and $\|g\|$ for its norm; note we suppress dependence of the norm and inner product on the Hilbert space.
The bounded linear operator on $\mathcal{H}$ given by $x \mapsto \langle x, g \rangle h$ is the outer product of $g$ and $h$ and is denoted by $g \otimes h$. A bounded linear operator $\mathcal{A}$ on $\mathcal{H}$ is compact if it has a singular value decomposition, i.e., there exists two orthonormal bases $(e_{1,k})_{k \in \mathbb{N}}$ and $(e_{2,k})_{k \in \mathbb{N}}$ of $\mathcal{H}$ and a non-increasing sequence $(\lambda_k)_{k \in \mathbb{N}}$ of singular values such that 
\[
  \mathscr{A} h = \sum_{k=1}^\infty \lambda_k (e_{1,k} \otimes e_{2,k}) h =  \sum_{k=1}^\infty \lambda_k \langle e_{1,k}, h \rangle e_{2,k}
\]
for all $h \in \mathcal{H}$. For a compact linear operator $\mathcal{A}$ as above, we denote by $\lVert \mathscr{A} \rVert_{\op}$, $\lVert \mathscr{A} \rVert_{\HS}$ and $\lVert \mathscr{A} \rVert_{\TR}$ the operator norm, Hilbert--Schmidt norm and trace norm, respectively, of $\mathscr{A}$, which equal the $\ell^\infty$, $\ell^2$ and $\ell^1$ norms, respectively, of the sequence of singular values $(\lambda_k)_{k \in \mathbb{N}}$. 

A random variable on a separable Banach space $\mathcal{B}$ is a mapping $X: \Omega \to \mathcal{B}$ defined on a probability space $(\Omega, \mathcal{F}, \mathbb{P})$ which is measurable with respect to the Borel $\sigma$-algebra on $\mathcal{B}$, $\mathbb{B}(\mathcal{B})$.
Integrals with values in Hilbert or Banach spaces, including expectations, are Bochner integrals throughout. For a random variable $X$ on Hilbert space $\mathcal{H}$, we define the covariance operator of $X$ by
\[
\Cov(X) := \mathbb{E}\left[(X - \mathbb{E}(X)) \otimes (X - \mathbb{E}(X)) \right] = \mathbb{E}(X \otimes X) - \mathbb{E}(X) \otimes \mathbb{E}(X)
\]
whenever $\mathbb{E}\lVert X \rVert^2 < \infty$. For $h \in \mathcal{H}$ we thus have
\[
  \Cov(X)h = \mathbb{E}\left(\langle X, h \rangle^2 \right) - \mathbb{E}(\langle X, h \rangle)^2.
\]
For another random variable $Y$ with $\mathbb{E}\lVert Y \rVert^2 < \infty$, we define the cross-covariance operator of $X$ and $Y$ by
\[
\Cov(X, Y) := \mathbb{E}\left[(X - \mathbb{E}(X)) \otimes (Y - \mathbb{E}(Y)) \right] = \mathbb{E}(X \otimes Y) - \mathbb{E}(X) \otimes \mathbb{E}(Y) .
\]
We define conditional variants of the covariance operator and cross-covariance operator by replacing expectations with conditional expectations given a $\sigma$-algebra or random variable.

\section{The hardness of conditional independence testing with Gaussian functional data} \label{sec:hardness}
In this section we present a negative result on the possibility of testing for conditional independence with functional data in the idealised setting where all variables are Gaussian. We take $\mathcal{P}$ to consist of distributions of $(X, Y, Z)$ that are jointly Gaussian with injective covariance operator, where $X$ and $Z$ take values in separable Hilbert spaces $\mathcal{H}_X$ and $\mathcal{H}_Z$ respectively with $\mathcal{H}_Z$ infinite-dimensional, and $Y \in \R^{d_Y}$.
We note that in the case where $d_Y=1$ and $\mathcal{H}_X = \mathcal{H}_Z = L^2([0, 1], \mathbb{R})$, each $P \in \mathcal{P}$ admits a representation as a Gaussian scalar-on-function linear model \eqref{eq:fun_reg} where $Y$ is the scalar response, and functional covariates $X, Z$ and error $\varepsilon$ are all jointly Gaussian with $\varepsilon \independent (X, Z)$ (see Proposition~\ref{prop:gaussian-conditional-distribution} in the supplementary material); the settings with $d_Y  > 1$ may be thought of equivalently as multi-response versions of this.

For each $Q$ in the set of alternatives $\mathcal{Q}$, we further define $\mathcal{P}_0^Q \subset\mathcal{P}_0$ by
\[
\mathcal{P}_0^Q := \{P \in \mathcal{P}_0 : \text{ the marginal distribution of } Z \text{ under }P \text{ and } Q \text{ is the same} \}.
\]
Theorem~\ref{thm:main-hardness-theorem} below shows that not only is it fundamentally hard to test the null hypothesis of   $\mathcal{P}_0$ against $\mathcal{Q}$ for all dataset sizes $n$, but restricting to the null $\mathcal{P}_0^Q$ for $Q \in \mathcal{Q}$ presents an equally hard problem.
%

\begin{theorem}
\label{thm:main-hardness-theorem}
Given alternative $Q \in \mathcal{Q}$ and $n \in \mathbb{N}$, let $\psi_n$ be a test for null hypothesis $\mathcal{P}_0^Q$ against $Q$. Then we have that the power is at most the size:
\[
\pr_Q(\psi_n = 1) \leq \sup_{P \in \mathcal{P}_0^Q} \pr_P(\psi_n=1).
\]
\end{theorem}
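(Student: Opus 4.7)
My plan is to show that for every $\varepsilon > 0$ there exists $P \in \mathcal{P}_0^Q$ with $\pr_P(\psi_n = 1) \geq \pr_Q(\psi_n = 1) - \varepsilon$; taking the supremum then yields the claim. The key resource is the infinite-dimensional tail of $Z$: with only $n$ samples, any measurable test can meaningfully probe at most finitely many directions of $\mathcal{H}_Z$, so the partial covariance of $X$ and $Y$ given $Z$ under $Q$ can be hidden in directions the test is effectively blind to.

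Concretely, I would diagonalise $\Sigma_Z := \Cov_Q(Z) = \sum_k \lambda_k\, e_k \otimes e_k$ and, under $Q$, write $X = AZ + U_X$ and $Y = BZ + U_Y$, with $(U_X, U_Y)$ Gaussian, independent of $Z$, and with nonzero cross-covariance $\Gamma := \Cov_Q(U_X, U_Y)$ witnessing the alternative. For a block $S_m = \{k_1, \ldots, k_m\}$ of tail eigen-indices (those with small $\lambda_k$), define $P_m$ via
\[
X^{P_m} = AZ + \sum_{j=1}^m c_j\, \xi_{k_j} + \tilde U_X, \qquad Y^{P_m} = BZ + \sum_{j=1}^m d_j\, \xi_{k_j} + \tilde U_Y,
\]
where $\xi_{k_j} := \langle Z, e_{k_j}\rangle/\sqrt{\lambda_{k_j}}$, the pairs $(c_j, d_j) \in \mathcal{H}_X \times \R^{d_Y}$ satisfy $\sum_{j=1}^m c_j \otimes d_j = \Gamma$, and $\tilde U_X \independent \tilde U_Y$ are Gaussian, independent of $Z$, with covariances tuned so that $\Cov_{P_m}(X)$, $\Cov_{P_m}(Y)$ and $\Cov_{P_m}(X,Y)$ match their $Q$-counterparts; this is feasible by standard cross-covariance considerations (take $(c_j, d_j)$ from the SVD of $\Gamma$ suitably rescaled by $\Cov_Q(U_X)^{1/2}$ and $\Cov_Q(U_Y)^{1/2}$). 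By construction $P_m \in \mathcal{P}_0^Q$, and the joint laws under $P_m$ and $Q$ differ only in the tail cross-covariances $\Cov_{P_m}(X, Z_{k_j}) = \sqrt{\lambda_{k_j}}\, c_j$ and $\Cov_{P_m}(Y, Z_{k_j}) = \sqrt{\lambda_{k_j}}\, d_j$, each of which is small when the tail eigenvalues and the individual $(c_j, d_j)$ are small.

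It remains to argue $|\pr_{P_m}(\psi_n=1) - \pr_Q(\psi_n=1)| \to 0$ as $m \to \infty$ with $\min S_m \to \infty$. This is the crux: since the aggregate signal $\Gamma$ is preserved, $\|P_m^{\otimes n} - Q^{\otimes n}\|_{\mathrm{TV}}$ is bounded below and a direct TV bound fails. My plan is to approximate $\psi_n$ in $L^1(Q^{\otimes n})$ by a cylindrical test $\tilde\psi_n^{(K)}$ depending only on $(X^{(n)}, Y^{(n)}, \Pi_K Z^{(n)})$, obtained via martingale convergence applied to the increasing $\sigma$-algebras $\mathcal{F}_K = \sigma(X^{(n)}, Y^{(n)}, \Pi_K Z^{(n)})$, which generate the Borel $\sigma$-algebra by separability of $\mathcal{H}_Z$. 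Choosing the tail block $S_m \subset \{K{+}1, K{+}2, \ldots\}$ makes $P_m$ and $Q$ coincide on $\mathcal{F}_K$, so $\E_{P_m}[\tilde\psi_n^{(K)}] = \E_Q[\tilde\psi_n^{(K)}]$. The main obstacle is controlling the residual $L^1$-error $\E_{P_m}|\psi_n - \tilde\psi_n^{(K)}|$ under $P_m$ rather than under $Q$: I expect this to require a Gaussian-specific Hellinger or operator-norm bound on finite-dimensional projections of the joint law of dimension $O(n)$, leveraging that an $n$-sample test can statistically resolve only finitely many eigen-directions while the per-direction tail perturbation is of size $O(\sqrt{\lambda_{k_j}})$ and can thus be made arbitrarily small.
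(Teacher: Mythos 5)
Your construction of the null distributions is sound and is the same ``explaining away'' device the paper uses: the conditional cross-covariance $\Gamma$ of $X$ and $Y$ given $Z$ under $Q$ is reproduced marginally by correlating both $X$ and $Y$ with coordinates of $Z$, so that each $P_m$ lies in $\mathcal{P}_0^Q$. Your observation that the law of $(X^{(n)}, Y^{(n)}, \Pi_K Z^{(n)})$ coincides under $P_m$ and $Q$ when the perturbed block sits beyond $K$ is also correct. The problem is precisely the step you flag as the crux, and as written it is a genuine gap. L\'evy's upward theorem gives $\E_Q\lvert \psi_n - \tilde\psi_n^{(K)}\rvert \to 0$, i.e.\ the cylindrical approximation is good \emph{in $L^1(Q^{\otimes n})$}; you need it to be good in $L^1(P_m^{\otimes n})$. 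Since, as you note, $d_{\mathrm{TV}}(P_m^{\otimes n}, Q^{\otimes n})$ is bounded away from zero, the two measures genuinely disagree on where they put mass, and the likelihood ratio $dP_m^{\otimes n}/dQ^{\otimes n}$ is an unbounded exponential of a quadratic form; $P_m$ concentrates mass exactly on the configurations (alignments of $X$ and $Y$ with the tail coordinates $Z_{k_j}$) where $\tilde\psi_n^{(K)}$ has averaged away the behaviour of $\psi_n$. To transfer the $L^1$ bound you would need at minimum (a) equivalence (non-singularity) of the Gaussian measures $P_m$ and $Q$, which imposes Cameron--Martin-type conditions on the $c_j, d_j$; (b) finiteness of $\E_Q[(dP_m^{\otimes n}/dQ^{\otimes n})^q]$ for some $q>1$, which holds only when the whitened covariance perturbation is small enough and must be shown uniform in the location of the tail block; and then (c) a H\"older argument. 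None of this is supplied, and (b) can fail for alternatives $Q$ with large canonical correlations, so the ``Hellinger or operator-norm bound'' you hope for is not routine.

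The paper avoids this difficulty entirely by not working with a single null distribution. Via Kraft's lemma (Lemma~\ref{lemma:size-power-TV-distance}), it suffices to show that $Q^{\otimes n}$ lies in the total-variation closure of the \emph{convex hull} of $\{P^{\otimes n} : P \in \mathcal{P}_0^Q\}$. The paper then takes a uniform mixture over sign vectors $\tau \in \{-1,1\}^d$ of null distributions $P_\tau$ with $\Cov(X, Z) = \Cov(Y,Z) = \sqrt{\rho/d}\,\tau^\top$, and an explicit $\chi^2$ computation (Theorem~\ref{thm:gaussian-conditional-independence-simple-power-bound}) shows the mixture converges to $Q^{\otimes n}$ in total variation as $d \to \infty$; the convergence comes from the inner products $\langle \tau, \tau'\rangle$ concentrating under the averaging, which is exactly what a single deterministic $\tau$ (your single $P_m$) cannot provide. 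The mixture then yields, non-constructively, some $P_\tau$ whose rejection probability is within $\varepsilon$ of $\pr_Q(\psi_n = 1)$. If you want to pursue your constructive route, the missing ingredient is the quantitative change-of-measure estimate above; otherwise I would recommend switching to the convex-hull formulation, which is what makes the total-variation argument tractable.
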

An interpretation of this statement in the context of the functional linear model is that regardless of the number of observations $n$, there is no non-trivial test for the significance of the functional predictor $X$, even if the marginal distribution of the additional infinite-dimensional predictor $Z$ is known exactly.
It is clear that the size of a test over $\mathcal{P}_0$ is at least as large as that over the null $\mathcal{P}_0^Q$, so testing the larger null is of course at least as hard. 

It is known that testing conditional independence in simple multivariate (finite-dimensional) settings is hard in the sense of Theorem~\ref{thm:main-hardness-theorem} when the conditioning variable is continuous. In such settings, restricting the null to include only distributions with Lipschitz densities, for example, allows for the existence of tests with power against large classes of the alternative. The functional setting is however very different, simply removing pathological distributions from the entire null of conditional independence does not make the problem testable. Even with the parametric restriction of Gaussianity,
the null is still too large for the existence of non-trivial hypothesis tests. Indeed, the starting point of our proof is a result due to \citet{Kraft1955} that the hardness in the statement of Theorem~\ref{thm:main-hardness-theorem} is equivalent to the $n$-fold product $Q^{\otimes n}$ lying in the convex closure in total variation distance of the set of $n$-fold products of distributions in $\mathcal{P}_0^Q$.


A consequence of Theorem~\ref{thm:main-hardness-theorem} is that we need to make strong modelling assumptions in order to test for conditional independence in the functional data setting. Given the plethora of regression methods for functional data, we argue that it can be convenient to frame these modelling assumptions in terms of regression models for each of $X$ and $Y$ on $Z$, or more generally, in terms of the performances of methods for these regressions.
The remainder of this paper is devoted to developing a family of conditional independence tests whose validity rests primarily on the prediction errors of these regressions.

\section{GHCM methodology} \label{sec:method_all}
In this section we present the Generalised Hilbertian Covariance Measure (GHCM) for testing conditional independence with functional data.
To motivate the approach we take, it will be helpful to first review the construction of the Generalised Covariance Measure (GCM) developed in \citet{GCM} for univariate $X$ and $Y$, which we do in the next section. In Section~\ref{sec:GHCM} we then define the GHCM.
\subsection{Motivation}
\label{sec:method}
Consider first therefore the case where $X$ and $Y$ are real-valued random variables, and $Z$ is a random variable with values in some space $\mathcal{Z}$. We can always write $X = f(Z) + \varepsilon$ where $f(z) := \mathbb{E}(X \cond Z=z)$ and similarly $Y=g(Z) + \xi$ with $g(z) := \mathbb{E}(Y \cond Z=z)$. The conditional covariance of $X$ and $Y$ given $Z$,
\[
\Cov(X, Y \cond Z)  := \mathbb{E}\left[\{X- \mathbb{E}(X \cond Z)\} \{Y-\mathbb{E}(Y \cond Z)\} \cond Z \right] = \E(\varepsilon \xi \cond Z),
\]
has the property that $\Cov(X, Y \cond Z) = 0$ and hence $\E(\varepsilon \xi)=0$ whenever $X \independent Y \cond Z$.
The GCM forms an empirical version of $\E(\varepsilon \xi)$ given data $(x_i, y_i, z_i)_{i=1}^n$ by first regressing each of $X^{(n)}$ and $Y^{(n)}$ onto $Z^{(n)}$
to give estimates $\hat{f}$ and $\hat{g}$ of $f$ and $g$ respectively. Using the corresponding residuals $\hat{\varepsilon}_i := x_i - \hat{f}(z_i)$ and $\hat{\xi}_i := y_i - \hat{g}(z_i)$, the product $R_i:=\hat{\varepsilon}_i \hat{\xi}_i$ is computed for each $i=1,\ldots,n$ and then averaged to give $\bar{R} :=\sum_{i=1}^n R_i /n$, an estimate of $\E(\varepsilon \xi)$. The standard deviation of $\bar{R}$ under the null $X \independent Y \cond Z$ may also be estimated, and it can be shown \citep[Thm~8]{GCM} that under some conditions, $\bar{R}$ divided by its estimated standard deviation converges uniformly to a standard Gaussian distribution.

%
%
This basic approach can be extended to the case where $X$ and $Y$ take values in $\mathbb{R}^{d_X}$ and $\mathbb{R}^{d_Y}$ respectively,
by considering a multivariate conditional covariance,
\[
\Cov(X, Y \cond Z)  := \mathbb{E}\left[\{X- \mathbb{E}(X \cond Z)\}\{Y-\mathbb{E}(Y \cond Z)\}^\top  \cond Z \right]  = \E(\varepsilon \xi^\top  \cond Z) \in \R^{d_X \times d_Y}.
\]
This is a zero matrix when $X \independent Y \cond Z$, and hence $\E(\varepsilon \xi^\top )=0$ under this null. Thus, $\bar{R}$ defined as before but where $R_i := \hat{\varepsilon}_i \hat{\xi}_i^\top $ can form the basis of a test of conditional independence. There are several ways to construct a final test statistic using $\bar{R} \in \R^{d_X \times d_Y}$. The approach taken in \citet{GCM} involves taking the maximum absolute value of a version of $\bar{R}$ with each entry divided by its estimated standard deviation. This, however, does not generalise easily to the functional data setting we are interested in here; we now outline  an alternative that can be extended to handle functional data.


To motivate our approach, consider multiplying $\bar{R}$ by $\sqrt{n}$:
\begin{equation} \label{eq:expansion}
	\begin{split}
\sqrt{n} \bar{R} &= \frac{1}{\sqrt{n}} \sum_{i=1}^n \hat{\varepsilon}_i \hat{\xi}_i^\top  = \frac{1}{\sqrt{n}} \sum_{i=1}^n (f(z_i) - \hat{f}(z_i) + \varepsilon_i) (g(z_i) - \hat{g}(z_i) + \xi_i)^\top  \\
& = \frac{1}{\sqrt{n}} \underbrace{\sum_{i=1}^n \varepsilon_i \xi_i^\top }_{U_n} + \underbrace{\frac{1}{\sqrt{n}} \sum_{i=1}^n  (f(z_i)-\hat{f}(z_i))  (g(z_i)-\hat{g}(z_i))^\top }_{a_n}\\
& \qquad + \underbrace{\frac{1}{\sqrt{n}} \sum_{i=1}^n (f(z_i)-\hat{f}(z_i)) \xi_i^\top  }_{b_n} + \underbrace{\frac{1}{\sqrt{n}} \sum_{i=1}^n \varepsilon_i  (g(z_i)-\hat{g}(z_i))^\top }_{c_n} .
\end{split}
\end{equation}
Observe that $U_n$ is a sum of i.i.d.\ terms and so the multivariate central limit theorem dictates that $U_n / \sqrt{n}$ converges to a $d_X \times d_Y$-dimensional Gaussian distribution.
Applying the Frobenius norm $\lVert \cdot \rVert_{F}$ to the $a_n$ term, we get by submultiplicativity and the Cauchy--Schwarz inequality,
\begin{align}
\lVert a_n \rVert_{F} &\leq \frac{1}{\sqrt{n}} \sum_{i=1}^n  \lVert f(z_i)-\hat{f}(z_i) \rVert_2  \lVert g(z_i)-\hat{g}(z_i) \rVert_2 \notag\\
&\leq \sqrt{n} \bigg(\frac{1}{n} \sum_{i=1}^n  \lVert f(z_i)-\hat{f}(z_i)\rVert_2^2\bigg)^{1/2} \bigg(\frac{1}{n} \sum_{i=1}^n  \lVert g(z_i)-\hat{g}(z_i)\rVert_2^2\bigg)^{1/2}, \label{eq:fg_prod}
\end{align}
where $\lVert \cdot \rVert_2$ denotes the Euclidean norm. The right-hand-side here is a product of in-sample mean squared prediction errors for each of the regressions performed.
Under the null of conditional independence, each term of $b_n$ and $c_n$ is mean zero conditional on $(X^{(n)}, Z^{(n)})$ and $(Y^{(n)}, Z^{(n)})$, respectively. Thus, so long as both of the regression functions are estimated at a sufficiently fast rate, we can expect $a_n, b_n, c_n$ to be small so the distribution of $\sqrt{n} \bar{R}$ can be well-approximated by the Gaussian limiting distribution of $U_n/\sqrt{n}$. As in the univariate setting, it is crucially the product of the prediction errors in \eqref{eq:fg_prod} that is required to be small, so each root mean squared prediction error term can decay at relatively slow $o(n^{-1/4})$
rates.

Unlike the univariate setting however, $\sqrt{n} \bar{R}$ is now a matrix and hence we need to choose some sensible aggregator function $t: \mathbb{R}^{d_X \times d_Y} \to \mathbb{R}$ such that we can threshold $t(\sqrt{n}\bar{R})$ to yield a $p$-value. One option is as follows; we take a different approach as the basis of the GHCM for reasons which will become clear in the sequel.
If we vectorise $\bar{R}$, i.e., view the matrix as a $d_X d_Y$-dimensional vector, then under the assumptions required for the above heuristic arguments to formally hold, $\sqrt{n} \textrm{Vec}(\bar{R})$ converges to a Gaussian with mean zero and some covariance matrix $C \in \R^{d_Xd_Y \times d_Xd_Y}$ if $X \independent Y \cond Z$. Provided $C$ is invertible, $\sqrt{n} C^{-1/2} \bar{R}$ therefore converges to a Gaussian with identity covariance under the null and hence $\lVert C^{-1/2} \sqrt{n} \bar{R} \rVert_2^2$ converges to a $\chi^2$-distribution with $d_Xd_Y$ degrees of freedom. Replacing $C$ with an estimate $\hat{C}$ then yields a test statistic from which we may derive a $p$-value.

\subsection{The GHCM} \label{sec:GHCM}
We now turn to the setting where $X$ and $Y$ take values in separable Hilbert spaces $\mathcal{H}_X$ and $\mathcal{H}_Y$ respectively. These could for example be $L^2([0, 1], \mathbb{R})$, or 
$\R^{d_X}$ and $\R^{d_Y}$ respectively,
but where $X$ and $Y$ are vectors of function evaluations.
The latter case, which we will henceforth refer to as the finite-dimensional case, corresponds
to how data would often be received in practice with the observation vectors consisting of function evaluations on fixed grids (which are not necessarily equally spaced). However, it is important to recognise that the dimensions $d_X$ and $d_Y$ of the grids may be arbitrarily large, and it is necessary for the methodology to accommodate this; as we will see, the approach for the multivariate setting described in the previous section does not satisfy this requirement whereas our proposed GHCM will do so.

In some settings, our observed vectors of function evaluations will not be on fixed grids, and the numbers of function evaluations may vary from observation to observation. In Section~\ref{sec:unequal_grids} we set out a scheme to handle this case and bring it within our framework here.

Similarly to the approach outlined in Section~\ref{sec:method}, we propose to first regress each of $X^{(n)}$ and $Y^{(n)}$ onto $Z^{(n)}$ to give residuals $\hat{\varepsilon}_i \in \mathcal{H}_X$, $\hat{\xi}_i \in \mathcal{H}_Y$ for $i=1,\ldots,n$. (In practice, these regressions could be performed by \texttt{pfr} or \texttt{pffr} in the \texttt{refund} package \citep{Goldsmith2011,Ivanescu2015} or boosting \citep{FDboost}, for instance.) We centre the residuals, as these and other functional regression methods do not always produce mean-centred residuals. With these residuals we proceed as in the multivariate case outlined above but replacing matrix outer products in the multivariate setting with outer products in the Hilbertian sense, that is we define for $i=1,\ldots,n$,
\begin{gather}
\mathscr{R}_i:= \hat{\varepsilon}_i \otimes \hat{\xi}_i, \;\text{ and }\;\mathscr{T}_n := \sqrt{n} \bar{\mathscr{R}} \label{eq:scrT_n_def} \\
\text{where } \; \bar{\mathscr{R}} := \frac{1}{n} \sum_{i=1}^n \mathscr{R}_i. \notag
\end{gather}

We can show (see Theorem~\ref{thm:asymptotic normality and covariance estimation}) that under the null, provided the analogous prediction error terms in \eqref{eq:fg_prod} decay sufficiently fast and additional regularity conditions hold, $\mathscr{T}_n$ above converges uniformly to a Gaussian distribution in the space of Hilbert--Schmidt operators. This comes as a consequence of new results we prove on uniform convergence of Banachian random variables.
Moreover, the covariance operator of this limiting Gaussian distribution can be estimated by the empirical covariance operator
\begin{equation} \label{eq:hat_scrC}
\hat{\mathscr{C}} := \frac{1}{n-1} \sum_{i=1}^n (\mathscr{R}_i - \bar{\mathscr{R}}) \otimes_{\HS} (\mathscr{R}_i - \bar{\mathscr{R}})
\end{equation}
where $\otimes_{\HS}$ denotes the outer 
product in the space of Hilbert--Schmidt operators.

An analogous approach to that outlined above for the multivariate setting would involve attempting to whiten this limiting distribution using the square-root of the inverse of $\hat{\mathscr{C}}$.
However, here we hit a clear obstacle: even in the finite-dimensional setting, whenever $d_Xd_Y \geq n$, the inverse of $\hat{\mathscr{C}}$ or $\hat{C}$ from the previous section, cannot  exist.
Moreover, as indicated by \citet{bai1996effect}, who study the problem of testing whether a finite-dimensional Gaussian vector has mean zero, even when the inverses do exist, the estimated inverse covariance may not approximate its population level counterpart sufficiently well. Instead, \citet{bai1996effect} advocate using a test statistic based on the squared $\ell_2$-norm of the Gaussian vector.


We take an analogous approach here, and use as our test statistic
\begin{equation} \label{eq:T_n_def}
	T_n := \|\mathscr{T}_n\|_{\HS}^2
\end{equation}
where $\lVert \cdot \rVert_{\HS}$ denotes the Hilbert--Schmidt norm.
A further advantage of this test statistic is that it admits an alternative representation given by
\begin{equation}
	\label{eq:T_n-inner-product-form}
	T_n = \frac{1}{n}\sum_{i=1}^n \sum_{j=1}^n \langle \hat{\varepsilon}_i , \hat{\varepsilon}_j \rangle \langle \hat{\xi}_i , \hat{\xi}_j \rangle;
\end{equation}
see Section~\ref{sec:inner-product-version-of-test} for a derivation.
Only inner products between residuals need to be computed, and so
in the finite-dimensional case with the standard inner product, the computational burden is only $O(\max(d_X, d_Y)n^2)$.

As $\mathscr{T}_n$ has an asymptotic Gaussian distribution under the null with an estimable covariance operator, we can deduce the asymptotic null distribution of $T_n$ as a function of $\mathscr{T}_n$. This leads to the $\alpha$-level test function $\psi_n$ given by
\begin{equation} \label{eq:GHCM_test}
	\psi_n := \ind_{\{T_n \geq q_\alpha\}}
\end{equation}
where $q_\alpha$ is the $1-\alpha$ quantile of a weighted sum
\[
\sum_{k=1}^{d} \lambda_k W_k 
\]
of independent $\chi^2_1$ distributions $(W_k)_{k=1}^d$ with weights given by the $d$ non-zero eigenvalues $(\lambda_k)_{k=1}^{d}$ of $\hat{\mathscr{C}}$. Note that $d \leq \min(n-1, d_Xd_Y)$.

These eigenvalues may also be derived from inner products of the residuals: they are equal to the eigenvalues of the $n \times n$ matrix
\[
\frac{1}{n-1}(\Gamma - J\Gamma - \Gamma J + J\Gamma J)
\]
where $J \in \R^{n \times n}$ is a matrix with all entries equal to $1/n$, and $\Gamma \in \R^{n \times n}$ has $ij$th entry given by
\begin{equation} \label{eq:Gamma}
\Gamma_{ij} :=  \langle \hat{\varepsilon}_i, \hat{\varepsilon}_j \rangle \langle \hat{\xi}_i , \hat{\xi}_j \rangle ;
\end{equation}
see Section~\ref{sec:inner-product-version-of-test} for a derivation. Thus, in the finite-dimensional case, the computation of the eigenvalues requires $O(n^2\max(d_X,d_Y,n))$ operations. In typical usage therefore, the cost for computing the test statistic given the residuals is dominated by the cost of performing the initial regressions, particularly those corresponding to function-on-function regression. Note that there are several schemes for approximating $q_\alpha$ \citep{Imhof1961,Liu2009,Farebrother1984}; we use the approach of \citet{Imhof1961} as implemented in the \texttt{QuadCompForm} package in \texttt{R} \citep{QuadCompForm} in all of our numerical experiments. 
We summarise the above construction of our test function for the finite-dimensional case with the standard inner product in Algorithm~\ref{alg:ghcm}.

In principle, different inner products may be chosen, to yield different test functions. However, the theoretical properties of the test function rely on the prediction errors of the regressions, measured in terms of the norm corresponding to the inner product used, being small. In the common case where the observed data are finite vectors of function evaluations, i.e.,
for each $i=1,\ldots,n$, $x_{ik} = W_{X,i}(k/d_{X})$ for a function $W_{X,i} \in L_2([0,1],\R)$, and similarly for $y_i$, our default recommendation is to use the standard inner product. The residuals, $\hat{\varepsilon}_i \in \R^{d_X}$ and $\hat{\xi}_i \in \R^{d_Y}$, would then similarly correspond to underlying functional residuals via $\hat{\varepsilon}_{ik} = W_{\hat{\varepsilon},i}(k/d_{X})$ for $W_{\hat{\varepsilon},i} \in L_2([0,1],\R)$, and similarly for $\hat{\xi}_i$.
We may compare
the test function computed based on the computed residuals $\hat{\varepsilon}_i$ and $\hat{\xi}_i$ with that which would be obtained when replacing these with the underlying functions $W_{\hat{\varepsilon},i}$ and $W_{\hat{\xi},i}$. As the test function depends entirely on inner products between residuals, it suffices to compare
\begin{equation} \label{eq:Riemann}
\hat{\varepsilon}_i^\top  \hat{\varepsilon}_j = \sum_{k=1}^{d_X} W_{\hat{\varepsilon},i}(k/d_X) W_{\hat{\varepsilon},i}(k/d_X)  \qquad \text{and} \qquad  \int_0^1 W_{\hat{\varepsilon},i}(t) W_{\hat{\varepsilon},j}(t) \,\mathrm{d}t.
\end{equation}
We see that the LHS is $d_X$ times a Riemann sum approximation to the integral on the RHS. The $p$-value computed is invariant to multiplicative scaling of the test statistic, and so in the so-called densely observed case where $d_X$ is large, the $p$-value from the finite-dimensional setting would be a close approximation to that which would be obtained with the true underlying functions.

Other numerical integration schemes could be used to make the approximation even more precise. However, the theory we present in Section~\ref{sec:theory} that guarantees uniform asymptotic level control and power over certain classes of nulls and alternatives applies directly to the finite-dimensional or infinite-dimensional settings, and so there is no requirement that the approximation error above is small. In particular, there is no strict requirement that the residuals computed correspond to function evaluations on equally spaced grids. However, in that case $\hat{\varepsilon}_i^\top  \hat{\varepsilon}_j$ will not necessarily approximate a scaled version of the RHS of \eqref{eq:Riemann}, and an inner product that maintains this approximation may be more desirable from a power perspective. 

\begin{algorithm}[ht!] \caption{Generalised Hilbertian Covariance Measure (GHCM)} \label{alg:ghcm}
  \textbf{input}: $X^{(n)} \in \R^{n \times d_X}$, $Y^{(n)} \in \R^{n \times d_Y}$, $Z^{(n)} \in \R^{n \times d_Z}$ \;
  \textbf{options}: regression methods for each of the regressions \; %
  \Begin{
    regress $X^{(n)}$ on $Z^{(n)}$ producing residuals $\hat{\varepsilon}_i \in \mathbb{R}^{d_X}$ for $i=1, \dots, n$ \;
    regress $Y^{(n)}$ on $Z^{(n)}$ producing residuals $\hat{\xi}_i \in \mathbb{R}^{d_Y}$ for $i=1, \dots, n$ \;
    construct $\Gamma \in \R^{n \times n}$ with entries $\Gamma_{ij} \leftarrow \hat{\varepsilon}_i^\top  \hat{\varepsilon}_j \hat{\xi}_i^\top  \hat{\xi}_j$ (or more generally via \eqref{eq:Gamma}) \;
    compute test statistic $T_n \leftarrow \frac{1}{n} \sum_{i=1}^n \sum_{j=1}^n \Gamma_{ij}$ \;
    set $A \leftarrow \frac{1}{n-1} \left(\Gamma - J\Gamma - \Gamma J + J\Gamma J \right)$ where $J \in \R^{n \times n}$ has all entries equal to $1/n$ \;
    compute the non-zero eigenvalues $\lambda_1, \dots, \lambda_{d}$ of $A$ (there are at most $n-1$)\;
    compute by numerical integration $p$-value
    $p \leftarrow \mathbb{P}\left( \sum_{k=1}^{d} \lambda_k \zeta_k^2 > T_n  \right)$,    
    where $\zeta_1, \dots, \zeta_{d}$ are independent standard Gaussian variables \;
  }
  \textbf{output}: $p$-value $p$\;
\end{algorithm} 
In the following section we explain how when the residuals $\hat{\varepsilon}_i$ and $\hat{\xi}_i$ correspond to function evaluations on different grids for each $i$, we can preprocess these to obtain residuals corresponding to fixed grids, which may then be fed into our algorithm.

An \texttt{R}-package \texttt{ghcm} \citep{ghcm} implementing the methodology is available on CRAN.

\subsubsection{Data observed on irregularly spaced grids of varying lengths} \label{sec:unequal_grids}
We now consider the case where $\hat{\varepsilon}_{i} \in \R^{d_{X,i}}$ with its $k$th component given by $\hat{\varepsilon}_{ik} = W_{\hat{\varepsilon},i}(t_{ik})$ for $t^X_{ik} \in [0,1]$, and similarly for $\hat{\xi}_{i}$. Such residuals would typically be output by regression methods when supplied with functional data $x_i \in \R^{d_{X,i}}$ and $y_i \in \R^{d_{Y,i}}$ corresponding to functional evaluations on grids $(t_{ik})_{k=1}^{d_{X,i}}$ and $(t_{ik})_{k=1}^{d_{Y,i}}$ respectively.

In order to apply our GHCM methodology, we need to represent these residual vectors by vectors of equal lengths corresponding to fixed grids. Our approach is to construct for each $i$, natural cubic interpolating splines $\hat{W}_{\hat{\varepsilon},i}$ and $\hat{W}_{\hat{\xi},i}$ corresponding to $\hat{\varepsilon}_i$ and $\hat{\xi}_i$ respectively. We may compute the inner product between these functions in $L_2([0,1], \R)$ exactly and efficiently as it is the integral of a piecewise polynomial with the degree in each piece at most $6$. This gives us the entries of the matrix $\Gamma$ \eqref{eq:Gamma} which we may then use in lines 7 and following in Algorithm~\ref{alg:ghcm}. Furthermore, Theorems~\ref{thm:level of test} and \ref{thm: consistency of test} apply equally well to the setting considered here provided the residuals are understood as the interpolating splines described above, and the fitted regression functions are defined accordingly as the difference between the observed functional responses these functional residuals.
%

\section{Theoretical properties of the GHCM} \label{sec:theory}
In this section, we provide uniform level control guarantees for the GHCM, and uniform power guarantees for a version incorporating sample-splitting; note that we do not recommend the use of the latter in practice but consider it a proxy for the GHCM that is more amenable to theoretical analysis in non-null settings.
Before presenting these results, we explain the importance of uniform results in this context, and set out some notation relating to uniform convergence.

\subsection{Background on uniform convergence}
In Section~\ref{sec:hardness} we saw that even when $\mathcal{P}$ consists of Gaussian distributions over $\mathcal{H}_X \times \R^{d_Y} \times \mathcal{H}_Z$, we cannot ensure that our test has both the desired size $\alpha$ over $\mathcal{P}_0$ and also non-trivial power properties against alternative distributions in $\mathcal{Q}$. We also have the following related result.
\begin{proposition} \label{prop:gaussian-conditional-independence-testing-is-hard-when-dependence-is-never-in-tail}
	Let $\mathcal{H}_Z$ be a separable Hilbert space with orthonormal basis $(e_k)_{k \in \mathbb{N}}$. Let $\mathcal{P}$ be the family of Gaussian distributions for $(X, Y, Z) \in \R \times \R \times \mathcal{H}_Z$ with injective covariance operator and where $(X, Y) \independent (Z_{r+1}, Z_{r+2},\ldots) \cond Z_1,\ldots,Z_r$ for some $r \in \mathbb{N}$ and $Z_k := \langle e_k, Z \rangle$ for all $k \in \mathbb{N}$. Let $Q \in \mathcal{Q}$ and recall the definition of $\mathcal{P}_0^Q$ from Section~\ref{sec:hardness}. Then, for any test $\psi_n$,
	\[
	\pr_Q(\psi_n = 1) \leq \sup_{P \in \mathcal{P}_0^Q} \pr_P(\psi_n=1).
	\]
\end{proposition}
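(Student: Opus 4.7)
The plan is to mimic the structure of the proof of Theorem~\ref{thm:main-hardness-theorem}: by the equivalence due to \citet{Kraft1955}, it suffices to show that $Q^{\otimes n}$ lies in the total-variation-closed convex hull of $\{P^{\otimes n} : P \in \mathcal{P}_0^Q\}$, where $\mathcal{P}_0^Q$ is now understood relative to the restricted family $\mathcal{P}$ of the proposition. Since this set is strictly smaller than the null appearing in Theorem~\ref{thm:main-hardness-theorem} (which corresponds to the unrestricted Gaussian family), the present conclusion is strictly stronger and cannot be obtained by direct appeal. Instead, I would reduce to Theorem~\ref{thm:main-hardness-theorem} by combining it with an approximation step that moves the null distributions it produces into the restricted class.

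Concretely, applying Theorem~\ref{thm:main-hardness-theorem} with $\mathcal{P}$ taken to be the unrestricted class of jointly Gaussian distributions on $\R \times \R \times \mathcal{H}_Z$ with injective covariance yields, for any $\epsilon > 0$, finitely many $\tilde{P}^{(1)}, \dots, \tilde{P}^{(J)}$ in the enlarged null together with weights $w_j \geq 0$ summing to one such that the corresponding mixture of $n$-fold products is within $\epsilon$ of $Q^{\otimes n}$ in total variation. By joint Gaussianity, under $\tilde{P}^{(j)}$ we may write $X = L_X^{(j)}(Z) + \eta_X^{(j)}$ and $Y = L_Y^{(j)}(Z) + \eta_Y^{(j)}$, where $L_X^{(j)}, L_Y^{(j)}$ are the natural RKHS-valued linear functionals of $Z$ and $(\eta_X^{(j)}, \eta_Y^{(j)})$ is a centered Gaussian pair independent of $Z$; the conditional independence $X \independent Y \cond Z$ under $\tilde{P}^{(j)}$ forces $\eta_X^{(j)} \independent \eta_Y^{(j)}$. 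For each integer $M > r$, define $P_M^{(j)}$ as the joint Gaussian distribution of $(L_X^{(j), M}(Z) + \eta_X^{(j)}, L_Y^{(j), M}(Z) + \eta_Y^{(j)}, Z)$, where $L_X^{(j), M}(Z) := \E_{\tilde P^{(j)}}[L_X^{(j)}(Z) \cond Z_1, \dots, Z_M]$ is the $L^2$-projection of $L_X^{(j)}(Z)$ onto the linear span of $Z_1, \dots, Z_M$, and analogously for $L_Y^{(j), M}$. Each $P_M^{(j)}$ lies in the restricted $\mathcal{P}_0^Q$: the $Z$-marginal is preserved, $X \independent Y \cond Z$ is inherited from $\eta_X^{(j)} \independent \eta_Y^{(j)}$, and $(X, Y)$ depends on $Z$ only through $Z_1, \dots, Z_M$, verifying the finite-dependence restriction with parameter $M$; injectivity of the full covariance operator follows from the positive variance of the noises together with the injectivity of the covariance of $Z$.

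The technical core is then to establish $\|(P_M^{(j)})^{\otimes n} - (\tilde{P}^{(j)})^{\otimes n}\|_{\mathrm{TV}} \to 0$ as $M \to \infty$ for each fixed $j$. Since $P_M^{(j)}$ and $\tilde{P}^{(j)}$ share the marginal of $Z$ and the conditional covariance of $(X, Y)$ given $Z$, differing only in the conditional mean, the martingale convergence theorem applied to the filtration generated by $(Z_1, \dots, Z_M)$ implies that the conditional-mean error tends to zero in $L^2$. A standard bound on the total variation distance between two Gaussians with common covariance, combined with dominated convergence, then gives $\|P_M^{(j)} - \tilde{P}^{(j)}\|_{\mathrm{TV}} \to 0$, and the tensorization inequality $\|\mu^{\otimes n} - \nu^{\otimes n}\|_{\mathrm{TV}} \leq n \|\mu - \nu\|_{\mathrm{TV}}$ transfers this bound to the $n$-fold products. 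Choosing $M$ large enough that $\sum_j w_j \|(P_M^{(j)})^{\otimes n} - (\tilde{P}^{(j)})^{\otimes n}\|_{\mathrm{TV}} < \epsilon$ and applying the triangle inequality produces a mixture of $n$-fold products of distributions in the restricted $\mathcal{P}_0^Q$ within $2\epsilon$ of $Q^{\otimes n}$; since $\epsilon$ was arbitrary, Kraft's equivalence completes the proof. The main obstacle I anticipate is the careful treatment of the linear functionals $L_X^{(j)}, L_Y^{(j)}$, which in the infinite-dimensional Gaussian setting naturally live in the RKHS of $Z$ rather than in $\mathcal{H}_Z$ itself; one must verify that their $L^2$-projections onto finite coordinate subspaces are well-defined and converge in $L^2$, and that the injectivity and covariance structures described above hold rigorously.
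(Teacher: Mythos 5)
Your route is genuinely different from the paper's, and much heavier. The paper's proof of this proposition is essentially one line: it observes that in the proofs of Theorems~\ref{thm:gaussian-finite-dimensional-conditional-independence-testing-is-hard} and~\ref{thm:main-hardness-theorem}, Lemmas~\ref{lem:gaussian-conditional-dependence-basis} and~\ref{lem:gaussian-conditional-dependence-basis-variant} are invoked only to \emph{produce} a basis in which $(X,Y)$ depends on $Z$ through finitely many coordinates, and that the null distributions manufactured there (finite-dimensional nulls extended by appending the tail of $Z$ drawn conditionally on $Z_1,\ldots,Z_d$) automatically satisfy $(X,Y)\independent (Z_{d+1},Z_{d+2},\ldots)\cond Z_1,\ldots,Z_d$, hence already lie in the restricted family of the proposition. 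Since the proposition hands you the basis, those proofs run verbatim against the smaller null. Your proposal instead extracts an abstract approximating mixture from Theorem~\ref{thm:main-hardness-theorem} via Kraft's equivalence and then perturbs each component into the restricted class. This is a legitimate alternative strategy, and its later steps (martingale convergence of the projected conditional means, the Gaussian total-variation bound with common conditional covariance, tensorisation, triangle inequality) are sound, but it contains a gap at the perturbation step.

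The gap is your claim that injectivity of $\Cov(P_M^{(j)})$ ``follows from the positive variance of the noises'': positive variance of $\eta_X^{(j)},\eta_Y^{(j)}$ is not guaranteed for an arbitrary member of the \emph{unrestricted} null with injective covariance, and you have no control over which members Kraft's equivalence hands you. For example, take $Z_k\sim\mathcal{N}(0,k^{-4})$ independent and $X=\sum_k kZ_k$; then $X=\E(X\cond Z)$ almost surely, so $\Var(X\cond Z)=0$, yet the covariance operator of $(X,Z)$ is injective because the coefficient sequence $(k)_{k}$ does not lie in $\ell^2=\mathcal{H}_Z$. For such a degenerate component your $P_M^{(j)}$ makes $X$ an exact finite linear combination of $Z_1,\ldots,Z_M$, so its covariance operator has non-trivial kernel and $P_M^{(j)}\notin\mathcal{P}_0^Q$; worse, the conditional law of $X$ given $Z$ is a point mass under $\tilde P^{(j)}$ but atomless under every member of the restricted family (injectivity forces $\Var(X\cond Z_{1:r})>0$ there), so $(\tilde P^{(j)})^{\otimes n}$ sits at total variation distance $1$ from the entire restricted convex hull and no choice of $M$ rescues the approximation. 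The argument can be repaired: since $Q$ itself is non-degenerate ($\Var_Q(X\cond Z)=\Var_Q(X\cond Z_{1:r})>0$ by injectivity and the tail-independence assumption), the union over $j$ of the graph events $\{x_i=\E_{\tilde P^{(j)}}(X\cond Z=z_i)\text{ for all }i\}$ associated with degenerate components has $Q^{\otimes n}$-measure zero, so their total weight in any mixture within $\epsilon$ of $Q^{\otimes n}$ is at most $\epsilon$; discard them, renormalise, and apply your construction to the remaining non-degenerate components, for which the positive conditional variances also justify the bound $d_{\mathrm{TV}}\bigl(\mathcal{N}(m_1,\Sigma),\mathcal{N}(m_2,\Sigma)\bigr)\lesssim\lVert\Sigma^{-1/2}(m_1-m_2)\rVert$ that your convergence step relies on. You should either supply this repair or, more simply, follow the paper and note that the explicit nulls constructed in the existing proofs already satisfy the finite-coordinate dependence in the given basis.
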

In other words, even if we know a basis $(e_k)_{k \in \mathbb{N}}$ such that in particular the conditional expectations $\E(X \cond Z)$ and $\E(Y \cond Z)$ are sparse in that they depend only on finitely many components $Z_1,\ldots,Z_r$ (with $r \in \mathbb{N}$ unknown), and the marginal distribution of $Z$ is known exactly, there is still no non-trivial test of conditional independence.

In this specialised setting, it is however possible to give a test of conditional independence that will, for each \emph{fixed} null hypothesis $P \in \mathcal{P}_0$, yield exact size control and power against all alternatives $\mathcal{Q}$ for $n$ sufficiently large. These properties are for example satisfied by the nominal $\alpha$-level $t$-test $\psi_n^{\OLS}$ for $Y$ in a linear model of $X$ on $Y, Z_1,\ldots,Z_{a(n)}$ and an intercept term, for some sequence $a(n) < n-1$ with $a(n) \to \infty$ and $n - a(n) \to \infty$ as $n \to \infty$. Indeed,
\begin{equation} \label{eq:OLStest}
\sup_{P \in \mathcal{P}_0} \lim_{n \to \infty} \pr_P( \psi_n^{\OLS} = 1) =\alpha \qquad \text{ and } \qquad  \inf_{Q \in \mathcal{Q}} \lim_{n \to \infty} \pr_Q( \psi_n^{\OLS} = 1) =1;
\end{equation}
see Section~\ref{sec:OLS-proof} in the supplementary material for a derivation. This illustrates the difference between pointwise asymptotic level control in the left-hand side of \eqref{eq:OLStest}, and uniform asymptotic level control given by interchanging the limit and the supremum.



Our analysis instead focuses on proving that the GHCM asymptotically maintains its level uniformly over a subset of
the conditional independence null.
In order to state our results we first introduce some definitions and notation to do with uniform stochastic convergence.
Throughout the remainder of this section we tacitly assume the existence of a measurable space $(\Omega, \mathcal{F})$ whereupon all random quantities are defined. The measurable space is equipped with a family of probability measures $(\mathbb{P}_\dist)_{\dist \in \mathcal{P}}$ such that the distribution of $(X, Y, Z)$ under $\mathbb{P}_\dist$ is $\dist$. For a subset $\mathcal{A} \subseteq \mathcal{P}$, we say that a sequence of random variables $W_n$ \emph{converges uniformly in distribution to $W$ over $\mathcal{A}$} and write if
\[
W_n \underset{\mathcal{A}}{\overset{\mathcal{D}}{\rightrightarrows}} W \qquad \text{if} \;\;\; \lim_{n \to \infty} \sup_{\dist \in \mathcal{A}} d_{{\BL}}(W_n, W) = 0,
\]
where $d_{\BL}$ denotes the bounded Lipschitz metric.
We say, \emph{$W_n$ converges uniformly in probability to $W$ over $\mathcal{A}$} and write
\[
W_n \underset{\mathcal{A}}{\overset{P}{\rightrightarrows}} W \qquad \text{if for any }\varepsilon > 0, \;\;\; \lim_{n \to \infty} \sup_{\dist \in \mathcal{A}} \mathbb{P}_{\dist}( \lVert W_n - W \rVert \geq \varepsilon) = 0.
\]
We sometimes omit the subscript $\mathcal{A}$ when it is clear from the context.
A full treatment of uniform stochastic convergence in a general setting is given in Section~\ref{app:uniform-convergence} of the supplementary material. Throughout this section we emphasise the dependence of many of the quantities in Section~\ref{sec:method} on the distribution of $(X, Y, Z)$ with a subscript $\dist$, e.g.\ $f_\dist$, $\varepsilon_\dist$ etc.

In Sections~\ref{sec:size} and \ref{sec:power} we present general results on the size and power of the GHCM.
We take $\mathcal{P}$ to be the set of  all distributions over $\mathcal{H}_X \times \mathcal{H}_Y \times \mathcal{Z}$, and $\mathcal{P}_0$ to be the corresponding conditional independence null. We however show properties of the GHCM under smaller sets of distributions $\tilde{\mathcal{P}} \subset \mathcal{P}$ with corresponding null distributions $\tilde{\mathcal{P}}_0 \subset \mathcal{P}_0$, where in particular certain conditions on the quality of the regression procedures on which the test is based are met. 
In Section~\ref{sec:ex} we consider the special case where the regressions of each of $X$ and $Y$ on $Z$ are given by functional linear models and show that Tikhonov regularised regression can satisfy these conditions.
We note that throughout, the dimensions $d_X$ and $d_Y$ may be finite or infinite.

\subsection{Size of the test} \label{sec:size}

In order to state our result on the size of the GHCM, we introduce the following quantities. Let
\begin{equation*}
u_\dist(z) := \mathbb{E}_\dist\left(\lVert \varepsilon_\dist \rVert^2 \cond Z = z\right), \quad v_\dist(z) := \mathbb{E}_\dist\left(\lVert \xi_\dist \rVert^2 \cond Z = z\right).
\end{equation*}
We further define the in-sample unweighted and weighted mean squared prediction errors of the regressions as follows:
\begin{align}
M_{n, \dist}^f &:= \frac{1}{n} \sum_{i=1}^n \left\lVert f_\dist(z_i) - \hat{f}^{(n)}(z_i) \right\rVert^2, &\quad  &&
M_{n, \dist}^g &:= \frac{1}{n} \sum_{i=1}^n \left\lVert g_\dist(z_i) - \hat{g}^{(n)}(z_i) \right\rVert^2, \label{eq:unweighted_err} \\
 \tilde{M}_{n, \dist}^f &:= \frac{1}{n} \sum_{i=1}^n \left\lVert f_\dist(z_i) - \hat{f}^{(n)}(z_i) \right\rVert^2 v_\dist(z_i),  &\quad  &&
 \tilde{M}_{n, \dist}^g &:= \frac{1}{n} \sum_{i=1}^n \left\lVert g_\dist(z_i) - \hat{g}^{(n)}(z_i) \right\rVert^2 u_\dist(z_i). \label{eq:weighted_err}
\end{align}
The result below shows that on a subset $\tilde{\mathcal{P}}_0$ of the null distinguished primarily by the product of the prediction errors in \eqref{eq:unweighted_err} being small, the operator-valued statistic $\mathscr{T}_n$ converges in distribution uniformly to a mean zero Gaussian whose covariance can be estimated consistently. We remark that prediction error quantities in \eqref{eq:unweighted_err} and \eqref{eq:weighted_err} are ``in-sample'' prediction errors, only reflecting the quality of estimates of the conditional expectations $f$ and $g$ at the observed values $z_1,\ldots,z_n$.

\begin{theorem}
\label{thm:asymptotic normality and covariance estimation}
Let $\tilde{\mathcal{P}}_0 \subseteq \mathcal{P}_0$ be such that uniformly over $\tilde{\mathcal{P}}_0$,
\begin{enumerate}[(i)]
	\item $n M_{n, \dist}^f M_{n, \dist}^g \overset{P}{\rightrightarrows} 0$,
	\item $\tilde{M}_{n, \dist}^f \overset{P}{\rightrightarrows} 0$, $\tilde{M}_{n, \dist}^g \overset{P}{\rightrightarrows} 0$,
	\item $\inf_{P \in \tilde{\mathcal{P}}_0} \mathbb{E}_\dist \left(\lVert \varepsilon_\dist \rVert^2 \lVert \xi_\dist \rVert^2\right) > 0$ and $\sup_{\dist \in \tilde{\mathcal{P}}_0} \mathbb{E}_\dist\left(\lVert \varepsilon_\dist \rVert^{2+\eta} \lVert \xi_\dist \rVert^{2+\eta}\right) < \infty $ for some $\eta > 0$, and
	\item for some orthonormal bases $(e_{X, i})_{i=1}^{d_X}$ and $(e_{Y, j})_{j=1}^{d_Y}$ of $\mathcal{H}_X$ and $\mathcal{H}_Y$, respectively, writing $\varepsilon_{P,i} := \langle e_{X,i}, \varepsilon_P \rangle$ and $\xi_{P,j} := \langle e_{Y,j}, \xi_P \rangle$, we have 
	\[
	\lim_{K \to \infty} \sup_{\dist \in \tilde{\mathcal{P}}_0} \sum_{ (i,j): i+j \geq K} \E_P (\varepsilon_{P,i}^2 \xi_{P,j}^2)=0,
  \]
  where we interpret an empty sum as $0$.
\end{enumerate}
Then uniformly over $\tilde{\mathcal{P}}_0$ we have
\[
\mathscr{T}_n {\overset{\mathcal{D}}{\rightrightarrows}} \mathcal{N}(0, \mathscr{C}_\dist ) \quad \text{and} \quad   \lVert \hat{\mathscr{C}} - \mathscr{C}_\dist \rVert_{\TR} {\overset{P}{\rightrightarrows}} 0,
\]
where
\[
\mathscr{C}_P := \E \{(\varepsilon_P \otimes \xi_P) \otimes_{\HS} (\varepsilon_P \otimes \xi_P)\}.
\]
\end{theorem}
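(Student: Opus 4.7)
\textbf{Proof plan for Theorem~\ref{thm:asymptotic normality and covariance estimation}.}

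The plan is to port the multivariate decomposition \eqref{eq:expansion} to the Hilbertian setting: write
\[
\mathscr{T}_n = U_n + A_n + B_n + C_n,
\]
where $U_n := n^{-1/2}\sum_{i=1}^n \varepsilon_i\otimes \xi_i$ is the oracle term and $A_n, B_n, C_n$ are the three error terms obtained by the same expansion as below \eqref{eq:expansion}, with the matrix outer product replaced by the Hilbertian outer product $\otimes$; the sample-mean centering of the residuals contributes only a $\bar{\hat\varepsilon}\otimes\bar{\hat\xi}$ correction that is of order $n^{-1}$ in $\HS$-norm under the bounds established below. The two conclusions of the theorem are then proved separately.

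For the distributional statement I would first show that $A_n$, $B_n$, $C_n$ are each uniformly $o_P(1)$ in $\HS$-norm over $\tilde{\mathcal{P}}_0$. For $A_n$, the identity $\|a\otimes b\|_{\HS}=\|a\|\|b\|$ together with Cauchy--Schwarz gives $\|A_n\|_{\HS}^2\le nM^f_{n,P}M^g_{n,P}$, so condition (i) suffices. For $B_n$, under $X\independent Y\cond Z$ the $\xi_i$ are mutually conditionally independent and conditionally mean zero given $(X^{(n)},Z^{(n)})$, so the cross-terms in $\|B_n\|_{\HS}^2$ vanish in conditional expectation and one is left with $\E_P(\|B_n\|_{\HS}^2\cond X^{(n)},Z^{(n)}) = \tilde M^f_{n,P}$; condition (ii) and Markov's inequality then close the argument, and $C_n$ is symmetric. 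The oracle average $U_n$ is an average of i.i.d.\ mean-zero $\HS$-valued variables (conditional independence gives $\E_P(\varepsilon_P\otimes \xi_P)=0$), condition (iii) supplies a uniform $(2+\eta)$-th $\HS$-moment bound and a uniform lower bound on the variance, and condition (iv), when rewritten in the tensor-product basis $(e_{X,i}\otimes e_{Y,j})_{i,j}$ of $\HS$, is precisely the tail-uniform-tightness of the covariance operators $\mathscr{C}_P$. These are the standard premises for the uniform Hilbertian CLT developed in Section~\ref{app:uniform-convergence}, and a uniform Slutsky argument completes the proof of the first claim.

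For the trace-norm convergence of $\hat{\mathscr{C}}$, I would compare to the oracle
\[
\mathscr{C}^\star_{n,P} := \frac{1}{n-1}\sum_{i=1}^n (\varepsilon_i\otimes \xi_i - \bar{\mathscr{R}}_P)\otimes_{\HS}(\varepsilon_i\otimes \xi_i - \bar{\mathscr{R}}_P), \qquad \bar{\mathscr{R}}_P := \frac{1}{n}\sum_{i=1}^n\varepsilon_i\otimes\xi_i.
\]
The difference $\hat{\mathscr{C}}-\mathscr{C}^\star_{n,P}$ expands into finitely many bilinear-in-$(\hat\varepsilon-\varepsilon,\hat\xi-\xi)$ rank-one operators whose trace norms coincide with their $\HS$ norms; each is controlled by Cauchy--Schwarz against in-sample prediction errors vanishing under (i)--(iii). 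To show $\|\mathscr{C}^\star_{n,P}-\mathscr{C}_P\|_{\TR}\to 0$ uniformly, I would combine uniform $\HS$ convergence (a uniform LLN for the positive-semidefinite i.i.d.\ summands $(\varepsilon_i\otimes \xi_i)\otimes_{\HS}(\varepsilon_i\otimes \xi_i)$, whose $L^{1+\eta/2}$ $\HS$-moment is uniformly bounded by (iii)) with uniform convergence of the scalar traces $n^{-1}\sum_i\|\varepsilon_i\|^2\|\xi_i\|^2\to \E_P(\|\varepsilon_P\|^2\|\xi_P\|^2)$, and then invoke the operator analogue of Scheff\'e's lemma: $\HS$ convergence together with convergence of the trace for positive-semidefinite trace-class operators implies trace-norm convergence.

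The principal obstacle is upgrading $\HS$-level convergence to trace-norm convergence uniformly over $\tilde{\mathcal{P}}_0$. Condition (iv) bears the weight here, ruling out spectral mass of $\mathscr{C}_P$ escaping to the high-index part of the tensor basis as $P$ varies, which is exactly what is required both for the uniform CLT in $\HS$ and for the uniform Scheff\'e-type passage from $\HS$ to trace norm. The remaining heavy lifting -- uniform central limit and law of large numbers in separable Hilbert space -- is delegated to the uniform convergence toolkit developed in Section~\ref{app:uniform-convergence}.
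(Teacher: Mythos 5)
Your proof of the distributional statement $\mathscr{T}_n \overset{\mathcal{D}}{\rightrightarrows} \mathcal{N}(0,\mathscr{C}_P)$ follows the paper's argument essentially verbatim: the same four-term decomposition, Cauchy--Schwarz on the bias term via condition (i), the conditional-mean-zero computation reducing $\E(\|B_n\|_{\HS}^2 \cond X^{(n)},Z^{(n)})$ to $\tilde M^f_{n,P}$ via condition (ii), the uniform Hilbertian CLT for $U_n$ fed by (iii) and (iv) read in the tensor basis, and uniform Slutsky. For the covariance estimator the overall architecture (oracle plus cross terms killed by Cauchy--Schwarz and the conditional trick) also matches the paper's sixteen-term expansion, but you establish trace-norm convergence of the oracle term by a different route: uniform HS convergence plus uniform convergence of the scalar traces $n^{-1}\sum_i\|\varepsilon_i\|^2\|\xi_i\|^2$, upgraded via an operator Scheff\'e (Gr\"umm/dell'Antonio) argument for positive trace-class operators. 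The paper instead proves a uniform law of large numbers directly in the Banach space of trace-class operators, using the Schauder basis $(e_{X,i}\otimes e_{Y,j})$ of that space and showing that condition (iv) yields uniform tightness of the family $\{(\varepsilon_P\otimes\xi_P)^{\otimes_{\HS}2}\}$ there. Your route is sound in the pointwise setting, and you correctly identify that (iv) is what must prevent spectral mass escaping to high indices; but be aware that the \emph{uniform-over-$P$} version of the Scheff\'e passage is itself a nontrivial lemma you would have to prove (controlling the tail trace $\sum_{i+j\ge K}\langle \mathscr{C}_P e_{X,i}\otimes e_{Y,j}, e_{X,i}\otimes e_{Y,j}\rangle_{\HS}$ uniformly and combining it with operator-norm convergence on the head), and that this is precisely the work the paper's Banach-space LLN and its tightness lemma are doing. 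Neither approach is cheaper; the paper's buys a reusable uniform LLN in trace class, yours keeps everything at the level of scalar traces and HS norms at the cost of an extra uniform continuity argument.
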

Condition (i) is the most important requirement, and says that the regression methods must perform sufficiently well, uniformly on $\tilde{\mathcal{P}}_0$. It is satisfied if $\sqrt{n} M_{n, \dist}^f, \, \sqrt{n} M_{n, \dist}^g\overset{P}{\rightrightarrows} 0$, and so allows for relatively slow $o(\sqrt{n})$ rates for the mean squared prediction errors. Moreover, if one regression yields a faster rate, the other can go to zero more slowly. These properties are shared with the regular generalised covariance measure and more generally doubly robust procedures popular in the literature on causal inference and semiparametric statistics \citep{robins1995semiparametric, scharfstein1999adjusting, chernozhukov2018double}. Condition (ii) is much milder, and if the conditional variances $u_P$ and $v_P$ are bounded almost surely, it is satisfied when simply $M_{n, \dist}^f, \, M_{n, \dist}^g\overset{P}{\rightrightarrows} 0$.
We note that importantly, the regression methods are not required to extrapolate well beyond the observed data. We show in Section~\ref{sec:ex} that when the regression models are functional linear models and ridge regression is used for the functional regressions, (i) and (ii) hold under much weaker conditions than are typically required for out-of-sample prediction error guarantees in the literature.

Conditions (iii) and (iv) imply that the family $\{ \varepsilon_P \otimes \xi_P : P \in \tilde{\mathcal{P}}_0\}$ is uniformly tight. Similar tightness conditions are required in \citet[Lem.~3.1]{chen1998central} in the context of functional central limit theorems. Note that if $d_X$ and $d_Y$ are both finite, this condition is always satisfied.

The result below shows that the GHCM test $\psi_n$ \eqref{eq:GHCM_test}
has type I error control uniformly over $\tilde{\mathcal{P}}_0$ given in Theorem~\ref{thm:asymptotic normality and covariance estimation}, provided an additional assumption of non-degeneracy of the covariance operators is satisfied.
\begin{theorem} \label{thm:level of test}
	Let $\tilde{\mathcal{P}}_0 \subseteq \mathcal{P}_0$ satisfy the conditions stated in Theorem~\ref{thm:asymptotic normality and covariance estimation}, and in addition suppose
	\begin{equation} \label{eq:degen}
	\inf_{\dist \in \tilde{\mathcal{P}}_0} \lVert \mathscr{C}_\dist \rVert_{\op} > 0.
	\end{equation}
	Then for each $\alpha \in (0, 1)$, the $\alpha$-level GHCM test $\psi_n$ \eqref{eq:GHCM_test} satisfies
	\begin{equation} \label{eq:level}
	\lim_{n \to \infty} \sup_{\dist \in \tilde{\mathcal{P}}_0} | \mathbb{P}_\dist(\psi_n = 1) - \alpha | = 0.
	\end{equation}
%
%
\end{theorem}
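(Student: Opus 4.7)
The plan is to combine the two conclusions of Theorem~\ref{thm:asymptotic normality and covariance estimation} with the non-degeneracy bound \eqref{eq:degen} to deduce uniform level control. Write $L_\dist$ for the law of $\|\mathcal{G}_\dist\|_{\HS}^2$ where $\mathcal{G}_\dist \sim \mathcal{N}(0, \mathscr{C}_\dist)$; equivalently, $L_\dist$ is the distribution of $\sum_{k} \mu_k(\dist) W_k$, where $(\mu_k(\dist))_k$ are the eigenvalues of $\mathscr{C}_\dist$ in decreasing order and $(W_k)_k$ are i.i.d.\ $\chi^2_1$. Let $\hat{L}_n$ denote the conditional law given the data of $\sum_k \lambda_k W_k$, so that $q_\alpha$ is its $(1-\alpha)$-quantile. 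A uniform continuous mapping theorem applied to $x \mapsto \|x\|_{\HS}^2$ together with the asymptotic normality conclusion of Theorem~\ref{thm:asymptotic normality and covariance estimation} yields $T_n \underset{\tilde{\mathcal{P}}_0}{\overset{\mathcal{D}}{\rightrightarrows}} L_\dist$.

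Next, I show that $q_\alpha$ approximates the population quantile $q_\alpha(\dist) := F_\dist^{-1}(1 - \alpha)$ uniformly in probability. Since Gaussian measures on Hilbert--Schmidt operators depend Lipschitz continuously on their covariance in trace norm (for instance in the $2$-Wasserstein metric), the uniform consistency $\|\hat{\mathscr{C}} - \mathscr{C}_\dist\|_{\TR} \overset{P}{\rightrightarrows} 0$ propagates to $\hat{L}_n \to L_\dist$ weakly in probability, uniformly over $\tilde{\mathcal{P}}_0$, after taking norm-squared conditionally on the data. To turn this weak convergence into quantile convergence, I establish equicontinuity of the limit CDFs. The non-degeneracy condition forces $\mu_1(\dist) \geq c_0 := \inf_{\dist \in \tilde{\mathcal{P}}_0} \|\mathscr{C}_\dist\|_{\op} > 0$, so $L_\dist = \mu_1(\dist) W_1 + R_\dist$ with $R_\dist := \sum_{k \geq 2} \mu_k(\dist) W_k \geq 0$ independent of $W_1$. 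Conditioning on $R_\dist$ and using the Hölder-$\tfrac{1}{2}$ continuity of the $\chi^2_1$ CDF gives $F_\dist(x + h) - F_\dist(x) \leq C h^{1/2}$ uniformly in $\dist \in \tilde{\mathcal{P}}_0$, $x \in \R$ and $h > 0$. In particular $q_\alpha(\dist)$ is uniquely defined and lies in a compact interval (lower bound from $L_\dist \geq c_0 W_1$; upper bound via Markov's inequality applied to $\E_\dist L_\dist = \tr(\mathscr{C}_\dist)$, which is bounded uniformly by condition (iii)), and I obtain $q_\alpha \overset{P}{\rightrightarrows} q_\alpha(\dist)$.

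Finally, combining $T_n \overset{\mathcal{D}}{\rightrightarrows} L_\dist$ with the uniform Hölder regularity of $F_\dist$ via a uniform form of Pólya's theorem yields $\sup_t \sup_{\dist \in \tilde{\mathcal{P}}_0} |\mathbb{P}_\dist(T_n \leq t) - F_\dist(t)| \to 0$. Hence
\[
\mathbb{P}_\dist(T_n \geq q_\alpha) = 1 - \mathbb{P}_\dist(T_n \leq q_\alpha) = 1 - F_\dist(q_\alpha(\dist)) + o(1) = \alpha + o(1),
\]
uniformly over $\tilde{\mathcal{P}}_0$, where the replacement of $q_\alpha$ by $q_\alpha(\dist)$ exploits once more the uniform Hölder continuity of $F_\dist$ together with $q_\alpha - q_\alpha(\dist) = o_P(1)$. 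This is the desired \eqref{eq:level}.

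The main obstacle is the quantile-convergence step: converting trace-norm consistency of $\hat{\mathscr{C}}$ into uniform convergence of the $(1-\alpha)$-quantile of the weighted $\chi^2$ mixture $\hat{L}_n$. The non-degeneracy condition \eqref{eq:degen} is essential here; without a uniform lower bound on $\mu_1(\dist)$, the limit laws $L_\dist$ could degenerate toward a point mass and their quantiles would fail to be stably identified. With it, the embedded $\mu_1(\dist) W_1$ summand supplies the uniform Hölder regularity of $F_\dist$ that bridges distributional convergence and quantile convergence.
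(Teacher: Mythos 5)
Your proposal follows essentially the same route as the paper: uniform convergence in distribution of $T_n$ to the weighted $\chi^2$ limit, uniform consistency of the estimated quantile via trace-norm consistency of $\hat{\mathscr{C}}$, and a uniform equicontinuity property of the limit laws $F_\dist$ (powered by \eqref{eq:degen}) to bridge distributional convergence, quantile convergence and the final level statement via a three-term sandwich. The one step you handle by a genuinely different device is the equicontinuity: you extract the leading summand $\mu_1(\dist)W_1$ with $\mu_1(\dist)\geq c_0$ and condition on the remainder to get a uniform H\"older-$\tfrac{1}{2}$ modulus for $F_\dist$ from the $\chi^2_1$ CDF, whereas the paper establishes uniform absolute continuity of the family $\{W^{1/2}\}$ with respect to Lebesgue measure by showing the scaled $\chi^2_1$ densities are uniformly integrable and that this is preserved under convolution and square roots. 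Your version is more quantitative and arguably cleaner for this purpose; the paper's version is reused elsewhere. Two small technical overstatements to repair: (i) $x\mapsto\|x\|_{\HS}^2$ is neither Lipschitz nor uniformly continuous, so the uniform CMT cannot be applied to it directly --- the paper's fix is to work with $T_n^{1/2}=\|\mathscr{T}_n\|_{\HS}$, which is Lipschitz, and take square roots of the thresholds; (ii) the $2$-Wasserstein distance between centred Gaussians is not Lipschitz in the trace norm of the covariances, though weak convergence does follow from trace-norm convergence, and the uniform continuity of the quantile map needed here additionally uses relative compactness of $\{\mathscr{C}_\dist\}$, which is supplied by condition (iv) of Theorem~\ref{thm:asymptotic normality and covariance estimation}. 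Neither issue affects the architecture of your argument.
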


\subsection{Power of the test} \label{sec:power}
We now study the power of the GHCM.
It is not straightforward to analyse what happens to the test statistic $T_n$ when the null hypothesis is false in the setup we have considered so far. However, if we modify the test such that the regression function estimates $\hat{f}$ and $\hat{g}$ are constructed using an auxiliary dataset independent of the main data $(x_i, y_i, z_i)_{i=1}^n$, the behaviour of $T_n$ is more tractable. Given a single sample, this could be achieved through sample splitting, and cross-fitting \citep{chernozhukov2018double} could be used to recover the loss in efficiency from the split into smaller datasets.
However, we do not recommend such sample-splitting in practice here and view this as more of a technical device that facilitates our theoretical analysis. As we require $\hat{f}$ and $\hat{g}$ to satisfy (i) and (ii) of Theorem~\ref{thm:asymptotic normality and covariance estimation}, these estimators would need to perform well out of sample rather than just on the observed data, which is typically a harder task.

Given that our test is based on an empirical version of $\mathbb{E}(\Cov(X, Y \cond Z)) = \E (\varepsilon \otimes \xi)$, we can only hope to have power against alternatives where this is non-zero. For such alternatives however, we have positive power whenever the Hilbert--Schmidt norm of the expected conditional covariance operator is at least $c/\sqrt{n}$ for a constant $c>0$, as the following result shows.
\begin{theorem}
\label{thm: consistency of test}
Consider a version of the GHCM test $\psi_n$ where $\hat{f}$ and $\hat{g}$ are constructed on independent auxiliary data. Let $\tilde{\mathcal{P}} \subset \mathcal{P}$ be the set of distributions for $(X, Y, Z)$ satisfying (i)-(iv) of Theorem~\ref{thm:asymptotic normality and covariance estimation} and \eqref{eq:degen} with $\tilde{\mathcal{P}}$ in place of $\tilde{\mathcal{P}}_0$. Then writing $\mathscr{K}_\dist := \mathbb{E}_\dist(\varepsilon_\dist \otimes \xi_\dist) = \mathbb{E}_\dist(\Cov_\dist(X, Y\cond  Z))$, we have, uniformly over $\tilde{\mathcal{P}}$,
\[
\tilde{\mathscr{T}}_n := \frac{1}{\sqrt{n}} \sum_{i=1}^n (\mathscr{R}_i - \mathscr{K}_\dist) \overset{\mathcal{D}}{\rightrightarrows}  \mathcal{N}(0, \mathscr{C}_\dist ) \qquad \text{and} \qquad
\lVert \hat{\mathscr{C}} - \mathscr{C}_\dist \rVert_{\TR} \overset{P}{\rightrightarrows}  0.
\]
Furthermore, an $\alpha$-level GHCM test $\psi_n$ (constructed using independent estimates $\hat{f}$ and $\hat{g}$) satisfies the following two statements.
\begin{enumerate}[(i)]
	\item Redefining $\tilde{\mathcal{P}}_0 = \tilde{\mathcal{P}} \cap \mathcal{P}_0$, we have that \eqref{eq:level} is satisfied, and so an $\alpha$-level GHCM test has size converging to $\alpha$ uniformly over $\tilde{\mathcal{P}}_0$.
	\item  For every $0 < \alpha < \beta < 1$ there exists $c > 0$  and $N \in \mathbb{N}$ such that for any $n \geq N$,
	\[
	\inf_{\dist \in \mathcal{Q}_{c, n}} \mathbb{P}_{\dist}(\psi_n = 1) \geq \beta,
	\]
	where $\mathcal{Q}_{c, n} := \{\dist \in \tilde{\mathcal{P}} \ : \ \lVert  \mathscr{K}_\dist \rVert_{\HS} > c / \sqrt{n}  \}$.
\end{enumerate}
\end{theorem}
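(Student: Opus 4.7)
The plan is to first establish the joint uniform convergence display in the theorem statement, and then deduce parts (i) and (ii) from it. With sample-splitting, $\hat f, \hat g$ are constructed from a sample independent of the main data, so after conditioning on the auxiliary sample I would treat $\hat f, \hat g$ as deterministic. Writing $\hat\varepsilon_i = \varepsilon_i + (f_P(z_i)-\hat f(z_i))$ and $\hat\xi_i = \xi_i + (g_P(z_i)-\hat g(z_i))$, and using $\E_P(\varepsilon_P \otimes \xi_P) = \mathscr{K}_P$ (which holds since $\E_P(\varepsilon_P \cond Z)=\E_P(\xi_P \cond Z)=0$), I would expand $\tilde{\mathscr{T}}_n$ exactly as in \eqref{eq:expansion} into a principal iid sum $\frac{1}{\sqrt n}\sum_i (\varepsilon_i \otimes \xi_i - \mathscr{K}_P)$ with covariance operator $\mathscr{C}_P$, plus three Hilbert--Schmidt-valued remainders $a_n, b_n, c_n$ (the outer products being Hilbertian rather than matricial). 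Conditions (iii)--(iv) supply the uniform moment and tail bounds needed for a uniform Hilbertian CLT in the HS space; invoking the uniform convergence machinery in Section~\ref{app:uniform-convergence} of the supplementary material yields uniform weak convergence of the principal sum to $\mathcal{N}(0,\mathscr{C}_P)$. The remainder $a_n$ obeys $\|a_n\|_{\HS}^2 \leq n M^f_{n,P} M^g_{n,P}$ by Cauchy--Schwarz, and is uniformly $o_P(1)$ by (i); conditionally on the auxiliary sample and $Z^{(n)}$ the summands of $b_n$ are centred, so its conditional second HS-moment equals $\tilde M^f_{n,P}$, giving $b_n \to 0$ uniformly in probability by (ii), and analogously for $c_n$. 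The consistency $\|\hat{\mathscr{C}}-\mathscr{C}_P\|_{\TR}\overset{P}{\rightrightarrows} 0$ follows by the same argument as in the proof of Theorem~\ref{thm:asymptotic normality and covariance estimation}, with the same conditioning on the auxiliary sample.

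Part (i) is then immediate: under $\tilde{\mathcal{P}}_0$ we have $\mathscr{K}_P=0$, so $\mathscr{T}_n=\tilde{\mathscr{T}}_n$, and applying the continuous, $1$-Lipschitz map $\|\cdot\|_{\HS}^2$ to the uniform weak convergence yields uniform convergence of $T_n$ to $\sum_k \lambda_k(\mathscr{C}_P)W_k$, the distribution that defines $q_\alpha$. The trace-norm consistency of $\hat{\mathscr{C}}$, together with uniform continuity of the $(1-\alpha)$-quantile of this limit guaranteed by \eqref{eq:degen}, then transfers to uniform size control.

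For part (ii), the key identity is $\mathscr{T}_n = \tilde{\mathscr{T}}_n + \sqrt n\,\mathscr{K}_P$, whence by the triangle inequality $\|\mathscr{T}_n\|_{\HS} \geq \sqrt n\,\|\mathscr{K}_P\|_{\HS} - \|\tilde{\mathscr{T}}_n\|_{\HS}$. I would control $q_\alpha$ uniformly by Markov's inequality applied to $\sum_k \lambda_k W_k$, giving $q_\alpha \leq \|\hat{\mathscr{C}}\|_{\TR}/\alpha$; since $\|\mathscr{C}_P\|_{\TR} \leq \E_P(\|\varepsilon_P\|^2\|\xi_P\|^2)$ is bounded uniformly via (iii) and Jensen, and the trace-norm consistency of $\hat{\mathscr{C}}$ holds uniformly over all of $\tilde{\mathcal{P}}$, $q_\alpha$ is uniformly $O_P(1)$. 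Similarly, $\|\tilde{\mathscr{T}}_n\|_{\HS}$ is uniformly tight because its limit has second moment $\|\mathscr{C}_P\|_{\TR}$, uniformly bounded. Given $\beta\in(\alpha,1)$ I would thus choose $M_1, M_2$ such that for all $n$ sufficiently large, $\pr_P(\|\tilde{\mathscr{T}}_n\|_{\HS}\leq M_1,\ \sqrt{q_\alpha}\leq M_2) \geq \beta$ uniformly over $\tilde{\mathcal{P}}$, and set $c = M_1 + M_2$; then for any $P\in\mathcal{Q}_{c,n}$, on this high-probability event $\|\mathscr{T}_n\|_{\HS}\geq c - M_1 = M_2 \geq \sqrt{q_\alpha}$, so $\psi_n = 1$. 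The main obstacle is the uniform Hilbertian CLT for the principal sum and the accompanying uniform tightness in infinite dimensions; classical function-space CLTs are pointwise, and it is exactly conditions (iii)--(iv) together with the uniform convergence results of Section~\ref{app:uniform-convergence} that make the argument go through.
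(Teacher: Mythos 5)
Your proposal is correct, and for the convergence displays and part (i) it follows essentially the same route as the paper: the same decomposition of $\tilde{\mathscr{T}}_n$ into the centred i.i.d.\ sum plus $a_n, b_n, c_n$, the same Cauchy--Schwarz bound on $a_n$, and the same conditional-second-moment argument for $b_n, c_n$ with the conditioning shifted from $(X^{(n)},Z^{(n)})$ to $(Z^{(n)}, A)$ where $A$ is the auxiliary sample --- this shift is precisely the point of sample splitting, since under the alternative $\E(\xi_i \cond X^{(n)}, Z^{(n)}) \neq 0$ but $\E(\xi_i \cond Z^{(n)}, A) = 0$. Part (i) is then immediate since $\mathscr{K}_P = 0$ on the null, exactly as in the paper.

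For part (ii) you take a genuinely different and somewhat more elementary route. The paper lower-bounds $\pr_P(T_n > q(\hat{\mathscr{C}}))$ by three terms: a uniform comparison of the distribution functions of $\tilde{T}_n^{1/2}$ and $W^{1/2} := \lVert \mathcal{N}(0,\mathscr{C}_P)\rVert_{\HS}$ (which requires the uniform absolute continuity machinery of the supplement), a term controlled by uniform tightness of $W^{1/2} + \{q(\mathscr{C}_P)+\delta\}^{1/2}$ with $c$ chosen large, and a term controlled by uniform continuity of the quantile map $q$ in trace norm. You instead bound the random threshold directly via Markov's inequality, $q(\hat{\mathscr{C}}) \leq \lVert \hat{\mathscr{C}}\rVert_{\TR}/\alpha$, combine this with the uniform trace-norm consistency of $\hat{\mathscr{C}}$ and the uniform bound on $\lVert\mathscr{C}_P\rVert_{\TR} \leq \E_P(\lVert\varepsilon_P\rVert^2\lVert\xi_P\rVert^2)$ from (iii) to get $q(\hat{\mathscr{C}}) = O_P(1)$ uniformly, and use uniform tightness of $\lVert\tilde{\mathscr{T}}_n\rVert_{\HS}$ (which follows from the uniform convergence in distribution together with uniform tightness of the Gaussian limits) rather than of its limit. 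Both arguments hinge on the same reverse triangle inequality $T_n^{1/2} \geq \sqrt{n}\lVert\mathscr{K}_P\rVert_{\HS} - \lVert\tilde{\mathscr{T}}_n\rVert_{\HS}$. What your version buys is that part (ii) no longer needs the uniform convergence of distribution functions or the uniform continuity of the quantile map; what it gives up is the sharper quantile comparison, but since the claim is only a fixed power level $\beta$ at radius $c/\sqrt{n}$ for some $c$, the cruder Markov bound suffices. The one place to tighten your write-up: "its limit has second moment $\lVert\mathscr{C}_P\rVert_{\TR}$, uniformly bounded" does not by itself give tightness of the pre-limit sequence; you need to pass through the already-established uniform convergence in distribution (cf.\ Proposition~\ref{prop:uniform-convergence-in-distribution-implies-sequential-tightness}) to transfer tightness from the limit family to $\lVert\tilde{\mathscr{T}}_n\rVert_{\HS}$ for all large $n$.
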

In a setting where $X$, $Y$ and $Z$ are related by linear regression models, we can write down $\|\E \Cov(X, Y \cond Z)\|_{\HS}$ more explicitly. Suppose
 $Z$, $\varepsilon$ and $\xi$ are independent random variables in $L^2([0, 1], \mathbb{R})$, with $X$ and $Y$ determined by
\begin{align*}
	X(t) &= \int \beta^X(s, t) Z(s) \, \mathrm{d}s + \varepsilon(t) \\
	Y(t) &= \int \beta^Y(s, t) Z(s) \, \mathrm{d}s + \int \theta(s, t) X(s) \, \mathrm{d}s + \varepsilon+ \xi(t).
\end{align*}
Then $\E \Cov(X, Y \cond Z)$ is an  
 integral operator with kernel 
$$
\phi(s, t) = \int_0^1 \theta(u, s) v(t, u) \, \mathrm{d}u,
$$
where $v(t, u)$ denotes
the covariance function of $\varepsilon$. The Hilbert--Schmidt norm $\|\E \Cov(X, Y \cond Z)\|_{\HS}$ is then given by the $L^2([0, 1]^2, \mathbb{R})$-norm of $\phi$. We investigate the empirical performance of the GHCM in such a setting in Section~\ref{sec:functional-level-power-sim}.

\subsection{GHCM using linear function-on-function ridge regression} \label{sec:ex}
Here we consider a special case of the general setup used in Sections~\ref{sec:size} and \ref{sec:power} where we assume that $\mathcal{Z}$ is a Hilbert space $\mathcal{H}_Z$ and that, under the null of conditional independence, the Hilbertian $X$ and $Y$ are related to Hilbertian $Z$ via linear models:
\begin{align}
	X &= \mathscr{S}^X_\dist Z + \varepsilon_\dist  \label{eq:x_mod}\\
	Y &= \mathscr{S}^Y_\dist Z + \xi_\dist. \label{eq:y_mod}
\end{align}
Here $\mathscr{S}^X_\dist$ is a Hilbert--Schmidt operator such that $\mathscr{S}^X_\dist Z = f(Z) := \E(X \cond Z)$, with analogous properties holding for $\mathscr{S}^Y_\dist$, and it is assumed that $\E Z = 0$. If $X$, $Y$ and $Z$ are elements of $L^2([0, 1], \mathbb{R})$, this is equivalent to 
\begin{equation}
  \label{eq:process-linear-operator}
  X(t) = \int_0^1 \beta^X_P(s, t) Z(s) \, \mathrm{d}s + \varepsilon_P(t),
\end{equation}
where $\beta^X_\dist$ is a square-integrable function, and similarly for the relationship between $Y$ and $Z$. Such functional response linear models have been discussed by \citet[Chap.~16]{Ramsay2005}, and studied by \citet{chiou2004functional, yao2005functional, Crambes2013}, for example. \citet{Benatia2017} propose a Tikhonov regularised estimator analogous to ridge regression \citep{hoerl1970ridge}; applied to the regression model \eqref{eq:x_mod}, this estimator takes the form
\begin{equation}
	\label{eq:penalised-likelihood-criterion-hilbertian-linear-model}
	\hat{\mathscr{S}} = \argmin_{\mathscr{S}} \sum_{i=1}^n \lVert x_i - \mathscr{S}(z_i) \rVert^2  + \gamma \lVert \mathscr{S} \rVert_{\HS}^2,
\end{equation}
where $\gamma >0 $ is a tuning parameter.

We now consider a specific instance of the general GHCM framework using regression estimates based on \eqref{eq:penalised-likelihood-criterion-hilbertian-linear-model}. Specifically, we form estimate $\hat{\mathscr{S}}^X$ of $\mathscr{S}^X$ by solving the optimisation in \eqref{eq:penalised-likelihood-criterion-hilbertian-linear-model} with regularisation parameter
\begin{equation}
  \label{eq:gamma_hat}
\hat{\gamma} :=  \argmin_{\gamma > 0} \left( \frac{1}{\gamma n} \sum_{i=1}^n \min(\hat{\mu}_i/4, \gamma)  + \frac{\gamma}{4} \right),
\end{equation}
where $\hat{\mu}_1 \geq \hat{\mu}_2 \geq \dots \geq \hat{\mu}_n \geq 0$ are the ordered eigenvalues of the $n \times n$ matrix $K$ with $K_{ij}=\langle z_i, z_j \rangle/n$.
We form estimate $\hat{\mathscr{S}}^Y$ of $\mathscr{S}^Y$ analogously but with the $x_i$ replaced by $y_i$ in \eqref{eq:penalised-likelihood-criterion-hilbertian-linear-model}.
Note that in the case where $K=0$ and so $\hat{\gamma}$ does not exist, we simply take $\hat{\mathscr{S}}^X$ and $\hat{\mathscr{S}}^Y$ to be $0$ operators, i.e., no regression is performed.

The data-driven choice of $\hat{\gamma}$ above is motivated by an upper bound on the in-sample MSPE
of the estimators $\hat{\mathscr{S}}^X$ and $\hat{\mathscr{S}}^Y$ (see Lemma \ref{lem:deterministic-regression-mse-bound} in the supplementary material) where we have omitted some distribution-dependent factors of $\lVert \mathscr{S}_\dist^X \rVert_{\HS}^2$ or $\lVert \mathscr{S}_\dist^Y \rVert^2_{\HS}$ and a variance factor; a similar strategy was used in an analysis of kernel ridge regression \citep{GCM} which closely parallels ours here.
This choice allows us to conduct a theoretical analysis that we present below. In practice, other choices of regularisation parameter such as cross validation-based approaches may perform even better and so could alternative methods that are not based on Tikhonov regularisation.

In the following result, we take $\psi_n$ to be the $\alpha$-level GHCM test \eqref{eq:GHCM_test} with estimated regression functions $\hat{f}$ and $\hat{g}$ yielding fitted values given by
\begin{equation} \label{eq:hatf_hatg}
	\hat{f}(z_i) = \hat{\mathscr{S}}^X z_i \qquad \text{and} \qquad \hat{g}(z_i) = \hat{\mathscr{S}}^Y z_i, \qquad \text{for all } i=1,\ldots,n.
\end{equation}
Note that in the finite dimensional setting where $X^{(n)} \in \R^{n \times d_X}$ (which is also covered by the result below), we have that the matrix of fitted values $(\hat{f}(z_i))_{i=1}^n \in \R^{n \times d_X}$ is given by
\[
K(K+\gamma I)^{-1} X^{(n)},
\]
and similarly for the $Y^{(n)}$ regression.

\begin{theorem}
\label{thm:kernel-regression-ghcm}
Let $\tilde{\mathcal{P}}_0 \subset \mathcal{P}_0$ be such that \eqref{eq:x_mod} and \eqref{eq:y_mod}  are satisfied, and moreover (iii) and (iv) of Theorem~\ref{thm:asymptotic normality and covariance estimation} and \eqref{eq:degen} hold when $\hat{f}$ and $\hat{g}$ are as in \eqref{eq:hatf_hatg}. Suppose further that
\begin{enumerate}[(i)]
  \item $\sup_{\dist \in \tilde{\mathcal{P}}_0} \max(\lVert \mathscr{S}^X_\dist \rVert_{\HS}, \lVert \mathscr{S}^Y_\dist \rVert_{\HS}) < \infty$,
  \item $\sup_{\dist \in \tilde{\mathcal{P}}_0} \max(u_\dist(Z), v_\dist(Z)) < \infty$ almost surely,
  \item $\sup_{\dist \in \tilde{\mathcal{P}}_0} \mathbb{E}\lVert Z \rVert^2 < \infty$ and $\lim_{\gamma \downarrow 0} \sup_{\dist \in \tilde{\mathcal{P}}_0} \sum_{k=1}^\infty \min(\mu_{k, \dist}, \gamma)  = 0$ where $(\mu_{k, \dist})_{k \in \mathbb{N}}$ denote the ordered eigenvalues of the covariance operator of $Z$ under $\dist$.
\end{enumerate}
Then the $\alpha$-level GHCM test $\psi_n$ satisfies
\[
	\lim_{n \to \infty} \sup_{\dist \in \tilde{\mathcal{P}}_0} | \mathbb{P}_\dist(\psi_n = 1) - \alpha | = 0.
\]
\end{theorem}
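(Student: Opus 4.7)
The plan is to reduce the statement to Theorem~\ref{thm:level of test} by verifying its hypotheses for the Tikhonov estimators defined in \eqref{eq:hatf_hatg}. Since conditions (iii), (iv), and \eqref{eq:degen} are assumed directly, the substantive work consists of establishing conditions (i) and (ii) of Theorem~\ref{thm:asymptotic normality and covariance estimation}, namely $n M_{n,\dist}^f M_{n,\dist}^g \overset{P}{\rightrightarrows} 0$ and $\tilde{M}_{n,\dist}^f, \tilde{M}_{n,\dist}^g \overset{P}{\rightrightarrows} 0$, both uniformly over $\tilde{\mathcal{P}}_0$.

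First I would invoke Lemma~\ref{lem:deterministic-regression-mse-bound} to bound each in-sample MSPE by a constant times the surrogate
\[
\Lambda_n(\gamma) := \frac{1}{\gamma n}\sum_{i=1}^n \min(\hat{\mu}_i/4,\gamma) + \frac{\gamma}{4},
\]
where the multiplicative constants depend only on $\lVert \mathscr{S}_\dist^X \rVert_{\HS}^2$ (or $\lVert \mathscr{S}_\dist^Y \rVert_{\HS}^2$) and a noise variance term, with the weighted variants $\tilde M$ picking up an additional factor involving $\sup u_\dist$ or $\sup v_\dist$. By hypotheses (i) and (ii) of the current theorem, all of these are uniformly bounded on $\tilde{\mathcal{P}}_0$. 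Since $\hat{\gamma}$ is defined to minimise $\Lambda_n(\gamma)$, we may bound the MSPEs by $\Lambda_n(\gamma_n)$ for any deterministic sequence $\gamma_n$ of our choosing.

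Next I would translate the empirical eigenvalue functional into a population quantity. The non-zero eigenvalues of $K$ coincide with those of the empirical covariance operator $\hat{\Sigma}_n := n^{-1}\sum_i z_i \otimes z_i$. Since $\sup_\dist \E_\dist \lVert Z\rVert^2 < \infty$, the standard Bochner calculation gives $\sup_\dist \E_\dist \lVert \hat{\Sigma}_n - \Cov_\dist(Z)\rVert_{\HS}^2 = O(1/n)$. Combining Mirsky's inequality with the $1$-Lipschitz property of $x \mapsto \min(x,\gamma)$ then yields
\[
\left| \frac{1}{n}\sum_{i=1}^n \min(\hat{\mu}_i, \gamma) - \sum_{k\geq 1}\min(\mu_{k,\dist}, \gamma) \right| = O_P(n^{-1/2})
\]
uniformly over $\tilde{\mathcal{P}}_0$, where $(\mu_{k,\dist})_{k\geq 1}$ are the eigenvalues of $\Cov_\dist(Z)$. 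Writing $s_\dist(\gamma) := \sum_{k\geq 1}\min(\mu_{k,\dist},\gamma)$, this reduces the problem to controlling $\inf_{\gamma>0}\{s_\dist(\gamma/4)/(4\gamma n) + \gamma/4\}$ up to an additive $O_P(1/\gamma n^{3/2})$ term.

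Finally I would balance $\gamma$. At the balance point $\gamma_\dist^\star$ where the two terms match, one has $M_{n,\dist}^f$ bounded by a uniform constant times $\sqrt{s_\dist(\gamma_\dist^\star)/n}$, and similarly for $M_{n,\dist}^g$; consequently $n M_{n,\dist}^f M_{n,\dist}^g$ is bounded (up to uniform constants) by $s_\dist(\gamma_\dist^\star)$. Because $s_\dist(\gamma) \leq \tr(\Cov_\dist(Z)) \leq \E_\dist \lVert Z\rVert^2$, which is uniformly bounded, the balance equation forces $\gamma_\dist^\star = O(n^{-1/2})$ uniformly in $\dist$; hence by assumption (iii) of the current theorem, $s_\dist(\gamma_\dist^\star) \leq \sup_\dist s_\dist(C/\sqrt{n}) \to 0$, giving condition (i). The weaker consequence $M_{n,\dist}^f, M_{n,\dist}^g \overset{P}{\rightrightarrows} 0$ together with the almost sure uniform boundedness of $u_\dist, v_\dist$ in assumption (ii) immediately yields the weighted condition (ii) of Theorem~\ref{thm:asymptotic normality and covariance estimation}.

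The main obstacle is the second step: transferring the uniform concentration of the empirical covariance operator into a uniform approximation for the truncation functional $\gamma \mapsto n^{-1}\sum_i \min(\hat\mu_i,\gamma)$, without any lower bound on, spacing of, or polynomial decay rate for the eigenvalues $(\mu_{k,\dist})$. The flexibility afforded by the in-sample (as opposed to out-of-sample) character of the quantities $M_{n,\dist}^f$ is essential here, because it lets us pass directly via Mirsky's inequality from an $O_P(n^{-1/2})$ operator-norm bound to control of the truncation functional, using only summability of the population eigenvalues rather than the stronger eigen-spacing or decay conditions required for conventional prediction error analyses.
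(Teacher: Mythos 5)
Your overall architecture matches the paper's: reduce to Theorem~\ref{thm:level of test} by verifying conditions (i) and (ii) of Theorem~\ref{thm:asymptotic normality and covariance estimation}, bound the in-sample MSPE via Lemma~\ref{lem:deterministic-regression-mse-bound} and the minimising property of $\hat{\gamma}$, pass from the empirical eigenvalue functional to a population one, and then balance $\gamma$ (your balance-point computation agrees, up to constants, with the paper's explicit choice $\gamma_{n,P} = n^{-1/2}\sqrt{\phi_P(n^{-1/2})}$, and your use of assumption (iii) at scale $\gamma = O(n^{-1/2})$ is exactly right).

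The gap is in your second step, which you yourself identify as the main obstacle. First, the concentration bound $\sup_\dist \E_\dist \lVert \hat{\Sigma}_n - \Cov_\dist(Z)\rVert_{\HS}^2 = O(1/n)$ is not a consequence of $\sup_\dist \E_\dist\lVert Z\rVert^2 < \infty$: the variance of a single summand $z_i \otimes z_i$ in Hilbert--Schmidt norm is controlled by $\E\lVert Z\rVert^4$, a fourth moment that the theorem does not assume. Second, even granting Hilbert--Schmidt concentration, Mirsky's inequality in its $\ell^2$ form controls $\bigl(\sum_k |\hat{\mu}_k - \mu_{k,\dist}|^2\bigr)^{1/2}$, whereas the functional $\gamma \mapsto \sum_k \min(\cdot,\gamma)$ applied coordinatewise with the $1$-Lipschitz bound produces an $\ell^1$ sum over infinitely many eigenvalues; converting this to the $\ell^2$ bound requires either trace-norm concentration of $\hat{\Sigma}_n$ (harder still) or an effective-dimension argument that reintroduces decay conditions on $(\mu_{k,\dist})$ of precisely the kind the theorem is designed to avoid. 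The paper circumvents both problems with Lemma~\ref{lem:koltchinskii}: since only an upper bound on the MSPE is needed, it suffices to prove the one-sided inequality $\E\bigl(\sum_{k=1}^n \min(\hat{\mu}_k, r)\bigr) \leq \sum_{k=1}^\infty \min(\mu_{k,\dist}, r)$, which follows from concavity of $A \mapsto \sum_k \min(\lambda_k(A), r)$ on positive semi-definite matrices (via Lidskii/Weyl) together with Jensen's inequality, and uses only $\E\lVert Z\rVert^2 < \infty$. If you replace your two-sided concentration argument with this one-sided expectation bound, the rest of your proof goes through (with a final application of Markov's inequality to convert the expectation bounds into the uniform in-probability statements required by Theorem~\ref{thm:asymptotic normality and covariance estimation}).
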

Condition (iii) is generally satisfied, by the dominated convergence theorem, for any family $\tilde{\mathcal{P}}_0$ for which 
the sequence of eigenvalues of the covariance operators are uniformly bounded above by a summable sequence.
As a very simple example where all the remaining conditions of Theorem~\ref{thm:kernel-regression-ghcm} are satisfied, we may consider the family of distribution $\tilde{\mathcal{P}}_0$ where $Z$, $\varepsilon_P$ in \eqref{eq:X_mod} and $\xi_P$ in \eqref{eq:Y_mod} are independent, and the latter two are Brownian motions with variances $\sigma_{\varepsilon,P}^2$ and $\sigma_{\xi,P}^2$ respectively. If the coefficient functions $\beta_P^X$ corresponding to $X$ in \eqref{eq:process-linear-operator} are in $L_2([0,1]^2, \mathbb{R})$ with norms bounded above for all $P \in \mathcal{P}_0$, and an equivalent assumption for the coefficient functions relating to $Y$ holds, and $\sigma_{\varepsilon,P}^2$ and $\sigma_{\xi,P}^2$ are bounded from above and below uniformly, we have that $\mathcal{P}_0$ satisfies all the requirements of Theorem~\ref{thm:kernel-regression-ghcm}.
%

The proof of Theorem~\ref{thm:kernel-regression-ghcm} relies on Lemma~\ref{lem:deterministic-regression-mse-bound} in Section~\ref{sec:4.3-proofs} of the supplementary material, which gives a bound on the in-sample MSPE of ridge regression in terms of the decay of the eigenvalues $\mu_{k,P}$, which may be of independent interest. For example, we have that if these are dominated by an exponentially decaying sequence, the in-sample MSPE is $o(\log n / n)$ as $n \to \infty$ (see Corollary~\ref{cor:exponential-decay-mse}).
 This matches the out-of-sample MSPE bound obtained in \citet[Corollary 5]{Crambes2013} in the same setting as that described, but the out-of-sample result additionally requires convexity and lower bounds on the decay of the sequence of eigenvalues of the covariance operator, and stronger moment assumptions on the norm of the predictor. Similarly, other related results \citep[e.g.,][]{cai2006,hall2007methodology} require additional eigen-spacing conditions in place of convexity, and upper and lower bounds on the decay of the eigenvalues.  Furthermore, while some of these bounds are uniform over values of the linear coefficient operator for fixed distributions of the predictors, our in-sample MSPE bound is uniform over both the coefficients and distributions of the predictor. This illustrates how in-sample and out-of-sample prediction are very different in the functional data setting, and reliance on the former being small, as we have with the GHCM, is desirable due to the weaker conditions needed to guarantee this.

\section{Experiments} \label{sec:experiments}
In this section we present the results of numerical experiments that investigate the performance of our proposed GHCM methodology. We implement the GHCM as described in Algorithm~\ref{alg:ghcm} with scalar-on-function and function-on-function regressions performed using the \texttt{pfr} and \texttt{pffr} functions respectively from the \texttt{refund} package \citep{refund}. These are functional linear regression methods which rely on fitting smoothers implemented in the \texttt{mgcv} package \citep{Wood2017}; we choose the tuning parameters for these smoothers (dimension of the basis expansions of the smooth terms) as per the standard guidance such that a further increase does not decrease the deviance.
In Section~\ref{sec:real-data} in the supplement, we study high-dimensional EEG data using the GHCM with regressions performed using \texttt{FDboost}.

We note that, to the best of our knowledge, neither \texttt{FDboost} nor the
regression methods in \texttt{refund} come with prediction error
bounds (such as the ones derived in Section~\ref{sec:ex}) that are required
for obtaining formal guarantees for the GHCM; nevertheless they are well-developed and well-used functional regression methods and our aim here is to demonstrate empirically that they perform suitably well in terms of prediction such that when used with the GHCM, type I error is maintained across a variety of settings.
In Section~\ref{app: additional-sim-results} of the supplementary material, we include additional simulations that consider among others, settings with heavy tailed errors, test the GHCM with \texttt{FDboost} in further settings and examine the local power of the GHCM.


\subsection{Size and power simulation} \label{sec:level-power-sim}
In this section we examine the size and power properties of the GHCM when testing the conditional independence $X \independent Y \cond Z$.  We take $X, Z \in L^2([0, 1], \mathbb{R})$, and first consider the setting where $Y$ is scalar. In Section~\ref{sec:functional-level-power-sim} we present experiments for the case where $Y \in L^2([0, 1], \mathbb{R})$, so all variables are functional. All simulated functional random variables
are sampled
on an equidistant grid of $[0, 1]$ with $100$ grid points. 
\subsubsection{\texorpdfstring{Scalar $Y$, functional $X$ and $Z$}{Scalar Y, functional X and Z}}
\label{sec:scalar-level-power-sim}
Here we consider the setup where $Z$ is standard Brownian motion and $X$ and $Y$ are related to $Z$ through the functional linear models
\begin{align}
X(t) &= \int_0^1 \beta_a(s, t) Z(s) \, \mathrm{d}s + N_X(t), \label{eq:X_mod}\\
Y &= \int_0^1 \alpha_a(t) Z(t) \, \mathrm{d}t + N_Y. \label{eq:Y_mod}
\end{align}
The variables $N_X, N_Y$ and $Z$ are independent with $N_X$ a Brownian motion with variance $\sigma_X^2$, $N_Y \sim \mathcal{N}(0,1)$, so $X \independent Y \cond Z$. Nonlinear coefficient functions $\beta_a$ and $\alpha_a$ are given by
\begin{align} \label{eq:beta_alpha}
	\beta_a(s, t) = a \exp(-(st)^2/2) \sin(ast), \qquad \alpha_a(t) = \int_0^1 \beta_a(s, t) \, \mathrm{d}s.
\end{align}
We vary the parameters $\sigma_X \in \{0.1, 0.25, 0.5, 1\}$ and $a \in \{2, 6, 12\}$. We generate $n$ i.i.d.\ observations from each of the $4 \times 3=12$ models given by \eqref{eq:X_mod}, \eqref{eq:Y_mod}, for sample sizes $n \in \{100, 250, 500, 1000\}$.
Increasing $a$ or decreasing $\sigma_X$ increase the difficulty of the testing problem: for large $a$, $\beta_a$ oscillates more, making it harder to remove the dependence of $X$ on $Z$. A smaller $\sigma_X$ makes $Y$ closer to the integral of $X$, and so increases the marginal dependence of $X$ and $Y$. 

We apply the GHCM and compare the resulting tests to those corresponding to the significance test for $X$ in a regression of $Y$ on $(X, Z)$ implemented in \texttt{pfr}. The rejection rates of the two tests at the $5\%$ level, averaged over $100$ simulation runs, can be seen in Figure \ref{fig:level-rejection-rates}. We see that the \texttt{pfr} test has size greatly exceeding its level in the more challenging large $a$, small $\sigma_X$ settings, with large values of $n$ exposing most clearly the miscalibration of the test statistic. In these settings, $Y$ may be approximated simply by the integral of $X$ reasonably well, and is also well-approximated by the true regression function that features only $Z$. Regularisation encourages \texttt{pfr} to fit a model where $X$ determines the response, rather than $X$, and the $p$-values reflect this. 
On the other hand, the GHCM tests maintain reasonable type I error control across the settings considered here.

\begin{figure}
\centering
\includegraphics[scale=0.8]{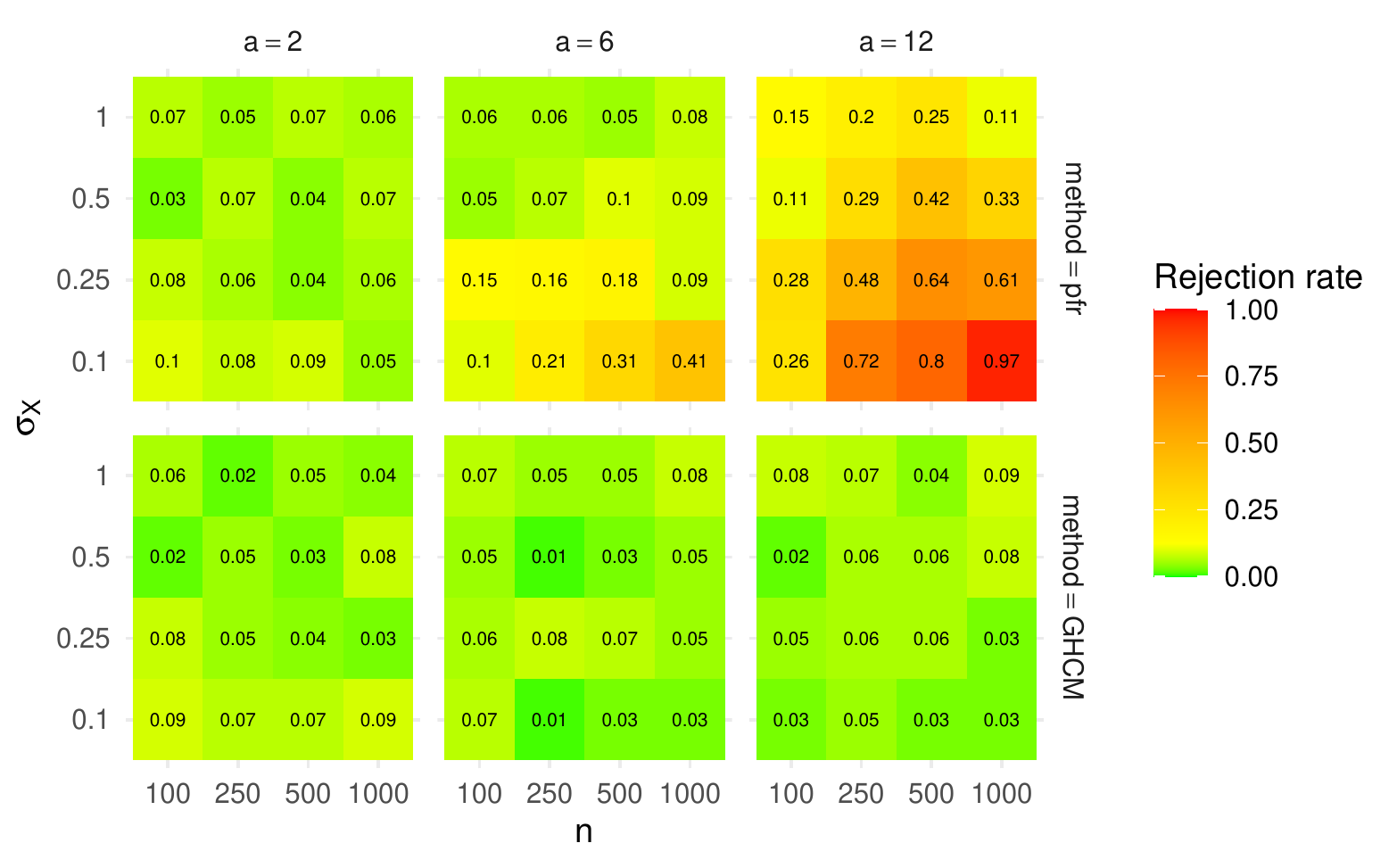}
\caption{Rejection rates in the various null settings considered in Section~\ref{sec:scalar-level-power-sim} for the nominal 5\%-level \texttt{pfr} test (top) and GHCM test (bottom).}
\label{fig:level-rejection-rates}
\end{figure}


To investigate the power properties of the test, we simulate $Z$ as before with $X$ also generated according to \eqref{eq:X_mod}. We replace the regression model \eqref{eq:Y_mod} for $Y$ with
\begin{equation}
\label{eq:sim_alt}  
Y = \int_0^1 \alpha_a(t) Z(t) \, \mathrm{d}t + \int_0^1 \frac{\alpha_a(t)}{a} X(t) \, \mathrm{d}t + N_Y,
\end{equation}
where $N_Y \sim \mathcal{N}_Y(0,1)$ as before.
Note that the coefficient function for $X$ oscillates more as $a$ increases.
 The rejection rates at the $5\%$ level can be seen in Figure \ref{fig:power-rejection-rates}. 
\begin{figure}
\centering
\includegraphics[scale=0.8]{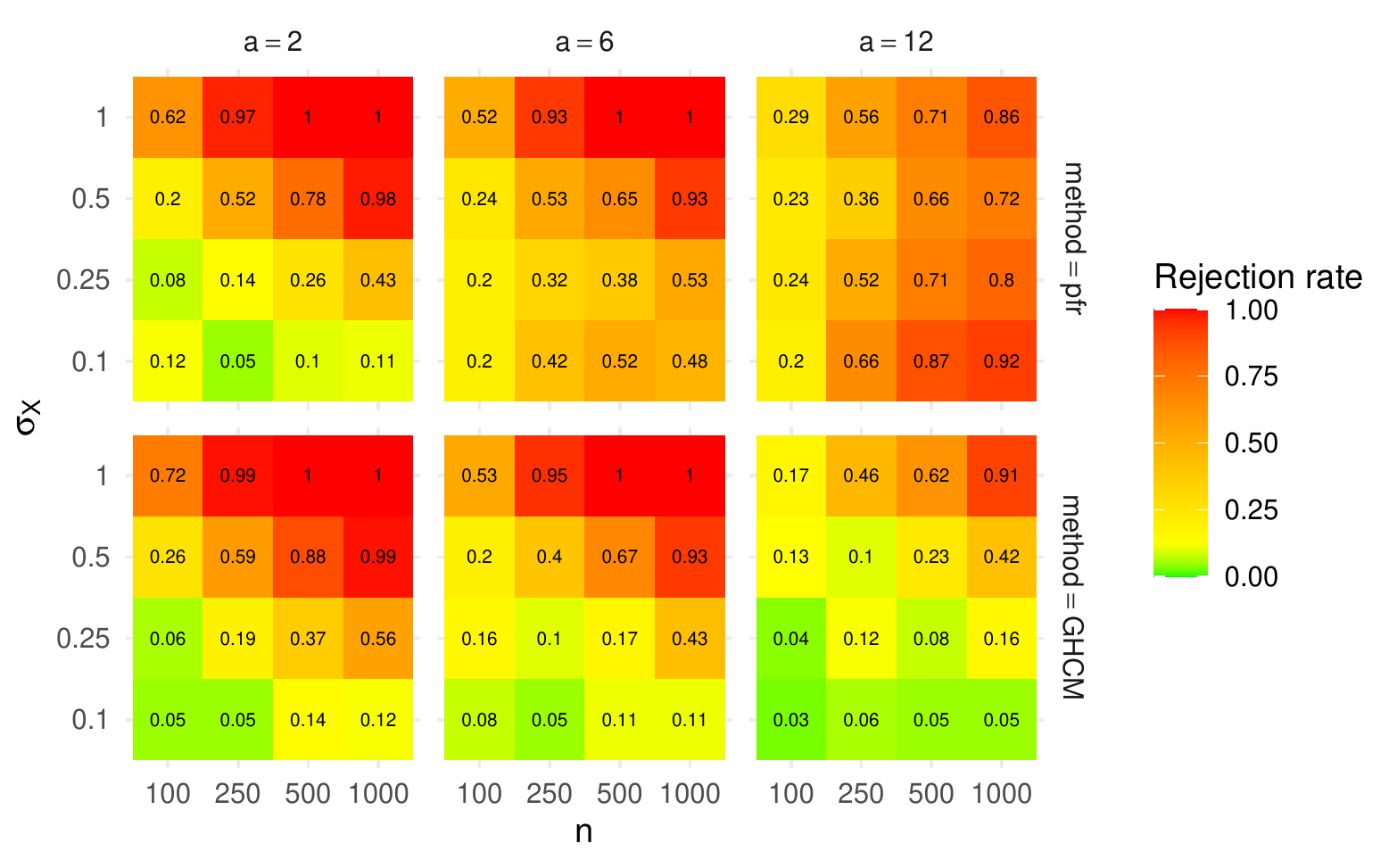}
\caption{
	Rejection rates in the various alternative settings considered in Section~\ref{sec:scalar-level-power-sim} (see \eqref{eq:sim_alt}) for the nominal 5\%-level \texttt{pfr} test (top) and GHCM test (bottom).}
\label{fig:power-rejection-rates}
\end{figure}
While the two approaches perform similarly when $a=2$, the \texttt{pfr} test has higher power in the more complex cases. However, as the results from the size analysis in Figure \ref{fig:level-rejection-rates} show, null cases are also rejected in the analogous settings.

To illustrate the full distribution of $p$-values from the two methods under the null and the alternative, we plot false positive rates and true positive rates in each setting as a function of the chosen significance level of the test $\alpha$. The full set of results can be seen in Section~\ref{app: additional-sim-results} of the supplementary material and a plot for a subset of the simulations settings where $n=500$ and $\sigma_X \in \{0.1, 0.25, 0.5\}$ is presented in Figure \ref{fig:FPR-TPR}. 
\begin{figure}
\centering
\includegraphics[scale=0.8]{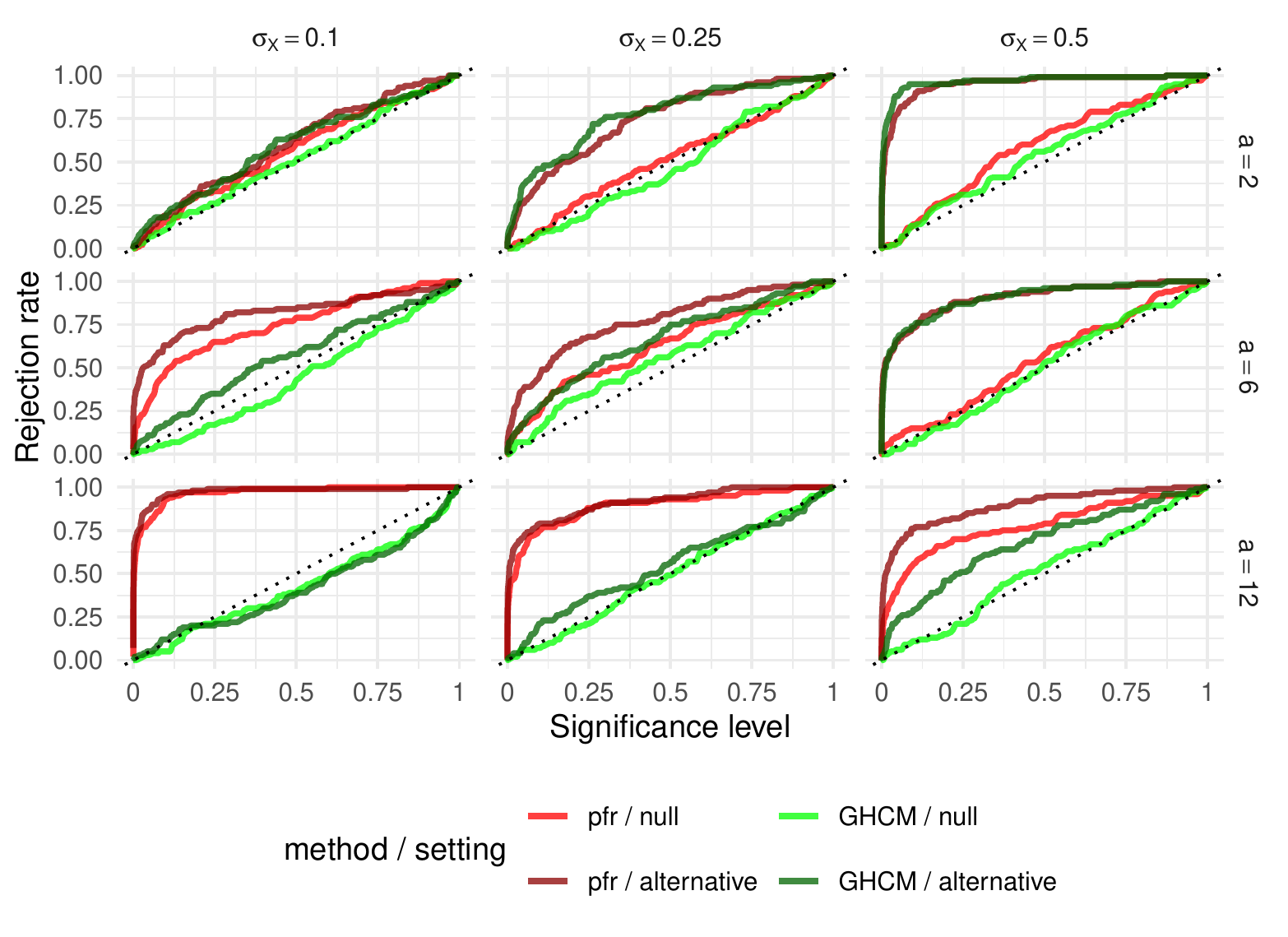}
\caption{Rejection rates against significance level for the $\texttt{pfr}$ (red) and GHCM (green) tests under null (light) and alternative (dark) settings when $n=500$.}
\label{fig:FPR-TPR}
\end{figure}
We see that both tests distinguish null from alternative well in the cases with $a$ small and $\sigma_X$ large. The $p$-values of the GHCM are close to uniform in the settings considered, whereas the distribution of the \texttt{pfr} $p$-values is heavily dependent on the particular null setting, illustrating the difficulty with calibrating this test.

	In Section~\ref{app: additional-sim-results} of the supplementary material we also present the results of two additional sets of experiments. We repeat the experiments above using the \texttt{FDboost} package for regressions in place of the \texttt{refund} package. We see that the performance of the GHCM with \texttt{FDboost} is broadly similar to that displayed in Figures~\ref{fig:level-rejection-rates} and \ref{fig:power-rejection-rates}, supporting our theoretical results which indicate that provided the prediction errors of the regression methods used are sufficiently small, the test will perform similarly.
	
	We also consider the case where the noise is heavy-tailed. Specifically, we present analogous plots for setting where $N_Y$ is $t$-distributed with different degrees of freedom, $n=500$ and $\sigma_X = 0.25$; the results are similar to Figure~\ref{fig:FPR-TPR}, with the GHCM maintaining type I error control, and \texttt{pfr} tending to be anti-conservative in the more challenging settings.

%

\subsubsection{\texorpdfstring{Functional $X$, $Y$ and $Z$}{Functional X, Y and Z}}
\label{sec:functional-level-power-sim}
In this section we modify the setup and consider functional $Y \in L^2([0, 1], \mathbb{R})$. We take $X$ and $Z$ as in Section \ref{sec:scalar-level-power-sim} but in the null settings we let
\[
Y(t) = \int_0^1 \beta_a(s, t) Z(s) \, \mathrm{d}s + N_Y(t) ,
\]
where $N_Y$ is a standard Brownian motion. Note that this is a particularly challenging setting to maintain type I error control as $X$ and $Y$ are then highly correlated, and moreover the biases from regressing each of $X$ and $Y$ on $Z$ will tend to be in similar directions making the equivalent of the term $a_n$ in \eqref{eq:expansion} potentially large.

In the alternative settings, we take
\[
  Y(t) = \int_0^1 \beta_a(s, t) Z(s) \, \mathrm{d}s + \int_0^1 \frac{\beta_a(s, t)}{a} X(s) \, \mathrm{d}s  + N_Y(t) 
\]
with $N_Y$ again being a standard Brownian motion. 

The rejection rates at the $5\%$ level, averaged over $100$ simulation runs, can be seen in Figure~\ref{fig:functional-rejection-rates}. We see that, as in the case where $Y \in \mathbb{R}$, the GHCM maintains good type I error control in the settings considered, and has power increasing with $n$ and $\sigma_X$ as expected. We note that a comparison with the $p$-values from $\texttt{ff}$-terms in the $\texttt{pffr}$-function of the \texttt{refund} package here does not seem helpful. In our experiments the corresponding tests consistently reject in true null settings even for simple models.

In Section~\ref{app: additional-sim-results} of the supplementary material we look at the subset of the settings considered above with $n=500$ and $\sigma_X=0.25$ but where $X$ and $Y$ are observed on irregular grids of varying length grids. We first preprocess the residuals output by the regression method as described in Section~\ref{sec:unequal_grids} and then apply the GHCM. We observe that the performance is similar to that in the fixed grid setting, though the power is lower when the average grid length is smaller, and type I error increases slightly above nominal levels in the most challenging $a=12$ setting.

\begin{figure}
\centering
\includegraphics[scale=0.8]{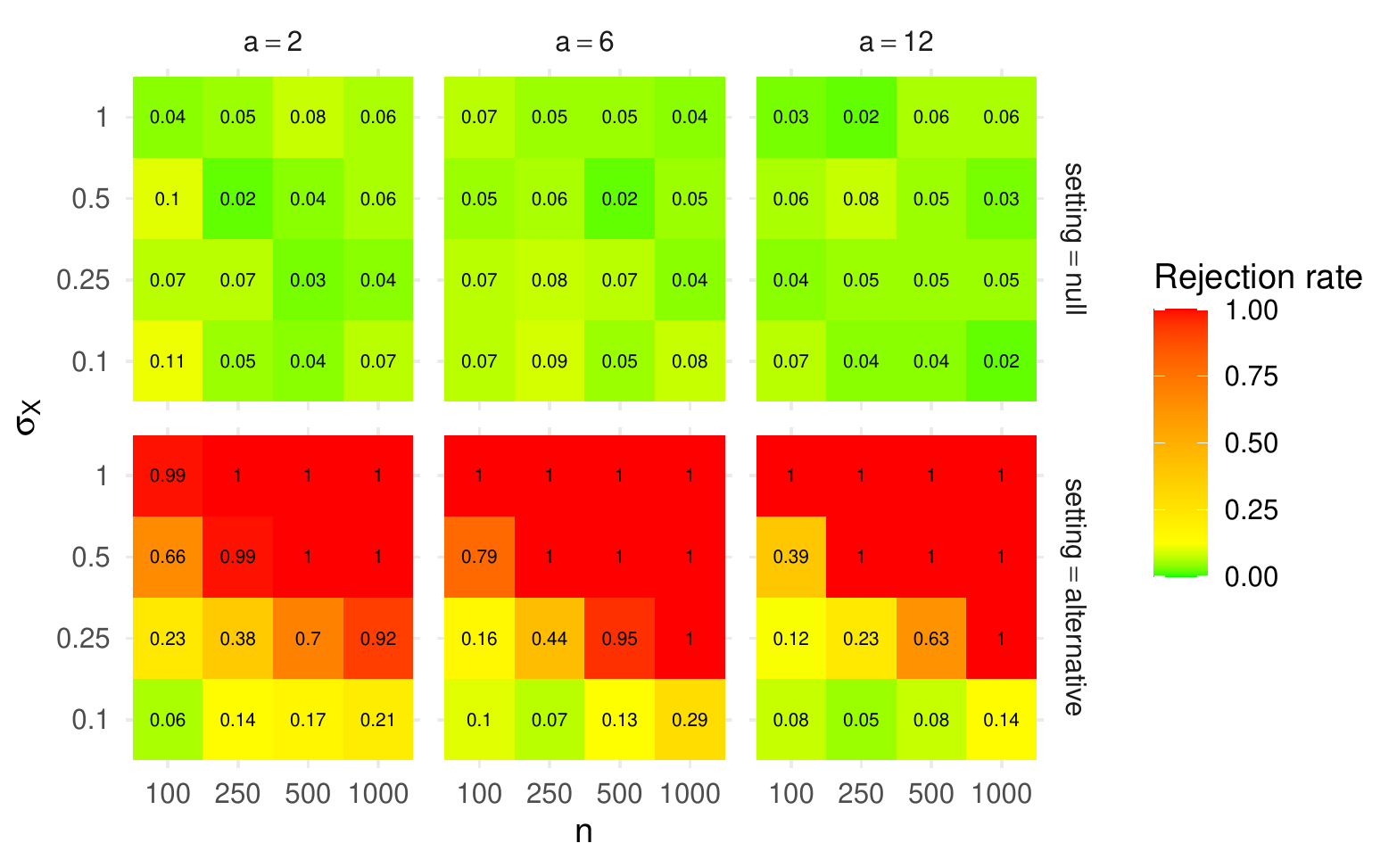}
\caption{Rejection rates in the various null (top) and alternative (bottom) settings considered in Section~\ref{sec:functional-level-power-sim} for the nominal 5\%-level GHCM test.}
\label{fig:functional-rejection-rates}
\end{figure}

\subsection{Confidence intervals for truncated linear models} \label{sec:ci-truncated}
In this section we consider an application of the GHCM in constructing a confidence interval for the truncation point $\theta \in [0, 1]$ in a truncated functional linear model \citep{Hall2016}
\begin{equation} \label{eq:trunc}
Y = \int_0^\theta \alpha(t) X(t) \, \mathrm{d}t + \varepsilon,
\end{equation}
where the predictor $X \in L^2([0, 1], \mathbb{R})$, $Y \in \R$ is a response and $\varepsilon \independent X$ is stochastic noise. To frame this as a conditional independence testing problem, observe that \eqref{eq:trunc} implies that defining the null hypotheses
\begin{equation} \label{eq:trunc_null}
H_{\tilde{\theta}} : \;\;\; Y \independent \{X(t)\}_{t > \tilde{\theta}} \cond \{X(t)\}_{t \leq \tilde{\theta}}
\end{equation}
for $\tilde{\theta} \in (0, 1)$, we have that $H_{\tilde{\theta}}$ is true for all $\theta \leq \tilde{\theta} \leq 1$.

Given an $\alpha$-level conditional independence test $\psi$, we may thus form a one-sided confidence interval for $\theta$ using
\begin{equation} \label{eq:conf_int}
\left[ \inf \left\{\tilde{\theta} \in (0, 1)  \,: \, \psi \text{ accepts null } H_{\tilde{\theta}} \right\}, \, 1\right].
\end{equation}
Indeed, with probability $1-\alpha$, $\psi$ will not reject the true null $H_\theta$, and so with probability $1-\alpha$ the infimum above will be at most $\theta$.

To approximate \eqref{eq:conf_int} we initially consider the null hypothesis $H_{\tilde{\theta}}$ at $5$ equidistant values of $\tilde{\theta}$ and then employ a bisection search between the smallest of these points $\tilde{\theta}$ at which $H_{\tilde{\theta}}$ is accepted by a 5\% level GHCM, and the point immediately before it or $0$.
We consider two instances of the model \eqref{eq:trunc} with $\theta = 0.275, 0.675$ and with $\alpha(t) := 10(t+1)^{-1/3}$, $X$ a standard Brownian motion and $\varepsilon \sim \mathcal{N}(0,1)$. The simulated functional variables are observed on an equidistant grid of $[0,1]$ with $121$ grid points. The results across $500$ simulations are given in Figure~\ref{fig:confidencce-interval-histograms}. We see that the empirical coverage probabilities are close to the nominal coverage of 95\%.
%

\begin{figure}
\centering
\includegraphics[scale=0.8]{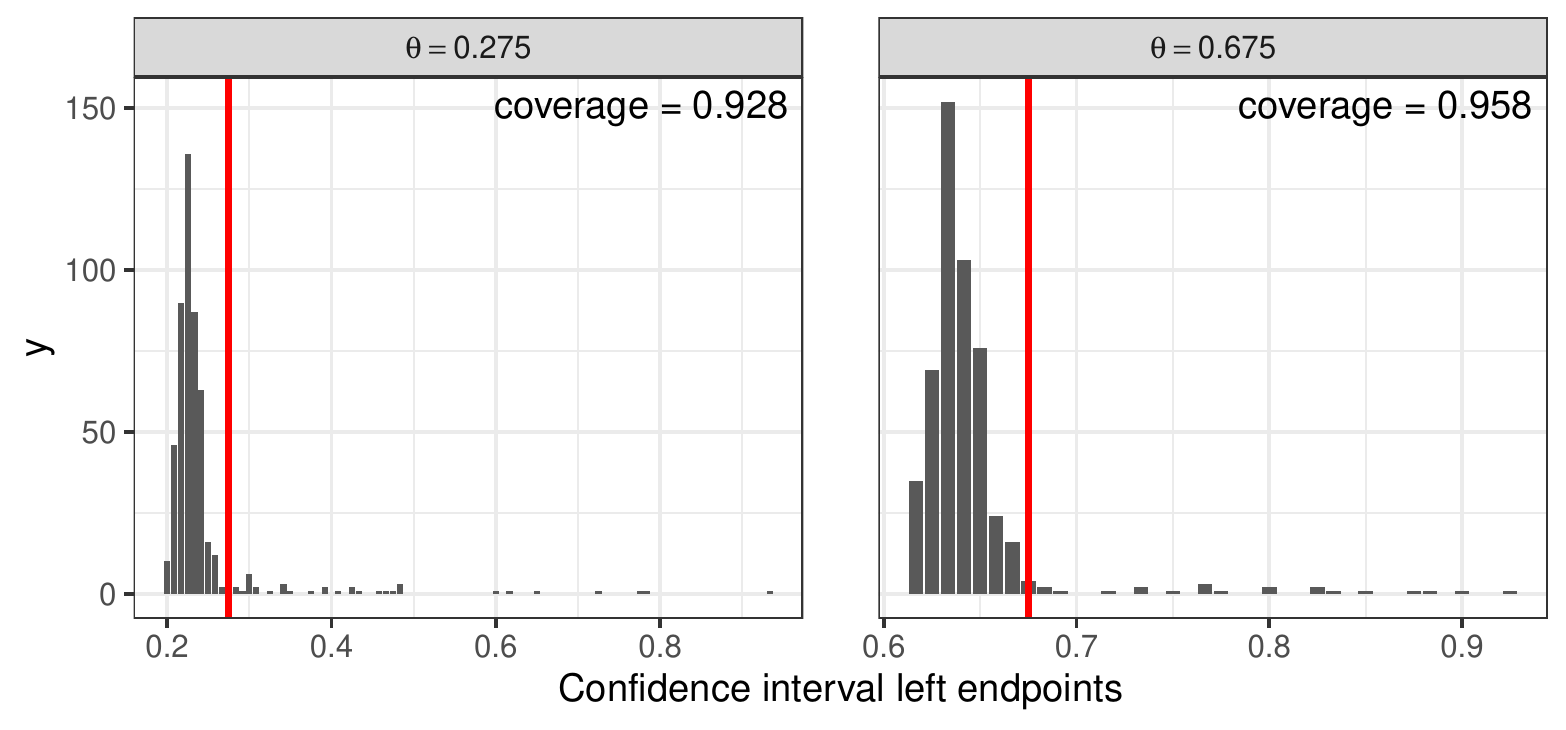}
\caption{Histograms of the left endpoints of 95\% confidence intervals for truncation points $\theta = 0.275$ (left) and $\theta=0.675$ (right), given by red vertical lines, in model \eqref{eq:trunc} across $500$ simulations. } 
\label{fig:confidencce-interval-histograms}
\end{figure}

\subsection{EEG data analysis} \label{sec:real-data}
In this section we demonstrate the application of our GHCM methodology to the problem of learning functional graphical models. In contrast to existing work \citep{Qiao2019, Qiao2020} which typically assumes a Gaussian functional graphical model and outputs a point estimate of the conditional independence graph, here we are able to test for the presence of each edge, with type I error control guaranteed for data generating processes where our regression methods perform suitably well as indicated by Theorem~\ref{thm:level of test}.

We illustrate this on an EEG dataset from a study on alcoholism \citep{Zhang1995,Ingber1997,Ingber1998}. The study participants were shown one of three visual stimuli repeatedly and simultaneous EEG activity was measured across $64$ channels over the course of $1$ second at $256$ measurements per second.  While the study included both a control group and an alcoholic group we will restrict our analysis to the alcoholic group consisting of $77$ subjects and further restrict ourselves to a single type of visual stimulus.
We preprocess the data as in \citet{Qiao2019}, averaging across the repetitions of the experiment for each subject and using an order $96$ FIR filter implemented in the $\texttt{eegkit}$ \texttt{R}-package \citep{Helwig2018} to filter the averaged curves at the $\alpha$ frequency bands (between $8$ and $12.5$ Hz). We thus obtain $64$ $\alpha$-filtered frequency curves for each of the $77$ subjects.

Given the low number of observations compared to the $64$ functional variables, there is not enough data to reject the null of edge absence even if a true edge were to be present. We therefore aim for a coarser analysis by grouping the variables by brain region and then further according to whether the variable corresponded to the right or left hemispheres of the brain. This yields disjoint groups $G_1,\ldots,G_{24}$ comprising $52$ variables in total after omitting reference channels and midline channels that could not easily be classified as being in either hemisphere, that is, $G_1 \cup \ldots \cup G_{24} = \{1, \ldots, 52\}$. We suppose the observed data are i.i.d.\ copies functional variables $(X_1,\ldots,X_{52})$, and then test the null hypothesis
\begin{equation} \label{eq:group}
X_{G_j} \independent X_{G_k} \cond \{ X_{G_m} : m \in \{1,\ldots,24\} \setminus \{j,k\}\},
\end{equation}
for each $j,k \in \{1,\ldots,24\}$ with $j \neq k$; that is, we test for edge presence in the conditional independence graph of the grouped variables.
Here, the conditional independence graph over the grouped variables is defined as an undirected graph over $G_{1}, \ldots, G_{24}$, in which the edge between $G_j$ and $G_k$, $j \neq k$ is missing if and only if~\eqref{eq:group} holds; that is, rejection of the null in \eqref{eq:group} for $k$ and $j$ indicates that the conditional independence graph has an edge between $G_k$ and $G_j$.

To construct $p$-values for the null in \eqref{eq:group} using the GHCM, we must regress for each $l \in G_j$ and $r \in G_k$, each of the functional variables $X_l$ and $X_r$ on to the set of variables in the conditioning set.
Since the regressions will involve large numbers of functional predictors, the \texttt{refund} package is not suitable to perform the regressions. Instead, we use the \texttt{FDboost} package in \texttt{R}, which is well-suited to high-dimensional functional regressions \citep{FDboost}. We fit a concurrent functional model \citep[][Section 16]{Ramsay2005} of the form
\[
X_l(t) = \sum_m \beta_m(t) X_m(t) ;
\]
the inclusion of additional functional linear terms did not improve the fit. We assessed the appropriateness of this regression method to data of the sort studied here through simulations described in Section~\ref{app: additional-sim-results} of the supplement.
\begin{figure}
	\centering
	\includegraphics[scale=0.8]{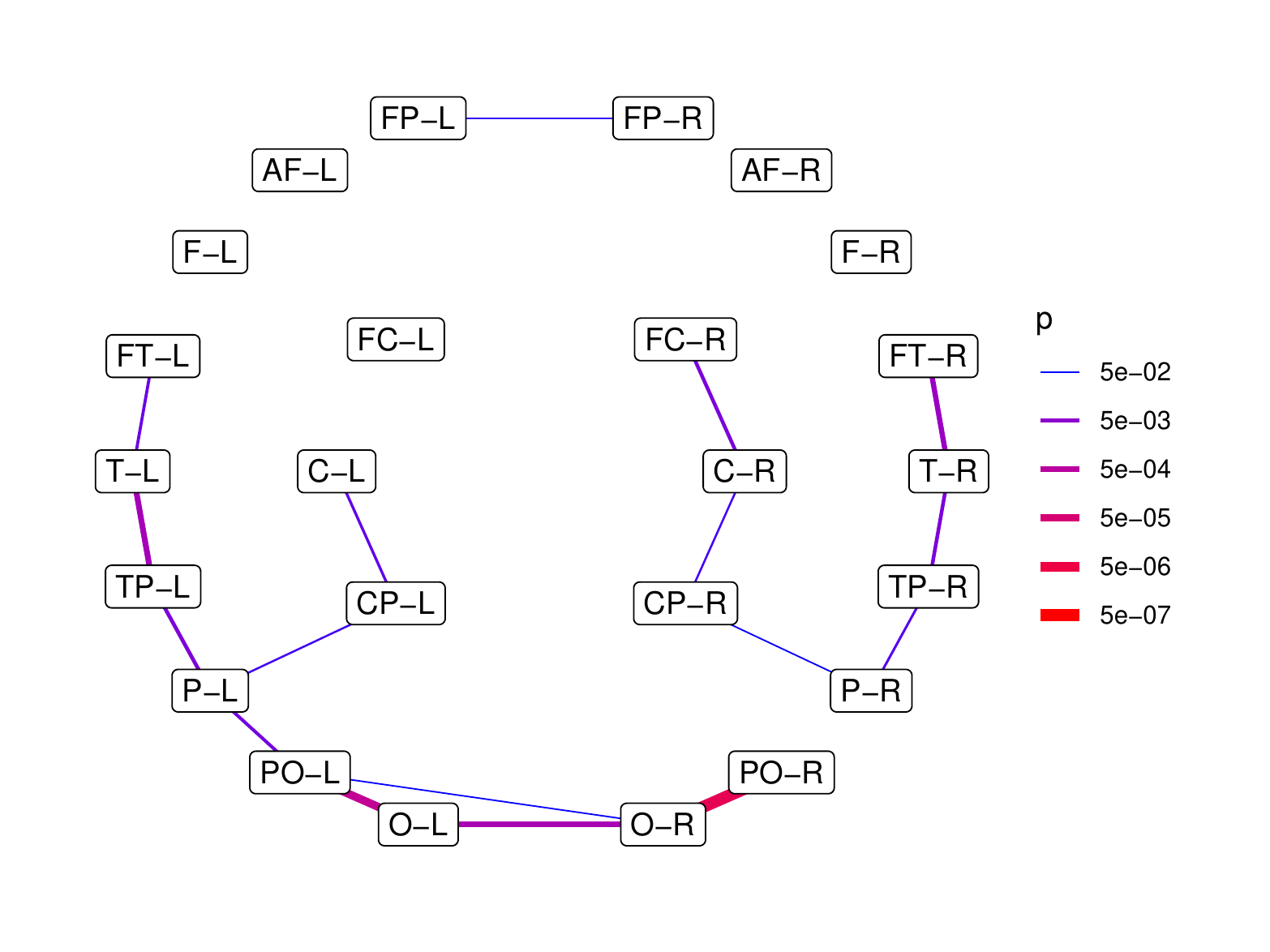}
	\caption{Network summarising the output of conditional independence tests for each pair of groups. Only edges with $p$-values of less than 5\% are shown with thicker lines indicating smaller $p$-values.} 
	\label{fig:eeg-network}
\end{figure}

Figure~\ref{fig:eeg-network} summarises the results of GHCM applied to test the presence of each edge in the conditional independence graph. We see that some of the brain regions located close to each other appear to be connected, as one might expect.
Note that the network presented includes all edges that had a $p$-value less than $5\%$. The edge PO-R---O-R has a Bonferroni-corrected $p$-value of $0.0027$, and is the only edge yielding a corrected $p$-value less than $5\%$. Applying the Benjamini--Hochberg procedure \citep{FDR} to control the false discovery rate at the 5\% level selects this edge and also PO-L---O-L. We may compare these results with those of \citet{Qiao2019} and \citet{Qiao2020} who study the same dataset but consider the different problem of estimation of the conditional independence graph rather than testing of edge presence as we do here. We see that our results are broadly in line with their estimates: for example, there are edges estimated between the groups represented by PO-R and O-R (the group pair which yields the lowest $p$-value) even in some of their sparsest estimated graphs.



\section{Conclusion} \label{sec:conclusion}
Testing the conditional independence $X \independent Y \cond Z$ has been shown to be a hard problem in the setting where $X, Y, Z$ are all real-valued and $Z$ is absolutely continuous with respect to Lebesgue measure \citep{GCM}. This hardness takes a more extreme form in the functional setting: even when $(X, Y, Z)$ are jointly Gaussian with non-degenerate covariance and $Z$ and at most one of $X$ and $Y$ are infinite-dimensional, there is no non-trivial test of conditional independence. This requires us to (i) understand the form of an `effective null hypothesis' for a given hypothesis test, and (ii) develop tests where these effective nulls are somewhat interpretable so that domain knowledge can more easily inform the choice of a conditional independence test to use on any given dataset.

In order to address these two needs, we introduce here a new family of tests for functional data and develop the necessary uniform convergence results to understand the forms of null hypotheses that we can have type I error control over. We see that for our proposed GHCM tests, error control is guaranteed under conditions largely determined by the in-sample prediction error rate of regressions upon which the test is based. Whilst in-sample and more common out-of-sample results share similarities in some settings, the lack of a need to extrapolate beyond the data in the former lead to important differences when regressing on functional data. In particular, no eigen-spacing conditions or lower bounds on the eigenvalues of the covariance of the regressor are required for the in-sample error to be controlled when ridge regression is used. It would be interesting to investigate the in-sample MSPE properties of other regression methods and understand whether such conditions can be avoided more generally.

One attractive feature of the GHCM is that it only depends on inner products between the residuals produced by the regression methods.
An interesting question is whether different inner products can be constructed to have power against different sets of alternatives, by emphasising certain regions of the function domains, for example.


Another direction which may be fruitful to pursue is to adapt the GHCM so that it has power against alternatives where $\E \Cov (X, Y \cond Z) = 0$. It is likely that further conditions will be required of the regression methods than simply that their in-sample prediction errors are small, and so some interpretability of the effective null hypotheses, and indeed its size compared to the full null of conditional independence, will need to be sacrificed. There are however settings where the severity of type I versus type II errors may be balanced such that this is an attractive option.

It would also be interesting to investigate the hardness of conditional independence in the setting where all of $X$, $Y$ and $Z$ are infinite-dimensional. For our hardness result here, at least one of $X$ and $Y$ must be finite-dimensional. It may be the case that requiring two infinite-dimensional variables to be conditionally independent is such a strong condition that the null is not prohibitively large compared to the entire space of Gaussian measures, and so genuine control of the type I error while maintaining power is in fact possible. Such a result, or indeed a proof that hardness persists, would certainly be of interest.

\section*{Acknowledgements}
We thank Yoav Zemel, Alexander Aue, Sonja Greven and Fabian Scheipl for helpful discussions.


\end{cbunit}

\newpage
\setcounter{page}{1}

\begin{cbunit}
\appendix

\section*{Supplementary material for `Conditional Independence Testing in Hilbert Spaces with Applications to Functional Data Analysis'}

Section~\ref{app:hardness} is a self-contained presentation of the theory and proofs of Section~\ref{sec:hardness} in the paper. Section~\ref{app:uniform-convergence} contains much of the background on uniform stochastic convergence that is used for the technical results of the paper. This includes an account of previously established results for real-valued random variables and new results for Hilbertian and Banachian random variables. Section~\ref{app:proofs} contains the proofs of the results in Sections~\ref{sec:GHCM}~and~\ref{sec:theory} in the paper. Section~\ref{app: additional-sim-results} contains some additional simulation results.

\section{Hardness of functional Gaussian independence testing}
\label{app:hardness}
In this section we provide the necessary background and prove the hardness result in Section~\ref{sec:hardness}. We use the notation and terminology described in the setup of Section~\ref{sec:hardness} with the exception that $\mathcal{P}$, $\mathcal{P}_0$ and $\mathcal{Q}$ will consist of $n$ i.i.d.\ copies of jointly Gaussian $(X, Y, Z)$ rather than a single copy. For a bounded linear operator $\mathscr{A}$ on a Hilbert space $\mathcal{H}$, we let $\mathscr{A}^*$ denote the adjoint of $\mathscr{A}$. For two orthogonal subspaces $\mathcal{A}$ and $\mathcal{B}$ of a Hilbert space $\mathcal{H}$, we write $\mathcal{A} \oplus \mathcal{B}$ for the orthogonal direct sum of $\mathcal{A}$ and $\mathcal{B}$.

In Section~\ref{sec:power-finite-dimensional-gaussian-ci-testing} we consider the  setup of Section~\ref{sec:hardness} in the specific case where all the Hilbert spaces are finite-dimensional. We show that for any $Q \in \mathcal{Q}$, sample size $n$ and $\varepsilon > 0$, we can find a sufficiently large dimension of $\mathcal{H}_Z$ such that any test of size $\alpha$ over $\mathcal{P}_0^Q$ has power at most $\alpha+\varepsilon$ against any alternative. In Section~\ref{sec:hardness-infinite-dimensional-gaussian-testing} we use this to prove Theorem~\ref{thm:main-hardness-theorem}. 
In Section~\ref{sec:conditional-distributions-on-hilbert-spaces} we review the theory of regular conditional probabilities and conditional distributions of Hilbertian random variables and prove several Hilbertian analogues of well-known multivariate Gaussian results. Sections~\ref{sec:power-finite-dimensional-gaussian-ci-testing}~and~\ref{sec:hardness-infinite-dimensional-gaussian-testing} with the exception of Lemma~\ref{lemma:size-power-TV-distance} contain new material while Section~\ref{sec:conditional-distributions-on-hilbert-spaces} is primarily a review of relatively well-known results.

\subsection{Power of finite-dimensional Gaussian conditional independence testing} \label{sec:power-finite-dimensional-gaussian-ci-testing}

Before we consider Gaussian conditional independence testing, we present the following general result from \citet{Kraft1955}. A summary is given in \citet{LeCam1973}. 
\begin{lemma}
\label{lemma:size-power-TV-distance}
Let $\mathcal{P}$ and $\mathcal{Q}$ denote two families of probability measures on some measurable space $(\mathcal{X}, \mathcal{A})$ and assume that both families are dominated by a $\sigma$-finite measure. Consider the problem of testing the null hypothesis that the given data is from a distribution in $\mathcal{P}$ against the alternative that the distribution is in $\mathcal{Q}$. Let $d_{\textrm{TV}}$ denote the total variation distance and $\widetilde{\mathcal{P}}$ and $\widetilde{\mathcal{Q}}$ the closed convex hulls of $\mathcal{P}$ and $\mathcal{Q}$. Then
\[ 
\inf_{\psi: \mathcal{X} \to [0,1]} \sup_{P \in \mathcal{P}, Q \in \mathcal{Q}} \left[ \int \psi \, \mathrm{d}P + \int (1-\psi) \, \mathrm{d}Q \right] = 1 - \inf_{P \in \widetilde{\mathcal{P}}, Q \in \widetilde{\mathcal{Q}}} d_{\textrm{TV}}(P, Q) .
\]
An immediate consequence of this is that for any test $\psi$ that has size $\alpha$ and power function $\beta: \mathcal{Q} \to [0,1]$, $\beta(Q) = \int \psi \, \mathrm{d}Q$, we have
\[
\inf_{Q \in \mathcal{Q}} \beta(Q) \leq \alpha + \inf_{P \in \widetilde{\mathcal{P}}, Q \in \widetilde{\mathcal{Q}}} d_{\textrm{TV}}(P, Q) \leq \alpha + \inf_{P \in \widetilde{\mathcal{P}}, Q \in \mathcal{Q}} d_{\textrm{TV}}(P, Q).
\]
\end{lemma}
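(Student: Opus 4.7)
The plan is to reduce the minimax problem to a pointwise computation involving the Neyman--Pearson testing lemma and then apply a minimax exchange.

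First I would observe that for a fixed measurable $\psi : \mathcal{X} \to [0,1]$, the map $(P,Q) \mapsto \int \psi\,\mathrm{d}P + \int (1-\psi)\,\mathrm{d}Q$ is affine in each argument separately and continuous in the topology of total variation (and in the weak topology induced by bounded measurable functions). Hence
\[
\sup_{P\in\mathcal{P},\,Q\in\mathcal{Q}} \Bigl[\int \psi\,\mathrm{d}P + \int (1-\psi)\,\mathrm{d}Q\Bigr] = \sup_{P\in\widetilde{\mathcal{P}},\,Q\in\widetilde{\mathcal{Q}}} \Bigl[\int \psi\,\mathrm{d}P + \int (1-\psi)\,\mathrm{d}Q\Bigr].
\]
This lets me replace $\mathcal{P},\mathcal{Q}$ by their closed convex hulls throughout.

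Second, I would apply Sion's minimax theorem to interchange the $\inf_\psi$ and the $\sup_{P,Q}$. Let $\mu$ be a common dominating $\sigma$-finite measure and identify tests with elements of the set $\Psi := \{\psi \in L^\infty(\mu) : 0 \le \psi \le 1\,\text{$\mu$-a.e.}\}$, which is convex and weak-$*$ compact by the Banach--Alaoglu theorem. For each fixed $(P,Q) \in \widetilde{\mathcal{P}} \times \widetilde{\mathcal{Q}}$, the objective is affine and weak-$*$ continuous in $\psi$ (both $P$ and $Q$ are absolutely continuous with respect to $\mu$, so integration against their densities gives weak-$*$ continuous linear functionals), and for fixed $\psi$ it is affine in $(P,Q)$. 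Sion's theorem therefore gives
\[
\inf_{\psi \in \Psi}\,\sup_{P,Q}\Bigl[\int \psi\,\mathrm{d}P + \int (1-\psi)\,\mathrm{d}Q\Bigr] \;=\; \sup_{P,Q}\,\inf_{\psi \in \Psi}\Bigl[\int \psi\,\mathrm{d}P + \int (1-\psi)\,\mathrm{d}Q\Bigr].
\]

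Third, I would compute the inner infimum explicitly. Writing $p = \mathrm{d}P/\mathrm{d}\mu$ and $q = \mathrm{d}Q/\mathrm{d}\mu$, the objective equals $1 + \int \psi(p-q)\,\mathrm{d}\mu$, which is minimised pointwise at $\psi^\star = \mathbb{1}_{\{p<q\}}$, yielding $1 - \int (q-p)^{+}\,\mathrm{d}\mu = 1 - d_{\mathrm{TV}}(P,Q)$. Taking the supremum over $(P,Q) \in \widetilde{\mathcal{P}} \times \widetilde{\mathcal{Q}}$ then gives $1 - \inf_{P,Q} d_{\mathrm{TV}}(P,Q)$, proving the main equality.

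Finally, for the consequence, I would fix any test $\psi$ of size $\alpha$ and note that the supremum over $(P,Q) \in \mathcal{P} \times \mathcal{Q}$ decouples as $\sup_P \int \psi\,\mathrm{d}P + \sup_Q (1-\beta(Q)) = \alpha + 1 - \inf_Q \beta(Q)$. Since this supremum is at least $\inf_{\psi'} \sup_{P,Q}$, which by the established equality equals $1 - \inf_{P \in \widetilde{\mathcal{P}},\,Q \in \widetilde{\mathcal{Q}}} d_{\mathrm{TV}}(P,Q)$, rearranging yields $\inf_Q \beta(Q) \le \alpha + \inf d_{\mathrm{TV}}$, and the second inequality follows from $\mathcal{Q} \subseteq \widetilde{\mathcal{Q}}$. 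The main obstacle is the minimax exchange: one must choose the topology on $\Psi$ so that both the compactness of $\Psi$ and the continuity of the objective in $\psi$ are simultaneously available, which the weak-$*$ topology on $L^\infty(\mu)$ supplies cleanly thanks to the domination hypothesis.
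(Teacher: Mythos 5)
The paper does not prove this lemma; it states the result and attributes it to \citet{Kraft1955}, with \citet{LeCam1973} as a reference for a summary. So there is no in-paper argument against which to compare your proposal, only the classical ``testing affinity'' literature those citations point to.

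Your proof is correct and is essentially the standard minimax argument that appears (in various dressings) in that literature. The three moves are each sound. First, for fixed $\psi$ the map $(P,Q)\mapsto\int\psi\,\mathrm{d}P+\int(1-\psi)\,\mathrm{d}Q$ is affine in each argument and $1$-Lipschitz in total variation, so the supremum over $\mathcal{P}\times\mathcal{Q}$ equals that over $\widetilde{\mathcal{P}}\times\widetilde{\mathcal{Q}}$; one small point worth spelling out is that $\widetilde{\mathcal{P}}$ and $\widetilde{\mathcal{Q}}$ remain dominated by $\mu$ (TV limits of $\mu$-absolutely-continuous probability measures are $\mu$-absolutely-continuous), which you need for the weak-$*$ continuity claim in the next step. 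Second, the application of Sion's theorem is valid: $\Psi=\{\psi\in L^\infty(\mu):0\le\psi\le 1\ \text{a.e.}\}$ is convex and weak-$*$ closed in the unit ball, hence weak-$*$ compact by Banach--Alaoglu (using $\sigma$-finiteness of $\mu$ so that $L^\infty(\mu)=(L^1(\mu))^*$); the objective is bilinear, weak-$*$ continuous in $\psi$ via the densities, and TV-continuous in $(P,Q)$. Third, the pointwise Neyman--Pearson/Scheff\'e computation $\inf_{\psi}\bigl(1+\int\psi(p-q)\,\mathrm{d}\mu\bigr)=1-\int(q-p)^+\,\mathrm{d}\mu=1-d_{\mathrm{TV}}(P,Q)$ is exact because $p$ and $q$ integrate to one. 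The derivation of the corollary is also fine. In short: the proposal is a complete and correct self-contained proof of a result the paper imports by citation, and its route (closed-convex-hull reduction, Sion, Neyman--Pearson) matches the structure of the classical argument.
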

In most practical situations both $\mathcal{P}$ and $\mathcal{Q}$ will consist of product measures on a product space corresponding to a situation where we observe a sample of $n$ i.i.d.\ observations of some random variable. The theorem states that a lower bound on the sum of the type I and type II error probabilities of testing the null that data is from a distribution in $\mathcal{P}$ against the alternative that the distribution is in $\mathcal{Q}$ is given by $1$ minus the total variation distance between the closed convex hulls of $\mathcal{P}$ and $\mathcal{Q}$. As a consequence we see that the power of a test is upper bounded by the size plus the total variation distance between the closed convex hull of $\mathcal{P}$ and $\mathcal{Q}$.

In the remainder of this section we will consider the testing problem described in Section~\ref{sec:hardness} with $\mathcal{H}_X = \mathbb{R}^{d_X}$ and $\mathcal{H}_Z=\mathbb{R}^{d_Z}$ for $d_X, d_Z \in \mathbb{N}$.
To produce bounds on the power of a test in this setting, we will construct an explicit TV-approximation to a family of particularly simple distributions in $\mathcal{Q}$ using a distribution in the convex hull of the null distributions. We will need the following upper bound on the total variation distance between measures.
\begin{lemma}
\label{lem:TV-bound}
Let $P$ and $Q$ be probability measures where $P$ has density $f$ with respect to $Q$. Then 
\[
4d_{\mathrm{TV}}(P, Q)^2 \leq \int f^2 \, \mathrm{d}Q - 1 .
\]
\end{lemma}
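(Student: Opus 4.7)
The plan is to recognize this as essentially the Cauchy--Schwarz bound relating the $L^1(Q)$ norm of $f-1$ to its $L^2(Q)$ norm, i.e., the standard inequality between total variation and chi-squared divergence.

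First I would rewrite the total variation distance in its density form. Since $P$ has density $f$ with respect to $Q$, one has
\[
d_{\mathrm{TV}}(P,Q) = \tfrac{1}{2}\int |f - 1|\, dQ,
\]
which is the standard characterization (the supremum of $|P(A)-Q(A)|$ over measurable $A$ equals half the $L^1$ distance between the densities with respect to any common dominating measure). Squaring gives
\[
4 d_{\mathrm{TV}}(P,Q)^2 = \left(\int |f-1|\, dQ\right)^2.
\]

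Next I would apply the Cauchy--Schwarz inequality to the pair $(|f-1|, 1)$ in $L^2(Q)$:
\[
\left(\int |f-1|\, dQ\right)^2 \leq \int (f-1)^2\, dQ \cdot \int 1\, dQ = \int (f-1)^2\, dQ,
\]
using that $Q$ is a probability measure. Finally, expanding the square and using $\int f\, dQ = 1$ (since $f$ is a density with respect to $Q$) yields
\[
\int (f-1)^2\, dQ = \int f^2\, dQ - 2\int f\, dQ + 1 = \int f^2\, dQ - 1,
\]
which gives the claim.

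There is no real obstacle here; the only subtlety is implicitly using that $f$ is a bona fide density so that $\int f\, dQ = 1$, and that $Q$ is a probability measure so that the $\int 1^2\, dQ$ factor in Cauchy--Schwarz equals $1$.
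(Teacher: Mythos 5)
Your proof is correct and follows essentially the same route as the paper's: both write $d_{\mathrm{TV}}(P,Q)=\tfrac12\int|f-1|\,\mathrm{d}Q$ and then bound the square of the integral by $\int(f-1)^2\,\mathrm{d}Q$ (the paper invokes Jensen's inequality where you invoke Cauchy--Schwarz, but these are the same step for a probability measure $Q$). The only cosmetic difference is that the paper first notes the inequality is trivial when $\int f^2\,\mathrm{d}Q=\infty$, a case your argument also handles implicitly.
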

\begin{proof}
We may assume that the integral of $f^2$ with respect to $Q$ is finite, otherwise the inequality is trivially valid. Then by Jensen's inequality, we get
\begin{equation*}
d_{\mathrm{TV}}(P, Q)^2 = \frac{1}{4} \left( \int |f-1| \, \mathrm{d}Q \right)^2 \leq \frac{1}{4} \int (f-1)^2 \, \mathrm{d}Q = \frac{1}{4}  \int f^2 \, \mathrm{d}Q - \frac{1}{4}. \qedhere
\end{equation*}
\end{proof}

Using this bound and Lemma~\ref{lemma:size-power-TV-distance}, we can show the following result. 
\begin{theorem}
\label{thm:gaussian-conditional-independence-simple-power-bound}
Let $Q$ be a distribution consisting of $n$ i.i.d.\ copies of jointly Gaussian $(X, Y, Z)$ on $(\mathbb{R}, \mathbb{R}, \mathbb{R}^d)$ for some $d \in \mathbb{N}$, where $X$ and $Y$ are standard Gaussian, $Z$ is mean zero with identity covariance matrix, $\Cov(X, Z) = \Cov(Y, Z) = 0$ and $\Cov(X, Y) = \rho \in (0, 1)$. Consider the testing problem described in Section~\ref{sec:hardness} with $\mathcal{H}_X = \mathbb{R}$ and $\mathcal{H}_Z=\mathbb{R}^d$ and let $\psi$ be the test function of a size $\alpha$ test over $\mathcal{P}_0^Q$. Writing $\beta$ for the power of $\psi$ against $Q$, we have 
\[
\beta \leq \alpha + \frac{1}{2} \sqrt{-1 + (1+\rho)^n \sum_{k=0}^d \frac{\binom{d}{k}}{2^d (1+(3-4k/d)\rho)^n} }.
\]
In particular, for fixed $n$ the upper bound converges to $\alpha$ as $d$ increases.
\end{theorem}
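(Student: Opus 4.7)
The plan is to invoke Lemma~\ref{lemma:size-power-TV-distance} together with Lemma~\ref{lem:TV-bound}: if I can exhibit an element $P$ of the closed convex hull of $\mathcal{P}_0^Q$ and control $\int (dP/dQ)^2 \, dQ$, the bound on the power follows. To construct such a $P$, I would average over sign vectors. For each $\sigma \in \{-1,+1\}^d$, let $P_\sigma$ denote the single-observation Gaussian law under which $Z \sim \mathcal{N}(0, I_d)$ and, conditionally on $Z$, $X = \sqrt{\rho/d}\,\sigma^\top Z + \varepsilon_X$ and $Y = \sqrt{\rho/d}\,\sigma^\top Z + \varepsilon_Y$, with $\varepsilon_X,\varepsilon_Y \sim \mathcal{N}(0,1-\rho)$ mutually independent and independent of $Z$. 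Then the $Z$-marginal of $P_\sigma$ matches that of $Q$ and $X \independent Y \cond Z$ holds under $P_\sigma$, so $P_\sigma^{\otimes n}\in\mathcal{P}_0^Q$. Set $P := 2^{-d}\sum_\sigma P_\sigma^{\otimes n}$, which lies in the convex hull of $\mathcal{P}_0^Q$.

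Using the product structure, expand
\[
\int \left(\frac{dP}{dQ}\right)^{\!2}\! dQ \;=\; 2^{-2d}\sum_{\sigma,\sigma'}\left[\int \frac{dP_\sigma}{dQ^{(1)}}\frac{dP_{\sigma'}}{dQ^{(1)}}\,dQ^{(1)}\right]^{n},
\]
where $Q^{(1)}$ denotes the single-observation law. I would evaluate the inner integral by factorising $p_\sigma(x,y,z) = q_Z(z)\,p^{\sigma}_{XY\mid Z}(x,y\mid z)$, carrying out the $(X,Y)$-integral in closed form, then writing the remaining integral over $Z\sim\mathcal{N}(0,I_d)$ as $\mathbb{E}[\exp(Z^\top M_{\sigma,\sigma'} Z)]$ for a matrix $M_{\sigma,\sigma'}$ of rank at most two lying in $\mathrm{span}\{\sigma\sigma^\top,\sigma'\sigma'^\top,\sigma\sigma'^\top,\sigma'\sigma^\top\}$. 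Via the identity $\mathbb{E}[\exp(Z^\top M Z)] = \det(I-2M)^{-1/2}$ and Sylvester's determinant identity, this determinant reduces to a $2\times 2$ computation that depends on $\sigma,\sigma'$ only through $\sigma^\top\sigma' = 2k-d$ (using that $\|\sigma\|^2=\|\sigma'\|^2=d$) and that simplifies to $(1+\rho)/(1+(3-4k/d)\rho)$ where $k=|\{i:\sigma_i=\sigma_i'\}|$. The main obstacle is executing this algebraic simplification cleanly; the cancellations work precisely because all sign vectors share the norm $\|\sigma\|^2=d$, which is why $\sigma$ is restricted to $\{-1,+1\}^d$. Since for each $\sigma$ there are $\binom{d}{k}$ values of $\sigma'$ agreeing on exactly $k$ coordinates, the double sum collapses to $\int (dP/dQ)^2\, dQ = (1+\rho)^n\sum_{k=0}^d \binom{d}{k}/(2^d(1+(3-4k/d)\rho)^n)$, and combining with Lemmas~\ref{lemma:size-power-TV-distance} and~\ref{lem:TV-bound} yields the stated upper bound on $\beta$.

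For the final claim that this bound converges to $\alpha$ as $d\to\infty$ with $n$ fixed, rewrite the sum under the square root as $(1+\rho)^n\,\mathbb{E}[(1+(3-4K_d/d)\rho)^{-n}]$ for $K_d\sim\mathrm{Binomial}(d,1/2)$. The weak law of large numbers gives $K_d/d\to 1/2$ in probability, so the integrand converges in probability to $(1+\rho)^{-n}$; since $K_d/d\in[0,1]$ forces the denominator into $[(1-\rho)^n,(1+3\rho)^n]$, bounded convergence implies the expectation tends to $(1+\rho)^{-n}$, whence the quantity under the square root tends to $0$ and the bound collapses to $\alpha$.
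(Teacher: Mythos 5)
Your proposal is correct and follows essentially the same route as the paper: the same mixture $P = 2^{-d}\sum_{\sigma} P_\sigma^{\otimes n}$ over sign vectors with loadings $\sqrt{\rho/d}\,\sigma$, the same combination of the Kraft/Le Cam bound with the $\chi^2$-type bound on total variation, the same counting of pairs agreeing in $k$ coordinates, and the same bounded-convergence argument for the limit $d \to \infty$. The only difference is organisational: you evaluate $\int (dP_\sigma/dQ^{(1)})(dP_{\sigma'}/dQ^{(1)})\,dQ^{(1)}$ by conditioning on $Z$ and reducing to a $2\times 2$ determinant via Sylvester's identity, whereas the paper inverts the full $(d+2)\times(d+2)$ covariance matrices with Schur and Woodbury; your asserted value $(1+\rho)/(1+(3-4k/d)\rho)$ agrees with the paper's, so the deferred algebra does indeed close.
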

\begin{proof}
Let $\tau \in \{-1, 1 \}^d$ and let $P_\tau$ denote the Gaussian distribution consisting of $n$ i.i.d.\ copies of jointly Gaussian $(X, Y, Z)$ where $X$ and $Y$ are standard Gaussian, $Z$ is mean zero with identity covariance matrix, $\Cov(X, Y) = \rho$ and $\Cov(X,Z) = \Cov(Y,Z) = \sqrt{\frac{\rho}{d}} \tau^\top $. For every $\tau \in \{-1, 1 \}^d$, it is clear that $X \independent Y \cond Z$ under $P_\tau$ and thus forming
\[
P := \frac{1}{2^d} \sum_{\tau \in \{-1, 1\}^d} P_\tau
\]
we note that $P$ is in the closed convex hull of the set of null distributions. Let $\Gamma_\tau$ and $\Gamma_Q$ denote the $n(d+2)$-dimensional covariance matrices of the $n$ i.i.d.\ copies of $(X, Y, Z)$ under $P_\tau$ and $Q$ respectively. These are block-diagonal, and we let $\Sigma_\tau$ and $\Sigma_Q$ respectively denote the matrices in the diagonal, corresponding to the covariance of a single observation of $(X, Y, Z)$ under $P_\tau$ and $Q$. 
By standard manipulations of densities, the density of $P$ with respect to $Q$ is simply the ratio of their respective densities with respect to the Lebesgue measure. We have
\[
\Sigma_\tau = \begin{pmatrix}
\begin{pmatrix}
1 & \rho \\
\rho & 1
\end{pmatrix} & \sqrt{\frac{\rho}{d}}  \begin{pmatrix}
\tau^\top \\ \tau^\top 
\end{pmatrix} \\
\sqrt{\frac{\rho}{d}}  \begin{pmatrix}
\tau &  \tau
\end{pmatrix} & I_d
\end{pmatrix}
\]
and, letting $I_d$ denote the $d$-dimensional identity matrix,
\[
\Sigma_Q = \begin{pmatrix}
\begin{pmatrix}
1 & \rho \\
\rho & 1
\end{pmatrix} & 0 \\
 0 & I_d
\end{pmatrix}.
\]
The determinant of $\Sigma_Q$ is $1-\rho^2$ by Laplace-expanding the first row. Letting $J_2$ denote the $2$-dimensional matrix of ones, we have
\[
\det(\Sigma_\tau) = \det(I_d) \det\left( \begin{pmatrix}
1 & \rho\\
\rho & 1
\end{pmatrix}
-
\rho J_2
\right) = (1-\rho)^2
\]
by Schur's formula. Defining $f$ to be the density of $P$ with respect to $Q$, we see that
\[
f(v) = \frac{1}{2^d} \frac{(1+\rho)^{n/2}}{(1-\rho)^{n/2}} \sum_{\tau \in \{-1, 1 \}^d} \exp\left( -\frac{1}{2}v^\top (\Gamma_\tau^{-1} - \Gamma_Q^{-1})v \right)
\]
since the determinants of $\Gamma_\tau$ and $\Gamma_Q$ are the determinants of $\Sigma_\tau$ and $\Sigma_Q$ to the $n$th power. From this we get that
\begin{align*}
&\int f^2 \, \mathrm{d}Q = \frac{1}{2^{2d}} \frac{(1+\rho)^n}{(1-\rho)^n} \sum_{\tau, \tau' \in \{ -1, 1\}^d}  \int \exp\left(-\frac{1}{2}v^\top (\Gamma_\tau^{-1} + \Gamma_{\tau'}^{-1} - 2 \Gamma_Q^{-1})v \right) \, \mathrm{d}Q(v) =\\
 & \frac{1}{2^{2d}} \frac{(1+\rho)^n}{(1-\rho)^n} \frac{1}{\sqrt{(2\pi)^{n(d+2)}(1-\rho^2)^n}}\sum_{\tau, \tau' \in \{ -1, 1\}^d}\int \exp\left(-\frac{1}{2}v^\top (\Gamma_\tau^{-1} + \Gamma_{\tau'}^{-1} -  \Gamma_Q^{-1})v \right) \, \mathrm{d}\lambda_{n(d+2)}(v),
\end{align*}
where $\lambda_{n(d+2)}$ denotes the $n(d+2)$-dimensional Lebesgue measure. Each integral is the integral of an unnormalised Gaussian density in $\mathbb{R}^{n(d+2)}$, and thus we can simplify further to get 
\begin{align*}
\int f^2 \, \mathrm{d}Q &=  \frac{1}{2^{2d}} \frac{(1+\rho)^n}{(1-\rho)^n} \frac{1}{(1-\rho^2)^{n/2}}\sum_{\tau, \tau' \in \{ -1, 1\}^d} \sqrt{ \det \left[ (\Gamma_\tau^{-1} + \Gamma_{\tau'}^{-1} -  \Gamma_Q^{-1})^{-1} \right] } \\
&= \frac{1}{2^{2d}} \frac{(1+\rho)^n}{(1-\rho)^n} \frac{1}{(1-\rho^2)^{n/2}}\sum_{\tau, \tau' \in \{ -1, 1\}^d}  \det(\Gamma_\tau^{-1} + \Gamma_{\tau'}^{-1} -  \Gamma_Q^{-1})^{-1/2} \\
&= \frac{1}{2^{2d}} \frac{(1+\rho)^n}{(1-\rho)^n} \frac{1}{(1-\rho^2)^{n/2}}\sum_{\tau, \tau' \in \{ -1, 1\}^d} \det(\Sigma_\tau^{-1} + \Sigma_{\tau'}^{-1} -  \Sigma_Q^{-1})^{-n/2} ,
\end{align*}
by again using the block diagonal structure of $\Gamma_Q$ and the $\Gamma_\tau$'s. Recall that for a symmetric block matrix 
\[
\begin{pmatrix}
A & B^\top \\
B & C
\end{pmatrix}^{-1} = \begin{pmatrix}
(A-B^\top C^{-1}B)^{-1} & -(A-B^\top C^{-1}B)^{-1} B^\top C^{-1}\\
-C^{-1}B (A-B^\top C^{-1}B^\top )^{-1} & C^{-1} + C^{-1}B(A-B^\top C^{-1}B)^{-1} B^\top  C^{-1}
\end{pmatrix}.
\]
Using this, we see that
\[
\Sigma_Q^{-1} = \begin{pmatrix}\frac{1}{1-\rho^2}\begin{pmatrix}
1 & -\rho \\
-\rho & 1
\end{pmatrix} & 0\\
0 & I_d
\end{pmatrix}
\]
and
\[
\Sigma_\tau^{-1} =\begin{pmatrix}\frac{1}{1-\rho} I_2 & -\frac{1}{1-\rho} \sqrt{\frac{\rho}{d}} \begin{pmatrix}
\tau^\top  \\ \tau^\top 
\end{pmatrix}\\
-\frac{1}{1-\rho} \sqrt{\frac{\rho}{d}} \begin{pmatrix}
\tau & \tau
\end{pmatrix} & I_d + \frac{2\rho}{(1-\rho)d} \tau \tau^\top 
\end{pmatrix} .
\]
Further, 
\[
\Sigma_\tau^{-1} + \Sigma_{\tau'}^{-1} - \Sigma_Q^{-1} =\begin{pmatrix}
A & B^\top \\
B & C
\end{pmatrix},
\]
where
\begin{align*}
A &:= \frac{1}{1-\rho^2}\begin{pmatrix}
2\rho +1 & \rho\\
 \rho & 2\rho +1 
\end{pmatrix}\\
B &:= -\frac{1}{1-\rho}\sqrt{\frac{\rho}{d}} \begin{pmatrix}
\tau + \tau' & \tau + \tau'
\end{pmatrix} \\
C &:= I_d + \frac{2\rho}{(1-\rho)d}(\tau\tau^\top  + \tau'\tau'^\top ) .
\end{align*}
We may once more use Schur's formula for the determinant of a block matrix to find that
\[
\det ( \Sigma_\tau^{-1} + \Sigma_{\tau'}^{-1} - \Sigma_Q^{-1} ) = \det(C ) \det (A - B^\top  C^{-1} B ) .
\]
Defining $V=\begin{pmatrix}
\tau & \tau'
\end{pmatrix}$, we note that $C= I_d + \frac{2\rho}{(1-\rho)d}VV^\top  $ and defining further
\[
M:= I_2 + \frac{2\rho}{(1-\rho)d} V^\top  V = \frac{1}{d(1-\rho)} \begin{pmatrix}
d(1+\rho) & 2\rho \langle \tau , \tau' \rangle\\
2\rho \langle \tau , \tau' \rangle & d(1+\rho),
\end{pmatrix}
\]
the Weinstein--Aronszajn identity yields that
\[
\det(C) = \det(M) = \frac{(d(1+\rho)+2\rho \langle \tau, \tau' \rangle)(d(1+\rho)-2\rho \langle \tau, \tau' \rangle)}{d^2(1-\rho)^2}.
\]
The Woodbury matrix identity yields that
\[
C^{-1} = I_d - \frac{2\rho}{(1-\rho)d} V M^{-1} V^\top  .
\]
Hence,
\[
\det (A - B^\top  C^{-1} B ) = \det \left(A - B^\top  B+ \frac{2\rho}{(1-\rho)d} B^\top  V M^{-1} V^\top  B \right) .
\]
Now
\[
M^{-1} = \frac{(1-\rho)d}{(d(1+\rho)+2\rho \langle \tau, \tau' \rangle)(d(1+\rho)-2\rho \langle \tau, \tau' \rangle)} \begin{pmatrix}
d(1+\rho) & -2\rho \langle \tau , \tau' \rangle \\
 -2\rho \langle \tau , \tau' \rangle & d(1+\rho)
\end{pmatrix}
\]
and
\[
B^\top  V = -\frac{1}{1-\rho} \sqrt{\frac{\rho}{d}} (d+\langle \tau , \tau' \rangle) J_2,
\]
where $J_2$ is the $2$-dimensional matrix of ones. Thus,
\[
 \frac{2\rho}{(1-\rho)d} B^\top  V M^{-1} V^\top  B  = \frac{2 \rho^2(d+\langle \tau , \tau' \rangle)^2}{(1-\rho)^3 d^2}  J_2 M^{-1} J_2 =  \frac{4 \rho^2(d+\langle \tau , \tau' \rangle)^2}{(1-\rho)^2 d(d(1+\rho) + 2\rho \langle \tau, \tau' \rangle)}  J_2 .
\]
Since
\[
B^\top  B = \frac{2 \rho}{(1-\rho)^2d} (d+\langle \tau , \tau' \rangle) J_2
\]
we get that
\begin{align*}
&\det (A - B^\top  C^{-1} B ) = \det \left(A + \left( \frac{4 \rho^2(d+\langle \tau , \tau' \rangle)^2}{(1-\rho)^2 d(d(1+\rho) + 2\rho \langle \tau, \tau' \rangle)}  - \frac{2 \rho}{(1-\rho)^2d} (d+\langle \tau , \tau' \rangle) \right) J_2  \right) \\
 &=  \det \left(A -  \frac{2\rho(d+\langle \tau , \tau' \rangle)}{(1-\rho)(d(1+\rho)+2\rho\langle \tau , \tau' \rangle)}J_2  \right)  \\
 &= \frac{\det \left( (d(1+\rho)+2\rho \langle \tau , \tau' \rangle ) \begin{pmatrix}
 2\rho +1 & \rho \\
 \rho & 2\rho +1
\end{pmatrix} -  2\rho(d+\langle \tau , \tau' \rangle ) (1+\rho) J_2 \right) }{(1-\rho)^2(1+\rho)^2(d(1+\rho)+2\rho \langle \tau , \tau' \rangle )^2} \\
&= \frac{\det   \begin{pmatrix}
(d(1+\rho)+2\rho\langle \tau , \tau' \rangle)\rho + (1+\rho)(1-\rho)d &  (1+\rho)(1-\rho)d -(d(1+\rho)+2\rho\langle \tau , \tau' \rangle)  \\
(1+\rho)(1-\rho)d -(d(1+\rho)+2\rho\langle \tau , \tau' \rangle)  & (d(1+\rho)+2\rho\langle \tau , \tau' \rangle)\rho + (1+\rho)(1-\rho)d 
\end{pmatrix}  }{(1-\rho)^2(1+\rho)^2(d(1+\rho)+2\rho \langle \tau , \tau' \rangle )^2} \\
&= \frac{(d(1+\rho) + 2\rho \langle \tau , \tau' \rangle)(1+\rho)(\rho-1) + 2(1+\rho)^2(1-\rho)d  }{(1-\rho)^2(1+\rho)^2(d(1+\rho)+2\rho \langle \tau , \tau' \rangle )} \\
&= \frac{d(1+\rho) - 2\rho \langle \tau , \tau' \rangle}{(1-\rho)(1+\rho)(d(1+\rho)+2\rho \langle \tau , \tau' \rangle )} 
\end{align*}
and thus
\[
\det ( \Sigma_\tau^{-1} + \Sigma_{\tau'}^{-1} - \Sigma_Q^{-1} ) = \frac{(d(1+\rho) - 2\rho \langle \tau , \tau' \rangle)^2}{d^2(1-\rho)^3(1+\rho)} .
\]
Returning to the squared integral of $f^2$ with respect to $Q$, we get that
\begin{align*}
\int f^2 \, \mathrm{d}Q 
&= \frac{1}{2^{2d}} \frac{(1+\rho)^n}{(1-\rho)^n} \frac{1}{(1-\rho^2)^{n/2}}\sum_{\tau, \tau' \in \{ -1, 1\}^d}  \frac{d^n \sqrt{
(1-\rho)^{3n}
(1+\rho)^n}}{|d(1+\rho) - 2\rho \langle \tau , \tau' \rangle|^n}\\
 &=  \frac{1}{2^{2d}} (1+\rho)^n \sum_{\tau, \tau' \in \{ -1, 1\}^d}  \frac{d^n} {|d(1+\rho) - 2\rho \langle \tau , \tau' \rangle|^n} .
\end{align*}
For $\tau, \tau' \in \{-1, 1\}^d$,  $\langle \tau, \tau' \rangle = 2k-d$ where $k$ is the number of indices where $\tau_i = \tau_i'$. 
Thus instead of summing over $\tau, \tau' \in \{-1, 1\}^d$, we can count the number of $(\tau, \tau')$-pairs where $\tau$ and $\tau'$ agree in exactly $k$ positions. For each $\tau$, there are $\binom{d}{k}$ other elements in $\{-1, 1\}^d$ agreeing
in exactly $k$ positions and there are $2^d$ different $\tau$'s, hence
\begin{align*}
\int f^2 \, \mathrm{d}Q  
 &=  \frac{1}{2^{2d}} (1+\rho)^n \sum_{k=0}^d \frac{d^n \binom{d}{k} 2^d} {|d(1+\rho) - 2\rho (2k-d)|^n} \\
 &=   (1+\rho)^n \sum_{k=0}^d   \frac{d^n \binom{d}{k} } {2^d(d+\rho(3d-4k))^n} =   (1+\rho)^n \sum_{k=0}^d  \frac{ \binom{d}{k}} {2^d(1+\rho(3-4k/d))^n}  .
\end{align*}
The result now follows from Proposition~\ref{lem:TV-bound} and Lemma~\ref{lemma:size-power-TV-distance}.

To see this for each $n$ the bound converges to $\alpha$ as $d$ increases, let $W_d$ be a random variable with a binomial distribution with probability parameter $1/2$ and with $d$ trials and note that
\[
\sum_{k=0}^d  \frac{ \binom{d}{k}} {2^d(1+\rho(3-4k/d))^n}   = \mathbb{E}\left( (1+\rho(3-4W_d/d))^{-n} \right) .
\]
By the Strong Law of Large Numbers (SLLN), $W_d/d \overset{a.s.}{\to} 1/2$ and thus $ (1+\rho(3-4W_d/d))^{-n} \overset{a.s.}{\to} (1+\rho)^{-n}$. Since $(1+\rho(3-4W_d/d))^{-n} \leq (1-\rho)^{-n}$, we get by the bounded convergence theorem that
\[
\lim_{d \to \infty} \mathbb{E}\left( (1+\rho(3-4W_d/d))^{-n} \right) = \mathbb{E}\left( (1+\rho)^{-n}\right)  = (1+\rho)^{-n},
\]
and hence the upper bound on the power converges to $\alpha$.
\end{proof}

We can generalise the previous result to the situation where $X$ and $Y$ are of arbitrary finite dimension.

\begin{theorem}
\label{thm:gaussian-conditional-independence-simple-multivariate-power-bound}
Let $Q$ be a distribution consisting of $n$ i.i.d.\ copies of jointly Gaussian $(X, Y, Z)$ on $(\mathbb{R}^{d_X}, \mathbb{R}^{d_Y}, \mathbb{R}^{d_Z})$ for some $d_X, d_Y, d_Z \in \mathbb{N}$ where $X$, $Y$ and $Z$ are all mean zero with identity covariance matrix, $\Cov(X, Z) = \Cov(Y, Z) = 0$ and $\Cov(X, Y) = R$ for some rectangular diagonal matrix $R$ with diagonal entries $\rho_1, \dots, \rho_r \in (0, 1)$, where $r = \min(d_X, d_Y)$. Consider the testing problem described in Section~\ref{sec:hardness} with $\mathcal{H}_X = \mathbb{R}^{d_X}$ and $\mathcal{H}_Z=\mathbb{R}^{d_Z}$ and let $\psi$ be the test function of a size $\alpha$ test over $\mathcal{P}_0^Q$. Assume that $d_Z \geq r$ and let $d = \lfloor d_Z/r \rfloor$. Letting $\beta$ denote the power of $\psi$ against $Q$, we have
\[
 \beta \leq \alpha + \frac{1}{2}\sqrt{-1 + \prod_{i=1}^r \left( (1+\rho_i)^n \sum_{k=0}^d \frac{\binom{n}{k}}{2^d (1+(3-4k/d)\rho_i)^n} \right) }.
\]
In particular for fixed $n$ the upper bound converges to $\alpha$ as $d_Z$ increases. 
\end{theorem}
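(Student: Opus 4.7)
The plan is to reduce the multivariate problem to $r$ independent instances of Theorem~\ref{thm:gaussian-conditional-independence-simple-power-bound}, exploiting the diagonal structure of the cross-covariance $R$ and partitioning $Z$ into $r$ disjoint blocks of size $d = \lfloor d_Z / r \rfloor$. As in the univariate proof, I would invoke Lemmas~\ref{lemma:size-power-TV-distance} and \ref{lem:TV-bound} to bound the power by $\alpha + \tfrac{1}{2}\sqrt{\int f^2 \, dQ - 1}$, where $f$ is the density with respect to $Q$ of a carefully chosen convex combination $P$ of distributions in $\mathcal{P}_0^Q$.

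To construct $P$, partition the first $rd$ coordinates of $Z$ into blocks $I_1, \ldots, I_r$ of size $d$, and for each pair index $i \in \{1,\ldots,r\}$ and $\tau^{(i)} \in \{-1, 1\}^d$, let $P^{(i)}_{\tau^{(i)}}$ be the distribution on $n$ i.i.d.\ copies of $(X_i, Y_i, Z_{I_i}) \in \mathbb{R} \times \mathbb{R} \times \mathbb{R}^d$ constructed in the proof of Theorem~\ref{thm:gaussian-conditional-independence-simple-power-bound} using correlation $\rho_i$. Then set
\[
P = \bigotimes_{i=1}^{r} \left( \frac{1}{2^d} \sum_{\tau^{(i)} \in \{-1, 1\}^d} P^{(i)}_{\tau^{(i)}} \right) \otimes Q_{\text{rest}},
\]
where $Q_{\text{rest}}$ governs the remaining coordinates of $(X, Y, Z)$ not used in the blocks (any surplus in $Z$ beyond $rd$, and any surplus in $X$ or $Y$ beyond the first $r$) as mutually independent standard Gaussians, matching their distribution under $Q$. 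Distributing the tensor product across the sums exhibits $P$ as a uniform mixture over $2^{rd}$ jointly Gaussian distributions, each of which satisfies $X \independent Y \mid Z$: within each block the conditional independence is provided by the univariate construction, and the blocks (and surplus coordinates) are mutually independent. Hence $P$ lies in the convex hull of $\mathcal{P}_0^Q$.

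The central calculation is that both $P$ and $Q$ factorise as products over the $r$ blocks (plus the independent remainder), so $f = dP/dQ$ factorises as $\prod_{i=1}^{r} f_i$, where $f_i$ is the density of the $i$-th mixture with respect to the corresponding block of $Q$, and the remainder contributes a factor of $1$. Consequently
\[
\int f^2 \, dQ = \prod_{i=1}^{r} \int f_i^2 \, dQ_i,
\]
and the computation carried out in the proof of Theorem~\ref{thm:gaussian-conditional-independence-simple-power-bound} evaluates each factor explicitly as $(1+\rho_i)^n \sum_{k=0}^{d} \binom{d}{k} / [2^d (1 + \rho_i(3 - 4k/d))^n]$. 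Substituting this into the TV bound yields the claimed inequality.

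The main obstacle is the careful bookkeeping required to verify that each summand $\bigotimes_i P^{(i)}_{\tau^{(i)}} \otimes Q_{\text{rest}}$ genuinely satisfies conditional independence of the \emph{full} $X$ and $Y$ given the \emph{full} $Z$ (not merely block by block) and shares the marginal of $Z$ with $Q$; both reduce to the independence of the blocks and the fact that the surplus $X$ and $Y$ coordinates are uncorrelated with everything under $Q$. The final limiting statement follows because $d \to \infty$ as $d_Z \to \infty$ with $r$ fixed, and the SLLN argument from the end of the proof of Theorem~\ref{thm:gaussian-conditional-independence-simple-power-bound} forces each factor to $1$.
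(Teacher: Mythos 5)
Your proposal is correct and follows essentially the same route as the paper: the same uniform mixture over sign vectors $\tau \in (\{-1,1\}^d)^r$ built block-by-block from the univariate construction, the same appeal to Lemmas~\ref{lemma:size-power-TV-distance} and \ref{lem:TV-bound}, and the same reduction of $\int f^2\,\mathrm{d}Q$ to a product of the per-block quantities computed in Theorem~\ref{thm:gaussian-conditional-independence-simple-power-bound} (you factor the density and apply Fubini, whereas the paper factors the block-diagonal determinants and interchanges the product with the double sum over $\tau,\tau'$ --- the same mechanism). The bookkeeping you flag (full conditional independence of each summand and the matching $Z$-marginal) is handled in the paper exactly as you describe, via the mutual independence of the blocks and the surplus coordinates.
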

\begin{proof}
Assume without loss of generality that $d_X \geq d_Y$. The proof follows a similar idea to the proof of Theorem~\ref{thm:gaussian-conditional-independence-simple-power-bound}. In what follows we consider a different ordering of the variables than the natural one given by $(X, Y, Z)$. We consider $r+1$ blocks, where the first $r$ blocks are $(X_i, Y_i, Z_{(i-1)d+1}, \dots , Z_{id})$ for $i \in \{1, \dots, r \}$ and the final block consists of the remaining components of $X$ and $Z$. When we consider $n$ i.i.d.\ copies, we will again reorder the variables such that we consider each block separately. As a consequence of doing this, the covariance matrix of $n$ i.i.d.\ copies under $Q$, $\Xi_Q$, can be written as a block-diagonal matrix with $r$ $n(d+2) \times n(d+2)$ blocks $\Gamma_{Q, i}$ and a final identity matrix block. Each of the $\Gamma_{Q, i}$'s is again a block-diagonal matrix consisting of $n$ identical blocks $\Sigma_{Q, i}$ of the form
\[
\Sigma_{Q, i} = \begin{pmatrix}
\begin{pmatrix}
1 & \rho_i \\
\rho_i & 1
\end{pmatrix} & 0 \\
0 & I_d
\end{pmatrix} .
\]
Let now $\mathcal{T}= (\{-1, 1\}^d)^r$ and for each $\tau = (\tau_1, \dots, \tau_r) \in \mathcal{T}$ let $P_\tau$ denote the Gaussian distribution consisting of $n$ i.i.d.\ copies of jointly Gaussian $(X, Y, Z)$ where $X$, $Y$ and $Z$ are mean zero with identity covariance, $\Cov(X, Y) = R$ and $\Cov(X, Z) = \Cov(Y, Z)= 0$ except for 
\[
\Cov(X_i, (Z_{(i-1)d+1}, \dots , Z_{id}) ) = \Cov(Y_i, (Z_{(i-1)d+1}, \dots , Z_{id}) ) = \sqrt{\frac{\rho_i}{d}} \tau_i^\top 
\]
for $i \in \{1, \dots , r \}$. Arranging the random variables as before, the covariance matrix of $n$ i.i.d.\ copies under $P_\tau$, $\Xi_\tau$, is a block-diagonal matrix with $r$ $n(d+2) \times n(d+2)$ blocks $\Gamma_{\tau, i}$ and a final identity matrix block. Each of the $\Gamma_{\tau, i}$'s is again a block-diagonal matrix consisting of $n$ identical blocks $\Sigma_{\tau, i}$ of the form
\[
\Sigma_{\tau, i} = \begin{pmatrix}
\begin{pmatrix}
1 & \rho_i \\
\rho_i & 1
\end{pmatrix} & \sqrt{\frac{\rho_i}{d}} \tau_i^\top  \\
\sqrt{\frac{\rho_i}{d}} \tau_i & I_d
\end{pmatrix} .
\]
Clearly $X \independent Y \cond Z$ under $P_\tau$ for every $\tau \in \mathcal{T}$ and thus letting
\[
P := \frac{1}{2^{dr}} \sum_{\tau \in \mathcal{T}} P_\tau,
\]
we note that $P$ is in the closed convex hull of the null distributions. Letting $f$ be the density of $P$ with respect to $Q$, we see that 
\[
f(v) = \frac{1}{2^{dr}} \left(\prod_{i=1}^r \frac{1+\rho_i}{1-\rho_i}\right)^{n/2} \sum_{\tau \in \mathcal{T}} \exp\left(- \frac{1}{2} v^\top  (\Xi_\tau^{-1} - \Xi_Q^{-1})v \right),
\]
since this is simply the ratio of their respective densities with respect to the Lebesgue measure. We can now repeat the argument of the proof of Theorem~\ref{thm:gaussian-conditional-independence-simple-power-bound} to obtain
\[
\int f^2 \, \mathrm{d}Q = \frac{1}{2^{2dr}} \prod_{i=1}^r \left( \frac{1+\rho_i}{(1-\rho_i)\sqrt{1-\rho_i^2}} \right)^n \sum_{\tau, \tau' \in \mathcal{T}} \left( \sqrt{\det(\Xi_\tau^{-1} + \Xi_{\tau'}^{-1} - \Xi_Q^{-1}) } \right)^{-1} .
\]
The determinant can be written as 
\[
\det(\Xi_\tau^{-1} + \Xi_{\tau'}^{-1} - \Xi_Q^{-1})  = \prod_{i=1}^r\det(\Gamma_{\tau, i}^{-1} + \Gamma_{\tau', i}^{-1} - \Gamma_{Q, i}^{-1}) 
\]
by the block-diagonal structure of the $\Xi$'s. In the proof of Theorem~\ref{thm:gaussian-conditional-independence-simple-power-bound}, we derive that
\[
\det(\Gamma_{\tau, i}^{-1} + \Gamma_{\tau', i}^{-1} - \Gamma_{Q, i}^{-1}) = \left( \frac{(d(1+\rho_i) - 2\rho_i \langle \tau_i, \tau_i' \rangle)^2}{d^2(1+\rho_i)(1-\rho_i)^3} \right)^n .
\]
Therefore,
\[
\int f^2 \, \mathrm{d}Q  = \frac{1}{2^{2dr}} \left( \prod_{j=1}^r (1+\rho_j)^n  \right) \sum_{\tau, \tau' \in \mathcal{T}} \prod_{i=1}^r \frac{d^n}{|d(1+\rho_i)-2\rho_i \langle \tau_i, \tau_i' \rangle|^n} .
\]
Since each factor of the second product only depends on the $i$th component of $\tau$ and $\tau'$, we can interchange the product and sum and apply the same counting arguments as in Theorem~\ref{thm:gaussian-conditional-independence-simple-power-bound} to get that
\[
\int f^2 \, \mathrm{d}Q  = \prod_{i=1}^r \left( (1+\rho_i)^n \sum_{k=0}^d \frac{\binom{n}{k}}{2^d (1+(3-4k/d)\rho_i)^n} \right)
\]
as desired. We can repeat the same SLLN-based limiting arguments as in Theorem~\ref{thm:gaussian-conditional-independence-simple-power-bound} to show that as $d$ increases the integral will converge to $1$ and hence the power is bounded by the size in the limit. 
\end{proof}

Having shown that for each $n$ and $d$, we have an upper bound on the power of a Gaussian conditional independence test against a simple alternative, we can now show this also holds for Gaussian conditional independence testing problems against other $Q$.

\begin{lemma}
\label{lem:gaussian-conditional-independence-reduction-to-simple-distribution}
Let $Q \in \mathcal{Q}$ be a distribution consisting of $n$ i.i.d.\ copies of jointly Gaussian and injective $(X, Y, Z)$ on $(\mathbb{R}^{d_X}, \mathbb{R}^{d_Y}, \mathbb{R}^{d_Z})$ with non-singular covariance for some $d_X, d_Y, d_Z \in \mathbb{N}$. Consider the testing problem described in Section~\ref{sec:hardness} with $\mathcal{H}_X = \mathbb{R}^{d_X}$ and $\mathcal{H}_Z=\mathbb{R}^{d_Z}$ and let $\psi$ be the test function of a size $\alpha$ test over $\mathcal{P}_0^Q$ with power $\beta$ against $Q$. Then there exists a $d_X \times d_Y$-rectangular diagonal matrix $R$ with diagonal entries $\rho_1, \dots, \rho_r \in (0, 1)$, a distribution $\tilde{Q}$ consisting of $n$ i.i.d.\ copies of jointly Gaussian $(\tilde{X}, \tilde{Y}, \tilde{Z})$ where $\tilde{X}$, $\tilde{Y}$ and $\tilde{Z}$ are all mean zero with identity covariance matrix, $\Cov(\tilde{X}, \tilde{Z}) = \Cov(\tilde{Y}, \tilde{Z}) = 0$ and $\Cov(\tilde{X}, \tilde{Y}) = R$ and a test size $\alpha$ test over $\mathcal{P}_0^{\tilde{Q}}$ with power $\beta$ against $\tilde{Q}$.
\end{lemma}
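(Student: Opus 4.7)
The plan is to exhibit an explicit bijective affine transformation $T:\R^{d_X}\times\R^{d_Y}\times\R^{d_Z}\to\R^{d_X}\times\R^{d_Y}\times\R^{d_Z}$ which, when applied componentwise to the $n$ i.i.d.\ samples, simultaneously (i) maps $Q$ to a canonical distribution $\tilde Q$ of the prescribed form, and (ii) induces a bijection between $\mathcal{P}_0^Q$ and $\mathcal{P}_0^{\tilde Q}$. Transferring the test via $\tilde\psi = \psi\circ(T^{\otimes n})^{-1}$ then preserves both size and power, which gives the conclusion.

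Concretely, I would first regress $X$ and $Y$ on $Z$ under $Q$: set $B_X := \Cov_Q(X,Z)\Cov_Q(Z)^{-1}$ and analogously $B_Y$, and form the residual covariances $\Sigma_X^{\mathrm{res}} := \Cov_Q(X-B_XZ)$ and $\Sigma_Y^{\mathrm{res}}$, which are strictly positive definite because the joint covariance of $(X,Y,Z)$ under $Q$ is non-singular. Next I would take a singular value decomposition $(\Sigma_X^{\mathrm{res}})^{-1/2}\Cov_Q(X-B_XZ, Y-B_YZ)(\Sigma_Y^{\mathrm{res}})^{-1/2} = U\Lambda V^{\!\top}$ and define
\[
T(x,y,z) := \bigl(U^{\!\top}(\Sigma_X^{\mathrm{res}})^{-1/2}(x-B_Xz-\mathbb{E}_Q X+B_X\mathbb{E}_Q Z),\; V^{\!\top}(\Sigma_Y^{\mathrm{res}})^{-1/2}(y-B_Yz-\mathbb{E}_Q Y+B_Y\mathbb{E}_Q Z),\; \Cov_Q(Z)^{-1/2}(z-\mathbb{E}_Q Z)\bigr).
\]
Using standard Gaussian regression formulas, a direct computation then shows that $T_*Q$ is Gaussian with $\tilde X$, $\tilde Y$, $\tilde Z$ all mean zero with identity covariance, $\Cov(\tilde X,\tilde Z)=\Cov(\tilde Y,\tilde Z)=0$, and $\Cov(\tilde X,\tilde Y)=\Lambda$. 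By Cauchy--Schwarz the singular values lie in $[0,1]$; non-singularity of $\Cov_Q(X,Y,Z)$ rules out the value $1$, and since $Q\in\mathcal{Q}$ (so $X\not\!\independent Y\mid Z$ under $Q$), the conditional covariance is nonzero, hence at least one singular value is strictly positive. Taking $R:=\Lambda$ and letting $r$ be the number of nonzero singular values yields the required diagonal form with $\rho_1,\dots,\rho_r\in(0,1)$.

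The crux is then to verify that $T$ really does interchange the two testing problems. Since $T$ is an affine bijection and $T^{-1}$ is again affine, any $P\in\mathcal{P}$ is Gaussian iff $T_*P$ is Gaussian. Because the $\tilde Z$-coordinate of $T$ depends only on $z$, while the $\tilde X$-coordinate depends only on $(x,z)$ and the $\tilde Y$-coordinate only on $(y,z)$, conditioning on $Z$ is equivalent to conditioning on $\tilde Z$ and the map $X\mapsto\tilde X$ is bijective for each fixed $Z$ (similarly for $Y$); this yields the equivalence $X\independent Y\mid Z$ under $P$ iff $\tilde X\independent\tilde Y\mid\tilde Z$ under $T_*P$. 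Finally, the $Z$-marginal of $P$ equals that of $Q$ iff the $\tilde Z$-marginal of $T_*P$ equals that of $\tilde Q$, because the $z$-block of $T$ is a fixed affine bijection. Thus $T_*$ is a bijection $\mathcal{P}_0^Q\leftrightarrow\mathcal{P}_0^{\tilde Q}$ with $T_*Q=\tilde Q$, and applying this componentwise to the $n$-fold i.i.d.\ product distributions transfers $\psi$ to the test $\tilde\psi(\tilde X^{(n)},\tilde Y^{(n)},\tilde Z^{(n)}):=\psi((T^{\otimes n})^{-1}(\tilde X^{(n)},\tilde Y^{(n)},\tilde Z^{(n)}))$ with identical rejection probability under every distribution.

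The main obstacle is essentially notational rather than substantive: I expect the computation of $T_*Q$ and the careful verification that the conditional independence and the $Z$-marginal are both preserved in both directions by $T_*$ to be the longest steps, but both reduce to elementary properties of jointly Gaussian vectors and affine transformations. The potential presence of vanishing singular values of $\Lambda$ is accommodated by letting $r$ count only the strictly positive ones, with the remaining diagonal entries of $R$ equal to zero; at least one must be positive by the assumption $Q\in\mathcal{Q}$.
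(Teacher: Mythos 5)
Your proposal is correct and follows the same overall strategy as the paper: whiten $Z$, regress out $Z$ from $X$ and $Y$, whiten and rotate the residuals via an SVD of the standardised conditional cross-covariance, and transfer the test through the resulting affine bijection, with $R$ being the matrix of singular values. Your $T$ is exactly the inverse of the map the paper calls $f$, and $\Sigma_X^{\mathrm{res}}$, $\Sigma_Y^{\mathrm{res}}$, $\Cov_Q(X-B_XZ,Y-B_YZ)$ coincide with the paper's $\Sigma_{X\mid Z}$, $\Sigma_{Y\mid Z}$, $\Sigma_{XY\mid Z}$.

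The one place you argue differently, and arguably more cleanly, is the verification that $T_*$ carries $\mathcal{P}_0^Q$ bijectively onto $\mathcal{P}_0^{\tilde Q}$. The paper proves that a linear map of block-triangular form with block-diagonal $(X,Y)$-block preserves Gaussian conditional independence by an explicit computation of the conditional covariance operator (invoking the Schur-complement formula for the conditional distribution and observing that it remains block-diagonal). You instead observe that $T$ has the triangular structure $\tilde Z = g(Z)$, $\tilde X = h_1(X,Z)$, $\tilde Y = h_2(Y,Z)$ with each piece a bijection in its leading argument, so $\sigma(\tilde Z)=\sigma(Z)$ and for fixed $Z$ the map $(X,Y)\mapsto(\tilde X,\tilde Y)$ factors as a product of bijections; conditional independence is then preserved by a purely measure-theoretic argument that does not require Gaussianity, and this is tidier and more general than the paper's covariance calculation. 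You also make explicit the handling of zero singular values (setting $r$ to the number of strictly positive ones and noting at least one is positive because $Q\in\mathcal{Q}$), a point the paper glosses over. Both approaches are sound; yours pushes a bit more of the burden onto structural properties of $T$ and less onto Gaussian algebra.
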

\begin{proof}
Let $\psi$ denote the test function of the test with power $\beta$ against $Q$ and $\mu$ and $\Sigma$ denote the mean and covariance matrix of $(X, Y, Z)$ under $Q$. We construct a new test with test function $\tilde{\psi}$ performed by first applying a transformation $f$ to each sample of the data and then applying $\psi$. The transformation $f: \mathbb{R}^{d_X+d_Y+d_Z} \to \mathbb{R}^{d_X+d_Y+d_Z}$ is an affine transformation given by $f(v) = Ax + \mu$ where
\[
A = \begin{pmatrix}
D & M \\
0 & B
\end{pmatrix}
\]
for a block-diagonal matrix $D$ consisting of a $d_X \times d_X$ matrix $D_X$ and $d_Y \times d_Y$ matrix $D_Y$, a $(d_X + d_Y) \times d_Z$ matrix $M$ and a full rank $d_Z \times d_Z$ matrix $B$. 

Note first that such a transformation preserves conditional independence. Let $(X^0, Y^0, Z^0)$ be jointly Gaussian with $X^0 \independent Y^0 \cond Z^0$,  joint mean $\mu^0$ and covariance matrix $\Sigma^0$. The distribution of $(\check{X}_0, \check{Y}_0, \check{Z}_0) := f(X^0, Y^0, Z^0)$ is again Gaussian by the finite-dimensional version of Proposition~\ref{prop:linear-transformation-gaussian} and has mean $A \mu^0 + \mu$ and covariance 
\[
A \Sigma^0 A^\top  = \begin{pmatrix}
D \Sigma_{XY}^0 D^\top  + M \Sigma_{Z, XY}^0 D^\top  + D \Sigma_{XY, Z}^0 M^\top  + M \Sigma_Z^0 M^\top  & D \Sigma_{XY, Z}^0 A^\top  + M \Sigma_Z^0 B^\top  \\
B \Sigma_{Z, XY}^0 D^\top  + B \Sigma_Z^0 M^\top  & B \Sigma_Z^0 B^\top 
\end{pmatrix} ,
\]
where $\Sigma_{XY}^0 = \Cov((X^0, Y^0))$, $ \Sigma_{XY, Z}^0 = \Sigma_{Z , XY}^0  = \Cov((X^0, Y^0), Z^0)$ and $\Sigma_Z^0 = \Cov(Z_0)$. Using the finite-dimensional version
of Proposition~\ref{prop:gaussian-conditional-distribution}, we get that the conditional distribution of $(\check{X}_0, \check{Y}_0)$ given $\check{Z}_0$ is again Gaussian with covariance matrix
\begin{align*}
&D \Sigma_{XY}^0 D^\top  + M \Sigma_{Z, XY}^0 D^\top  + D \Sigma_{XY, Z}^0 M^\top  + M \Sigma_Z^0 M^\top  \\
 &\qquad - (D \Sigma_{XY, Z}^0 B^\top  + M \Sigma_Z^0 B^\top )( B\Sigma_Z^0 B^\top )^{-1} (B \Sigma_{Z, XY}^0 D^\top  + B \Sigma_Z^0 M^\top )\\
  = &D(\Sigma^0_{XY} - \Sigma_{XY, Z}^0 \Sigma_Z^0 \Sigma_{Z, XY}^0) D^\top  .
\end{align*}
The matrix $\Sigma^0_{XY} - \Sigma_{XY, Z}^0 \Sigma_Z^0 \Sigma_{Z, XY}^0$ is the conditional covariance matrix of $(X^0, Y^0)$ given $Z^0$ and is block-diagonal since $X^0 \independent Y^0 \cond Z^0$ by the multivariate analogue of Proposition~\ref{prop:uncorrelated-independence-gaussian}. By the same proposition, since $D$ is block-diagonal, we see that the conditional covariance of $(\check{X}_0, \check{Y}_0)$ given $\check{Z}_0$ is block-diagonal and hence $\check{X}_0 \independent \check{Y}_0 \cond \check{Z}_0$ as desired. 

Let now
\[
\Sigma_{X|Z}^{-1/2}\Sigma_{XY \cond Z} \Sigma_{Y \cond Z}^{-1/2}= U S V^\top 
\]
be the singular-value decomposition of the normalised conditional covariance of $X$ and $Y$ given $Z$ under $Q$. The normalisation ensures that $S$ is a rectangular diagonal matrix with diagonal entries in the open unit interval.
If we let
\begin{align*}
B &:= \Sigma_Z^{1/2}, \quad
M := \begin{pmatrix}
\Sigma_{X, Z} \Sigma_Z^{-1/2} \\
\Sigma_{Y, Z} \Sigma_Z^{-1/2}
\end{pmatrix}\\
 D & := \begin{pmatrix}
\Sigma_{X \cond Z}^{1/2} U & 0 \\
0 & \Sigma_{Y \cond Z}^{1/2} V 
\end{pmatrix}, \quad R := S
\end{align*}
and $(\check{X}, \check{Y}, \check{Z}) = f((\tilde{X}, \tilde{Y}, \tilde{Z}))$ where $(\tilde{X}, \tilde{Y}, \tilde{Z}) \sim \tilde{Q}$, then Proposition~\ref{prop:linear-transformation-gaussian} yields that $(\check{X}, \check{Y}, \check{Z}) \sim Q$ and hence when applying $\psi$, we have power $\beta$ by assumption. Since $A$ also transforms a null distribution with identity covariance into a null distribution with where $Z$ has mean $\mu_Z$ and covariance $\Sigma_Z$, we have the desired result.
\end{proof}

\subsection{Hardness of infinite-dimensional Hilbertian Gaussian conditional independence testing}
\label{sec:hardness-infinite-dimensional-gaussian-testing}
In this section we consider the testing problem described in Section~\ref{sec:hardness} with $\mathcal{H}_X$ and $\mathcal{H}_Z$ infinite-dimensional and separable. We will show that the testing problem against $Q$ is hard for any $Q \in \mathcal{Q}$. In particular, this includes the typical functional data setting where $\mathcal{H}_Z = L^2([0, 1], \mathbb{R})$. It follows that the Gaussian conditional independence problem is hard in the same settings when the null distributions are not restricted to match the marginals of $Q$. 

\subsubsection{Preliminary results}

In this section, we consider finite-dimensional $\mathcal{H}_X$ and infinite-dimensional $\mathcal{H}_Z$. We will need a lemma using the theory of conditional Hilbertian Gaussian distributions from Section \ref{sec:conditional-distributions-on-hilbert-spaces}.

\begin{lemma}
\label{lem:gaussian-conditional-dependence-basis}
Let $(X, Y, Z)$ be jointly Gaussian on $\mathbb{R}^{d_X} \times \mathbb{R}^{d_Y} \times \mathcal{H}$ and assume that the covariance operator of $Z$ is injective. Then there exists a basis 
$(e_k)_{k \in \mathbb{N}}$
 of $\mathcal{H}$ such that
\[
(X, Y) \independent Z_{d_X+d_Y+1}, \ldots \cond Z_1, \dots Z_{d_X}, Z_{d_X+1}, \ldots , Z_{d_X+d_Y}
\]
where $Z_k := \langle Z, e_k \rangle$. 
\end{lemma}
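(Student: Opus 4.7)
The plan is to use the Hilbertian Gaussian conditional distribution to identify a finite-dimensional subspace $V \subseteq \mathcal{H}$, of dimension at most $d_X + d_Y$, through which $\mathbb{E}((X,Y) \mid Z)$ depends on $Z$, and then to build an orthonormal basis of $\mathcal{H}$ whose first $d_X + d_Y$ vectors span $V$. The Gaussian property that the regression residual is independent of the regressor will then deliver the claimed conditional independence.

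First I would apply Proposition~\ref{prop:gaussian-conditional-distribution} to obtain $\mathbb{E}((X,Y) \mid Z) = \mathbb{E}(X,Y) + \mathscr{L}(Z - \mathbb{E}Z)$ almost surely for some bounded linear operator $\mathscr{L}: \mathcal{H} \to \mathbb{R}^{d_X + d_Y}$. Since the codomain is finite-dimensional, each coordinate of $\mathscr{L}$ is a bounded linear functional on $\mathcal{H}$ and, by the Riesz representation theorem, takes the form $\langle \cdot , v_i \rangle$ for some $v_i \in \mathcal{H}$, $i = 1, \ldots, d_X + d_Y$. Set $V := \mathrm{span}(v_1, \ldots, v_{d_X+d_Y})$, a subspace of dimension at most $d_X + d_Y$. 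By separability of $\mathcal{H}$, I can pick an orthonormal basis $(e_1, \ldots, e_{d_X+d_Y})$ of any $(d_X + d_Y)$-dimensional subspace containing $V$ (padding by vectors in $V^\perp$ if necessary) and extend it to an orthonormal basis $(e_k)_{k \in \mathbb{N}}$ of $\mathcal{H}$. For $k > d_X + d_Y$ one has $e_k \perp V$, so $\langle e_k, v_i \rangle = 0$ for every $i$, and therefore $\mathscr{L}(e_k) = 0$. Expanding $Z - \mathbb{E}Z = \sum_{k \geq 1}(Z_k - \mathbb{E}Z_k)\, e_k$ in $\mathcal{H}$ and using continuity of $\mathscr{L}$ yields
\[
\mathscr{L}(Z - \mathbb{E}Z) = \sum_{k=1}^{d_X + d_Y} (Z_k - \mathbb{E}Z_k)\, \mathscr{L}(e_k) \quad \text{a.s.,}
\]
so $\mathbb{E}((X,Y) \mid Z)$ is $\sigma(Z_1, \ldots, Z_{d_X + d_Y})$-measurable, and may be written as $h(Z_1, \ldots, Z_{d_X+d_Y})$ for some measurable function $h$.

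To conclude, set $\eta := (X,Y) - \mathbb{E}((X,Y) \mid Z)$, so that $(X,Y) = h(Z_1, \ldots, Z_{d_X + d_Y}) + \eta$. Since $(\eta, Z)$ is an affine transformation of $(X, Y, Z)$ it is jointly Gaussian, and $\mathrm{Cov}(\eta, Z) = 0$ by construction of the conditional expectation; Proposition~\ref{prop:uncorrelated-independence-gaussian} then yields $\eta \independent Z$, and in particular $\eta \independent (Z_{d_X+d_Y+1}, Z_{d_X+d_Y+2}, \ldots)$ conditionally on $(Z_1, \ldots, Z_{d_X+d_Y})$. A standard factorisation argument now gives $(X, Y) \independent (Z_{d_X+d_Y+1}, Z_{d_X+d_Y+2}, \ldots) \mid (Z_1, \ldots, Z_{d_X+d_Y})$, as required. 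The only delicate step is invoking the correct Hilbertian form of the affine linear conditional mean via Proposition~\ref{prop:gaussian-conditional-distribution}; the rest is elementary linear algebra in a separable Hilbert space together with the standard Gaussian uncorrelated-implies-independent fact.
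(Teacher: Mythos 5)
Your proof is correct and follows essentially the same route as the paper's: both invoke Proposition~\ref{prop:gaussian-conditional-distribution} to reduce the dependence of the conditional law of $(X,Y)$ on $Z$ to a linear operator of rank at most $d_X+d_Y$, and then choose the basis so that its first $d_X+d_Y$ elements span the relevant finite-dimensional subspace (your $\mathrm{span}(v_1,\ldots,v_{d_X+d_Y})$ is exactly the paper's $\mathrm{Im}(\mathscr{A}^*)$). The only cosmetic difference is the concluding step, where you argue via the Gaussian residual decomposition $(X,Y)=h(Z_1,\ldots,Z_{d_X+d_Y})+\eta$ with $\eta\independent Z$, whereas the paper notes that the whole conditional distribution factors through $Z_{\mathrm{Im}(\mathscr{A}^*)}$ and applies Proposition~\ref{prop:conditional-independence-from-conditional-distribution}; both are valid.
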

\begin{proof}
Note that $\mathbb{R}^{d_X} \times \mathbb{R}^{d_Y} \times \mathcal{H}_Z$ is itself a Hilbert space and decompose it as $(\mathbb{R}^{d_X} \times \mathbb{R}^{d_Y}) \oplus \mathcal{H}_Z$. 
Let $\mathscr{C}_{Z} := \Cov(Z)$, 
$\mathscr{C}_{(X,Y)} := \Cov((X,Y))$ (the covariance of the joint vector $(X, Y)$) and $\mathscr{C}_{(X,Y),  Z} := \Cov((X,Y), Z)$.
We can apply Proposition~\ref{prop:gaussian-conditional-distribution} to see that $(X,Y)$ conditional on $Z$ is Gaussian with mean $\mathscr{C}_{(X,Y), Z} \mathscr{C}_{Z}^\dag Z$ and covariance operator $\mathscr{C}_{(X,Y)} - \mathscr{C}_{(X,Y), Z} \mathscr{C}_{Z}^\dag \mathscr{C}_{(X,Y), Z}^*$. 
The operator $\mathscr{A} := \mathscr{C}_{(X,Y), Z} \mathscr{C}_{Z}^\dag$ maps from $\mathcal{H}$ to $\mathbb{R}^{d_X} \times \mathbb{R}^{d_Y}$ and thus is at most a rank $d_X+d_Y$ operator. By \citet[][Theorem~3.3.7 7.]{Hsing2015} this implies that the rank of $\mathscr{A}^*$ is also at most $d_X+d_Y$. Furthermore, \citet[][Theorem~3.3.7 6.]{Hsing2015} yields that $\mathcal{H}= \textrm{Ker}(\mathscr{A}) \oplus \textrm{Im}(\mathscr{A}^*)$. 
Using this decomposition we can write $Z = (Z_{\textrm{Ker}(\mathscr{A})} , Z_{\textrm{Im}(\mathscr{A}^*)})$ and note that by construction $\mathscr{A} Z= \mathscr{A} Z_{\textrm{Im}(\mathscr{A}^*)}$ thus the conditional distribution of $(X,Y)$ given $Z$ only depends on $Z_{\textrm{Im}(\mathscr{A}^*)}$. In total, we have shown by Proposition~\ref{prop:conditional-independence-from-conditional-distribution} that $(X,Y) \independent Z_{\textrm{Ker}(\mathscr{A})} \cond   Z_{\textrm{Im}(\mathscr{A}^*)}$. Letting $r$ denote the rank of $\mathscr{A}^*$, if we start with a basis for $\textrm{Im}(\mathscr{A}^*)$ and append vectors to form a basis for $\mathcal{H}$ using the Gram--Schmidt procedure, we get a basis where
\[
(X, Y) \independent Z_{r+1}, \dots \cond Z_1, \dots Z_r.
\]
Since $r \leq d_X + d_Y$, the weak union property of conditional independence yields
\[
(X, Y) \independent Z_{d_X+d_Y+1}, \dots \cond Z_1, \dots Z_{d_X}, Z_{d_X+1}, \dots , Z_{d_Y+d_X},
\]
as desired.
\end{proof}

Using this lemma and Lemma~\ref{lem:gaussian-conditional-independence-reduction-to-simple-distribution} and Theorem~\ref{thm:gaussian-conditional-independence-simple-multivariate-power-bound} from the previous section, we can prove the hardness result for finite-dimensional $\mathcal{H}_X$ and $\mathcal{H}_Y$. 

\begin{theorem}
\label{thm:gaussian-finite-dimensional-conditional-independence-testing-is-hard}
Let $Q \in \mathcal{Q}$ be a distribution consisting of $n$ i.i.d.\ copies of jointly Gaussian and injective $(X, Y, Z)$ on $(\mathbb{R}^{d_X}, \mathbb{R}^{d_Y}, \mathcal{H}_Z)$ for $d_X, d_Y \in \mathbb{N}$ and any infinite-dimensional and separable $\mathcal{H}_Z$. Consider the testing problem described in Section~\ref{sec:hardness} with $\mathcal{H}_X = \mathbb{R}^{d_X}$ and $\mathcal{H}_Z$ as above and let $\psi$ be the test function of a size $\alpha$ test over $\mathcal{P}_0^Q$. Then $\psi$ has power at most $\alpha$ against $Q$.
\end{theorem}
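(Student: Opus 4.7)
The plan is to reduce to the finite-dimensional hardness result (Theorem~\ref{thm:gaussian-conditional-independence-simple-multivariate-power-bound}) via the basis produced by Lemma~\ref{lem:gaussian-conditional-dependence-basis} combined with a lifting argument that embeds finite-dimensional null distributions back into $\mathcal{P}_0^Q$. Concretely, applying Lemma~\ref{lem:gaussian-conditional-dependence-basis} under $Q$ yields a basis $(e_k)_{k \in \mathbb{N}}$ of $\mathcal{H}_Z$ such that $(X, Y) \independent (Z_{d_X+d_Y+1}, Z_{d_X+d_Y+2}, \ldots) \cond (Z_1, \ldots, Z_{d_X+d_Y})$ under $Q$, where $Z_k := \langle Z, e_k \rangle$. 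For any $d_Z \geq d_X + d_Y$, I let $Q^{d_Z}$ denote the marginal of $Q$ on the finite-dimensional subvector $(X, Y, Z^{(d_Z)})$ with $Z^{(d_Z)} := (Z_1, \ldots, Z_{d_Z})$; this is a jointly Gaussian conditional independence alternative on $\mathbb{R}^{d_X} \times \mathbb{R}^{d_Y} \times \mathbb{R}^{d_Z}$.

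First, combining Lemma~\ref{lem:gaussian-conditional-independence-reduction-to-simple-distribution} with the explicit convex combination appearing inside the proof of Theorem~\ref{thm:gaussian-conditional-independence-simple-multivariate-power-bound}, I obtain an $n$-fold product distribution $P^{d_Z}$ lying in the closed convex hull of $n$-fold products in $\mathcal{P}_0^{Q^{d_Z}}$ with $d_{\mathrm{TV}}(P^{d_Z}, (Q^{d_Z})^{\otimes n}) \to 0$ as $d_Z \to \infty$ (for fixed $n$). The invariance of TV distance under invertible affine transformations ensures that the standardising reduction of Lemma~\ref{lem:gaussian-conditional-independence-reduction-to-simple-distribution} does not disturb this approximation.

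Next, I lift each finite-dimensional null distribution $\tilde{P} \in \mathcal{P}_0^{Q^{d_Z}}$ back to a distribution $\tilde{P}^{\mathrm{ext}}$ on $\mathbb{R}^{d_X} \times \mathbb{R}^{d_Y} \times \mathcal{H}_Z$ by declaring that, under $\tilde{P}^{\mathrm{ext}}$, the conditional law of $(Z_{d_Z+1}, Z_{d_Z+2}, \ldots)$ given $(X, Y, Z^{(d_Z)})$ is the conditional law of $(Z_{d_Z+1}, Z_{d_Z+2}, \ldots)$ given $Z^{(d_Z)}$ alone under $Q$. Three things need verification: that $\tilde{P}^{\mathrm{ext}}$ is jointly Gaussian on $\mathcal{H}_Z$ (which follows from Gaussianity of the conditional kernel, cf.\ Propositions~\ref{prop:gaussian-conditional-distribution} and \ref{prop:linear-transformation-gaussian}); that $X \independent Y \cond Z$ under $\tilde{P}^{\mathrm{ext}}$ (by chaining $X \independent Y \cond Z^{(d_Z)}$ from $\tilde{P}$ with $(X, Y) \independent (Z_{d_Z+1}, \ldots) \cond Z^{(d_Z)}$ from the construction); and that the marginal of $Z$ under $\tilde{P}^{\mathrm{ext}}$ agrees with that of $Q$ (since $\tilde{P}$ matches $Q^{d_Z}$ on $Z^{(d_Z)}$ and the tail coordinates are drawn from $Q$). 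Linearity of this lifting in $\tilde{P}$ yields a distribution $P^{\mathrm{ext}}$ in the closed convex hull of the $n$-fold products in $\mathcal{P}_0^Q$ corresponding to $P^{d_Z}$.

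Finally, because $\tilde{P}^{\mathrm{ext}}$ and $Q$ share the same conditional distribution of the tail coordinates given $(X, Y, Z^{(d_Z)})$, the standard fact that TV distance is preserved by identical conditional kernels gives $d_{\mathrm{TV}}(P^{\mathrm{ext}}, Q^{\otimes n}) = d_{\mathrm{TV}}(P^{d_Z}, (Q^{d_Z})^{\otimes n})$, and Lemma~\ref{lemma:size-power-TV-distance} applied with $\mathcal{P} = \mathcal{P}_0^Q$ and $\mathcal{Q} = \{Q\}$ at the level of $n$-fold products yields $\beta \leq \alpha + d_{\mathrm{TV}}(P^{\mathrm{ext}}, Q^{\otimes n})$. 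Sending $d_Z \to \infty$ drives the remainder to zero and proves $\beta \leq \alpha$. The main obstacles I anticipate are: (i) carefully handling the infinite-dimensional Gaussian conditional kernel in the lifting, in particular checking the $\sigma$-finite domination condition underlying Lemma~\ref{lemma:size-power-TV-distance} (which holds because all lifted nulls share the marginal of $Z$ and differ only through a finite-dimensional conditional Gaussian); and (ii) propagating the TV-preservation identity cleanly through the $n$-fold product structure, which amounts to a disintegration/Fubini argument.
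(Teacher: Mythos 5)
Your proposal is correct, and it uses the same essential ingredients as the paper's proof (the basis from Lemma~\ref{lem:gaussian-conditional-dependence-basis}, the conditional law of the tail coordinates $(Z_{d+1},Z_{d+2},\ldots)$ given $(Z_1,\ldots,Z_d)$ under $Q$, and the finite-dimensional hardness of Theorem~\ref{thm:gaussian-conditional-independence-simple-multivariate-power-bound} via Lemma~\ref{lem:gaussian-conditional-independence-reduction-to-simple-distribution}), but it runs the reduction in the dual direction. The paper argues by contradiction at the level of tests: given $\psi$ on the infinite-dimensional problem, it builds a test for the finite-dimensional problem by randomly augmenting each observation with a draw of the tail coordinates from $Q$'s conditional kernel, checks that this maps nulls to nulls and $\tilde Q$ to $Q$ preserving size and power, and then invokes the finite-dimensional theorem. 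You instead work at the level of distributions: you push the finite-dimensional convex mixture from the proof of Theorem~\ref{thm:gaussian-conditional-independence-simple-multivariate-power-bound} (pulled back through the affine standardisation of Lemma~\ref{lem:gaussian-conditional-independence-reduction-to-simple-distribution}) up to $\mathcal{H}_Z$ using the very same conditional kernel, verify that the lifts land in $\mathcal{P}_0^Q$ and that the TV approximation is preserved because $Q^{\otimes n}$ is itself the lift of $(Q^{d_Z})^{\otimes n}$ under that kernel, and then apply Lemma~\ref{lemma:size-power-TV-distance} directly. The two arguments are exact duals of one another --- the randomisation kernel in the paper's test construction is your lifting kernel --- so neither buys anything the other cannot, but your version has the minor advantage of avoiding the contradiction framing and making the convex-hull/TV structure explicit at the infinite-dimensional level; the paper's version avoids having to verify Gaussianity, conditional independence and marginal-matching of the lifted measures, since these become trivial statements about the augmented data. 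Your anticipated obstacles are real but benign: the domination hypothesis in Lemma~\ref{lemma:size-power-TV-distance} is not actually needed for the one-sided bound $\beta\leq\alpha+d_{\mathrm{TV}}$, which follows directly from $\int\psi\,\mathrm{d}P\leq\alpha$ for any convex combination $P$ of $n$-fold products of nulls, and the $n$-fold Fubini step is routine because the kernel is applied coordinatewise. One small imprecision: your $P^{d_Z}$ is a convex combination of $n$-fold products, not itself a product distribution, but this does not affect the argument.
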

\begin{proof}
Assume for contradiction that $\psi$ is a test of size $\alpha$ over $\mathcal{P}_0^\mathcal{Q}$ with power $\alpha + \varepsilon$ for some $\varepsilon > 0$ against $Q$. Let $(X, Y, Z)$ be distributed as one of the $n$ i.i.d.\ copies constituting $Q$. By Lemma~\ref{lem:gaussian-conditional-dependence-basis}, we can express $Z$ in a basis $(e_k)_{k \in \mathbb{N}}$ such that defining $Z_k = \langle Z , e_k \rangle$, 
we have
\[
(X, Y) \independent Z_{d_X+d_Y+1}, \dots \cond Z_1, \dots , Z_{d_X}, Z_{d_X+1}, \dots , Z_{d_X+d_Y}.
\]
By the weak union property of conditional independence, this implies that
\[
(X, Y) \independent Z_{d+1}, \dots \cond Z_1, \dots , Z_d
\]
for any $d \geq d_X + d_Y$. 

Choose now an arbitrary $d \geq d_X + d_Y$ and let $\tilde{Q}$
denote the distribution of $n$ i.i.d.\ copies of $(X, Y, Z_1, \dots, Z_d)$ under $Q$. Consider the testing problem described in Section~\ref{sec:hardness} with $\mathcal{H}_X = \mathbb{R}^{d_X}$ and $\mathcal{H}_Z = \mathbb{R}^d$. We can construct a test in this setting by defining new observations $(\check{X}, \check{Y}, \check{Z})$ with values in $(\mathbb{R}^{d_X}, \mathbb{R}^{d_Y}, \mathcal{H}_Z)$ and applying $\psi$. We form the new observations by setting $\check{X} := \tilde{X}$, $\check{Y} := \tilde{Y}$ and $\check{Z}:=(\tilde{Z}_1, \dots, \tilde{Z}_d, Z^\circ_{d+1}, Z^\circ_{d+2}, \dots)$, where $Z^\circ_{d+1}, Z^\circ_{d+2}, \dots$ are sampled from the conditional distribution $Z_{d+1}, Z_{d+2}, \dots \cond Z_1 = \tilde{Z}_1, \dots Z_d = \tilde{Z}_d$. If the original sample is from a distribution in $\mathcal{P}_0^{\tilde{Q}}$ then the modified sample will be from a null distribution in $\mathcal{P}_0^Q$, thus the test has size $\alpha$ over $\mathcal{P}_0^{\tilde{Q}}$. Similarly, if $(\tilde{X}, \tilde{Y}, \tilde{Z}) \sim \tilde{Q}$, the modified sample will have distribution $Q$ and hence the test has power $\alpha+\varepsilon$ against $\tilde{Q}$.

By Lemma~\ref{lem:gaussian-conditional-independence-reduction-to-simple-distribution} this implies the existence of a $d_X \times d_Y$ block-diagonal matrix $R$ with diagonal entries in the open unit interval, a Gaussian distribution $Q'$ on $(\mathbb{R}^{d_X}, \mathbb{R}^{d_Y}, \mathbb{R}^d)$ where if $(X' , Y', Z') \sim Q'$, $X'$, $Y'$ and $Z'$ are mean zero with identity covariance matrix, $\Cov(X', Z') = \Cov(Y', Z') = 0$ and $\Cov(X', Y') = R$, and a test with size $\alpha$ over $\mathcal{P}_0^{Q'}$ with power $\alpha + \varepsilon$ against $Q'$. Since $d$ was arbitrary, this contradicts Theorem~\ref{thm:gaussian-conditional-independence-simple-multivariate-power-bound}.
\end{proof}

\subsubsection{Proofs of Theorem~\ref{thm:main-hardness-theorem} and Proposition~\ref{prop:gaussian-conditional-independence-testing-is-hard-when-dependence-is-never-in-tail}}
\label{sec:hardness-proofs}
In this section we prove Theorem~\ref{thm:main-hardness-theorem} and Proposition~\ref{prop:gaussian-conditional-independence-testing-is-hard-when-dependence-is-never-in-tail}. We do this by extending the results from the previous section to the situation where at most one of $X$ and $Y$ are infinite-dimensional.

\begin{lemma}
\label{lem:gaussian-conditional-dependence-basis-variant}
Let $(X, Y, Z)$ be jointly Gaussian on $\mathbb{R}^{d_X} \times \mathcal{H}_Y \times \mathcal{H}_Z$ and assume that the covariance operator of $(Y, Z)$ is injective. Then there exists a basis $(e_k)_{k\in \mathbb{N}}$ of $\mathcal{H}_Y$ such that
\[
X \independent Y_{d_X+1}, \dots \cond Y_1, \dots , Y_{d_X} , Z
\]
where $Y_k := \langle Y, e_k \rangle$. 
\end{lemma}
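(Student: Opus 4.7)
The plan is to adapt the approach of Lemma~\ref{lem:gaussian-conditional-dependence-basis}: identify the finite-rank operator governing the regression of $X$ on $(Y, Z)$, observe that its restriction to $\mathcal{H}_Y$ has rank at most $d_X$ (since the target space is only $d_X$-dimensional), and then align the first $d_X$ basis vectors of $\mathcal{H}_Y$ with the co-range of that restriction.

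First, I would apply Proposition~\ref{prop:gaussian-conditional-distribution} to $X$ conditional on $(Y, Z)$, viewing $(Y, Z)$ as taking values in the orthogonal direct sum $\mathcal{H}_Y \oplus \mathcal{H}_Z$. Injectivity of $\Cov((Y, Z))$ ensures $\mathscr{C}_{(Y, Z)}^\dag$ is well-defined, so the conditional distribution of $X$ given $(Y, Z)$ is Gaussian with mean $\mathbb{E}(X) + \mathscr{A}\bigl((Y, Z) - \mathbb{E}((Y, Z))\bigr)$, where $\mathscr{A} := \mathscr{C}_{X, (Y,Z)} \mathscr{C}_{(Y,Z)}^\dag$, and with a conditional covariance matrix not depending on $(Y, Z)$. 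Splitting $\mathscr{A}$ along the direct sum as $\mathscr{A}(y, z) = \mathscr{B}_Y y + \mathscr{B}_Z z$, the component $\mathscr{B}_Y : \mathcal{H}_Y \to \mathbb{R}^{d_X}$ has range contained in $\mathbb{R}^{d_X}$ and therefore rank at most $d_X$.

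Next, I would invoke \citet[][Theorem~3.3.7]{Hsing2015} to obtain $\mathrm{rank}(\mathscr{B}_Y^*) = \mathrm{rank}(\mathscr{B}_Y) \leq d_X$ and the decomposition $\mathcal{H}_Y = \mathrm{Ker}(\mathscr{B}_Y) \oplus \mathrm{Im}(\mathscr{B}_Y^*)$. I would then construct an orthonormal basis $(e_k)_{k \in \mathbb{N}}$ of $\mathcal{H}_Y$ whose first $d_X$ vectors span $\mathrm{Im}(\mathscr{B}_Y^*)$ (padding with any orthonormal vectors in $\mathrm{Ker}(\mathscr{B}_Y)$ if $\mathrm{rank}(\mathscr{B}_Y) < d_X$), and completing the basis with orthonormal vectors in $\mathrm{Ker}(\mathscr{B}_Y)$. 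Writing $Y = \sum_{k} Y_k e_k$, we then have $\mathscr{B}_Y e_k = 0$ for every $k > d_X$, so
\[
\mathscr{B}_Y Y = \sum_{k=1}^{d_X} Y_k \, \mathscr{B}_Y e_k.
\]
Hence both the conditional mean and the conditional covariance of $X$ given $(Y, Z)$ are functions of $(Y_1, \ldots, Y_{d_X}, Z)$ alone. Proposition~\ref{prop:conditional-independence-from-conditional-distribution} then delivers $X \independent Y_{d_X+1}, Y_{d_X+2}, \ldots \cond Y_1, \ldots, Y_{d_X}, Z$, as required.

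The construction is essentially a direct transcription of the finite-rank idea already used in Lemma~\ref{lem:gaussian-conditional-dependence-basis}. The only substantive bookkeeping issue is correctly splitting $\mathscr{A}$ along the orthogonal direct sum so that it is the restriction $\mathscr{B}_Y$, rather than the full operator $\mathscr{A}$, that annihilates the tail of the basis of $\mathcal{H}_Y$; no additional analytic tools beyond those already invoked in the proof of Lemma~\ref{lem:gaussian-conditional-dependence-basis} should be needed.
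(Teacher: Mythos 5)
Your proof is correct, and it takes a cleaner route through essentially the same idea that the paper uses. Both arguments start from Proposition~\ref{prop:gaussian-conditional-distribution} to write the conditional mean of $X$ given $(Y,Z)$ as $\mathscr{A}(Y,Z)$ for a finite-rank operator $\mathscr{A}$, and both exploit the $\mathrm{Ker}\oplus\overline{\mathrm{Im}}$ decomposition from \citet[Theorem 3.3.7]{Hsing2015}. The difference is where that decomposition is applied. The paper decomposes the full conditioning space $\mathcal{H}_Y\times\mathcal{H}_Z$ into $\mathrm{Ker}(\mathscr{A})\oplus\mathrm{Im}(\mathscr{A}^*)$, then argues that the $\mathcal{H}_Y$-components of a basis of $\mathrm{Im}(\mathscr{A}^*)$ span a subspace of $\mathcal{H}_Y$ of dimension at most $d_X$, and deduces the conclusion via a weak-union step $X\independent (Y,Z)_{\mathrm{Ker}(\mathscr{A})}\cond (Y,Z)_{\mathrm{Im}(\mathscr{A}^*)} \Rightarrow X\independent Y_{r+1},\ldots\cond Y_1,\ldots,Y_r,Z$ whose justification is left somewhat implicit (the Ker/Im decomposition of $\mathcal{H}_Y\times\mathcal{H}_Z$ does not line up with the $\mathcal{H}_Y\oplus\mathcal{H}_Z$ splitting, so one has to argue that $(Y,Z)_{\mathrm{Im}(\mathscr{A}^*)}$ is a measurable function of $(Y_1,\ldots,Y_r,Z)$ and then rearrange conditioning sets). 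Your version avoids this entirely: by restricting $\mathscr{A}$ to $\mathcal{H}_Y$ first, writing $\mathscr{A}(y,z)=\mathscr{B}_Y y+\mathscr{B}_Z z$, and applying the $\mathrm{Ker}(\mathscr{B}_Y)\oplus\mathrm{Im}(\mathscr{B}_Y^*)$ decomposition directly to $\mathcal{H}_Y$, you get a basis in which $\mathscr{B}_Y e_k=0$ for $k>d_X$ outright, so the conditional law of $X$ given $(Y,Z)$ is \emph{manifestly} a function of $(Y_1,\ldots,Y_{d_X},Z)$ and Proposition~\ref{prop:conditional-independence-from-conditional-distribution} applies in one step. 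The one bookkeeping point you flagged — that it is the restriction $\mathscr{B}_Y$, not all of $\mathscr{A}$, that must annihilate the tail of the basis — is precisely what makes the argument go through without the projection detour, and you handle it correctly. You also correctly handle the padding when $\mathrm{rank}(\mathscr{B}_Y)<d_X$. In short: same tools, same structure, but a more transparent construction of the basis.
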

\begin{proof}
Note that $\mathbb{R}^{d_X} \times \mathcal{H}_Y \times \mathcal{H}_Z$ is again a Hilbert space and decompose it as $\mathbb{R}^{d_X} \oplus (\mathcal{H}_Y \times \mathcal{H}_Z)$. Let $\mathscr{C}_{(Y,Z)} := \Cov((Y, Z))$ (the covariance of the joint vector $(Y, Z)$), $\mathscr{C}_{X} := \Cov(X)$ and $\mathscr{C}_{X, (Y, Z)} := \Cov(X, (Y, Z))$. 
We can apply Proposition~\ref{prop:gaussian-conditional-distribution} to see that $X$ conditional on $(Y, Z)$ is Gaussian with mean $\mathscr{C}_{X, (Y,Z)} \mathscr{C}_{(Y,Z)}^\dag (Y,Z)$ and covariance operator $\mathscr{C}_{X} - \mathscr{C}_{X, (Y,Z)} \mathscr{C}_{(Y, Z)}^\dag \mathscr{C}_{X, (Y,Z)}^*$. 
The operator $\mathscr{A} = \mathscr{C}_{X, (Y,Z)} \mathscr{C}_{(Y, Z)}^\dag $ maps from $\mathcal{H}_Y \times \mathcal{H}_Z$ to $\mathbb{R}^{d_X}$ and thus is at most a rank $d_X$ operator. By \citet[][Theorem~3.3.7 7.]{Hsing2015} this implies that the rank of $\mathscr{A}^*$ is also at most $d_X$. Furthermore, \citet[][Theorem~3.3.7 6.]{Hsing2015} yields that $\mathcal{H}_Y \times \mathcal{H}_Z= \textrm{Ker}(\mathscr{A}) \oplus \textrm{Im}(\mathscr{A}^*)$. 

Using this decomposition we can write $(Y, Z) = ((Y, Z)_{\textrm{Ker}(\mathscr{A})} , (Y, Z)_{\textrm{Im}(\mathscr{A}^*)})$ and note that by construction $\mathscr{A} (Y, Z)= \mathscr{A} (Y, Z)_{\textrm{Im}(\mathscr{A}^*)}$ thus the conditional distribution of $X$ given $(Y, Z)$ only depends on $(Y, Z)_{\textrm{Im}(\mathscr{A}^*)}$.
In total, we have shown by Proposition~\ref{prop:conditional-independence-from-conditional-distribution} that $X \independent (Y, Z)_{\textrm{Ker}(\mathscr{A})} \cond  (Y, Z)_{\textrm{Im}(\mathscr{A}^*)}$ which implies by the weak union property of conditional independence that $X \independent Y_{\textrm{Ker}(\mathscr{A})} \cond Y_{\textrm{Im}(\mathscr{A}^*)}, Z$.

Any basis of $\textrm{Im}(\mathscr{A}^*)$ will consist of at most $d_X$ elements. Forming the span of the $\mathcal{H}_Y$-components of the basis vectors will yield a subspace of $\mathcal{H}_Y$ that contains the projection onto $\mathcal{H}_Y$ of $\textrm{Im}(\mathscr{A}^*)$. Thus, letting $r$ denote the rank of $\mathscr{A}^*$, we can append vectors and form a basis for $\mathcal{H}_Y$ using the Gram--Schmidt procedure to get a basis where 
\[
X \independent Y_{r+1}, \dots \cond Y_1, \dots , Y_r , Z .
\]
Since $r \leq d_X$, the weak union property of conditional independence yields
\[
X \independent Y_{d_X+1}, \dots \cond Y_1, \dots , Y_{d_X} , Z
\]
as desired.
\end{proof}

We are now ready to prove Theorem~\ref{thm:main-hardness-theorem}.

\begin{proof}[Proof of Theorem~\ref{thm:main-hardness-theorem}]
Assume without loss of generality that $\mathcal{H}_X$ is finite-dimensional and thus $\mathcal{H}_X$ is isomorphic to a real vector space, and we will instead denote $\mathcal{H}_X = \mathbb{R}^{d_X}$ where $d_X$ is the dimension of $\mathcal{H}_X$.

Assume for contradiction that $\psi$ is a test of size $\alpha$, with power $\alpha + \epsilon$ for some $\epsilon > 0$ against $Q$. Let $(X, Y, Z)$ be distributed as one of the $n$ i.i.d.\ copies constituting $Q$. By Lemma~\ref{lem:gaussian-conditional-dependence-basis-variant} we can express $Y$ in a basis $(e_k)_{k \in \mathbb{N}}$ such that defining $Y_k = \langle Y , e_k \rangle$, we have
\[
X \independent Y_{d_X+1}, \dots \cond Y_1, \dots, Y_{d_X}, Z.
\]
By the weak union property of conditional independence, this implies that
\[
X \independent Y_{d+1}, \dots \cond Y_1, \dots , Y_d, Z
\]
for any $d \geq d_X$.

Choose now an arbitrary $d \geq d_X + d_Y$ and let $\tilde{Q}$ denote the distribution of $n$ i.i.d.\ copies of $(X, Y_1, \dots, Y_d, Z)$ under $Q$. Consider the testing problem described in Section~\ref{sec:hardness} with $\mathcal{H}_X = \mathbb{R}^{d_X}$ and $\mathcal{H}_Z$ as above. We can construct a test in this setting by defining new observations $(\check{X}, \check{Y}, \check{Z})$ with values in $(\mathbb{R}^{d_X}, \mathcal{H}_Y, \mathcal{H}_Z)$ and applying $\psi$. We form the new observations by setting $\check{X} := \tilde{X}$, $\check{Z}:=\tilde{Z}$ and $\check{Y} := (\tilde{Y}_1, \dots, \tilde{Y}_d, Y^\circ_{d+1}, Y^\circ_{d+2}, \dots)$, where $Y^\circ_{d+1}, Y^\circ_{d+2}, \ldots$ are sampled from the conditional distribution $Y_{d+1}, Y_{d+2}, \dots \cond Y_1 = \tilde{Y}_1, \ldots, Y_d = \tilde{Y}_d, Z=\tilde{Z}$. If the original sample is from a distribution in $\mathcal{P}_0^{\tilde{Q}}$, then the modified sample will be from a null distribution in $\mathcal{P}_0^Q$, thus the test has size $\alpha$ over $\mathcal{P}_0^{\tilde{Q}}$. Similarly, if $(\tilde{X}, \tilde{Y}, \tilde{Z}) \sim \tilde{Q}$, the modified sample will have distribution $Q$ and hence the test has power $\alpha+\epsilon$ against the distribution of $(X, Y_1, \dots, Y_d, Z)$. But this contradicts Theorem~\ref{thm:gaussian-finite-dimensional-conditional-independence-testing-is-hard}.
\end{proof}

A similar strategy can be employed to prove Proposition~\ref{prop:gaussian-conditional-independence-testing-is-hard-when-dependence-is-never-in-tail}.

\begin{proof}[Proof of Proposition~\ref{prop:gaussian-conditional-independence-testing-is-hard-when-dependence-is-never-in-tail}]
  We can repeat the arguments of Theorem~\ref{thm:gaussian-finite-dimensional-conditional-independence-testing-is-hard} and Theorem~\ref{thm:main-hardness-theorem} without using Lemma~\ref{lem:gaussian-conditional-dependence-basis} and Lemma~\ref{lem:gaussian-conditional-dependence-basis-variant} since we can use the basis $(e_k)_{k \in \mathbb{N}}$ instead. 
\end{proof}

\subsection{Auxiliary results about conditional distributions on Hilbert spaces} \label{sec:conditional-distributions-on-hilbert-spaces}

Let us first recall how to formally define a conditional distribution. We follow \citet[][Chapter 10.2]{Dudley2002} and \citet{Roenn-Nielsen2014}. 
\begin{definition}
Let $(\Omega, \mathcal{F}, \mathbb{P})$ be a probability space, let $\mathcal{D}$ be a sub-$\sigma$-algebra of $\mathcal{F}$ and let $\mathbb{P}_{|\mathcal{D}}$ denote the restriction of $\mathbb{P}$ to $\mathcal{D}$. Let $X$ be a random variable defined on $(\Omega, \mathcal{F}, \mathbb{P})$ mapping into a measurable space $(\mathcal{X}, \mathcal{A})$. We say that a function $P_{X \cond \mathcal{D}}: \mathcal{A} \times \Omega \to [0,1]$ is a \emph{conditional distribution for $X$ given $\mathcal{D}$} if the following two conditions hold.
\begin{enumerate}[(i)]
\item For each $A \in \mathcal{A}$, $P_{X \cond \mathcal{D}}(A, \cdot) = \mathbb{E}(\ind_{\{(X \in A)\}}\cond \mathcal{D}) = \mathbb{P}(X \in A \cond \mathcal{D})$  $\mathbb{P}_{|\mathcal{D}}$-a.s.
\item For $P_{|\mathcal{D}}$ almost every $\omega \in \Omega$, $P_{X \cond \mathcal{D}}(\cdot, \omega)$ is a probability measure on $(\mathcal{X}, \mathcal{A})$. 
\end{enumerate}
\end{definition}
We are mainly interested in conditioning on the value of some random variable which leads to the following definition.
\begin{definition}
Consider random variables $X$ and $Y$ defined on the probability space $(\Omega, \mathcal{F}, \mathbb{P})$ with values in the measurable spaces $(\mathcal{X}, \mathcal{A})$ and $(\mathcal{Y}, \mathcal{G})$, respectively. We say that a function $P_{Y \cond X}:\mathcal{G} \times \mathcal{X} \to [0,1]$ is a \emph{conditional distribution for $Y$ given $X$} if the following conditions hold.
\begin{enumerate}[(i)]
\item For each $x \in \mathcal{X}$, $P_{Y \cond X}(\cdot, x)$ is a probability measure on $(\mathcal{Y}, \mathcal{G})$.
\item For each $G \in \mathcal{G}$, $P_{Y \cond X}(G, \cdot)$ is $\mathcal{A}$-$\mathbb{B}$ measurable, where $\mathbb{B}$ denotes the Borel $\sigma$-algebra on $\mathbb{R}$.
\item For each $A \in \mathcal{A}$ 
\[
\mathbb{P}(X \in A, Y \in G) = \int_{(X \in A)} P_{Y \cond X}(G, X(\omega)) \, \mathrm{d}\mathbb{P}(\omega) = \int_{A} P_{Y \cond X}(G, x) \, \mathrm{d}X(\mathbb{P})(x),  
\]
where $X(\mathbb{P})$ is the push-forward measure of $X$ under $\mathbb{P}$, i.e. the measure on $(\mathcal{X}, \mathcal{A})$ such that $X(\mathbb{P})(A) = \mathbb{P}(X \in A)$ for $A \in \mathcal{A}$. 
\end{enumerate}
Informally, we write $Y \cond X$ for the conditional distribution of $Y$ given $X$ and $Y \cond X=x$ for the measure $P_{Y \cond X}(\cdot, x)$. If a function $Q: \mathcal{G} \times \mathcal{X} \to [0, 1]$ only satisfies the first two conditions, we say that $Q$ is a \emph{$(\mathcal{X}, \mathcal{A})$-Markov kernel on $(\mathcal{Y}, \mathcal{G})$}. 
\end{definition}
The connection between the previous two definitions can be seen by viewing $X$ and $Y$ as random variables on the probability space $(\mathcal{X} \times \mathcal{Y}, \mathcal{A} \otimes \mathcal{G}, (X, Y)(\mathbb{P}))$ where $(X, Y)(\mathbb{P})$ is the joint push-forward measure of $X$ and $Y$ under $\mathbb{P}$. If we then let $\mathcal{D}$ be the smallest $\sigma$-algebra making the projection onto the $\mathcal{X}$-space measurable, 
we see by letting $P_{Y \cond \mathcal{D}}(G, (x,y)) = P_{Y \cond X}(G, x)$ that $P_{Y \cond X}$ also satisfies the conditions of the first definition. For more on this perspective, see \citet[][Theorem~10.2.1]{Dudley2002}. It is non-trivial to show the existence of conditional distributions, however, we do have the following result from \citet[][Theorem~10.2.2]{Dudley2002}.
\begin{lemma}
Consider random variables $X$ and $Y$ defined on the probability space $(\Omega, \mathcal{F}, \mathbb{P})$ with values in the measurable spaces $(\mathcal{X}, \mathcal{A})$ and $(\mathcal{Y}, \mathcal{G})$ respectively. If $\mathcal{X}$ and $\mathcal{Y}$ are Polish spaces and $\mathcal{A}$ and $\mathcal{G}$ are their respective Borel $\sigma$-algebras then the conditional distribution for $Y$ given $X$ exists.
\end{lemma}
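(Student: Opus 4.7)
The plan is to obtain the conditional distribution in two stages: first construct a regular conditional probability for $Y$ given the sub-$\sigma$-algebra $\mathcal{D} := \sigma(X) = X^{-1}(\mathcal{A})$, and then use the Doob--Dynkin factorisation to turn this into a Markov kernel on $(\mathcal{X},\mathcal{A})$ with the required property.

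For the first stage I would exploit that $\mathcal{Y}$ being Polish is Borel-isomorphic to a Borel subset of $[0,1]$ (in fact, to a standard Borel space), so it suffices to construct a regular conditional distribution when $\mathcal{Y}$ is a Borel subset of a compact metric space. I would pick a countable algebra $\mathcal{G}_0 \subset \mathcal{G}$ generating $\mathcal{G}$ (using separability of $\mathcal{Y}$), and define $\tilde P(G,\omega) := \mathbb{E}(\ind_{\{Y \in G\}} \cond \mathcal{D})(\omega)$ for each $G \in \mathcal{G}_0$, choosing a fixed version of each conditional expectation. Since $\mathcal{G}_0$ is countable, the $\mathbb{P}$-null sets on which (a) finite additivity on $\mathcal{G}_0$ and (b) continuity from above at $\emptyset$ can fail are contained in a single null set $N \in \mathcal{D}$; outside $N$, $\tilde P(\,\cdot\,,\omega)$ restricted to $\mathcal{G}_0$ is a countably additive premeasure (continuity from above at $\emptyset$ is where the Polish structure enters, via an inner regularity / compactness argument applied to the reduction to a compact Polish space). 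Carathéodory's extension theorem then produces, for each $\omega \notin N$, a unique probability measure $P_{Y\cond\mathcal{D}}(\,\cdot\,,\omega)$ on $(\mathcal{Y},\mathcal{G})$; on $N$ we assign an arbitrary fixed probability measure. The uniqueness clause of Carathéodory guarantees that $P_{Y\cond\mathcal{D}}(G,\,\cdot\,)$ is $\mathcal{D}$-measurable and agrees a.s.\ with $\mathbb{E}(\ind_{\{Y \in G\}} \cond \mathcal{D})$ for every $G \in \mathcal{G}$, by a monotone class / Dynkin-system argument starting from $\mathcal{G}_0$.

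For the second stage I would invoke the Doob--Dynkin lemma: for any bounded $\mathcal{D}$-measurable real-valued function $h$, there exists a measurable $\tilde h : (\mathcal{X},\mathcal{A}) \to \mathbb{R}$ with $h = \tilde h \circ X$. Apply this $G$-by-$G$ to $\omega \mapsto P_{Y\cond\mathcal{D}}(G,\omega)$ for $G \in \mathcal{G}_0$ to obtain candidate kernels $P_{Y\cond X}(G,\,\cdot\,)$ on $\mathcal{X}$, then extend to all of $\mathcal{G}$ so that condition (ii) of the definition holds automatically. Condition (i) --- that $P_{Y\cond X}(\,\cdot\,,x)$ is a probability measure for \emph{every} $x$ --- requires care because the Doob--Dynkin factorisation is only determined $X(\mathbb{P})$-a.s.; I would fix this by using the countable $\mathcal{G}_0$ once more to identify a single $X(\mathbb{P})$-null set $N_0 \in \mathcal{A}$ outside which $P_{Y\cond X}(\,\cdot\,,x)$ restricted to $\mathcal{G}_0$ is a premeasure, apply Carathéodory pointwise, and redefine $P_{Y\cond X}(\,\cdot\,,x)$ to be an arbitrary fixed probability measure on $N_0$. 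Condition (iii) then follows from condition (i) of the first-stage regular conditional probability together with the definition of the push-forward measure $X(\mathbb{P})$.

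The main obstacle is the continuity-from-above step in the first stage: constructing the $\mathbb{P}$-null exceptional set on which countable additivity fails is the non-trivial part, and it is precisely here that the Polish assumption on $\mathcal{Y}$ is used (via a Borel isomorphism with a standard Borel space, so that the compactness of $[0,1]$ provides the required inner regularity). Everything else --- the countable-algebra bookkeeping, the monotone class extension, and the Doob--Dynkin factorisation --- is routine given this core ingredient.
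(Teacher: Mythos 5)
The paper does not supply its own proof of this lemma; it simply cites \citet[Theorem~10.2.2]{Dudley2002}, and your sketch is precisely the standard argument underlying that citation (countable generating algebra, a single exceptional null set, Carath\'eodory extension, Doob--Dynkin factorisation), so in substance you have reconstructed the intended proof correctly. The one step I would tighten is your item (b) in the first stage: ``continuity from above at $\emptyset$'' ranges over uncountably many decreasing sequences in $\mathcal{G}_0$, so you cannot discard a null set per sequence. The correct bookkeeping is to choose $\mathcal{G}_0$ so that its members are finite unions of traces of compact sets (e.g.\ rational closed intervals after the Borel isomorphism into $[0,1]$, together with compact sets $K_m$ from Ulam tightness of the law of $Y$); then any finitely additive set function on $\mathcal{G}_0$ that is inner regular with respect to these compact members is automatically countably additive, and inner regularity is a countable family of conditions, each failing only on a null set. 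With that replacement, and with the additional (routine) observation that the extended measure charges the Borel-isomorphic image of $\mathcal{Y}$ with full mass almost surely before restricting back, your argument goes through.
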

We will consider real-valued and Hilbertian random variables in the following, thus we are free to assume the existence of conditional distributions wherever needed. Before we delve into the main preliminary results about Hilbertian conditional distributions, we present some fundamental results from the theory of regular conditional distributions. For measurable spaces $(\mathcal{X}, \mathcal{A})$ and $(\mathcal{Y}, \mathcal{G})$, we let $i_x: \mathcal{Y} \to \mathcal{X} \times \mathcal{Y}$ denote the inclusion map, i.e.\ $i_x(y)=(x, y)$. This is a $\mathcal{G}-\mathcal{A} \otimes \mathcal{G}$ measurable mapping for each fixed $x$. The following four results are included for completeness and can be found in \citet[][Lemma~1.1.4, Theorem~1.2.1, Theorem~2.1.1 \& Theorem~3.5.5]{Roenn-Nielsen2014}. Unless otherwise specified, for these results $X$, $Y$ and $Z$ are random variables on measurable spaces $(\mathcal{X}, \mathcal{A})$, $(\mathcal{Y}, \mathcal{G})$ and $(\mathcal{Z}, \mathcal{K})$ respectively. 

\begin{lemma}
\label{lem:measurability-of-image-measure-of-kernel-under-inclusion-mapping}
Let $Q$ be a $(\mathcal{X}, \mathcal{A})$-Markov kernel on $(\mathcal{Y}, \mathcal{G})$ and let $\mathbb{B}$ denote the Borel $\sigma$-algebra on $\mathbb{R}$. For each $C \in \mathcal{A} \otimes \mathcal{G}$ the map
\[
x \mapsto Q(i_x^{-1}(C), x)
\]
is $\mathcal{A}$-$\mathbb{B}$ measurable.
\end{lemma}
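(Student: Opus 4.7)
The plan is to use a monotone class / Dynkin $\pi$-$\lambda$ argument on the collection
\[
\mathcal{D} := \{C \in \mathcal{A} \otimes \mathcal{G} : x \mapsto Q(i_x^{-1}(C), x) \text{ is } \mathcal{A}\text{-}\mathbb{B} \text{ measurable}\}.
\]
The goal is to show $\mathcal{D} = \mathcal{A} \otimes \mathcal{G}$. The standard recipe is: show $\mathcal{D}$ contains the measurable rectangles (which form a $\pi$-system generating the product $\sigma$-algebra), and show $\mathcal{D}$ is a $\lambda$-system (equivalently a Dynkin system). The Dynkin $\pi$-$\lambda$ theorem then gives the conclusion.

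First I would verify $\mathcal{D}$ contains all rectangles $C = A \times G$ with $A \in \mathcal{A}$ and $G \in \mathcal{G}$. For such $C$ one has
\[
i_x^{-1}(A \times G) = \begin{cases} G & \text{if } x \in A, \\ \emptyset & \text{if } x \notin A,\end{cases}
\]
so that $Q(i_x^{-1}(A \times G), x) = \mathbf{1}_A(x)\, Q(G, x)$. This is a product of two $\mathcal{A}$-measurable functions (the indicator of $A$, and $x \mapsto Q(G, x)$ which is $\mathcal{A}$-$\mathbb{B}$ measurable by the definition of a Markov kernel), hence is itself $\mathcal{A}$-$\mathbb{B}$ measurable.

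Next I would check the $\lambda$-system axioms. Clearly $\mathcal{X} \times \mathcal{Y} \in \mathcal{D}$ since then the map is constantly $1$. For closure under proper differences $C_1 \subseteq C_2$, note $i_x^{-1}$ commutes with set differences, so
\[
Q(i_x^{-1}(C_2 \setminus C_1), x) = Q(i_x^{-1}(C_2), x) - Q(i_x^{-1}(C_1), x),
\]
which is measurable as the difference of two measurable functions (the values are finite since $Q(\cdot, x)$ is a probability measure). For closure under increasing countable unions $C_n \uparrow C$, we have $i_x^{-1}(C_n) \uparrow i_x^{-1}(C)$ and so by continuity of the probability measure $Q(\cdot, x)$ from below,
\[
Q(i_x^{-1}(C), x) = \lim_{n \to \infty} Q(i_x^{-1}(C_n), x),
\]
a pointwise limit of measurable functions and hence measurable. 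Thus $\mathcal{D}$ is a $\lambda$-system.

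Since the rectangles form a $\pi$-system that generates $\mathcal{A} \otimes \mathcal{G}$, the Dynkin $\pi$-$\lambda$ theorem yields $\mathcal{D} \supseteq \mathcal{A} \otimes \mathcal{G}$, completing the proof. There is no real obstacle here; the only minor point to be careful with is ensuring $Q(\cdot, x)$ is a finite (indeed probability) measure so that differences are well-defined and continuity from below applies, both of which are built into the definition of a Markov kernel.
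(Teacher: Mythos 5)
Your proof is correct and follows essentially the same route as the paper: define the class $\mathcal{D}$ of good sets, check that rectangles $A \times G$ yield the measurable function $\mathbf{1}_A(x)\,Q(G,x)$, verify $\mathcal{D}$ is a Dynkin ($\lambda$-) system using finiteness and continuity from below of $Q(\cdot,x)$, and conclude by the $\pi$-$\lambda$ theorem. No discrepancies.
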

\begin{proof}
Let
\[
\mathcal{D} = \{C \in \mathcal{A} \otimes \mathcal{G} \cond \text{$x \mapsto Q(i_x^{-1}(C), x)$ is $\mathcal{A}$-$\mathbb{B}$ measurable} \}  
\]
and consider a product set $A \times G \in \mathcal{A} \otimes \mathcal{G}$. Clearly, 
\[
i_x^{-1}(A \times G)  = \begin{cases}
  \emptyset & \text{if $x \not\in A$}\\
  B & \text{if $x \in A$}
\end{cases}
\]
and therefore
\[
Q(i_x^{-1}(A \times G), x)  = \begin{cases}
  0 & \text{if $x \not\in A$}\\
  Q(G, x) & \text{if $x \in A$}
\end{cases} = \ind_A(x) Q(G, x).
\]
This is a product of two $\mathcal{A}$-$\mathbb{B}$ measurable functions and is thus also $\mathcal{A}$-$\mathbb{B}$ measurable. This shows that $\mathcal{D}$ contains all product sets and since the product sets are an intersection-stable generator of $\mathcal{A} \otimes \mathcal{G}$, we are done if we can show that $\mathcal{D}$ is a Dynkin class by \citet[][Theorem~5.5]{schilling}. 

We have already shown that product sets are in $\mathcal{D}$ which includes $\mathcal{X} \times \mathcal{Y}$. If $C_1, C_2 \in \mathcal{D}$ where $C_1 \subseteq C_2$ then clearly also $i_x^{-1}(C_1) \subseteq i_x^{-1}(C_2)$ and further $i_x^{-1}(C_2 \setminus C_1) = i_x^{-1}(C_2) \setminus i_x^{-1}(C_1)$. This implies that
\[
Q(i_x^{-1}(C_2 \setminus C_1), x) = Q(i_x^{-1}(C_2), x) - Q(i_x^{-1}(C_1), x)
\]
which is the difference of two $\mathcal{A}$-$\mathbb{B}$ measurable functions and is thus also $\mathcal{A}$-$\mathbb{B}$ measurable. Hence, $C_2 \setminus C_1 \in \mathcal{D}$. Finally, assume that $C_1 \subseteq C_2 \subseteq \cdots $ is an increasing sequence of $\mathcal{D}$-sets. Similarly to above we have $i_x^{-1}(C_1) \subseteq i_x^{-1}(C_2) \subseteq \cdots $ and
\[
i_x^{-1}\left( \bigcup_{n=1}^\infty C_n \right) = \bigcup_{n=1}^\infty i_x^{-1}\left(  C_n \right) .
\]
Then
\[
Q \left( i_x^{-1}\left( \bigcup_{n=1}^\infty C_n \right), x \right) = Q \left( \bigcup_{n=1}^\infty i_x^{-1}\left(  C_n \right), x \right) = \lim_{n \to \infty} Q(i_x^{-1}(C_n), x) .
\]
The limit is $\mathcal{A}$-$\mathbb{B}$ measurable since each of the functions $x \mapsto Q(i_x^{-1}(C_n), x)$ are measurable. Hence, $\mathcal{D}$ is a Dynkin class, and we have the desired result.
\end{proof}

\begin{proposition}
\label{prop:existence-of-integration-measure}
Let $\mu$ be a probability measure on $(\mathcal{X}, \mathcal{A})$ and let $Q$ be a $(\mathcal{X}, \mathcal{A})$-Markov kernel on $(\mathcal{Y}, \mathcal{G})$. There exists a uniquely determined probability measure $\lambda$ on $(\mathcal{X} \times \mathcal{Y}, \mathcal{A} \otimes \mathcal{G})$ satisfying
\[
\lambda(A \times G) = \int_A Q(G, x) , \mathrm{d}\mu(x)  
\]
for all $A \in \mathcal{A}$ and $G \in \mathcal{G}$. Furthermore, for $C \in \mathcal{A} \otimes \mathcal{G}$
\[
\lambda(C) = \int Q(i_x^{-1}(C), x) , \mathrm{d}\mu(x) .
\]
\end{proposition}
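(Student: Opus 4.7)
The plan is to construct $\lambda$ explicitly via the second formula and then verify that it satisfies the first, with uniqueness following from a standard $\pi$--$\lambda$ argument. First I would define
\[
\lambda(C) := \int Q(i_x^{-1}(C), x) \, \mathrm{d}\mu(x), \qquad C \in \mathcal{A} \otimes \mathcal{G}.
\]
Lemma~\ref{lem:measurability-of-image-measure-of-kernel-under-inclusion-mapping} guarantees that the integrand is $\mathcal{A}$-$\mathbb{B}$ measurable, so the integral is well defined and takes values in $[0,1]$ since $Q(\cdot, x)$ is a probability measure for each $x$.

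Next I would verify that $\lambda$ is a probability measure. Non-negativity is immediate. For $C = \mathcal{X} \times \mathcal{Y}$ we have $i_x^{-1}(\mathcal{X} \times \mathcal{Y}) = \mathcal{Y}$, so $Q(i_x^{-1}(C),x) = 1$ for all $x$ and therefore $\lambda(\mathcal{X}\times\mathcal{Y}) = \mu(\mathcal{X}) = 1$. For countable additivity, given pairwise disjoint $(C_n)_{n\in\mathbb{N}} \subseteq \mathcal{A}\otimes\mathcal{G}$, observe that $i_x^{-1}$ commutes with arbitrary unions and preserves disjointness, so $Q\bigl(i_x^{-1}(\bigcup_n C_n),x\bigr) = \sum_n Q(i_x^{-1}(C_n), x)$ pointwise. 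An application of the monotone convergence theorem to the partial sums then yields $\lambda(\bigcup_n C_n) = \sum_n \lambda(C_n)$.

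Third, I would check the product-set formula. As in the proof of Lemma~\ref{lem:measurability-of-image-measure-of-kernel-under-inclusion-mapping}, for any $A \in \mathcal{A}$ and $G \in \mathcal{G}$ we have $Q(i_x^{-1}(A\times G),x) = \ind_A(x)\, Q(G,x)$, and hence
\[
\lambda(A\times G) = \int \ind_A(x)\, Q(G,x) \, \mathrm{d}\mu(x) = \int_A Q(G,x) \, \mathrm{d}\mu(x),
\]
which is the required identity. Finally, for uniqueness, suppose $\lambda'$ is any probability measure on $(\mathcal{X}\times\mathcal{Y}, \mathcal{A}\otimes\mathcal{G})$ satisfying the product-set formula. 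Then $\lambda$ and $\lambda'$ agree on the collection of measurable rectangles, which is an intersection-stable generator of $\mathcal{A}\otimes\mathcal{G}$ and contains $\mathcal{X}\times\mathcal{Y}$. The standard uniqueness-of-measures theorem (Dynkin's $\pi$--$\lambda$ theorem) then gives $\lambda = \lambda'$ on all of $\mathcal{A}\otimes\mathcal{G}$. No step here is a genuine obstacle; the only part requiring any care is the appeal to Lemma~\ref{lem:measurability-of-image-measure-of-kernel-under-inclusion-mapping} to make sense of the defining integral, and the commutation of $i_x^{-1}$ with set operations in the countable-additivity step.
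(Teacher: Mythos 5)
Your proposal is correct and follows essentially the same route as the paper: define $\lambda$ by the integral formula for general $C$, invoke Lemma~\ref{lem:measurability-of-image-measure-of-kernel-under-inclusion-mapping} for measurability, verify countable additivity via disjointness of the preimages $i_x^{-1}(C_n)$ and monotone convergence, recover the product-set formula from $Q(i_x^{-1}(A\times G),x)=\ind_A(x)Q(G,x)$, and conclude uniqueness from agreement on the intersection-stable generator of rectangles. No gaps.
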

\begin{proof}
Uniqueness follows from \citet[][Theorem~5.7]{schilling} since $\lambda$ is determined on the product sets which form an intersection-stable generator of $\mathcal{A} \otimes \mathcal{G}$.

For existence, we show that $\lambda$ as defined for general $C \in \mathcal{A} \otimes \mathcal{G}$ is a measure. The integrand is measurable by Lemma~\ref{lem:measurability-of-image-measure-of-kernel-under-inclusion-mapping} and since $Q$ is non-negative, the integral is well-defined with values in $[0, \infty]$. Let $C_1, C_2 \dots $ be a sequence of disjoint sets in $\mathcal{A} \otimes \mathcal{G}$. Then for each $x \in \mathcal{X}$ the sets $i_x^{-1}(C_1), i_x^{-1}(C_2), \dots$ are disjoint as well. Hence,
\begin{align*}
\lambda \left(\bigcup_{n=1}^\infty C_n \right) &= \int Q \left( i_x^{-1} \left( \bigcup_{n=1}^\infty C_n \right)  , x\right) \, \mathrm{d} \mu(x) =  \int \sum_{n=1}^\infty Q \left( i_x^{-1} (C_n)  , x\right) \, \mathrm{d} \mu(x) \\
&= \sum_{n=1}^\infty \int  Q \left( i_x^{-1} (C_n)  , x\right) \, \mathrm{d} \mu(x) = \sum_{n=1}^\infty \lambda(C_n)
\end{align*}
where the second equality uses that $Q(\cdot, x)$ is a measure and the third uses monotone convergence to interchange integration and summation. Since also
\[
\lambda(\mathcal{X} \times \mathcal{Y}) = \int Q( i_x^{-1}(\mathcal{X} \times \mathcal{Y}), x) \, \mathrm{d}\mu(x) = \int Q( \mathcal{Y}, x) \, \mathrm{d}\mu(x) = \int 1 \, \mathrm{d}\mu(x) = 1 
\]
$\lambda$ is a probability measure, and it follows that 
\[
\lambda(A \times G) = \int Q(i_x^{-1}(A \times G), x) \, \mathrm{d}\mu(x) = \int_A Q(G, x) \, \mathrm{d}\mu(x) 
\]
for all $A \in \mathcal{A}$ and $G \in \mathcal{G}$ as desired.
\end{proof}

\begin{proposition}
\label{prop:substitution-theorem-for-conditional-distributions}
Assume that $P_{Y \cond X}$ is the conditional distribution of $Y$ given $X$. Let $(\mathcal{Z}, \mathcal{K})$ be another measurable space and let $\phi: \mathcal{X} \times \mathcal{Y} \to \mathcal{Z}$ be a measurable mapping. Define $Z= \phi(X, Y)$. Then the conditional distribution of $Z$ given $X$ exists and for $K \in \mathcal{K}$ and $x \in \mathcal{X}$ is given by
\[
P_{Z \cond X}(K, x) = P_{Y \cond X}((\phi \circ i_x)^{-1}(K), x).
\]
\end{proposition}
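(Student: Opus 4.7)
The plan is to verify the three defining properties of a conditional distribution for the proposed formula $P_{Z \cond X}(K, x) := P_{Y \cond X}((\phi \circ i_x)^{-1}(K), x)$, exploiting the two auxiliary results (Lemma~\ref{lem:measurability-of-image-measure-of-kernel-under-inclusion-mapping} and Proposition~\ref{prop:existence-of-integration-measure}) stated just above the statement.

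First I would check that $P_{Z\cond X}$ is a Markov kernel. For fixed $x \in \mathcal{X}$, the map $K \mapsto (\phi \circ i_x)^{-1}(K)$ is a pullback and thus commutes with countable unions, intersections and complements, so $K \mapsto P_{Y\cond X}((\phi \circ i_x)^{-1}(K), x)$ inherits countable additivity and total mass $1$ from $P_{Y\cond X}(\cdot, x)$. For fixed $K \in \mathcal{K}$, observe that $(\phi \circ i_x)^{-1}(K) = i_x^{-1}(\phi^{-1}(K))$, and since $\phi$ is measurable we have $\phi^{-1}(K) \in \mathcal{A} \otimes \mathcal{G}$. Lemma~\ref{lem:measurability-of-image-measure-of-kernel-under-inclusion-mapping} applied to this set and to the kernel $P_{Y\cond X}$ then yields the required $\mathcal{A}$--$\mathbb{B}$ measurability of $x \mapsto P_{Z \cond X}(K, x)$.

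Next I would verify the integration identity. For $A \in \mathcal{A}$ and $K \in \mathcal{K}$, write
\[
\pr(X \in A,\, Z \in K) = \pr\bigl((X, Y) \in (A \times \mathcal{Y}) \cap \phi^{-1}(K)\bigr).
\]
Let $\lambda$ be the measure supplied by Proposition~\ref{prop:existence-of-integration-measure} with $\mu = X(\pr)$ and $Q = P_{Y\cond X}$. Since the joint distribution of $(X,Y)$ satisfies $(X,Y)(\pr)(A \times G) = \int_A P_{Y\cond X}(G, x) \, \mathrm{d}X(\pr)(x)$ (this is property (iii) of $P_{Y\cond X}$), the uniqueness in Proposition~\ref{prop:existence-of-integration-measure} forces $(X,Y)(\pr) = \lambda$. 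Applying the second identity of that proposition to $C := (A \times \mathcal{Y}) \cap \phi^{-1}(K)$ gives
\[
\pr(X \in A,\, Z \in K) = \int P_{Y\cond X}\bigl(i_x^{-1}(C),\, x\bigr) \, \mathrm{d}X(\pr)(x).
\]

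Finally I would simplify $i_x^{-1}(C)$. Since $i_x^{-1}(A \times \mathcal{Y}) = \mathcal{Y}$ when $x \in A$ and $\emptyset$ otherwise, and $i_x^{-1}(\phi^{-1}(K)) = (\phi \circ i_x)^{-1}(K)$, we obtain $i_x^{-1}(C) = (\phi \circ i_x)^{-1}(K)$ for $x \in A$ and $\emptyset$ for $x \notin A$. Hence the integrand is $\ind_A(x)\, P_{Z\cond X}(K, x)$, and the right-hand side becomes $\int_A P_{Z \cond X}(K, x)\, \mathrm{d}X(\pr)(x)$, which is exactly property (iii) in the definition of a conditional distribution. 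There is no substantive obstacle: the only subtle point is the measurability in $x$, for which the preceding lemma was tailored; the rest is bookkeeping with preimages and an appeal to the uniqueness clause in Proposition~\ref{prop:existence-of-integration-measure}.
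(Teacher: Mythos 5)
Your proposal is correct and follows essentially the same route as the paper's proof: verify the Markov kernel properties via Lemma~\ref{lem:measurability-of-image-measure-of-kernel-under-inclusion-mapping}, then establish property (iii) by applying Proposition~\ref{prop:existence-of-integration-measure} to the set $(A \times \mathcal{Y}) \cap \phi^{-1}(K)$ and simplifying the preimage under $i_x$. Your version is in fact slightly more careful than the paper's in spelling out the uniqueness argument identifying $(X,Y)(\pr)$ with $\lambda$ and in verifying countable additivity of the pullback, both of which the paper treats as immediate.
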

\begin{proof}
Clearly $P_{Z \cond X}(\cdot , x)$ is a probability measure for every $x \in \mathcal{X}$ and Lemma~\ref{lem:measurability-of-image-measure-of-kernel-under-inclusion-mapping} yields that $P_{Z \cond X}(K, \cdot)$ is $\mathcal{A}$-$\mathbb{B}$ measurable for every $K \in \mathcal{K}$. It remains to show that $P_{Z \cond X}$ satisfies the third condition required to be the conditional distribution of $Z$ given $X$. For $A \in \mathcal{A}$ and $K \in \mathcal{K}$ we get that
\[
\mathbb{P}(X \in A, Z \in K) = \mathbb{P}((X, Y) \in (A \times \mathcal{Y}) \cap \phi^{-1}(K))
\]
and hence by Proposition~\ref{prop:existence-of-integration-measure}, we get that
\[
\mathbb{P}(X \in A, Z \in K) = \int P_{Y \cond X}(i_x^{-1}((A \times \mathcal{Y}) \cap \phi^{-1}(K)), x) \, \mathrm{d}X(\mathbb{P})(x) .
\]
Since 
\[
i_x^{-1}((A \times \mathcal{Y}) \cap \phi^{-1}(K))  = \begin{cases}
  \emptyset & \text{if $x \not \in A$}\\
  i_x^{-1}(\phi^{-1}(K)) & \text{if $x \in A$}
\end{cases},
\]
we get
\[
\mathbb{P}(X \in A, Z \in K) = \int_A P_{Y \cond X}(i_x^{-1}(\phi^{-1}(K)), x) \, \mathrm{d}X(\mathbb{P})(x) = \int_A P_{Z \cond X}(K, x) \, \mathrm{d}X(\mathbb{P})(x),
\]
proving the desired result.
\end{proof}

\begin{proposition}
\label{prop:conditional-independence-from-conditional-distribution}
Suppose that conditional distribution $P_{Y \cond (X,Z)}$ of $Y$ given $(X, Z)$ has the structure
\[
P_{Y \cond (X, Z)}(G, (x, z)) = Q(G, z)
\]
for some $Q: \mathcal{G} \times \mathcal{Z}$ where for every $z \in \mathcal{Z}$, $Q(\cdot, z)$ is a probability measure. Then $Q$ is a Markov kernel, $Q$ is the conditional distribution of $Y$ given $Z$ and $X \independent Y \cond Z$.
\end{proposition}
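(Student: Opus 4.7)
The plan is to verify the three claims in turn, using the machinery of Markov kernels and regular conditional distributions established in the preceding results.

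First, I would check that $Q$ is a Markov kernel on $(\mathcal{Y}, \mathcal{G})$. The condition that $Q(\cdot, z)$ is a probability measure for every $z \in \mathcal{Z}$ is given. For the measurability of $Q(G, \cdot)$ for each $G \in \mathcal{G}$, I would fix any $x_0 \in \mathcal{X}$ and observe that the map $z \mapsto Q(G, z) = P_{Y \cond (X, Z)}(G, (x_0, z))$ is a section at $x_0$ of the $\mathcal{A} \otimes \mathcal{K}$-measurable function $P_{Y \cond (X, Z)}(G, \cdot)$, and sections of jointly measurable functions are measurable in the remaining variable.

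Second, I would show that $Q$ is the conditional distribution of $Y$ given $Z$ by checking the integration identity. Given $K \in \mathcal{K}$ and $G \in \mathcal{G}$, I would compute
\[
\mathbb{P}(Z \in K, Y \in G) = \mathbb{P}(X \in \mathcal{X}, Z \in K, Y \in G) = \int_{\mathcal{X} \times K} P_{Y \cond (X, Z)}(G, (x, z)) \, \mathrm{d} (X, Z)(\mathbb{P})(x, z),
\]
using the defining property of the conditional distribution $P_{Y \cond (X, Z)}$. Substituting $P_{Y \cond (X, Z)}(G, (x, z)) = Q(G, z)$ and using that the integrand no longer depends on $x$, Fubini reduces the right-hand side to $\int_K Q(G, z) \, \mathrm{d} Z(\mathbb{P})(z)$, which is exactly the required identity for $Q$ to be the conditional distribution of $Y$ given $Z$.

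Third, to establish $X \independent Y \cond Z$, I would verify the defining identity $\mathbb{E}(f(X) g(Y) \cond Z) = \mathbb{E}(f(X) \cond Z) \mathbb{E}(g(Y) \cond Z)$ almost surely for bounded Borel $f, g$. By the tower property, $\mathbb{E}(f(X) g(Y) \cond Z) = \mathbb{E}(f(X) \mathbb{E}(g(Y) \cond X, Z) \cond Z)$, so it suffices to show $\mathbb{E}(g(Y) \cond X, Z) = \mathbb{E}(g(Y) \cond Z)$ almost surely. Applying $P_{Y \cond (X, Z)}$ to compute $\mathbb{E}(g(Y) \cond X, Z)$ yields $\int g \, \mathrm{d} P_{Y \cond (X, Z)}(\cdot, (X, Z)) = \int g \, \mathrm{d} Q(\cdot, Z)$, which depends only on $Z$ and equals $\mathbb{E}(g(Y) \cond Z)$ by the second part. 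Plugging this back gives the product factorisation.

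The argument is essentially bookkeeping once the hypothesis is written out, so I do not expect any real obstacle; the mildest care is in the measurability step for $Q$ and in invoking the correct equivalent characterisation of conditional independence from Constantinou and Dawid~(2017, Proposition~2.3).
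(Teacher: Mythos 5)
Your proposal is correct and follows essentially the same route as the paper: immediate verification that $Q$ is a Markov kernel, the integration identity reducing the joint law of $(X,Z)$ to the marginal of $Z$, and the observation that $\mathbb{E}(g(Y)\cond X,Z)$ depends on $Z$ only, which yields the product factorisation (the paper phrases this as $\mathbb{P}(Y\in G\cond X,Z)=\mathbb{P}(Y\in G\cond Z)$ for all $G$). One small terminological point: the reduction $\int_{\mathcal{X}\times K} Q(G,z)\,\mathrm{d}(X,Z)(\mathbb{P})=\int_K Q(G,z)\,\mathrm{d}Z(\mathbb{P})$ is the transformation theorem for the image measure under the projection $\pi_Z$ (the joint law need not be a product measure), not Fubini, though the step itself is valid for exactly the reason you give.
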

\begin{proof}
That $Q$ is a Markov kernel follows immediately from the fact that $P_{Y \cond (X,Z)}$ is a Markov kernel. To see that $Q$ is the conditional distribution of $Y$ given $Z$, note that defining $\pi_Z: \mathcal{X} \times \mathcal{Z} \to \mathcal{Z}$ to be the projection onto $\mathcal{Z}$, we get
\begin{align*}
\mathbb{P}(Z \in K, Y \in G) &= \mathbb{P}((X, Z) \in \pi_Z^{-1}(K), Y \in G) = \int_{\pi_Z^{-1}(K)} P_{Y \cond (X, Z)}(G, (x, z)) \, \mathrm{d}(X, Z)(P)(x, z)\\
&= \int_{\pi_Z^{-1}(K)} Q(G, \pi_Z(x, z)) \, \mathrm{d}(X, Z)(P)(x, z) =  \int_K Q(G, z) \, \mathrm{d}Z(P)(z),
\end{align*}
by viewing $Z(P)$ as the image measure of $(X,Z)(P)$ under $\pi_Z$ and applying \citet[][Theorem~14.1]{schilling}. For every $G \in \mathcal{G}$, $Q(G, Z)$ is a version of the conditional probability $\mathbb{P}(Y \in G \cond Z) = \mathbb{E}(1_{(Y \in G)} \cond Z)$ since $Q(G, Z)$ is clearly measurable with respect to $\sigma(Z)$ and 
\[
\int_{(Z \in K)} 1_{(Y \in G)}  \, \mathrm{d}P = \mathbb{P}(Z \in K, Y \in G) = \int_{(Z \in K)} Q(G, Z) \, \mathrm{d}P.
\]
The same argument applies to show that $P_{Y \cond (X,Z)}(G, (X,Z))$ is a version of $\mathbb{P}(Y \in G \cond X, Z)$. Hence, for every $G \in \mathcal{G}$
\[
\mathbb{P}(Y \in G  \cond Z) = Q(G, Z) = P_{Y \cond (X, Z)}(G, (X, Z)) = \mathbb{P}(Y  \cond X, Z)  
\]
and thus $X \independent Y \cond Z$ as desired.
\end{proof}

With these results we are ready to start considering Hilbertian conditional distributions. 

\begin{remark}
\label{remark:hilbert-space-orthogonal-decomposition}
In the following we will repeatedly consider orthogonal decompositions of Hilbert spaces. We write $\mathcal{H} = \mathcal{H}_1 \oplus \mathcal{H}_2$ if every $h \in \mathcal{H}$ can be written as $h = h_1 + h_2$ where $h_1 \in \mathcal{H}_1$ and $h_2 \in \mathcal{H}_2$ and $\mathcal{H}_1 \perp \mathcal{H}_2$. If an operator $\mathscr{A}$ is defined on $\mathcal{H}$, the decomposition induces four operators: $\mathscr{A}_{11}$ and $\mathscr{A}_{21}$, the $\mathcal{H}_1$ and $\mathcal{H}_2$ components of the restriction of $\mathscr{A}$ to $\mathcal{H}_1$ and similarly $\mathscr{A}_{12}$ and $\mathscr{A}_{22}$, the $\mathcal{H}_1$ and $\mathcal{H}_2$ components of the restriction of $\mathscr{A}$ to $\mathcal{H}_2$. We can write $\mathscr{A}$ as the sum of these four operators. 
If $X$ is a random variable on $\mathcal{H}$ 
and $\mathcal{H}_1$ and $\mathcal{H}_2$ are as above, we can similarly decompose $X$ into $(X_1, X_2)$ where $X_1 \in \mathcal{H}_1$ and $X_2 \in \mathcal{H}_2$. If $\mathscr{C}$ is the covariance operator of $X$, we can decompose it as mentioned above and, in particular, we have $\mathscr{C}_{11}= \Cov(X_1)$, $\mathscr{C}_{22}= \Cov(X_2)$ and $\mathscr{C}_{12} = \mathscr{C}_{21}^* = \Cov(X_1, X_2)$, where $ \mathscr{C}_{21}^*$ denotes the adjoint of $\mathscr{C}_{21}$. 
This is analogous to the usual block matrix decomposition of the covariance matrix of multivariate random variables.
\end{remark}
We will need two results that are fundamental in the theory of the multivariate Gaussian distribution.

\begin{proposition}
\label{prop:uncorrelated-independence-gaussian}
Let $X$ be Gaussian on $\mathcal{H}$ and assume that $\mathcal{H} = \mathcal{H}_1 \oplus \mathcal{H}_2$. Define $(X_1, X_2)$ to be the corresponding decomposition of $X$. Then $X_1 \independent X_2$ if and only if $\Cov(X_1, X_2) = 0$. 
\end{proposition}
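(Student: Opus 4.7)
The plan is to mirror the standard finite-dimensional argument, but carry it out through characteristic functionals on $\mathcal{H}$ rather than through densities, which may fail to exist in the Hilbert setting.

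First I would dispatch the easy direction. If $X_1 \independent X_2$, then for any $h_1 \in \mathcal{H}_1$ and $h_2 \in \mathcal{H}_2$ the real random variables $\langle X_1, h_1 \rangle$ and $\langle X_2, h_2 \rangle$ are independent, so
\[
\langle \mathscr{C}_{12} h_2, h_1 \rangle = \E\bigl[\langle X_1 - \E X_1, h_1 \rangle \langle X_2 - \E X_2, h_2\rangle\bigr] = 0,
\]
and since $h_1, h_2$ were arbitrary, $\mathscr{C}_{12} = \Cov(X_1, X_2) = 0$.

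For the converse I would use the characteristic functional of a Hilbertian Gaussian. Write $\mu = \E X$, $\mathscr{C} = \Cov(X)$, and decompose $\mu = \mu_1 + \mu_2$ with $\mu_j \in \mathcal{H}_j$. Since $\mathcal{H}_1 \perp \mathcal{H}_2$, every $h \in \mathcal{H}$ splits uniquely as $h = h_1 + h_2$ with $h_j \in \mathcal{H}_j$, and $\langle X, h \rangle = \langle X_1, h_1 \rangle + \langle X_2, h_2 \rangle$. Using the block decomposition of $\mathscr{C}$ recalled in Remark~\ref{remark:hilbert-space-orthogonal-decomposition},
\[
\langle \mathscr{C} h, h \rangle = \langle \mathscr{C}_{11} h_1, h_1 \rangle + 2 \langle \mathscr{C}_{12} h_2, h_1 \rangle + \langle \mathscr{C}_{22} h_2, h_2 \rangle.
\]
Under the hypothesis $\mathscr{C}_{12} = 0$, the cross term vanishes, and the characteristic functional of $X$ factorises as
\[
\phi_X(h) = \exp\bigl(i\langle \mu, h \rangle - \tfrac{1}{2}\langle \mathscr{C} h, h\rangle\bigr) = \phi_{X_1}(h_1)\,\phi_{X_2}(h_2),
\]
where $\phi_{X_j}$ is the characteristic functional of the marginal Gaussian $X_j$ on $\mathcal{H}_j$ (with mean $\mu_j$ and covariance $\mathscr{C}_{jj}$).

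Finally I would invoke the uniqueness of Borel probability measures on a separable Hilbert space by their characteristic functionals: identifying $\mathcal{H}$ with $\mathcal{H}_1 \times \mathcal{H}_2$, the joint law of $(X_1, X_2)$ and the product law of its marginals have the same characteristic functional, hence coincide, giving $X_1 \independent X_2$. The main subtlety here is simply checking the factorisation of the quadratic form $\langle \mathscr{C} h, h \rangle$ under the block decomposition; once that is in hand, no further approximation or density arguments are needed.
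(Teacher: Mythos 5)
Your proposal is correct and follows essentially the same route as the paper: the converse is proved via the characteristic functional of a Hilbertian Gaussian, observing that $\mathscr{C}_{12}=0$ kills the cross term in $\langle \mathscr{C}h, h\rangle$ so that $\phi_X(h)$ factorises into $\phi_{X_1}(h_1)\phi_{X_2}(h_2)$, and then appealing to the fact that the characteristic functional determines the law (the paper cites Vakhania et al.\ for exactly this factorisation criterion). Your explicit treatment of the easy direction via linear functionals is a harmless elaboration of what the paper dismisses as trivial.
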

\begin{proof}
We show that $\Cov(X_1, X_2) = 0$ implies independence since the other direction is trivial. We will use the approach of characteristic functionals as described in detail in \citet[][Chapter IV]{Vakhania1987}. The characteristic functional of a random variable (technically, the distribution of the random variable) is the mapping defined on $\mathcal{H}$ where $h \mapsto \mathbb{E}[\exp(i \langle X, h \rangle)]$. 
\citet[][Theorem~IV.2.4]{Vakhania1987} state that for Gaussian $X$ with mean $\mu$ and covariance operator $\mathscr{C}$ the characteristic functional is
\[
\phi_X(h) = \exp\left(i \langle \mu , h \rangle - \frac{1}{2} \langle \mathscr{C} h, h \rangle  \right) . 
\]
\citet[][Chapter~IV, Proposition~2.2 + Corollary]{Vakhania1987} state that $X_1$ and $X_2$ are independent if the characteristic functional of $X$ factorises into the product of their respective characteristic functionals. By the assumption that $\mathscr{C}_{12} = \Cov(X_1, X_2) = 0$, we can write the covariance as $\mathscr{C} = \mathscr{C}_{1} + \mathscr{C}_{2}$ where $\mathscr{C}_i$ is the covariance of $X_i$. The result then follows by factorising the characteristic functional appropriately. 
\end{proof}

\begin{proposition}
\label{prop:linear-transformation-gaussian}
Let $X$ be Gaussian on $\mathcal{H}_1$ with mean $\mu$ and covariance operator $\mathscr{C}$ and let $\mathscr{A}$ be a bounded linear operator from $\mathcal{H}_1$ to $\mathcal{H}_2$ and $z \in \mathcal{H}_2$. Then $Y = \mathscr{A} X + z$ is Gaussian on $\mathcal{H}_2$ with mean $\mathscr{A}\mu + z$ and covariance operator $\mathscr{A}\mathscr{C}\mathscr{A}^*$ where $\mathscr{A}^*$ is the adjoint of $\mathscr{A}$. 
\end{proposition}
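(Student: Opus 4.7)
The plan is to mirror the characteristic functional approach used in the proof of Proposition~\ref{prop:uncorrelated-independence-gaussian}. Recall from \citet[][Theorem~IV.2.4]{Vakhania1987} that a random variable $W$ on a separable Hilbert space is Gaussian with mean $\nu$ and covariance operator $\mathscr{D}$ if and only if its characteristic functional has the form $h \mapsto \exp\bigl(i\langle \nu,h\rangle - \tfrac{1}{2}\langle \mathscr{D}h,h\rangle\bigr)$, and that this characteristic functional uniquely determines the distribution. It therefore suffices to compute the characteristic functional $\phi_Y$ of $Y = \mathscr{A}X + z$ and identify it as one of the claimed form.

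First, I would fix $h \in \mathcal{H}_2$ and use boundedness of $\mathscr{A}$ (so that $\mathscr{A}^* h \in \mathcal{H}_1$ is well defined) together with the adjoint relation $\langle \mathscr{A} X, h\rangle = \langle X, \mathscr{A}^* h\rangle$ to write
\[
\phi_Y(h) = \mathbb{E}\exp\bigl(i\langle \mathscr{A}X + z, h\rangle\bigr) = e^{i\langle z,h\rangle}\,\mathbb{E}\exp\bigl(i\langle X, \mathscr{A}^* h\rangle\bigr) = e^{i\langle z,h\rangle}\,\phi_X(\mathscr{A}^* h).
\]
Substituting the known Gaussian form of $\phi_X$ and again using the adjoint relation gives
\[
\phi_Y(h) = \exp\!\Bigl(i\langle z,h\rangle + i\langle \mu, \mathscr{A}^* h\rangle - \tfrac{1}{2}\langle \mathscr{C}\mathscr{A}^* h, \mathscr{A}^* h\rangle\Bigr) = \exp\!\Bigl(i\langle \mathscr{A}\mu + z, h\rangle - \tfrac{1}{2}\langle \mathscr{A}\mathscr{C}\mathscr{A}^* h, h\rangle\Bigr),
\]
which is exactly the characteristic functional of a Gaussian on $\mathcal{H}_2$ with mean $\mathscr{A}\mu + z$ and covariance operator $\mathscr{A}\mathscr{C}\mathscr{A}^*$.

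The only item still requiring justification is that $\mathscr{A}\mathscr{C}\mathscr{A}^*$ is a bona fide covariance operator (self-adjoint, positive, and trace class). Self-adjointness and positivity are immediate from the corresponding properties of $\mathscr{C}$. For trace class, since $\mathscr{C}$ is trace class its square root $\mathscr{C}^{1/2}$ is Hilbert--Schmidt, so $\mathscr{A}\mathscr{C}^{1/2}$ is Hilbert--Schmidt by composition with the bounded operator $\mathscr{A}$, and thus $\mathscr{A}\mathscr{C}\mathscr{A}^* = (\mathscr{A}\mathscr{C}^{1/2})(\mathscr{A}\mathscr{C}^{1/2})^*$ is trace class. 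Alternatively, this is automatic from the fact that $\mathscr{A}\mathscr{C}\mathscr{A}^*$ has been identified as the covariance operator of the genuine $\mathcal{H}_2$-valued random variable $Y$, whose second moment $\mathbb{E}\|Y\|^2$ is finite by $\mathbb{E}\|Y\|^2 \le 2\|z\|^2 + 2\|\mathscr{A}\|_{\op}^2 \mathbb{E}\|X\|^2 < \infty$. I do not expect any real obstacle in the argument; the only subtlety is the appeal to the uniqueness of the characteristic functional and the verification that the resulting operator is indeed a covariance operator, both of which are handled by standard Hilbert space facts.
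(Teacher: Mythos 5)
Your proof is correct, but it takes a genuinely different route from the paper's. The paper argues directly from the definition of a Hilbertian Gaussian via one-dimensional projections: it writes $\langle Y, h\rangle_2 = \langle X, \mathscr{A}^*h\rangle_1 + \langle z, h\rangle_2$, observes this is an affine function of a real Gaussian and hence Gaussian, and then computes the mean by interchanging the Bochner integral with the bounded operator $\mathscr{A}$ and the covariance via the identity $(\mathscr{A}h)\otimes k = (h\otimes k)\mathscr{A}^*$. You instead compute the characteristic functional $\phi_Y(h) = e^{i\langle z,h\rangle}\phi_X(\mathscr{A}^*h)$ and identify it with the Gaussian form from \citet[Theorem~IV.2.4]{Vakhania1987}, the same tool the paper reserves for Proposition~\ref{prop:uncorrelated-independence-gaussian}. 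Your route delivers Gaussianity, the mean, and the covariance in one stroke, at the cost of invoking the uniqueness of characteristic functionals and of separately checking that $\mathscr{A}\mathscr{C}\mathscr{A}^*$ is a legitimate (trace-class) covariance operator --- a point you handle correctly via the factorisation $\mathscr{A}\mathscr{C}\mathscr{A}^* = (\mathscr{A}\mathscr{C}^{1/2})(\mathscr{A}\mathscr{C}^{1/2})^*$, or more simply via $\mathbb{E}\lVert Y\rVert^2 < \infty$. The paper's argument is more elementary in that it never needs the full characteristic-functional characterisation, only the projection definition and basic Bochner-integral facts; yours is slightly slicker and avoids the separate moment computations. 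Both are complete.
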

\begin{proof}
Throughout, we let $\langle \cdot , \cdot \rangle_1$ and  $\langle \cdot , \cdot \rangle_2$ denote the inner products of $\mathcal{H}_1$ and $\mathcal{H}_2$ respectively. By definition, for every $h_1 \in \mathcal{H}_1$, $\langle X , h_1 \rangle$ is Gaussian on $\mathbb{R}$. For every $h_2 \in \mathcal{H}_2$ we have
\[
\langle Y , h_2 \rangle_2 = \langle \mathscr{A}X, h_2 \rangle_2 + \langle z , h_2 \rangle_2 =  \langle X, \mathscr{A}^* h_2 \rangle_1 + \langle z , h_2 \rangle_2 
\]
thus $Y$ is also Gaussian. Using the interchangeability of the Bochner integral and linear operators (see \citet[][Theorem~3.1.7]{Hsing2015}), we get the mean of $Y$ immediately. By noting that for any $h, k \in \mathcal{H}_1$, we have
\[
(\mathscr{A}h) \otimes k = \langle \mathscr{A}h , \cdot \rangle_2  k  = \langle h , \mathscr{A}^* \cdot \rangle_1 k = (h \otimes k) \mathscr{A}^*,
\]
the covariance result then follows by the same argument as for the mean.
\end{proof}

With these results we can now show that conditioning on an injective part of a Gaussian distribution on a Hilbert space yields another Gaussian distribution with mean and covariance given by the Hilbertian analogue of the well-known Gaussian conditioning formula.

\begin{proposition}
\label{prop:gaussian-conditional-distribution}
Let $X$ be mean zero Gaussian on $\mathcal{H}$ with covariance operator $\mathscr{C}$ and assume that $\mathcal{H} = \mathcal{H}_1 \oplus \mathcal{H}_2$. 
Let $(X_1, X_2)$ denote the corresponding decomposition of $X$. As discussed in Remark~\ref{remark:hilbert-space-orthogonal-decomposition}, we then set $\mathscr{C}_{11} := \Cov(X_1)$, $\mathscr{C}_{22} := \Cov(X_2)$ and $\mathscr{C}_{12} = \mathscr{C}_{21}^*  := \Cov(X_1, X_2)$,
 where  $\mathscr{C}_{21}^*$ denotes the adjoint of $\mathscr{C}_{21}$. If $\mathscr{C}_{22}$ is injective, i.e.
\[
\textrm{Ker}(\mathscr{C}_{22}) = \{ h \in \mathcal{H}_2 \cond \mathscr{C}_{22} h = 0 \} = \{0\}
\]
then the conditional distribution of $X_1$ given $X_2$ is Gaussian on $\mathcal{H}_1$ with 
\[
\mathbb{E}(X_1 \cond X_2) = \mathscr{C}_{12} \mathscr{C}_{22}^{\dag} X_2
\]
and
\[
\Cov(X_1 \cond X_2) = \mathscr{C}_{11} - \mathscr{C}_{12} \mathscr{C}_{22}^{\dag} \mathscr{C}_{21}, 
\]
where $\mathscr{C}_{22}^{\dag}$ is the generalised inverse (or Moore--Penrose inverse) of $\mathscr{C}_{22}$.
\end{proposition}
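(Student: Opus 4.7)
The plan is to adapt the classical ``residual after linear regression'' argument to the Hilbert-space setting. Set $\mathscr{A} := \mathscr{C}_{12}\mathscr{C}_{22}^\dag$ (whose interpretation as a random variable applied to $X_2$ is discussed below), form the residual $W := X_1 - \mathscr{A}X_2$, show that $(W, X_2)$ is jointly Gaussian with $W \independent X_2$, and then read off the conditional distribution of $X_1 = W + \mathscr{A}X_2$ given $X_2$.

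The first task is to give meaning to $\mathscr{A}X_2$ as an $\mathcal{H}_1$-valued random variable, because $\mathscr{C}_{22}^\dag$ is typically unbounded; it is defined on the dense domain $\mathrm{Im}(\mathscr{C}_{22})$ (dense since $\mathscr{C}_{22}$ is injective and self-adjoint, whence $\overline{\mathrm{Im}(\mathscr{C}_{22})} = \mathrm{Ker}(\mathscr{C}_{22})^\perp = \mathcal{H}_2$). Positivity of the joint covariance operator $\mathscr{C}$ combined with the Douglas range-inclusion theorem yields a bounded $\mathscr{B}: \mathcal{H}_1 \to \mathcal{H}_2$ with $\mathscr{C}_{21} = \mathscr{C}_{22}^{1/2}\mathscr{B}$ and, in particular, $\mathrm{Im}(\mathscr{C}_{21}) \subseteq \overline{\mathrm{Im}(\mathscr{C}_{22}^{1/2})}$. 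Using the spectral expansion $\mathscr{C}_{22} = \sum_k \mu_k e_k \otimes e_k$ of the trace-class operator $\mathscr{C}_{22}$ (all $\mu_k > 0$ by injectivity), I would define $\mathscr{A}X_2$ as the $L^2(\mathcal{H}_1)$-limit of the truncations $\mathscr{A}_N X_2 := \sum_{k \leq N} \mu_k^{-1}\langle X_2, e_k\rangle\, \mathscr{C}_{12} e_k$, the Douglas factorisation giving the required summability of second moments for Cauchyness.

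With $\mathscr{A}X_2$ in hand, apply Proposition~\ref{prop:linear-transformation-gaussian} to the bounded linear images $(X_1 - \mathscr{A}_N X_2, X_2)$ of $X$, and pass to the limit, to conclude that $(W, X_2)$ is jointly Gaussian. A direct computation of covariances then gives $\Cov(W) = \mathscr{C}_{11} - \mathscr{C}_{12}\mathscr{C}_{22}^\dag\mathscr{C}_{21}$ and $\Cov(W, X_2) = \mathscr{C}_{12} - \mathscr{A}\mathscr{C}_{22} = 0$; the latter vanishing uses that $\mathscr{C}_{22}\mathscr{C}_{22}^\dag$ acts as the identity on $\mathrm{Im}(\mathscr{C}_{22})$ together with $\mathrm{Im}(\mathscr{C}_{21}) \subseteq \overline{\mathrm{Im}(\mathscr{C}_{22}^{1/2})}$ from the Douglas factorisation. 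Proposition~\ref{prop:uncorrelated-independence-gaussian} then gives $W \independent X_2$.

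Finally, since $X_1 = W + \mathscr{A}X_2$ with $W \independent X_2$, apply Proposition~\ref{prop:substitution-theorem-for-conditional-distributions} to the measurable map $\phi(x_2, w) = w + \mathscr{A}x_2$: the conditional law of $W$ given $X_2 = x_2$ equals its unconditional (centred Gaussian) law with covariance $\mathscr{C}_{11} - \mathscr{C}_{12}\mathscr{C}_{22}^\dag\mathscr{C}_{21}$, and by Proposition~\ref{prop:linear-transformation-gaussian} its image under the shift $w \mapsto w + \mathscr{A}x_2$ is Gaussian with mean $\mathscr{A}x_2 = \mathscr{C}_{12}\mathscr{C}_{22}^\dag x_2$ and the same covariance, giving the claim. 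The main obstacle is the first step, namely making rigorous sense of $\mathscr{A}X_2$ when $\mathscr{C}_{22}^\dag$ is unbounded and verifying the cross-covariance identity $\mathscr{A}\mathscr{C}_{22} = \mathscr{C}_{12}$; both rest on the Douglas factorisation of the positive joint covariance, and one also needs measurability of $x_2 \mapsto \mathscr{A}x_2$ (obtained as an almost-sure pointwise limit of the finite-rank $\mathscr{A}_N$) for Proposition~\ref{prop:substitution-theorem-for-conditional-distributions} to apply. Everything after Step~1 is the classical multivariate Gaussian conditioning computation transferred verbatim to the Hilbert-space setting.
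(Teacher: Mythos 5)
Your proposal is correct and follows essentially the same route as the paper's proof: define the residual $X_1 - \mathscr{C}_{12}\mathscr{C}_{22}^{\dag}X_2$, show it is jointly Gaussian with and uncorrelated with (hence independent of) $X_2$ via Propositions~\ref{prop:linear-transformation-gaussian} and \ref{prop:uncorrelated-independence-gaussian}, compute its mean and covariance, and recover the conditional law of $X_1$ through Proposition~\ref{prop:substitution-theorem-for-conditional-distributions}. The one place you go beyond the paper is in rigorously constructing $\mathscr{C}_{12}\mathscr{C}_{22}^{\dag}X_2$ via the Douglas factorisation and $L^2$-limits of finite-rank truncations; the paper simply treats this as a bounded linear transformation, so your added care addresses a point the paper's own proof glosses over.
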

\begin{proof}
Define $Z := X_1 - \mathscr{C}_{12} \mathscr{C}_{22}^{\dag} X_2$. Note that since $(Z, X_2)$ is a bounded linear transformation of $(X_1, X_2)$, $(Z, X_2)$ must be jointly Gaussian by Proposition~\ref{prop:linear-transformation-gaussian}. By Proposition~\ref{prop:uncorrelated-independence-gaussian}, $Z$ and $X_2$ are independent if $\Cov(Z, X_2) = 0$. We calculate the covariance and get
\[
\Cov(Z, X_2) = \mathscr{C}_{12} - \mathscr{C}_{12} \mathscr{C}_{22}^{\dag}  \mathscr{C}_{22} = 0
\]
by \citet[][Theorem~3.5.8 (3.18)]{Hsing2015} since $\textrm{Ker}(\mathscr{C}_{22}) = {0}$.  This implies that the conditional distribution of $Z$ given $X_2$ is simply the distribution of $Z$. We can find the complete distribution of $Z$ by calculating the mean and covariance of $Z$, since $Z$ is Gaussian. We get by Proposition~\ref{prop:linear-transformation-gaussian},
\[
\mathbb{E}(Z) = \mathbb{E}(X_1) -  \mathscr{C}_{12} \mathscr{C}_{22}^{\dag} \mathbb{E}(X_2) = 0
\]
and
\[
\Cov(Z) = \mathscr{C}_{11} -  \mathscr{C}_{12} \mathscr{C}_{22}^{\dag} \mathscr{C}_{21} .
\]
By Proposition~\ref{prop:substitution-theorem-for-conditional-distributions}, since we can write $X_1 = Z + \mathscr{C}_{12} \mathscr{C}_{22}^{\dag}  X_2$, the conditional distribution of $X_1$ given $X_2$ is as desired.
\end{proof}

\section{Uniform convergence of random variables}
\label{app:uniform-convergence}
In this section we develop some background theory that will be useful when considering simultaneous convergence of sequences with varying distributions. In particular, we are interested the convergence of a sequence of random variables $(X_n)_{n \in \mathbb{N}}$ defined on a measurable space $(\Omega, \mathcal{F})$ with a family of probability measures $(\mathbb{P}_\theta)_{\theta \in \Theta}$. For each $\theta \in \Theta$ the distribution of $(X_n)_{n \in \mathbb{N}}$ will change as the background measure $\mathbb{P}_\theta$ changes. We are also interested in the convergence of $\theta$-dependent functions of $X_n$ such as the conditional expectation with respect to $\mathbb{P}_\theta$ of $X_n$ given a sub-$\sigma$-algebra $\mathcal{D}$ of $\mathcal{F}$. To allow for such considerations, the definitions given here will be more general than in Section~\ref{sec:theory} and will allow for a family of random variables $(X_{n, \theta})_{n \in \mathbb{N}, \theta \in \Theta}$ to converge to a family of random variables $(X_\theta)_{\theta \in \Theta}$.

The material in this section extends the work of \citet{Kasy2019} and \citet{Bengs2019} to Hilbertian and Banachian random variables and also adds further characterisations of their central assumptions for families of real-valued random variables.

Unless stated otherwise, we consider the following setup for the remainder of this section. Let $(\Omega, \mathcal{F})$ be a measurable space, $(\mathbb{P}_\theta)_{\theta \in \Theta}$ a family of probability measure on $(\Omega, \mathcal{F})$ where $\Theta$ is any set and $(\mathcal{B}, \mathbb{B}(\mathcal{B}))$ a separable Banach space with its Borel $\sigma$-algebra.  Let $(X_{n, \theta})_{n \in \mathbb{N}, \theta \in \Theta}$ and $(X_\theta)_{\theta \in \Theta}$ be families of random variables defined on $(\Omega, \mathcal{F})$ with values in $\mathcal{B}$. All additional random variables are also defined on $(\Omega, \mathcal{F})$. We write $\mathbb{E}_\theta$ for the expectation with respect to $\mathbb{P}_\theta$. 

\begin{definition}[Uniform convergence of random variables] 
\label{def:conv}
\begin{enumerate}[(i)]
\item We say that \emph{$X_{n, \theta}$ converges uniformly in distribution over $\Theta$ to $X_\theta$ and write $X_{n, \theta} \underset{\Theta}{\overset{\mathcal{D}}{\rightrightarrows}} X_\theta$} if
\[
\lim_{n \to \infty} \sup_{\theta \in \Theta} d_{{\BL}}^\theta(X_{n, \theta}, X_\theta) = 0,
\]
where 
\[
d_{\BL}^\theta(X_{n, \theta}, X_\theta) := \sup_{f \in {\BL}_1} \left| \mathbb{E}_\theta(f(X_{n, \theta})) -  \mathbb{E}_\theta(f(X_\theta)) \right| ,
\]
and ${\BL}_1$ denotes the set of all functions $f:\mathcal{B} \to [-1,1]$ that are Lipschitz with constant at most $1$. We write $X_{n, \theta} \overset{\mathcal{D}}{\rightrightarrows} X_\theta$ and simply say that $X_{n, \theta}$ converges uniformly in distribution to $X_\theta$ when $\Theta$ is clear from the context. When considering collections of random variables that do not depend on $\theta$ except through the measure on the domain of the random variables, we simply write $X_n \overset{\mathcal{D}}{\rightrightarrows} X$.
\item We say that \emph{$X_{n, \theta}$ converges uniformly in probability over $\Theta$ to $X_\theta$ and write \\
${X_{n, \theta} \underset{\Theta}{\overset{P}{\rightrightarrows}} X_\theta}$} if, for any $\epsilon > 0$,
\[
\lim_{n \to \infty} \sup_{\theta \in \Theta} \mathbb{P}_\theta( \lVert X_{n, \theta} - X_\theta \rVert \geq \epsilon) = 0.
\]
We write $X_{n, \theta} \overset{P}{\rightrightarrows} X_\theta$ and simply say that $X_{n, \theta}$ converges uniformly in probability to $X_\theta$ when $\Theta$ is clear from the context. When considering collections of random variables that do not depend on $\theta$ except through the measure on the domain of the random variables, we simply write $X_n \overset{P}{\rightrightarrows} X$.
\end{enumerate} 
\end{definition}
Using a slight abuse of notation, we write $X_{n, \theta} \overset{\mathcal{D}}{\rightrightarrows} 0$ and $X_{n, \theta} \overset{P}{\rightrightarrows} 0$ to mean that $X_{n, \theta}$ converges uniformly to the family of random variables $X_\theta$ that is equal to $0$ for all $\omega \in \Omega$ and any $\theta \in \Theta$. Note that if $(\mathbb{P}_\theta)_{\theta \in \Theta}$ contains a single element, we recover the standard definitions of convergence in distribution and probability. We have the following helpful characterisations of the two modes of uniform convergence.

\begin{proposition}
\label{prop:alternative-definition-uniform-convergence}
\begin{enumerate}[(i)]
\item $X_{n, \theta} \overset{\mathcal{D}}{\rightrightarrows} X_\theta$ if and only if for any sequence $(\theta_n)_{n \in \mathbb{N}} \subset \Theta$
\[
\lim_{n \to \infty} d_{\BL}^{\theta_n}(X_{n, \theta_n}, X_{\theta_n}) = 0.
\]
\item $X_{n, \theta} \overset{P}{\rightrightarrows} X_\theta $ if and only if for any sequence $(\theta_n)_{n \in \mathbb{N}} \subset \Theta$ and any $\varepsilon > 0$
\[
\lim_{n \to \infty} \mathbb{P}_{\theta_n}( \lVert X_{n, \theta_n} - X_{\theta_n} \rVert \geq \varepsilon) = 0.
\]
\end{enumerate}
\end{proposition}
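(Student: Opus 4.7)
The plan is to prove both parts by the same contrapositive argument. Each statement has the form "$\sup_{\theta \in \Theta} f_n(\theta) \to 0$ if and only if $f_n(\theta_n) \to 0$ for every sequence $(\theta_n)$", where in (i) $f_n(\theta) = d_{\BL}^\theta(X_{n,\theta}, X_\theta)$ and in (ii) $f_n(\theta) = \mathbb{P}_\theta(\|X_{n,\theta} - X_\theta\| \geq \varepsilon)$ for a fixed but arbitrary $\varepsilon > 0$. This is a purely real-analytic equivalence once the correct quantity $f_n$ is identified, so no measure-theoretic or Hilbert/Banach-specific machinery will be needed.

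For the forward implication I would just note that, for any sequence $(\theta_n) \subset \Theta$, $f_n(\theta_n) \leq \sup_{\theta \in \Theta} f_n(\theta)$, so the left-hand side tends to $0$ whenever the right-hand side does. For part (ii) this gives convergence for every fixed $\varepsilon$ simultaneously, since the supremum is taken for each $\varepsilon$ separately by the definition of uniform convergence in probability.

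For the reverse implication I would argue by contraposition. Suppose $\sup_\theta f_n(\theta) \not\to 0$; then there exist $\delta > 0$ and a subsequence $(n_k)$ with $\sup_\theta f_{n_k}(\theta) > \delta$. By definition of the supremum, for each $k$ pick $\theta_{n_k} \in \Theta$ with $f_{n_k}(\theta_{n_k}) > \delta$. Fill in the remaining indices $n \notin \{n_k\}$ by setting $\theta_n$ to be any fixed element of $\Theta$. Then $f_{n_k}(\theta_{n_k}) > \delta$ for all $k$, so $f_n(\theta_n) \not\to 0$, contradicting the subsequence hypothesis. In part (ii) the chosen $\delta$ arises from negating the definition of uniform convergence in probability, namely the existence of some $\varepsilon > 0$ for which $\sup_\theta \mathbb{P}_\theta(\|X_{n,\theta} - X_\theta\| \geq \varepsilon) \not\to 0$; this $\varepsilon$ is then the one used to witness failure of the sequence-wise condition.

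There is no real obstacle here; the only mild point to handle cleanly is the quantifier order in (ii), where the negation of uniform convergence in probability produces a single bad $\varepsilon$ that must be kept fixed throughout the subsequence construction, which the argument above respects.
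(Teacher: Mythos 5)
Your proof is correct and matches the approach underlying the paper's own treatment: the paper simply cites Lemma~1 of Kasy (2019) and remarks that the argument carries over to the Banachian case, and the elementary $\sup$-vs-sequence equivalence you spell out is precisely that argument. Your observation that the only delicate point is pinning down a single bad $\varepsilon$ in part (ii) before constructing the diagonal sequence is the right thing to flag, and you handle it correctly.
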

\begin{proof}
The proof given in \citet[][Lemma~1]{Kasy2019} also works in the Banachian case. 
\end{proof}

In the remainder of this section we derive various properties of uniform convergence in probability and distribution that are analogous to the well-known properties of non-uniform convergence. In particular, we first consider a uniform version of the continuous mapping theorem which relies on stronger versions of continuity.

\begin{proposition}
\label{prop:uniform-continuous-mapping-theorem}
Let $\psi: \mathcal{B} \to \tilde{\mathcal{B}}$ where $\tilde{\mathcal{B}}$ is another separable Banach space.
\begin{enumerate}[(i)]
\item If $X_{n, \theta} \overset{\mathcal{D}}{\rightrightarrows} X_\theta$ and $\psi$ is Lipschitz-continuous then  $\psi(X_{n, \theta}) \overset{\mathcal{D}}{\rightrightarrows} \psi(X_\theta) $.
\item If $X_{n, \theta} \overset{P}{\rightrightarrows} X_\theta $ and $\psi$ is uniformly continuous then $\psi(X_{n, \theta} ) \overset{P}{\rightrightarrows} \psi( X_\theta )$.
\end{enumerate}
\end{proposition}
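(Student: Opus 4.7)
The plan for both parts is to reduce the uniform statement to the corresponding pointwise estimate via the sequence-based characterisation in Proposition~\ref{prop:alternative-definition-uniform-convergence}, exploiting the stronger form of continuity to make that reduction quantitative and independent of $\theta$.

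For part (i), I will let $L \geq 1$ denote a Lipschitz constant for $\psi$ (taking $L = \max(L_\psi, 1)$ if necessary) and, for any $f \in \BL_1$ on $\tilde{\mathcal{B}}$, observe that $g := f \circ \psi / L$ lies in $\BL_1$ on $\mathcal{B}$: it is bounded by $1/L \leq 1$, and for $x, y \in \mathcal{B}$,
\[
|g(x) - g(y)| \leq \tfrac{1}{L} |f(\psi(x)) - f(\psi(y))| \leq \tfrac{1}{L} \|\psi(x) - \psi(y)\| \leq \|x - y\|.
\]
Hence for every $\theta$,
\[
|\mathbb{E}_\theta f(\psi(X_{n,\theta})) - \mathbb{E}_\theta f(\psi(X_\theta))| = L \, |\mathbb{E}_\theta g(X_{n,\theta}) - \mathbb{E}_\theta g(X_\theta)| \leq L \, d_{\BL}^\theta(X_{n,\theta}, X_\theta).
\]
Taking the supremum over $f \in \BL_1$ and then over $\theta \in \Theta$ yields
\[
\sup_{\theta \in \Theta} d_{\BL}^\theta(\psi(X_{n,\theta}), \psi(X_\theta)) \leq L \sup_{\theta \in \Theta} d_{\BL}^\theta(X_{n,\theta}, X_\theta),
\]
and the right-hand side tends to $0$ by hypothesis.

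For part (ii), uniform continuity of $\psi$ means that for every $\varepsilon > 0$ there exists $\delta > 0$ (depending only on $\varepsilon$, \emph{not} on the point) such that $\|x - y\| < \delta$ implies $\|\psi(x) - \psi(y)\| < \varepsilon$. Consequently, for every $\theta \in \Theta$ and $n \in \mathbb{N}$,
\[
\{\|\psi(X_{n,\theta}) - \psi(X_\theta)\| \geq \varepsilon\} \subseteq \{\|X_{n,\theta} - X_\theta\| \geq \delta\},
\]
so that
\[
\sup_{\theta \in \Theta} \mathbb{P}_\theta(\|\psi(X_{n,\theta}) - \psi(X_\theta)\| \geq \varepsilon) \leq \sup_{\theta \in \Theta} \mathbb{P}_\theta(\|X_{n,\theta} - X_\theta\| \geq \delta),
\]
and the right-hand side tends to $0$ by the assumed uniform convergence in probability.

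There is essentially no obstacle here; the only subtlety is recognising why one cannot weaken the hypotheses. Mere continuity of $\psi$ would give a modulus $\delta(x,\varepsilon)$ varying with $x$, which would not yield a $\theta$-free bound in the sequence characterisation; similarly, mere continuity (rather than Lipschitz continuity) of $\psi$ would produce, for each $f \in \BL_1$, a composite $f \circ \psi$ whose Lipschitz constant could depend on the localisation and thus fail to give a uniform bound in terms of $d_{\BL}^\theta$. These observations also clarify why the stated hypotheses are the natural ones for uniform statements.
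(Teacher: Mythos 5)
Your proof is correct, and it is essentially the standard argument the paper relies on: the paper itself gives no details, simply noting that the proof of Theorem~1 in \citet{Kasy2019} carries over to the Banach-space setting, and your write-up is precisely that argument made explicit (rescaling $f \circ \psi$ by the Lipschitz constant to land back in $\BL_1$ for part (i), and using a point-independent modulus of continuity for the event inclusion in part (ii)).
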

\begin{proof}
The proof in \citet[][Theorem~1]{Kasy2019} also works in the Banachian case.
\end{proof}

In what follows we will investigate different alternative assumptions such that continuity of $\psi$ suffices. One such assumption is tightness of the family of pushforward measures $(X_\theta(\mathbb{P}_\theta))_{\theta \in \Theta}$.

\begin{definition} \label{def:tight}
Let $(\mu_\theta)_{\theta \in \Theta}$ be a family of probability measures on $\mathcal{B}$.
\begin{enumerate}[(i)]
\item $(\mu_\theta)_{\theta \in \Theta}$ is said to be \emph{tight} if for any $\varepsilon > 0$, there exists a compact set $K$ such that
$\sup_{\theta \in \Theta} \mu_\theta(K^c)  < \varepsilon$.
$(X_\theta)_{\theta \in \Theta}$ is said to be \emph{uniformly tight with respect to $\Theta$} if the family of pushforward measures $(X_\theta(\mathbb{P}_\theta))_{\theta \in \Theta}$ is tight. If $\Theta$ is clear from the context we simply say that $(X_\theta)_{\theta \in \Theta}$ is uniformly tight.
\item $(X_{n, \theta})_{n \in \mathbb{N}, \theta \in \Theta}$ is said to be \emph{sequentially tight with respect to $\Theta$} if for any sequence $(\theta_n)_{n \in \mathbb{N}} \subset \Theta$ the sequence of pushforward measures $(X_{n, \theta_n}(\mathbb{P}_{\theta_n}))_{n \in \mathbb{N}}$ is tight. If $\Theta$ is clear from the context we simply say that $(X_{n, \theta})_{n \in \mathbb{N}, \theta \in \Theta}$ is sequentially tight.
\item $(\mu_\theta)_{\theta \in \Theta}$ is said to be \emph{relatively compact} if for any sequence $(\theta_n)_{n \in \mathbb{N}}$ there exists a subsequence $(\theta_{k(n)})_{n \in \mathbb{N}}$, where $k: \mathbb{N} \to \mathbb{N}$ is strictly increasing, such that $\mu_{\theta_{k(n)}}$ converges weakly to some measure $\mu$, which is not necessarily in the family $(\mu_\theta)_{\theta \in \Theta}$. 
\end{enumerate}
\end{definition}

Prokhorov's theorem states that tightness implies relative compactness and that they are equivalent on separable and complete metric spaces; in this work, we therefore use the terms interchangeably since we only consider separable Banach and Hilbert spaces. With a uniform tightness assumption, we can perform continuous operations and preserve uniform convergence in probability just as in the non-uniform setting.

\begin{proposition}
\label{prop:uniform-convergence-in-probability-preserved-under-continuous-operations-when-tight}
Let $(X_{n, \theta})_{n \in \mathbb{N}, \theta \in \Theta}$ and $(X_\theta)_{\theta \in \Theta}$ be random variables taking values in $\mathcal{B}$. Assume that $X_{n, \theta} \overset{P}{\rightrightarrows} X_\theta $ and $X_\theta$ is uniformly tight. Then, for any continuous function $\psi: \mathcal{B} \to \tilde{\mathcal{B}}$, where $\tilde{\mathcal{B}}$ is another separable Banach space, we have
\[
\psi(X_{n, \theta}) \overset{P}{\rightrightarrows} \psi(X_\theta) .
\]
\end{proposition}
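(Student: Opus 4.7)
The plan is to reduce to the sequential characterisation of uniform convergence in probability given in Proposition~\ref{prop:alternative-definition-uniform-convergence}(ii), but actually it is cleaner to work directly with the supremum. Fix $\varepsilon > 0$; I want to show $\sup_{\theta \in \Theta} \mathbb{P}_\theta(\lVert \psi(X_{n,\theta}) - \psi(X_\theta) \rVert \geq \varepsilon) \to 0$. The strategy is to split this probability using the event that $X_\theta$ lies in a compact set where $\psi$ behaves well, and then use uniform convergence in probability to control the remaining piece.

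First I would establish the following local-uniform continuity lemma: if $\psi : \mathcal{B} \to \tilde{\mathcal{B}}$ is continuous and $K \subset \mathcal{B}$ is compact, then for every $\varepsilon > 0$ there exists $\delta > 0$ such that whenever $x \in K$ and $\lVert y - x \rVert < \delta$, we have $\lVert \psi(y) - \psi(x) \rVert < \varepsilon$. This is a standard compactness argument: for each $x \in K$ pick $\delta_x$ with $\psi$-oscillation less than $\varepsilon/2$ on $B(x, 2\delta_x)$, extract a finite subcover of $K$ by the balls $B(x_i, \delta_{x_i})$ and take $\delta := \min_i \delta_{x_i}$; the triangle inequality in both the norm and the $\psi$-image then yields the claim.

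Next, given $\eta > 0$, uniform tightness of $(X_\theta)_{\theta \in \Theta}$ provides a compact $K \subset \mathcal{B}$ with $\sup_{\theta \in \Theta} \mathbb{P}_\theta(X_\theta \notin K) < \eta$. Pick $\delta > 0$ as in the previous step for this $K$ and the given $\varepsilon$. The key inclusion is then
\[
\{\lVert \psi(X_{n,\theta}) - \psi(X_\theta) \rVert \geq \varepsilon\} \subseteq \{X_\theta \notin K\} \cup \{\lVert X_{n,\theta} - X_\theta \rVert \geq \delta\},
\]
valid for every $\theta$. Taking $\mathbb{P}_\theta$ and the supremum over $\theta$ gives
\[
\sup_{\theta \in \Theta} \mathbb{P}_\theta(\lVert \psi(X_{n,\theta}) - \psi(X_\theta) \rVert \geq \varepsilon) \leq \eta + \sup_{\theta \in \Theta} \mathbb{P}_\theta(\lVert X_{n,\theta} - X_\theta \rVert \geq \delta),
\]
and by hypothesis the second term tends to $0$ as $n \to \infty$. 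Since $\eta > 0$ was arbitrary, the left-hand side tends to $0$, proving the proposition.

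I do not expect any genuine obstacle here; the only subtle ingredient is the local-uniform continuity lemma, which leverages the compactness produced by tightness. Everything else is an application of the definitions. Note that separability of $\mathcal{B}$ and $\tilde{\mathcal{B}}$ is used implicitly only to make $\psi(X_{n,\theta})$ a bona fide Borel-measurable random variable in $\tilde{\mathcal{B}}$, so that the probability on the left-hand side is well defined.
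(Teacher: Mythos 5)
Your proof is correct and follows essentially the same route as the paper's: uniform tightness yields a compact set $K$ capturing $X_\theta$ with high probability uniformly in $\theta$, continuity is upgraded to a uniform modulus of continuity near $K$, and the bad event is split into $\{X_\theta \notin K\}$ and $\{\lVert X_{n,\theta}-X_\theta\rVert \geq \delta\}$. Your local-uniform continuity lemma (controlling $\lVert \psi(y)-\psi(x)\rVert$ for $x \in K$ and $y$ merely $\delta$-close to $x$, with $y$ not required to lie in $K$) is in fact a more careful version of the paper's appeal to the Heine--Cantor theorem, which strictly speaking only gives uniform continuity for pairs of points both lying in $K$, whereas $X_{n,\theta}$ need not lie in $K$; your finite-subcover argument closes that small gap.
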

\begin{proof}
Let $\epsilon > 0$ be given. We need to show that 
\[
\sup_{\theta \in \Theta} \mathbb{P}_\theta(\lVert \psi(X_{n, \theta}) - \psi(X_\theta) \rVert \geq \epsilon) \to 0
\]
As $X_\theta$ is uniformly tight, for $\eta >0$ there exists a compact set $K$ such that 
\[
\sup_{\theta \in \Theta} \mathbb{P}_\theta(X_\theta \not \in K_X) < \eta/2 .
\]
By the Heine--Cantor theorem, $\psi$ is uniformly continuous on $K_X$, so there exists $\delta > 0$ such that $\lVert x-x' \rVert < \delta $ implies that $\lVert \psi(x) - \psi(x') \rVert < \epsilon$. We thus have
\begin{align*}
\sup_{\theta \in \Theta} \mathbb{P}_\theta(\lVert \psi(X_{n, \theta}) &- \psi(X_\theta) \rVert \geq \epsilon ) \leq \sup_{\theta \in \Theta} \mathbb{P}_\theta(X_\theta \not \in K) + \sup_{\theta \in \Theta} \mathbb{P}_\theta(\lVert X_{n, \theta} - X_\theta\rVert \geq \delta) .
\end{align*}
By assumption, we can choose $N$ sufficiently large such that for all $n \geq N$, the final term is less than $\eta/2$, resulting in the whole expression being less than $\eta$. As $\eta$ was arbitrary, this proves the result. 
\end{proof}

\citet{Bengs2019} make repeated use of an alternative assumption for many of their results for real-valued random variables.

\begin{definition}
A family of probability measures $(\mu_\theta)_{\theta \in \Theta}$ is \emph{uniformly absolutely continuous with respect to the measure $\mu$} if for any $\varepsilon > 0$, there exists $\delta > 0$ such that for any Borel set $B$
\[
\mu(B) < \delta \Longrightarrow \sup_{\theta \in \Theta} \mu_\theta(B) < \varepsilon  .
\]
A family of random variables $(X_\theta)_{\theta \in \Theta}$ is \emph{uniformly absolutely continuous over $\Theta$ with respect to the measure $\mu$} if the family of pushforward measures $(X_\theta(\mathbb{P}_\theta))_{\theta \in \Theta}$ is uniformly absolutely continuous with respect to $\mu$. When $\Theta$ is clear from the context we simply say that $X_\theta$ is uniformly absolutely continuous with respect to $\mu$.
\end{definition}
Uniform absolute continuity has previously been studied in other works such as the ones by \citet[][Section 5.6]{Bogachev2018} and \citet[][Chapter IX, Section 4]{Doob1994}. An intuitive view of uniform absolute continuity can be given when $\mu$ is a finite measure. In this case, we can define a pseudometric $d_\mu$ on the Borel sets with $d_\mu(A,B)=\mu(A \triangle B)$, where $A \triangle B$ is the symmetric difference. Uniform absolute continuity is then uniform $d_\mu$-continuity over $\theta$ of the collection of push-forward measures $(X_\theta(\mathbb{P}_\theta))_{\theta \in \Theta}$ viewed as mappings from the Borel sets into $\mathbb{R}$.

Another helpful perspective is in the case where for each $\theta$, $X_\theta$ has a density $f_\theta$ with respect to a common measure $\mu$. The following proposition shows that $X_\theta$ is uniformly absolutely continuous with respect to $\mu$ if and only if for each $\theta$, $X_\theta$ has a density $f_\theta$ with respect to $\mu$ and the family of densities is uniformly integrable. A convenient sufficient condition for uniform integrability is the existence of $r > 0$ such that $\sup_{\theta \in \Theta} \int f_\theta^{1+r} \, \mathrm{d}\mu < \infty$.

\begin{proposition}
\label{prop:uniform-absolute-continuity-and-uniform-integrability}
If $(X_\theta)_{\theta \in \Theta}$ is uniformly absolutely continuous with respect to $\mu$, then for each $\theta$ $X_\theta$ has a density $f_\theta$ with respect to $\mu$ and the family $(f_\theta)_{\theta \in \Theta}$ is uniformly integrable with respect to $\mu$. Conversely, if for each $\theta$ $X_\theta$ has a density $f_\theta$ with respect to $\mu$ and the family $(f_\theta)_{\theta \in \Theta}$ is uniformly integrable then $(X_\theta)_{\theta \in \Theta}$ is uniformly absolutely continuous with respect to $\mu$.
\end{proposition}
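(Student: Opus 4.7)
Write $\nu_\theta := X_\theta(\mathbb{P}_\theta)$ for the pushforward so the hypothesis concerns the family $(\nu_\theta)_{\theta \in \Theta}$. The argument will split cleanly into the two implications, both of which reduce to combining uniform absolute continuity of the measures $\nu_\theta$ with the Markov bound applied to the candidate densities $f_\theta$ (which are available because each $\nu_\theta$ is a probability measure, so $\int f_\theta \, \mathrm{d}\mu = 1$ automatically).

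For the forward direction, I would first check that uniform absolute continuity of $(\nu_\theta)$ forces each individual $\nu_\theta$ to be absolutely continuous with respect to $\mu$: taking any $\varepsilon > 0$ and choosing the corresponding $\delta$, a $\mu$-null set $B$ satisfies $\mu(B) < \delta$ and hence $\nu_\theta(B) < \varepsilon$ for every $\theta$, so $\nu_\theta(B)=0$. Radon--Nikodym then produces densities $f_\theta \geq 0$ with $\int f_\theta \, \mathrm{d}\mu = 1$. To upgrade to uniform integrability, fix $\varepsilon > 0$, take the $\delta$ from uniform absolute continuity, and use Markov's inequality $\mu(\{f_\theta > K\}) \leq 1/K$; as soon as $K > 1/\delta$ we get
\[
\sup_{\theta \in \Theta} \int_{\{f_\theta > K\}} f_\theta \, \mathrm{d}\mu = \sup_{\theta \in \Theta} \nu_\theta(\{f_\theta > K\}) < \varepsilon,
\]
which is the standard tail characterisation of uniform integrability (the companion $L^1$-boundedness condition is trivial since $\|f_\theta\|_{L^1(\mu)} = 1$).

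For the converse, assume $(f_\theta)$ is uniformly integrable. Given $\varepsilon > 0$, choose $K$ such that $\sup_\theta \int_{\{f_\theta > K\}} f_\theta \, \mathrm{d}\mu < \varepsilon/2$, and then set $\delta := \varepsilon/(2K)$. For any Borel set $B$ with $\mu(B) < \delta$, split
\[
\nu_\theta(B) = \int_{B \cap \{f_\theta \leq K\}} f_\theta \, \mathrm{d}\mu + \int_{B \cap \{f_\theta > K\}} f_\theta \, \mathrm{d}\mu \leq K \mu(B) + \frac{\varepsilon}{2} < \varepsilon,
\]
uniformly in $\theta$, which is precisely the uniform absolute continuity required.

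The main potential obstacle is not any deep argument but simply aligning the definition of uniform integrability with what is needed. The reconciliation is that, because each $f_\theta$ is a probability density, $L^1(\mu)$-boundedness is automatic, so the usual equivalence between the tail-decay definition and the ``bounded in $L^1$ plus uniformly absolutely continuous integrals'' definition collapses into the single Markov-based reduction above; verifying this once up front keeps the rest of the argument purely arithmetic.
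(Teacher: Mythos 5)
Your proof is correct and follows essentially the same route the paper (very tersely) gestures at: Radon--Nikodym for existence of the densities, the Markov bound $\mu(\{f_\theta > K\}) \le 1/K$ combined with the chosen $\delta$ for the forward direction, and the split $\nu_\theta(B) \le K\mu(B) + \int_{\{f_\theta>K\}} f_\theta\,\mathrm{d}\mu$ for the converse. You have merely filled in the elementary steps that the paper leaves as ``immediate from the definitions,'' including the useful observation that $\lVert f_\theta\rVert_{L^1(\mu)}=1$ makes the $L^1$-boundedness side of uniform integrability automatic.
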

\begin{proof}
For the first statement, note that by the Radon--Nikodym theorem, we need to show that for each $\theta$, $\mu(B)=0$ implies that $\mathbb{P}_\theta(X_\theta \in B) = 0$ for every Borel measurable $B$. This is immediate from the assumption of uniform absolute continuity (by negation) and so is the uniform integrability of the family $(f_\theta)_{\theta \in \Theta}$.
The second statement follows immediately from the definitions of uniform integrability and uniform absolute continuity.
\end{proof}

In \citet{Bengs2019} uniform absolute continuity is 
assumed with respect to a probability measure. For uniformly tight Banachian random variables that are uniformly absolutely continuous with respect to a $\sigma$-finite measure $\mu$, we can show that the family is also uniformly absolutely continuous with respect to any $\sigma$-finite measure $\nu$ such that $\mu$ has a continuous density with respect to $\nu$. 

\begin{proposition}
\label{prop:uniformly-absolutely-continuous-under-domination}
Assume that $(X_\theta)_{\theta \in \Theta}$ is uniformly absolutely continuous with respect to some $\sigma$-finite measure $\mu$. If $\nu$ is another $\sigma$-finite measure dominating $\mu$ and there exists a continuous Radon-Nikodym derivative of $\mu$ with respect to $\nu$, then $X$ is uniformly absolutely continuous with respect to $\nu$. 
\end{proposition}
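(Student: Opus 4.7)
The plan is to exploit the fact that, while a continuous Radon--Nikodym derivative $h := d\mu/d\nu$ need not be bounded on all of $\mathcal{B}$, it will be bounded on every compact subset, and uniform tightness (as noted in the paragraph preceding the statement) lets us restrict attention to such subsets while paying a uniformly small price.

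First, fix $\varepsilon > 0$. Using the uniform tightness of $(X_\theta)_{\theta \in \Theta}$, I would choose a compact set $K \subseteq \mathcal{B}$ such that
\[
\sup_{\theta \in \Theta} \mathbb{P}_\theta(X_\theta \notin K) < \varepsilon/2.
\]
By continuity of $h$ on the compact set $K$, the supremum $M := \sup_{x \in K} h(x)$ is finite. Next, by the uniform absolute continuity of $(X_\theta)_{\theta \in \Theta}$ with respect to $\mu$, there exists $\delta' > 0$ such that for every Borel set $A$, $\mu(A) < \delta'$ implies $\sup_{\theta \in \Theta} \mathbb{P}_\theta(X_\theta \in A) < \varepsilon/2$.

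Now set $\delta := \delta'/M$ (choosing $\delta := \delta'$ if $M = 0$). For any Borel set $B$ with $\nu(B) < \delta$, the key computation is
\[
\mu(B \cap K) = \int_{B \cap K} h \, d\nu \leq M\, \nu(B \cap K) \leq M \nu(B) < \delta',
\]
so that $\sup_{\theta \in \Theta} \mathbb{P}_\theta(X_\theta \in B \cap K) < \varepsilon/2$. Splitting along $K$ and its complement,
\[
\mathbb{P}_\theta(X_\theta \in B) \leq \mathbb{P}_\theta(X_\theta \in B \cap K) + \mathbb{P}_\theta(X_\theta \notin K) < \varepsilon/2 + \varepsilon/2 = \varepsilon,
\]
uniformly in $\theta \in \Theta$, which is exactly uniform absolute continuity with respect to $\nu$.

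The only subtle step is the passage $\mu(B \cap K) \leq M \nu(B \cap K)$, which is where continuity of $h$ (to bound it on compacts) and uniform tightness of $(X_\theta)$ (to make intersection with a compact set essentially lossless, uniformly in $\theta$) interact; both hypotheses are genuinely needed, as without them a continuous derivative could blow up on sets carrying non-negligible $\nu$-mass that nonetheless contain little of the $X_\theta$ distributions. No further technical obstacle is anticipated.
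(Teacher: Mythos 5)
Your proof is correct and follows essentially the same route as the paper's: restrict to a compact set via uniform tightness, bound the continuous Radon--Nikodym derivative on that compact set by the extreme value theorem, scale the $\delta$ from the $\mu$-absolute-continuity accordingly, and treat the degenerate case $M=0$ separately. The only cosmetic difference is that you form $\delta := \delta'/M$ explicitly rather than arguing via the chain of inequalities $\delta > \nu(B) \geq \nu(B\cap K) \geq \mu(B\cap K)/M$, but this is the same calculation.
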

\begin{proof}
Let $\varepsilon > 0$ be given. Because $(X_\theta)_{\theta \in \Theta}$ is uniformly tight, we can choose a compact set $K$, such that
\[
\sup_{\theta \in \Theta} \mathbb{P}_\theta(X_\theta \not\in K) < \varepsilon/2.
\]
Then note that for any Borel measurable set $B$
\[
\sup_{\theta \in \Theta} \mathbb{P}_\theta(X_\theta \in B) < \varepsilon/2 + \sup_{\theta \in \Theta} \mathbb{P_\theta}(X_\theta \in B \cap K).
\]
We thus need to find $\delta$ so that $\nu(B \cap K) < \delta$ implies $ \sup_{\theta \in \Theta} \mathbb{P}_\theta(X_\theta \in B \cap K) < \varepsilon/2$. Letting $g$ denote the continuous Radon-Nikodym derivative of $\mu$ with respect to $\nu$, we see that
\[
\mu(B \cap K) = \int_{B \cap K} g \, \mathrm{d}\nu \leq \left(\sup_{x \in K} g(x) \right) \nu(B \cap K) .
\]
The supremum is finite by the extreme value theorem for continuous functions since $K$ is compact. If $\sup_{x \in K} g(x)>0$ choose $\delta'$ from the uniform absolute continuity of $X$ with respect to $\mu$ matching $\varepsilon/2$ and set $\delta = \delta' / (\sup_{x \in K} g(x))$. Then for all $B$ with $\nu(B) < \delta$, we have
\[
\delta > \nu(B) \geq \nu(B \cap K) \geq \frac{
\mu(B \cap K)}{\sup_{x \in K} g(x) } \Longrightarrow \mu(B \cap K) < \delta' 
\]
and thus 
\[
\sup_{\theta \in \Theta} \mathbb{P}_\theta(X_\theta \in B \cap K) < \varepsilon /2
\]
proving the result. If $\sup_{x \in K} g(x)=0$ any $\delta$ works since $\mu(B \cap K) = 0$ implies \\ ${\sup_{\theta \in \Theta} \mathbb{P}(X_\theta \in B \cap K) = 0}$.
\end{proof}

A consequence of the above result is that uniform absolute continuity with respect to the Lebesgue measure implies uniform absolute continuity with respect to the standard Gaussian measure. This lets us immediately apply many of the results of \citet{Bengs2019} such as Theorem~4.1, when we consider a uniformly tight real-valued random variable that is uniformly absolutely continuous with respect to the Lebesgue measure. 

\begin{corollary}
\label{corollary:uniform-absolute-continuity-wrt-lebesgue-iff-wrt-gaussian}
A real-valued family of random variables $(X_\theta)_{\theta \in \Theta}$ is uniformly absolutely continuous with respect to the Lebesgue measure if and only if it is uniformly absolutely continuous with respect to the standard Gaussian measure.
\end{corollary}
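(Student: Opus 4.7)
My plan is to obtain the equivalence by applying Proposition~\ref{prop:uniformly-absolutely-continuous-under-domination} twice, with the Lebesgue measure $\lambda$ and the standard Gaussian measure $\mu_G$ alternately playing the roles of $\mu$ and $\nu$. The key observation is that both measures are $\sigma$-finite, and the Gaussian density $\phi(x) = (2\pi)^{-1/2} e^{-x^2/2}$ is continuous and strictly positive on $\mathbb{R}$, so $\lambda$ and $\mu_G$ are mutually absolutely continuous with continuous Radon--Nikodym derivatives in both directions: $d\mu_G/d\lambda = \phi$ and $d\lambda/d\mu_G = 1/\phi$.

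To go from uniform absolute continuity with respect to $\mu_G$ to uniform absolute continuity with respect to $\lambda$, I would invoke Proposition~\ref{prop:uniformly-absolutely-continuous-under-domination} with $\mu = \mu_G$ and $\nu = \lambda$: one only needs to check that $\lambda$ dominates $\mu_G$ (standard) and that $d\mu_G/d\lambda = \phi$ is continuous, both of which are immediate. For the converse direction, I would apply the same proposition with $\mu = \lambda$ and $\nu = \mu_G$. The strict positivity of $\phi$ gives $\lambda \ll \mu_G$, and $d\lambda/d\mu_G = 1/\phi$ is continuous on $\mathbb{R}$ (although unbounded, the proposition only requires continuity). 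In each case the conclusion of the proposition delivers precisely the desired direction of the corollary.

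I do not anticipate any substantive obstacle. Since all of the technical content, including the use of uniform tightness implicit in the proof of Proposition~\ref{prop:uniformly-absolutely-continuous-under-domination}, has already been absorbed into that proposition, the proof of the corollary reduces to verifying the elementary density and domination facts above.
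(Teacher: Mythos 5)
Your proof is correct and is exactly the paper's argument: the paper establishes this corollary by citing the equivalence of the standard Gaussian and Lebesgue measures, the continuity of the Gaussian density and of its reciprocal, and Proposition~\ref{prop:uniformly-absolutely-continuous-under-domination} applied in each direction. Your remark about the implicit uniform tightness hypothesis is apt --- the proof of that proposition invokes tightness even though its formal statement omits it --- and the paper's proof of the corollary relies on it in exactly the same unstated way.
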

\begin{proof}
The statement follows immediately by the equivalence of the standard Gaussian measure and the Lebesgue measure, by the continuity of the Gaussian density and its reciprocal, and Proposition~\ref{prop:uniformly-absolutely-continuous-under-domination}.
\end{proof}

We will consider sums of real-valued random variables and thus need to consider when such sums are uniformly absolutely continuous with respect to a measure. It turns out that when the random variables are independent and one of the families is uniformly absolutely continuous with respect to the Lebesgue measure, the same is true for the family of sums.

\begin{theorem}
\label{thm:independent-convolution-preserves-uniform-absolute-continuity}
Let $(X_\theta)_{\theta \in \Theta}$ and $(Y_\theta)_{\theta \in \Theta}$ be two real-valued random variables such that for any $\theta \in \Theta$ $X_\theta$ and $Y_\theta$ are independent under $\mathbb{P}_\theta$. Assume that $(X_\theta)_{\theta \in \Theta}$ is uniformly absolutely continuous with respect to the Lebesgue measure. Then $(X_\theta+Y_\theta)_{\theta \in \Theta}$ is uniformly absolutely continuous with respect to the Lebesgue measure.
\end{theorem}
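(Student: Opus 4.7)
The plan is to reduce the statement to the pointwise fact that convolving any probability measure with an absolutely continuous one yields an absolutely continuous measure, and to observe that the argument preserves the crucial uniformity in $\theta$ because the bound relies only on translation invariance of Lebesgue measure. Fix $\varepsilon > 0$. By the assumed uniform absolute continuity of $(X_\theta)_{\theta \in \Theta}$, choose $\delta > 0$ so that for every Borel set $A$ with $\lambda(A) < \delta$ one has $\sup_{\theta \in \Theta} \mathbb{P}_\theta(X_\theta \in A) < \varepsilon$, where $\lambda$ denotes Lebesgue measure on $\mathbb{R}$.

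Now fix an arbitrary Borel set $B \subseteq \mathbb{R}$ with $\lambda(B) < \delta$ and any $\theta \in \Theta$. Because $X_\theta$ and $Y_\theta$ are $\mathbb{P}_\theta$-independent, the map $(x,y) \mapsto \mathbf{1}_B(x+y)$ is jointly Borel measurable, so Fubini's theorem applies and gives
\[
\mathbb{P}_\theta(X_\theta + Y_\theta \in B) = \int_{\mathbb{R}} \mathbb{P}_\theta(X_\theta \in B - y) \, \mathrm{d} Y_\theta(\mathbb{P}_\theta)(y),
\]
where $B - y := \{b - y : b \in B\}$. By translation invariance of the Lebesgue measure, $\lambda(B - y) = \lambda(B) < \delta$ for every $y \in \mathbb{R}$, hence by the choice of $\delta$ the integrand is bounded uniformly in $y$ (and in $\theta$) by $\varepsilon$. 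Integrating against the probability measure $Y_\theta(\mathbb{P}_\theta)$ yields $\mathbb{P}_\theta(X_\theta + Y_\theta \in B) \leq \varepsilon$. As $\theta$ and $B$ were arbitrary, $\sup_{\theta \in \Theta} \mathbb{P}_\theta(X_\theta + Y_\theta \in B) \leq \varepsilon$ whenever $\lambda(B) < \delta$, which is precisely uniform absolute continuity of $(X_\theta + Y_\theta)_{\theta \in \Theta}$ with respect to Lebesgue measure.

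The only subtlety worth flagging is the measurability required by Fubini's theorem, but this is standard: the addition map $\mathbb{R}^2 \to \mathbb{R}$ is continuous, so $\mathbf{1}_B \circ (+)$ is Borel measurable on $\mathbb{R}^2$; independence then guarantees that the joint distribution of $(X_\theta, Y_\theta)$ under $\mathbb{P}_\theta$ is the product measure on which Fubini may be applied. Everything else is bookkeeping, and there is no real obstacle.
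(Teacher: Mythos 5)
Your proof is correct and follows essentially the same route as the paper's: write the convolution probability as an iterated integral via independence, use translation invariance of Lebesgue measure to see that $\lambda(B-y)=\lambda(B)<\delta$, and apply the uniform absolute continuity of $(X_\theta)$ to bound the inner integral by $\varepsilon$ uniformly in $y$ and $\theta$. The measurability remark is a fine (if standard) addition; nothing is missing.
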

\begin{proof}
Let $\varepsilon > 0$ be given and let $\lambda$ denote the Lebesgue measure. We need to find $\delta > 0$ such that for any Borel measurable $B$ with $\lambda(B) < \delta$, we have $\sup_{\theta \in \Theta} \mathbb{P}_\theta(X_\theta + Y_\theta \in B) < \varepsilon$. We can use the independence of $X_\theta$ and $Y_\theta$ to write the probability as a double-integral with respect to the pushforward measures $X_\theta(\mathbb{P}_\theta)$ and $Y_\theta(\mathbb{P}_\theta)$ as follows:
\[
\mathbb{P}_\theta(X_\theta + Y_\theta \in B) = \int \ind_B(X_\theta(\omega)+Y_\theta(\omega)) \, \mathrm{d}\mathbb{P}_\theta(\omega) = \int \int  \ind_B(x+y) \, \mathrm{d} X_\theta(\mathbb{P}_{\theta} )(x)  \, \mathrm{d} Y_\theta(\mathbb{P}_\theta )(y). 
\]
Note that $\ind_B(x+y) = \ind_{B-y}(x)$ where $B-y := \{b-y \,:\, b \in B\}$ and that, by the translation invariance of the Lebesgue measure,  $\lambda(B) = \lambda(B-y)$.
As $X_\theta$ is uniformly absolutely continuous with respect to $\lambda$, there exists $\delta$ such that if $\lambda(B) < \delta$ we have
\begin{align*}
\sup_{\theta \in \Theta} \mathbb{P}_\theta(X_\theta + Y_\theta \in B)  &\leq \sup_{ \theta \in  \Theta} \int\left(  \sup_{\theta \in \Theta} \int  \ind_{B-y}(x) \, \mathrm{d} X_\theta(\mathbb{P}_\theta )(x) \right)  \, \mathrm{d} Y_\theta(\mathbb{P}_\theta )(y)\\
& <\sup_{ \theta \in  \Theta} \int \varepsilon  \, \mathrm{d} Y_\theta(\mathbb{P}_\theta )(y) < \varepsilon. & \qedhere
\end{align*}
\end{proof}

Thus far, we have not discussed when we can expect uniform convergence in distribution to imply uniform convergence of distribution functions. This is exactly where we need an assumption of uniform absolute continuity. The following result is a modified version of \citet[][Theorem~4.1]{Bengs2019}, where our condition includes uniform convergence in $x$, rather than convergence for all $x$.

\begin{proposition}
\label{prop:uniform-convergence-of-distribution-functions}
Let $(X_{n, \theta})_{n \in \mathbb{N}, \theta \in \Theta}$ and $(X_\theta)_{\theta \in \Theta}$ be real-valued random variables. Assume that $(X_\theta)_{\theta \in \Theta}$ is uniformly absolutely continuous with respect to a continuous probability measure $\mu$. Then $X_{n, \theta} \overset{\mathcal{D}}{\rightrightarrows} X_\theta $ if and only if 
\begin{equation} \label{eq:unif_conv_dist}
\lim_{n \to \infty} \sup_{x \in \mathbb{R}} \sup_{\theta \in \Theta} | \mathbb{P}_\theta(X_{n, \theta} \leq x) -  \mathbb{P}_\theta(X_\theta \leq x) |  = 0 .
\end{equation}
\end{proposition}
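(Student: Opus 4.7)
The proof splits cleanly into two directions. For the "if" direction, the plan is to control $d_{\BL}^\theta(X_{n,\theta}, X_\theta)$ via an integration-by-parts style bound. I would first observe that uniform absolute continuity of $(X_\theta)$ with respect to a probability measure $\mu$ forces uniform tightness of $(X_\theta)$ (since $\mu$ itself is tight on $\mathbb{R}$: $\mu(K^c) < \delta$ implies $\sup_\theta \mathbb{P}_\theta(X_\theta \in K^c) < \varepsilon$), and then bootstrap this via the sup-norm closeness of the distribution functions to obtain uniform tightness of $(X_{n,\theta})_{n,\theta}$ for all $n$ sufficiently large. On a compact interval $[-M, M]$, for $f \in {\BL}_1$, a standard integration-by-parts estimate yields $|\int_{[-M,M]} f\,d(F_{n,\theta} - F_\theta)| \leq (2 + 2M)\sup_x |F_{n,\theta}(x) - F_\theta(x)|$, and the tails are bounded by uniform tightness, giving uniform convergence in $d_{\BL}^\theta$.

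For the "only if" direction, which I expect to be the substantive part, the strategy is to sandwich indicators by bounded Lipschitz functions and exploit the atomlessness of $\mu$ to obtain a finite grid that works uniformly in $\theta$. Given $\varepsilon > 0$, I would first pick $M$ large using uniform tightness of $(X_\theta)$, then use that $\mu$ is atomless and thus its distribution function is uniformly continuous on $[-M, M]$ to select a finite partition $-M = x_0 < x_1 < \dots < x_K = M$ with $\mu([x_{i-1}, x_i]) < \delta$, where $\delta$ is chosen to match $\varepsilon$ in the uniform absolute continuity assumption. This forces $\sup_\theta \mathbb{P}_\theta(X_\theta \in [x_{i-1}, x_i]) < \varepsilon$ for every $i$. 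For each grid point $x_i$ and small $h > 0$, I would introduce $1/h$-Lipschitz approximations $\phi_{x_i, h}^{\pm}$ of $\mathbb{1}_{(-\infty, x_i]}$ (equal to $1$ below $x_i - h$ or $x_i$ and $0$ above $x_i$ or $x_i + h$, respectively, with linear interpolation in between). Using $\phi_{x_i, h}^-(t) \leq \mathbb{1}_{t \leq x_i} \leq \phi_{x_i, h}^+(t)$ together with the uniform $d_{\BL}$ convergence (which gives $|\mathbb{E}_\theta \phi_{x_i, h}^{\pm}(X_{n,\theta}) - \mathbb{E}_\theta \phi_{x_i, h}^{\pm}(X_\theta)| \leq \max(1/h, 1)\,d_{\BL}^\theta(X_{n,\theta}, X_\theta)$) controls $|F_{n,\theta}(x_i) - F_\theta(x_i)|$ uniformly in $\theta$, up to a residual term bounded by $\sup_\theta \mathbb{P}_\theta(X_\theta \in [x_i, x_i + h])$, which is again small by uniform absolute continuity for small $h$.

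The remaining step, passing from the finite grid to all $x$, would use monotonicity of distribution functions: for $x \in [x_{i-1}, x_i]$,
\[
|F_{n,\theta}(x) - F_\theta(x)| \leq |F_{n,\theta}(x_i) - F_\theta(x_i)| + |F_{n,\theta}(x_{i-1}) - F_\theta(x_{i-1})| + \mathbb{P}_\theta(X_\theta \in [x_{i-1}, x_i]),
\]
and tails $|x| > M$ are handled by uniform tightness of both $(X_\theta)$ and $(X_{n,\theta})$. The main obstacle is that uniform absolute continuity is indispensable: without it, the laws of the $X_\theta$ could concentrate mass near the grid points in a $\theta$-dependent way, so the Lipschitz sandwich fails uniformly. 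The atomlessness of $\mu$ supplies a single grid valid for all $\theta$, while uniform absolute continuity translates $\mu$-smallness of intervals into uniform smallness of $\mathbb{P}_\theta$-masses; this pairing is the mechanical heart of the proof.
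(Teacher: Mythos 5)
Your proof is correct. The substantive part of the paper's own argument --- upgrading convergence at each fixed $x$ to convergence uniform in $x$ --- is carried out exactly as you do it: partition the line into cells of small $\mu$-mass (possible since $\mu$ is continuous), use uniform absolute continuity to convert $\mu$-smallness of each cell into $\sup_\theta \mathbb{P}_\theta(X_\theta \in (x_{i-1},x_i]) < \varepsilon$, and then interpolate between grid points by monotonicity of the distribution functions. The difference is in what you take as given: the paper simply cites \citet[Theorem~4.1]{Bengs2019} for the fixed-$x$ equivalence between $X_{n,\theta}\overset{\mathcal{D}}{\rightrightarrows}X_\theta$ and $\sup_\theta|F_{n,\theta}(x)-F_\theta(x)|\to 0$, whereas you re-derive both implications from scratch --- the ``only if'' direction via $1/h$-Lipschitz sandwiches of $\ind_{(-\infty,x]}$ with the residual $\sup_\theta\mathbb{P}_\theta(X_\theta\in(x_i,x_i+h])$ killed by uniform absolute continuity, and the ``if'' direction via Riemann--Stieltjes integration by parts on a compact interval obtained from the uniform tightness that uniform absolute continuity with respect to a probability measure automatically supplies. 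Your version is therefore more self-contained and slightly more elementary, at the cost of length; the paper's is shorter but leans on the external reference. Both are sound, and your observation that the atomlessness of $\mu$ furnishes a single grid valid for all $\theta$ while uniform absolute continuity transfers its smallness to every $\mathbb{P}_\theta$ is precisely the mechanism the paper exploits.
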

\begin{proof}
See \citet[][Theorem~4.1]{Bengs2019} for a proof that $X_{n, \theta} \overset{\mathcal{D}}{\rightrightarrows} X_\theta$ if and only if 
\[
\lim_{n \to \infty} \sup_{\theta \in \Theta} | \mathbb{P}_\theta(X_{n, \theta} \leq x) -  \mathbb{P}_\theta(X_\theta \leq x) |  = 0 
\]
for all $x \in \mathbb{R}$.
To show that the convergence of distribution functions is uniform, we proceed as follows. In view of the uniform absolute continuity of $(X_\theta)_{\theta \in \Theta}$ with respect to $\mu$, for all $\varepsilon > 0$ there exists $\delta > 0$ such that for Borel measurable $B$ with $\mu(B) < \delta$, we have
$\sup_{\theta \in \Theta} \mathbb{P}_\theta(X_\theta \in B) < \varepsilon$.
Let $-\infty = x_0 < x_1 < \dots < x_m = \infty$ such that for all $i \in \{1, \dots , m\}$, $0 < \mu((x_{i-1}, x_i]) < \delta$. We can find such a grid since $\mu$ is a continuous probability measure. For any $\theta$ and $i \in \{1, \dots , m\}$, we thus have
\[
\mathbb{P}_\theta(X_\theta \leq x_i) - \mathbb{P}_\theta(X_\theta\leq x_{i-1})  = \mathbb{P}_\theta(X_\theta \in (x_{i-1}, x_i]) < \varepsilon.
\]
For $x \in (x_{i-1}, x_i]$,
\begin{align*}
& \sup_{\theta \in \Theta} \{ \mathbb{P}_\theta (X_{n, \theta} \leq x) -  \mathbb{P}_\theta(X_\theta \leq x)\} \leq \sup_{\theta \in \Theta} \{\mathbb{P}_\theta(X_{n, \theta} \leq x_i) -  \mathbb{P}_\theta(X_\theta \leq x_{i-1})\} \\
 &\leq \sup_{\theta \in \Theta} \{\mathbb{P}_\theta(X_{n, \theta} \leq x_i) -  \mathbb{P}_\theta(X_\theta \leq x_i)\}  + \varepsilon \leq  \sup_{\theta \in \Theta} | \mathbb{P}_\theta(X_{n, \theta} \leq x_i) -  \mathbb{P}_\theta(X_\theta \leq x_i)|  + \varepsilon,
\end{align*}
and, similarly,
\begin{align*}
& \sup_{\theta \in \Theta}  \{\mathbb{P}_\theta(X_\theta \leq x) - \mathbb{P}_\theta(X_{n, \theta} \leq x)\}  \leq \sup_{\theta \in \Theta}  \{\mathbb{P}_\theta(X_\theta \leq x_i) - \mathbb{P}_\theta(X_{n, \theta} \leq x_{i-1})\} \\
 &\leq \sup_{\theta \in \Theta}  \{\mathbb{P}_\theta(X_\theta \leq x_{i-1}) - \mathbb{P}_\theta(X_{n, \theta} \leq x_{i-1}) \}  + \varepsilon \leq  \sup_{\theta \in \Theta} | \mathbb{P}_\theta(X_\theta \leq x_{i-1}) - \mathbb{P}_\theta(X_{n, \theta} \leq x_{i-1})|  + \varepsilon.
\end{align*}
Thus,
\[
 \sup_{x \in \mathbb{R}} \sup_{\theta \in \Theta} | \mathbb{P}_\theta(X_{n, \theta} \leq x) - \mathbb{P}_\theta(X_\theta \leq x) | \leq  \sup_{i \in \{0, \dots, m\}} \sup_{\theta \in \Theta} | \mathbb{P}_\theta(X_{n, \theta} \leq x_i) -  \mathbb{P}_\theta(X_\theta \leq x_i) | + \varepsilon.
\]
The first term on the right-hand side goes to $0$ by assumption and $\varepsilon$ was arbitrary, thus proving the uniform convergence. 
\end{proof}

The final results of this section are uniform versions of Slutsky's lemma, the Weak Law of Large Numbers and the Central Limit Theorem. In the remaining results uniform tightness will play a crucial role. It is a standard result that if $(X_n)_{n \in \mathbb{N}}$ converges in distribution to $X$ then $(X_n)_{n \in \mathbb{N}}$ is tight. We can show that analogously if $X_{n, \theta} \overset{\mathcal{D}}{\rightrightarrows} X_\theta$ and $(X_\theta)_{\theta \in \Theta}$ is uniformly tight then $(X_{n, \theta})_{n \in \mathbb{N}, \theta \in \Theta}$ is sequentially tight.

\begin{proposition}
\label{prop:uniform-convergence-in-distribution-implies-sequential-tightness}
Assume that $(X_\theta)_{\theta \in \Theta}$ is uniformly tight. If $X_{n, \theta} \overset{\mathcal{D}}{\rightrightarrows} X_\theta$ then $(X_{n, \theta})_{n \in \mathbb{N}, \theta \in \Theta}$ is sequentially tight.
\end{proposition}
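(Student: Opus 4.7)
The plan is to build, for each $\varepsilon > 0$, a single compact set capturing most of the mass of the entire sequence $X_{n, \theta_n}$, using a Prokhorov-style characterisation of tightness in the separable Banach space $\mathcal{B}$. Fix a sequence $(\theta_n)_{n \in \mathbb{N}} \subset \Theta$. For each resolution $k \in \mathbb{N}$, I aim to find a finite collection of points $c_{k,1}, \ldots, c_{k, N_k}$ such that
\[
\sup_n \mathbb{P}_{\theta_n}\left(X_{n, \theta_n} \notin \bigcup_{j=1}^{N_k} \overline{B}(c_{k,j}, 1/k)\right) < \varepsilon/2^k.
\]
Once this is done, the set $K^\star := \bigcap_k \bigcup_{j=1}^{N_k} \overline{B}(c_{k,j}, 1/k)$ is closed and totally bounded, hence compact since a separable Banach space is Polish, and a union bound over $k$ gives $\mathbb{P}_{\theta_n}(X_{n, \theta_n} \notin K^\star) < \varepsilon$ for every $n$, which is exactly the sequential tightness required.

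To produce the cover at level $k$, I will pair uniform tightness of $(X_\theta)$ with a single $\BL_1$-test function. By uniform tightness, pick a compact $K_k$ with $\sup_{\theta \in \Theta} \mathbb{P}_\theta(X_\theta \notin K_k) < \varepsilon/2^{k+2}$ and cover $K_k$ by finitely many open balls of radius $1/(2k)$ centred at points $a_{k,1}, \ldots, a_{k, M_k}$; the triangle inequality then forces $\{x \in \mathcal{B} : d(x, K_k) < 1/(2k)\} \subseteq \bigcup_j \overline{B}(a_{k,j}, 1/k)$. The function $\psi_k(x) := (1/(2k) - d(x, K_k))_+$ is $1$-Lipschitz and bounded by $1/(2k) \leq 1$, hence lies in $\BL_1$; it satisfies $\psi_k \geq (1/(2k))\ind_{K_k}$ and vanishes outside the $1/(2k)$-neighbourhood of $K_k$. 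Comparing $\mathbb{E}_{\theta_n} \psi_k(X_{\theta_n})$ with $\mathbb{E}_{\theta_n} \psi_k(X_{n, \theta_n})$ and using that uniform convergence in distribution yields $\sup_\theta d_{\BL}^\theta(X_{n, \theta}, X_\theta) \to 0$ produces $\mathbb{P}_{\theta_n}(d(X_{n, \theta_n}, K_k) \geq 1/(2k)) \leq \varepsilon/2^{k+1}$ for all $n$ beyond some threshold $N_k$. The finitely many remaining indices $n < N_k$ are handled separately: each individual Borel measure $X_{n, \theta_n}(\mathbb{P}_{\theta_n})$ on the Polish space $\mathcal{B}$ is tight by Ulam's theorem, so a compact set carries all but $\varepsilon/2^{k+1}$ of its mass, and the finite union of these compact sets is covered by finitely many $1/k$-balls whose centres are adjoined to the $a_{k,j}$'s.

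The main obstacle, and the reason for the two-scale choice of radii $1/(2k)$ and $1/k$, is that in an infinite-dimensional Banach space the closed $\delta$-enlargement of a compact set need not be compact, so one cannot simply thicken $K_k$ to obtain a single compact set carrying most of the mass. The Prokhorov-style device of intersecting a sequence of totally bounded $1/k$-covers sidesteps this, while the Lipschitz surrogate $\psi_k$ is what lets the uniform $\BL_1$-convergence translate into neighbourhood-probability bounds anchored on the a priori compact $K_k$ supplied by uniform tightness of $(X_\theta)_{\theta \in \Theta}$.
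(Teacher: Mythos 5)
Your proof is correct and takes a genuinely different route from the paper's. The paper argues by contrapositive: it assumes $(X_{n,\theta_n}(\mathbb{P}_{\theta_n}))_n$ is not tight, invokes Prokhorov's theorem to deduce there is a subsequence with no weakly convergent further subsequence, extracts a weakly convergent subsequence of the (uniformly tight) targets $Z_n := X_{\theta_n}(\mathbb{P}_{\theta_n})$, and then uses the reverse triangle inequality for $d_{\BL}$ to contradict the uniform convergence hypothesis. You instead give a direct Prokhorov-cube construction: for each resolution $k$ you build a finite union of $1/k$-balls that uniformly carries all but $\varepsilon/2^k$ of the mass along the whole sequence, then take the intersection over $k$. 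This relies only on the elementary fact that closed plus totally bounded equals compact in a complete metric space, not on the full Prokhorov compactness theorem. The pivotal technical device in your approach — replacing the indicator of $K_k$ by the Lipschitz surrogate $\psi_k(x) = (1/(2k) - d(x,K_k))_+$ — is exactly what converts the $d_{\BL}$ bound from uniform convergence in distribution into the probability estimate on the $1/(2k)$-neighbourhood, and your observation that in infinite dimensions the $\delta$-enlargement of a compact set is no longer compact correctly identifies why a two-scale cover is needed rather than a single thickened compact set. The paper's proof is shorter given that it leans on the Prokhorov machinery already set up in the surrounding text; yours is longer but self-contained, constructive, and yields explicit quantitative control over the rate at which $\varepsilon$ and the thresholds $N_k$ must scale. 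Minor remarks: you reuse $N_k$ for both the threshold index and the number of centres, and the relevant hypothesis for compactness is completeness of $\mathcal{B}$ (automatic for a Banach space) rather than separability, but these do not affect correctness.
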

\begin{proof}
We prove the contrapositive statement. Assume that there exists a sequence $(\theta_n)_{n \in \mathbb{N}} \subset \Theta$ such that $(X_{n, \theta_n}(\mathbb{P}_{\theta_n}))_{n \in \mathbb{N}}$ is not tight. Let $Y_n$ be distributed as $X_{n, \theta_n}(\mathbb{P}_{\theta_n})$ and $Z_n$ distributed as $X(\mathbb{P}_{\theta_n})$ defined on a probability space $(\Omega, \mathcal{F}, \mathbb{P})$. Since $(Y_n)_{n \in \mathbb{N}}$ is not tight, there exists a subsequence $(k(n))_{n \in \mathbb{N}}$ with $k: \mathbb{N} \to \mathbb{N}$ strictly increasing such that any further subsequence of $(Y_{k(n)})_{n \in \mathbb{N}}$ does not
converge in distribution. Since $(Z_n)_{n \in \mathbb{N}}$ is tight, there exists a strictly increasing $k': \mathbb{N} \to \mathbb{N}$ and a random variable $Z$ such that writing $m = k \circ k'$, we have 
\[
d_{\BL}(Z_{m(n)}, Z) \to 0.
\]
However, since $Y_{k(n)}$ does not have a weakly convergent subsequence, we have
\[
d_{\BL}(Y_{m(n)},Z) \not\to 0.
\]
Thus, there exists $\varepsilon > 0$ and a strictly increasing $k'': \mathbb{N} \to \mathbb{N}$ such that writing $l=m \circ k''$, we have for all $n$
\[
d_{\BL}(Y_{l(n)}, Z)  \geq \varepsilon.
\]
Next choose $N$ such that for $n \geq N$ we have
\[
d_{\BL}(Z_{l(n)}, Z) < \varepsilon/2.
\]
Then by the reverse triangle inequality
\[
d_{\BL}(Z_{l(n)}, Y_{l(n)}) \geq \left| d_{\BL}(Z_{l(n)}, Z) - d_{\BL}(Z, Y_{l(n)}) \right| \geq \varepsilon/2
\]
for all $n \geq N$. Since 
\[
d_{\BL}(Z_{l(n)}, Y_{l(n)})  = d_{\BL}^{\theta_{l(n)}}\left( X_{l(n), \theta_{l(n)}}, X_{\theta_{l(n)}} \right)
\]
by Proposition~\ref{prop:alternative-definition-uniform-convergence} we cannot have $X_{n, \theta} \overset{\mathcal{D}}{\rightrightarrows} X_\theta$ proving the desired statement.
\end{proof}

The previous result will be required when proving the second part of the upcoming uniform version of Slutsky's lemma.

\begin{proposition}[Uniform Slutsky's lemma]
\label{prop:uniform-slutsky}
Let $(X_{n, \theta})_{n \in \mathbb{N}, \theta \in \Theta}$, $(Y_{n, \theta})_{n \in \mathbb{N}, \theta \in \Theta}$ and $(X_\theta)_{\theta \in \Theta}$ be Banachian random variables. Assume that $X_{n, \theta} \overset{\mathcal{D}}{\rightrightarrows} X_\theta $ and $Y_{n, \theta} \overset{P}{\rightrightarrows} 0 $. Then, the following two statements hold.
\begin{enumerate}[(i)]
\item $X_{n, \theta} + Y_{n, \theta}   \overset{\mathcal{D}}{\rightrightarrows} X_\theta$.
\item If $(Y_{n, \theta})_{n \in \mathbb{N}, \theta \in \Theta}$ is a family of real-valued random variables and $(X_\theta)_{\theta \in \Theta}$ is uniformly tight, then $Y_{n, \theta} X_{n, \theta} \overset{P}{\rightrightarrows} 0$. 
\end{enumerate}
\end{proposition}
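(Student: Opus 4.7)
The plan is to handle the two parts separately, using the bounded Lipschitz metric for (i) and the sequence characterisation together with sequential tightness for (ii).

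For part (i), I would estimate the bounded Lipschitz distance $d_{\BL}^\theta(X_{n,\theta}+Y_{n,\theta}, X_\theta)$ by inserting $X_{n,\theta}$ and applying the triangle inequality, reducing the problem to bounding $d_{\BL}^\theta(X_{n,\theta}+Y_{n,\theta}, X_{n,\theta})$. For any $f \in \BL_1$ and any $\varepsilon>0$, the Lipschitz property together with the bound $|f| \le 1$ yields
\[
|f(X_{n,\theta}+Y_{n,\theta}) - f(X_{n,\theta})| \le \min(2, \|Y_{n,\theta}\|) \le \varepsilon + 2\,\ind_{\{\|Y_{n,\theta}\| > \varepsilon\}}.
\]
Taking expectations and then the supremum over $f$ and $\theta$ gives $\sup_\theta d_{\BL}^\theta(X_{n,\theta}+Y_{n,\theta}, X_{n,\theta}) \le \varepsilon + 2\sup_\theta \pr_\theta(\|Y_{n,\theta}\| > \varepsilon)$. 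The second summand vanishes as $n \to \infty$ by the hypothesis $Y_{n,\theta} \overset{P}{\rightrightarrows} 0$, and the assumption $X_{n,\theta} \overset{\mathcal{D}}{\rightrightarrows} X_\theta$ handles the other triangle-inequality term; since $\varepsilon$ was arbitrary, the conclusion follows.

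For part (ii), I would use the sequence characterisation from Proposition~\ref{prop:alternative-definition-uniform-convergence} to reduce the uniform claim to showing that for any sequence $(\theta_n) \subset \Theta$ and any $\varepsilon > 0$, $\pr_{\theta_n}(|Y_{n,\theta_n}|\,\|X_{n,\theta_n}\| \ge \varepsilon) \to 0$. The crucial ingredient is Proposition~\ref{prop:uniform-convergence-in-distribution-implies-sequential-tightness}, which, combined with uniform tightness of $(X_\theta)_{\theta \in \Theta}$, guarantees that $(X_{n,\theta})$ is sequentially tight. Hence for any $\delta>0$ there exists a compact set $K \subset \mathcal{B}$ with $\sup_n \pr_{\theta_n}(X_{n,\theta_n} \notin K) < \delta$, and by compactness $M := \sup_{x \in K} \|x\| < \infty$. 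I would then split on the event $\{\|X_{n,\theta_n}\| \le M\}$ to obtain
\[
\pr_{\theta_n}\!\bigl(|Y_{n,\theta_n}|\,\|X_{n,\theta_n}\| \ge \varepsilon\bigr) \le \delta + \pr_{\theta_n}\!\bigl(|Y_{n,\theta_n}| \ge \varepsilon/M\bigr).
\]
The last probability tends to zero by $Y_{n,\theta} \overset{P}{\rightrightarrows} 0$ applied along the sequence $\theta_n$, so $\limsup_n \pr_{\theta_n}(|Y_{n,\theta_n}|\,\|X_{n,\theta_n}\| \ge \varepsilon) \le \delta$, and $\delta$ being arbitrary finishes the argument.

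The routine pieces are both triangle-inequality manipulations; the main conceptual step is identifying that uniform tightness of the limit $X_\theta$ upgrades, via sequential tightness of $(X_{n,\theta})$, into the ability to pick a single compact set $K$ that traps $X_{n,\theta_n}$ with high probability uniformly in $n$. Without this, one could not convert smallness of $|Y_{n,\theta_n}|$ into smallness of the product, since $\|X_{n,\theta_n}\|$ could drift to infinity; this is the only point where the hypothesis of uniform tightness is genuinely used.
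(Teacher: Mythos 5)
Your proof is correct and follows essentially the same route as the paper's: for (i), the triangle inequality for $d_{\BL}$ plus the bound $|f(x+y)-f(x)| \le \varepsilon + 2\ind_{\{\|y\|>\varepsilon\}}$ for $f \in \BL_1$; for (ii), the sequence characterisation combined with sequential tightness (via Proposition~\ref{prop:uniform-convergence-in-distribution-implies-sequential-tightness}) to trap $X_{n,\theta_n}$ in a single bounded compact set. No gaps.
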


\begin{proof}
We first prove (i), for which we need to show that 
\[
\sup_{\theta \in \Theta} d_{\BL}^\theta(X_{n, \theta} +Y_{n, \theta}, X_\theta) \to 0
\]
as $n \to \infty$. We have for any $\theta$
\begin{align*}
d_{\BL}^\theta(X_{n, \theta} +Y_{n, \theta}, X_\theta) \leq d_{\BL}^\theta(X_{n, \theta} +Y_{n, \theta}, X_{n,\theta}) +  d_{\BL}^\theta(X_{n, \theta}, X_\theta),
\end{align*}
where the second term goes to $0$ uniformly by assumption. It remains to show that the first term goes to $0$ uniformly. Now for $f \in {\BL}_1$ we have that for any $\varepsilon > 0$ and any $x, y \in \mathcal{B}$, $\lVert y \rVert < \varepsilon$ implies $\lVert f(x+y) - f(x) \rVert \leq \varepsilon$. Hence, by using the triangle inequality for the expectation, partitioning the integral and using the uniform continuity above, we get 
\begin{align*}
d_{\BL}^\theta(X_{n, \theta} +Y_{n, \theta}, X_{n,\theta})  \leq \varepsilon +  \sup_{f \in {\BL}_1}  \mathbb{E}_\theta \left| [f(X_{n, \theta}+Y_{n, \theta}) -  f(X_{n, \theta})] \ind_{\{\lVert Y_{n, \theta} \rVert > \varepsilon\}} \right| .
\end{align*}
We can again apply the triangle inequality and recall that $f$ is bounded by $1$, yielding 
\[
\sup_{\theta \in \Theta} \sup_{f \in {\BL}_1}  \mathbb{E}_\theta \left| [f(X_{n, \theta}+Y_{n, \theta}) -  f(X_{n, \theta})]\ind_{\{\lVert Y_{n, \theta} \rVert > \varepsilon\}} \right| \leq 2 \sup_{\theta \in \Theta}  \mathbb{P}_\theta(\lVert Y_{n, \theta} \rVert > \varepsilon),
\]
which goes to $0$ by assumption. Since $\varepsilon >0$ was arbitrary, we have proven the desired result.

We now turn to the proof of (ii). We will apply Proposition~\ref{prop:alternative-definition-uniform-convergence} and show that for any $(\theta_n)_{n \in \mathbb{N}} \subseteq \Theta$ and any $\varepsilon > 0$,
\[
\mathbb{P}_{\theta_n}(\lVert  Y_{n, \theta_n} X_{n, \theta_n} \rVert \geq \varepsilon)  \to 0
\]
as $n \to \infty$, which implies the desired result. Let $\delta > 0$ be given. By Proposition~\ref{prop:uniform-convergence-in-distribution-implies-sequential-tightness} there exists a compact set $K$ such that 
\[
\sup_{n \in \mathbb{N}} \mathbb{P}_{\theta_n}( X_{n, \theta_n} \in K^c) \leq \delta/2 .
\]
Since $K$ is compact, it is bounded and thus there exists $M > 0$ such that $\|x\| < M$ for all $x \in K$.
By the uniform convergence in probability of $Y_n$ to zero, we can find $N$ such that for all $n \geq N$,
\[
\mathbb{P}_{\theta_n}(|Y_{n, \theta_n}| \geq \varepsilon/M)  < \delta/2 .
\]
Putting things together, we get, for all $n \geq N$,
\begin{align*}
\mathbb{P}_{\theta_n}(\lVert X_{n, \theta_n} Y_{n, \theta_n} \rVert \geq \varepsilon) 
&
\leq 
\mathbb{P}_{\theta_n}(\lVert X_{n, \theta_n} Y_{n, \theta_n} \rVert \geq \varepsilon, X_{n, \theta_n} \in K)
+
\mathbb{P}_{\theta_n}(X_{n, \theta_n} \in K^c) 
\\
&\leq \mathbb{P}_{\theta_n}(|Y_{n, \theta_n}| \geq \varepsilon/M) +  \sup_{n \in \mathbb{N}} \mathbb{P}_{\theta_n}( X_{n, \theta_n} \in K^c) < \delta,
\end{align*}
proving the result.
\end{proof}

We will now consider the setting of uniform convergence of averages of i.i.d.\ random variables, i.e.\ we assume that for each $\theta \in \Theta$ the sequence $(X_{n, \theta})_{n \in \mathbb{N}}$ is i.i.d.\ and consider the convergence of $1/n \sum_{i=1}^n X_{i, \theta}$. We first prove a small technical lemma and then apply this lemma to prove an analogue of the Law of Large numbers for uniform convergence in probability for Hilbertian random variables. 

\begin{lemma}
\label{lem:hilbertian-expected-squared-norm}
Let $Y_1, \dots, Y_n$ be independent, mean zero random variables taking values in Hilbert space $\mathcal{H}$. Then
\[
\mathbb{E}\left( \left\lVert \sum_{i=1}^n Y_i \right\rVert^2 \right) = \sum_{i=1}^n \mathbb{E}\lVert Y_i \rVert^2 .
\]

\begin{proof}
Note first that
\[
\left\lVert \sum_{i=1}^n Y_i \right\rVert^2  =  \sum_{i=1}^n \sum_{j=1}^n \langle Y_i , Y_j \rangle .
\]
Let $(e_k)_{k \in \mathbb{N}}$ denote a basis of $\mathcal{H}$. Then for $i \neq j$
\[
\mathbb{E}(\langle Y_i , Y_j \rangle) = \mathbb{E}\left( \sum_{k=1}^\infty \langle Y_i , e_k \rangle \langle Y_j, e_k \rangle  \right) =  \sum_{k=1}^\infty \mathbb{E}\left( \langle Y_i , e_k \rangle \langle Y_j, e_k \rangle  \right) = \sum_{k=1}^\infty \mathbb{E}(\langle Y_i , e_k \rangle ) \mathbb{E}(\langle Y_j, e_k \rangle )
\]
but $\mathbb{E}(\langle Y_i, e_k \rangle) = 0$ for all $i$ and $k$ since $Y_i$ are mean zero. 
\end{proof}
\end{lemma}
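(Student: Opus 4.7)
The plan is to expand $\lVert \sum_{i=1}^n Y_i \rVert^2$ using bilinearity of the inner product and reduce to showing that the cross terms vanish. Writing $\lVert \sum_i Y_i \rVert^2 = \sum_{i,j} \langle Y_i, Y_j \rangle$ and taking expectations, the diagonal contributions $\mathbb{E}\langle Y_i, Y_i \rangle = \mathbb{E}\lVert Y_i \rVert^2$ already produce the desired right-hand side, so it suffices to prove that $\mathbb{E}\langle Y_i, Y_j \rangle = 0$ whenever $i \neq j$.

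For the off-diagonal terms I would apply Fubini's theorem on the product measure for $(Y_i, Y_j)$, which is justified because Cauchy--Schwarz combined with independence gives $\mathbb{E}(\lVert Y_i \rVert \, \lVert Y_j \rVert) = \mathbb{E}\lVert Y_i \rVert \cdot \mathbb{E}\lVert Y_j \rVert < \infty$, using the implicit assumption $\mathbb{E}\lVert Y_i \rVert^2 < \infty$ without which the right-hand side would be infinite. Iterating the integral and using that the inner product with a fixed vector is a bounded linear functional and therefore commutes with the Bochner integral, we obtain $\mathbb{E}\langle Y_i, Y_j \rangle = \langle \mathbb{E} Y_i, \mathbb{E} Y_j \rangle$, which vanishes by the mean-zero assumption.

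The only subtle point is the commutation of the Bochner integral with the inner product, but this is a standard property: for any fixed $c \in \mathcal{H}$, the map $y \mapsto \langle y, c \rangle$ is a bounded linear functional $\mathcal{H} \to \mathbb{R}$, and such maps always commute with Bochner integration, so $\int \langle y, c\rangle \, d\mu(y) = \langle \int y \, d\mu(y), c\rangle$. If one preferred to avoid invoking this, an equally elementary alternative would be to fix an orthonormal basis $(e_k)$ of $\mathcal{H}$, expand $\langle Y_i, Y_j \rangle = \sum_k \langle Y_i, e_k \rangle \langle Y_j, e_k \rangle$ by Parseval, interchange the sum and the expectation using the same $L^1$ bound from Cauchy--Schwarz, factor each term by independence, and conclude via $\mathbb{E}\langle Y_i, e_k \rangle = \langle \mathbb{E} Y_i, e_k \rangle = 0$. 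Either route makes the argument essentially a one-line computation.
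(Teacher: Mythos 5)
Your proof is correct. The starting decomposition $\lVert \sum_i Y_i \rVert^2 = \sum_{i,j} \langle Y_i, Y_j \rangle$ is the same as the paper's, and the alternative route you sketch at the end — expand $\langle Y_i, Y_j \rangle$ via Parseval in an orthonormal basis, interchange sum and expectation, factor by independence — is exactly the paper's argument. Your primary route is a mild variant: instead of going through coordinates, you invoke Fubini on the product measure of $(Y_i, Y_j)$ and then use the fact that inner products against a fixed vector (being bounded linear functionals) commute with the Bochner integral, giving $\mathbb{E}\langle Y_i, Y_j \rangle = \langle \mathbb{E} Y_i, \mathbb{E} Y_j \rangle = 0$ directly. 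This is a cleaner argument in that it avoids introducing a basis at all and makes the structural reason for the cancellation (linearity of the Bochner integral) explicit rather than coordinate-by-coordinate; it is also more careful than the paper in flagging the $L^1$ integrability bound from Cauchy--Schwarz that justifies the interchange, which the paper leaves implicit. The paper's version is slightly more self-contained in that it needs no appeal to the Bochner commutation property beyond the scalar case. Both are valid and the content is equivalent.
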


\begin{proposition}
\label{prop:uniform-hilbert-lln}
Let $(X_\theta)_{\theta \in \Theta}$ be Hilbertian random variables with $\mathbb{E}_\theta(X_\theta) = 0$ for all $\theta \in \Theta$ and $\sup_ {\theta \in \Theta} \mathbb{E}_\theta(\lVert X_\theta \rVert^{1+\eta}) < C$ for some $C, \eta > 0$. Let $(X_{n, \theta})_{n \in \mathbb{N}, \theta \in \Theta}$ be random variables such that for $\theta \in \Theta$ $(X_{n, \theta})_{n \in \mathbb{N}}$ is i.i.d.\ with the same distribution as $X_\theta$ under $\mathbb{P}_\theta$. Then 
\[
\frac{1}{n} \sum_{i=1}^n X_{i, \theta} \overset{P}{\rightrightarrows} 0.
\]
\end{proposition}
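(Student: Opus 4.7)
The plan is to adapt the standard truncation proof of the weak law of large numbers to the Hilbertian, uniform setting, exploiting the fact that the crucial uniform bound $\sup_{\theta \in \Theta} \mathbb{E}_\theta \lVert X_\theta \rVert^{1+\eta} < C$ gives control on both the variance of a truncated version and the expected norm of the tail, with constants independent of $\theta$.

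First I would introduce a truncation level $M_n \to \infty$ and decompose $X_{i,\theta} = Y_{i,\theta} + Z_{i,\theta}$ with $Y_{i,\theta} := X_{i,\theta} \ind_{\{\lVert X_{i,\theta}\rVert \leq M_n\}}$ and $Z_{i,\theta} := X_{i,\theta} \ind_{\{\lVert X_{i,\theta}\rVert > M_n\}}$. Since $\mathbb{E}_\theta X_\theta = 0$, we have $\mathbb{E}_\theta Y_\theta = -\mathbb{E}_\theta Z_\theta$, so
\[
\tfrac{1}{n}\sum_{i=1}^n X_{i,\theta} = \tfrac{1}{n}\sum_{i=1}^n (Y_{i,\theta} - \mathbb{E}_\theta Y_\theta) + \tfrac{1}{n}\sum_{i=1}^n Z_{i,\theta} - \mathbb{E}_\theta Z_\theta.
\]
A union bound via Markov's inequality will then reduce matters to controlling three expectations uniformly in $\theta$.

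For the centred truncated average I would apply Lemma~\ref{lem:hilbertian-expected-squared-norm} to the independent mean-zero summands $Y_{i,\theta} - \mathbb{E}_\theta Y_\theta$, giving
\[
\mathbb{E}_\theta \bigl\lVert \tfrac{1}{n}\textstyle\sum_i (Y_{i,\theta} - \mathbb{E}_\theta Y_\theta)\bigr\rVert^2 \leq \tfrac{1}{n}\mathbb{E}_\theta \lVert Y_\theta \rVert^2 \leq \tfrac{1}{n} M_n^{1-\eta} \mathbb{E}_\theta \lVert X_\theta\rVert^{1+\eta} \leq \tfrac{C M_n^{1-\eta}}{n}
\]
(using that $\lVert Y_\theta\rVert^2 \leq M_n^{1-\eta}\lVert X_\theta\rVert^{1+\eta}$ on $\{\lVert X_\theta\rVert \leq M_n\}$, valid for $\eta \in (0,1]$; for $\eta > 1$ one may simply bound $\mathbb{E}_\theta \lVert Y_\theta\rVert^2$ by $C^{2/(1+\eta)}$ via Jensen and use a trivial factor in place of $M_n^{1-\eta}$). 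For the tail terms I would use
\[
\lVert \mathbb{E}_\theta Z_\theta \rVert \leq \mathbb{E}_\theta \lVert Z_\theta\rVert \leq M_n^{-\eta}\,\mathbb{E}_\theta \lVert X_\theta\rVert^{1+\eta} \leq C M_n^{-\eta},
\]
and the same bound applies to $\mathbb{E}_\theta \lVert n^{-1}\sum_i Z_{i,\theta}\rVert$. All three bounds are independent of $\theta$.

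Finally, choosing $M_n = \sqrt{n}$ (or any sequence with $M_n \to \infty$ and $M_n^{1-\eta}/n \to 0$) makes each of the three bounds vanish as $n \to \infty$. Combining via Markov's inequality,
\[
\sup_{\theta \in \Theta}\mathbb{P}_\theta\bigl(\bigl\lVert\tfrac{1}{n}\textstyle\sum_i X_{i,\theta}\bigr\rVert \geq \varepsilon\bigr) \leq \tfrac{9 C M_n^{1-\eta}}{\varepsilon^2 n} + \tfrac{6 C M_n^{-\eta}}{\varepsilon} \longrightarrow 0,
\]
yielding the required uniform convergence in probability. There is no real obstacle; the only point requiring care is keeping all constants independent of $\theta$, which is automatic because the only distributional quantity used is the uniform $(1+\eta)$-moment bound $C$.
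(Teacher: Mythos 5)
Your proof is correct and follows essentially the same route as the paper's: a truncation of $X_{i,\theta}$ at a threshold, Chebyshev via the Hilbertian identity $\mathbb{E}\lVert\sum_i W_i\rVert^2=\sum_i\mathbb{E}\lVert W_i\rVert^2$ (Lemma~\ref{lem:hilbertian-expected-squared-norm}) for the bounded part, and the uniform $(1+\eta)$-moment bound for the tail and its mean. The only cosmetic differences are that the paper fixes a large truncation level $M$ first and then sends $n\to\infty$, bounding the tail by H\"older's inequality, whereas you couple $M_n=\sqrt{n}$ to $n$ and use the direct pointwise bound $\lVert X\rVert\ind_{\{\lVert X\rVert>M_n\}}\leq M_n^{-\eta}\lVert X\rVert^{1+\eta}$; both yield the same uniform-in-$\theta$ control.
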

\begin{proof}
We adapt the argument given in \citet[][Lemma~19]{GCM}. Defining $S_{n, \theta} := n^{-1} \sum_{i=1}^n X_{i, \theta}$, we need to show that for any $\varepsilon > 0$,
\[
\sup_{\theta \in \Theta} \mathbb{P}_\theta(\lVert S_{n, \theta} \rVert \geq \varepsilon) \to 0
\]
as $n \to \infty$. To this end, we let $M>0$ and define $X_\theta^< := \ind_{\{\lVert X_\theta \rVert \leq M\}} X_\theta$ and $X_\theta^> := \ind_{\{\lVert X_\theta \rVert > M\}} X_\theta$ and similarly $X_{i, \theta}^<$ and $X_{i, \theta}^>$ for $i \in \mathbb{N}$. We also define $S_{n, \theta}^< := n^{-1} \sum_{i=1}^n X_{i, \theta}^<$ and $S_{n, \theta}^> := n^{-1} \sum_{i=1}^n X_{i, \theta}^>$.
Note first that
\[
\sup_{\theta \in \Theta} \mathbb{P}_\theta(\lVert X_\theta \rVert > M) \leq \frac{\sup_{\theta \in \Theta} \mathbb{E}_\theta \lVert X_\theta \rVert}{M} \leq \frac{C}{M},  
\]
hence choosing $M$ large, we can make $\mathbb{P}_\theta(\lVert X_\theta \rVert > M)$ small uniformly in $\theta$. Combining this with the fact that $\mathbb{E}(X_\theta^<) = -\mathbb{E}(X_\theta^>)$, we get
\begin{equation}
  \label{eq:holder-lln-eq}
  \sup_{\theta \in \Theta} \lVert \mathbb{E}(X_\theta^<) \rVert =   \sup_{\theta \in \Theta}  \lVert \mathbb{E}(X_\theta^>) \rVert \leq \sup_{\theta \in \Theta}  \mathbb{E}\lVert X_\theta^> \rVert \leq \sup_{\theta \in \Theta}  \left(\mathbb{E}\lVert X_\theta \rVert^{1+\eta}\right)^{\frac{1}{1+\eta}} \mathbb{P}_\theta(\lVert X_\theta \rVert > M)^{\frac{\eta}{1+\eta}} \leq \frac{C^2}{M},
\end{equation}
by Hölder's inequality. This implies that choosing $M$ large we can ensure that $\sup_{\theta \in \Theta} \lVert \mathbb{E}(X_\theta^<) \rVert < \varepsilon/3$ and for these $M$, we have
\begin{align*}
  \sup_{\theta \in \Theta} \mathbb{P}_\theta(\lVert S_{n, \theta} \rVert > \varepsilon) &\leq  \sup_{\theta \in \Theta} \mathbb{P}_\theta(\lVert S_{n, \theta}^< \rVert > 2\varepsilon/3) +  \sup_{\theta \in \Theta} \mathbb{P}_\theta(\lVert S_{n, \theta}^> \rVert > \varepsilon/3) \\
  &\leq \sup_{\theta \in \Theta} \mathbb{P}_\theta(\lVert S_{n, \theta}^< - \mathbb{E}(X_\theta^<) \rVert > \varepsilon/3) +  \sup_{\theta \in \Theta} \mathbb{P}_\theta(\lVert S_{n, \theta}^> \rVert > \varepsilon/3) .
\end{align*}
By Markov's inequality and the triangle inequality
\[
  \sup_{\theta \in \Theta} \mathbb{P}_\theta(\lVert S_{n, \theta}^> \rVert > \varepsilon/3) \leq   \frac{3 \sup_{\theta \in \Theta} \mathbb{E}_\theta \lVert X_\theta^> \rVert}{\varepsilon} 
\]
which we have already shown in \eqref{eq:holder-lln-eq} is uniformly small when $M$ is sufficiently large. Finally, by Markov's inequality, the triangle inequality and Lemma \ref{lem:hilbertian-expected-squared-norm}, we have 
\[
  \sup_{\theta \in \Theta} \mathbb{P}_\theta(\lVert S_{n, \theta}^< - \mathbb{E}(X_\theta^<) \rVert > \varepsilon/3) \leq \frac{\sup_{\theta \in \Theta} \textrm{E}_\theta\lVert S_{n, \theta}^< - \mathbb{E}(X_\theta^<) \rVert^2}{t^2} = \frac{\sup_{\theta \in \Theta} \mathbb{E}_\theta \lVert X_\theta^< \rVert^2}{n t^2} \leq \frac{M^2}{n t^2}
\]
hence choosing $n$ sufficiently large, we can control the final term.
\end{proof}

We can extend the previous result to a special class of Banach spaces under an additional tightness assumption. Recall that a Banach space $\mathcal{B}$ has a \emph{Schauder basis} if there exists $(e_k)_{k \in \mathbb{N}}$ such that for every $v \in \mathcal{B}$ there exists a unique sequence of scalars $(\alpha_k)_{k \in \mathbb{N}}$ satisfying
\[
  \left\lVert v - \sum_{k=1}^K \alpha_k e_k \right\rVert \to 0
\]
as $K \to \infty$.

\begin{proposition}
\label{prop:uniform-lln}
Let $(X_\theta)_{\theta \in \Theta}$ be Banachian random variables taking values in $\mathcal{B}$ with $\mathbb{E}_\theta(X_\theta) = 0$ for all $\theta \in \Theta$ and $\sup_ {\theta \in \Theta} \mathbb{E}_\theta(\lVert X_\theta \rVert^{1+\eta}) < C$ for some $C, \eta > 0$. Let $(X_{n, \theta})_{n \in \mathbb{N}, \theta \in \Theta}$ be random variables such that for $\theta \in \Theta$ $(X_{n, \theta})_{n \in \mathbb{N}}$ is i.i.d.\ with the same distribution as $X_\theta$ under $\mathbb{P}_\theta$. Assume further that $\mathcal{B}$ has a Schauder basis and that $(X_\theta)_{\theta \in \Theta}$ is uniformly tight.
Then 
\[
\frac{1}{n} \sum_{i=1}^n X_{i, \theta} \overset{P}{\rightrightarrows} 0.
\]
\end{proposition}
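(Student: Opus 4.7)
The plan is to reduce the Banach-space setting to the Hilbert-space setting of Proposition~\ref{prop:uniform-hilbert-lln} by decomposing each $X_{i,\theta}$ via the Schauder basis into a finite-dimensional projection and a tail, handling the former by Proposition~\ref{prop:uniform-hilbert-lln} and the latter through uniform tightness.

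Let $(e_k)_{k \in \mathbb{N}}$ denote the Schauder basis of $\mathcal{B}$, and let $P_K : \mathcal{B} \to \mathcal{B}$ be the projection onto $\mathrm{span}\{e_1,\dots,e_K\}$ associated with it. Two standard facts about Schauder bases will be invoked: by the uniform boundedness principle the basis constant $M := \sup_K \lVert P_K \rVert_{\op}$ is finite; and since $P_K v \to v$ pointwise, an equicontinuity/$\varepsilon$-net argument using this uniform bound shows that for any compact $\mathcal{K} \subset \mathcal{B}$,
\[
\sup_{v \in \mathcal{K}} \lVert (I - P_K) v \rVert \longrightarrow 0 \quad \text{as } K \to \infty.
\]

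First I would control the tail $(I-P_K)X_\theta$ in $L^1$ uniformly over $\theta$. Given $\varepsilon, \delta > 0$, uniform tightness of $(X_\theta)_{\theta \in \Theta}$ supplies a compact set $\mathcal{K}$ with $\sup_\theta \mathbb{P}_\theta(X_\theta \notin \mathcal{K})$ as small as desired; splitting $\mathbb{E}_\theta \lVert (I-P_K) X_\theta \rVert$ over $\{X_\theta \in \mathcal{K}\}$ and $\{X_\theta \notin \mathcal{K}\}$, the first part is bounded by $\sup_{v \in \mathcal{K}} \lVert (I-P_K)v\rVert$, while the second is bounded by $(1+M)\bigl(\mathbb{E}_\theta \lVert X_\theta \rVert^{1+\eta}\bigr)^{1/(1+\eta)}\mathbb{P}_\theta(X_\theta \notin \mathcal{K})^{\eta/(1+\eta)}$ via Hölder's inequality, exactly as in \eqref{eq:holder-lln-eq}. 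Hence $\sup_\theta \mathbb{E}_\theta \lVert (I-P_K)X_\theta\rVert$ can be made arbitrarily small by choosing $\mathcal{K}$ large and then $K$ large. Markov's inequality applied to $\lVert n^{-1}\sum_i (I-P_K)X_{i,\theta}\rVert \leq n^{-1}\sum_i \lVert (I-P_K)X_{i,\theta}\rVert$ then gives $\sup_\theta \mathbb{P}_\theta\bigl(\lVert n^{-1}\sum_i (I-P_K)X_{i,\theta}\rVert \geq \varepsilon/2\bigr) < \delta$ for suitably chosen $K$, independently of $n$.

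Second, for the fixed $K$ selected above, the projected variables $P_K X_{i,\theta}$ live in the finite-dimensional subspace $\mathrm{span}\{e_1,\dots,e_K\}$, which can be identified with a Hilbert space since all norms on a finite-dimensional space are equivalent. Since $\mathbb{E}_\theta X_\theta = 0$ and $P_K$ is bounded linear, $\mathbb{E}_\theta P_K X_\theta = 0$, and the moment bound transfers via $\mathbb{E}_\theta \lVert P_K X_\theta \rVert^{1+\eta} \leq M^{1+\eta} \mathbb{E}_\theta \lVert X_\theta \rVert^{1+\eta} \leq M^{1+\eta} C$. Proposition~\ref{prop:uniform-hilbert-lln} therefore yields $n^{-1}\sum_{i=1}^n P_K X_{i,\theta} \overset{P}{\rightrightarrows} 0$. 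Combining with the tail estimate via the triangle inequality closes the argument.

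The main obstacle is not any single calculation but rather coordinating the two levels of approximation uniformly in $\theta$: the truncation index $K$ has to be chosen once and for all so that the tail is small for \emph{every} $\theta$, which is precisely what uniform tightness (together with uniform convergence of $P_K$ on compacta) buys us; without it, there is no reason a Schauder-tail bound should be uniform.
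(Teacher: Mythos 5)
Your proof is correct and follows essentially the same route as the paper: decompose via the Schauder projections $P_K$, apply Proposition~\ref{prop:uniform-hilbert-lln} to the finite-dimensional part, and control the tail $(I-P_K)X_\theta$ uniformly using the $(1+\eta)$-moment bound, H\"older's inequality and Markov's inequality. The only cosmetic difference is that where the paper cites \citet[Theorem~2.7.10]{Bogachev2018} for the property $\lim_{K\to\infty}\sup_{\theta}\mathbb{P}_\theta(\lVert X_\theta - P_K X_\theta\rVert > \varepsilon)=0$, you derive the needed tail control directly from uniform tightness together with the uniform convergence of $P_K$ to the identity on compact sets, which is a valid (and self-contained) substitute.
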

\begin{proof}
For $K \in \mathbb{N}$ let $P_K$ denote the canonical projection of $v \in \mathcal{B}$ onto the first $K$ components of the Schauder basis, i.e. the mapping 
\[
v = \sum_{k=1}^\infty \alpha_k e_k \mapsto \sum_{k=1}^K \alpha_k e_k   .
\] 
This mapping is linear and satisfies that $\sup_{K \in \mathbb{N}} \lVert P_K \rVert_{\op} < \infty$ by \citet[][Theorem II.2 and II.3]{Li2017}.  By the triangle inequality
\[
\mathbb{P}_\theta \left( \left\lVert \frac{1}{n} \sum_{i=1}^n X_{i, \theta} \right\rVert \geq \varepsilon \right) \leq  \mathbb{P}_\theta \left(\left\lVert \frac{1}{n} \sum_{i=1}^n P_K X_{i, \theta} \right\rVert \geq \varepsilon/2 \right) + \mathbb{P}_\theta \left( \left\lVert \frac{1}{n} \sum_{i=1}^n (X_{i, \theta} -P_K X_{i, \theta}) \right\rVert \geq \varepsilon/2 \right) ,
\]
hence it is sufficient to show that the first term converges to $0$ uniformly as $n \to \infty$ for fixed $K$ and that the second term converges to $0$ uniformly as $K \to \infty$. By Proposition \ref{prop:uniform-hilbert-lln} the first term converges to $0$ for fixed $K$ since $(P_K X_\theta)_{\theta \in \Theta}$ are concentrated on a finite-dimensional subspace of $\mathcal{B}$ and since 
\[
  \sup_{\theta \in \Theta} \mathbb{E}_\theta \left( \lVert P_K X_\theta \rVert^{1+\eta} \right) \leq  \lVert P_K\rVert_{\op}^{1+\eta} \sup_{\theta \in \Theta} \mathbb{E}_\theta \left( \lVert  X_\theta \rVert^{1+\eta} \right) < \infty.
\]
It remains to show that when we choose $K$ large, the second term is small. \citet[][Theorem 2.7.10]{Bogachev2018} characterises tightness of families of random variables on Banach spaces with a Schauder basis. In particular, they satisfy
\begin{equation}
\label{eq:banach-tightness}
\lim_{K \to \infty} \sup_{\theta \in \Theta} \mathbb{P}_\theta(\lVert X_\theta - P_K X_\theta \rVert > \varepsilon )  = 0
\end{equation}
for every $\varepsilon > 0$. Applying Markov's inequality, partitioning the integral, applying Hölder's inequality and the triangle inequality yields that for any $t > 0$ and $\delta > 0$,
\begin{align*}
&\sup_{\theta \in \Theta} \mathbb{P}_\theta \left( \left\lVert \frac{1}{n} \sum_{i=1}^n (X_{i, \theta} -P_K X_{i, \theta}) \right\rVert \geq t \right) \leq \frac{1}{t} \sup_{\theta \in \Theta}  \mathbb{E}_\theta\lVert X_\theta - P_K X_\theta \rVert \\
& \leq \frac{1}{t} \sup_{\theta \in \Theta}  \left\{ \delta + \mathbb{E}_\theta \left( \lVert X_\theta - P_K X_\theta \rVert \ind_{\{\lVert X_\theta - P_K X_\theta \rVert > \delta\}} \right) \right\} \\
&\leq \frac{1}{t} \sup_{\theta \in \Theta}  \left\{ \delta + \left(\mathbb{E}_\theta \lVert X_\theta - P_K X_\theta \rVert^{1+\eta} \right)^{\frac{1}{1+\eta}} \left(\mathbb{P}_\theta(\lVert X_\theta - P_K X_\theta \rVert > \delta) \right)^{\frac{\eta}{1+\eta}}\right\} \\
&\leq  \frac{1}{t} \left\{\delta + (1+ \sup_{K \in \mathbb{N}} \lVert P_K \rVert_{\op}) C \sup_{\theta \in \Theta} \left(\mathbb{P}_\theta(\lVert X_\theta - P_K X_\theta \rVert > \delta) \right)^{\frac{\eta}{1+\eta}}\right\} .
\end{align*}
By \eqref{eq:banach-tightness}, we can choose $\delta$ and $K$ such that the upper bound is arbitrarily small, hence we have shown the desired result.
\end{proof}

For the uniform central limit theorem, we only consider the Hilbertian case since this is sufficient for our needs and avoids technical problems to do with tightness and the regular (non-uniform) central limit theorem on Banach spaces. We first give some sufficient conditions for uniform convergence in distribution of Hilbertian random variables.

\begin{proposition}
\label{prop:conditions-for-uniform-convergence-in-distribution}
Let $(X_{n, \theta})_{n \in \mathbb{N}, \theta \in \Theta}$ and $(X_\theta)_{\theta \in \Theta}$ be Hilbertian random variables. Assume that
\begin{enumerate}[(i)]
\item for all $h \in \mathcal{H}$, $\langle X_{n, \theta}, h \rangle \overset{\mathcal{D}}{\rightrightarrows} \langle X_\theta , h \rangle$,
\item $(X_{n, \theta})_{n \in \mathbb{N}, \theta \in \Theta}$ is sequentially tight, and
\item $(X_\theta)_{\theta \in \Theta}$ is uniformly tight.
\end{enumerate}
Then, $X_{n, \theta} \overset{\mathcal{D}}{\rightrightarrows}  X_\theta$.
\end{proposition}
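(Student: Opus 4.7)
The plan is to argue by contradiction using the sequential characterisation of uniform convergence in distribution provided by Proposition~\ref{prop:alternative-definition-uniform-convergence}(i): it suffices to show that for every sequence $(\theta_n)_{n \in \mathbb{N}} \subset \Theta$,
\[
d_{\BL}^{\theta_n}(X_{n,\theta_n}, X_{\theta_n}) \to 0.
\]
Suppose this fails. Then there exist $\varepsilon > 0$, a sequence $(\theta_n)$ and a subsequence indexed by a strictly increasing $k:\mathbb{N}\to\mathbb{N}$ with
\[
d_{\BL}^{\theta_{k(n)}}\!\bigl(X_{k(n),\theta_{k(n)}},\, X_{\theta_{k(n)}}\bigr) \geq \varepsilon \quad \text{for all } n.
\]

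The first main step is to pass to a further subsequence along which both families converge weakly. Introduce random variables $Y_n$ and $Z_n$ on some common probability space having the laws of $X_{k(n),\theta_{k(n)}}$ and $X_{\theta_{k(n)}}$ respectively. By the sequential tightness of $(X_{n,\theta})$, the family $(Y_n)$ is tight, and by the uniform tightness of $(X_\theta)$, the family $(Z_n)$ is tight as well. Applying Prokhorov's theorem (which applies on the separable Hilbert space $\mathcal{H}$) twice and diagonalising, we extract a further subsequence (which I will still denote $(n)$ for brevity) such that $Y_n \xrightarrow{\mathcal{D}} Y$ and $Z_n \xrightarrow{\mathcal{D}} Z$ for some Hilbertian random variables $Y$ and $Z$.

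The second step is to identify the limits. For any fixed $h \in \mathcal{H}$, the map $x \mapsto \langle x,h\rangle$ is continuous on $\mathcal{H}$, so the standard continuous mapping theorem yields $\langle Y_n,h\rangle \xrightarrow{\mathcal{D}} \langle Y,h\rangle$ and $\langle Z_n,h\rangle \xrightarrow{\mathcal{D}} \langle Z,h\rangle$. On the other hand, hypothesis (i) together with Proposition~\ref{prop:alternative-definition-uniform-convergence}(i) applied to the real-valued family gives
\[
d_{\BL}\!\bigl(\langle Y_n,h\rangle,\, \langle Z_n,h\rangle\bigr) = d_{\BL}^{\theta_{k(n)}}\!\bigl(\langle X_{k(n),\theta_{k(n)}},h\rangle,\, \langle X_{\theta_{k(n)}},h\rangle\bigr) \to 0,
\]
so $\langle Y,h\rangle$ and $\langle Z,h\rangle$ have the same distribution for every $h \in \mathcal{H}$. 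By the uniqueness of characteristic functionals on separable Hilbert spaces (as cited in the proof of Proposition~\ref{prop:uncorrelated-independence-gaussian}, via \citet[Ch.~IV]{Vakhania1987}), the laws of $Y$ and $Z$ coincide.

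The final step is to conclude. Since $Y_n \xrightarrow{\mathcal{D}} Y$ and $Z_n \xrightarrow{\mathcal{D}} Z$ with $Y \stackrel{d}{=} Z$, we have $d_{\BL}(Y_n, Z_n) \to 0$ by the triangle inequality applied to the $d_{\BL}$ metric. But $d_{\BL}(Y_n, Z_n) = d_{\BL}^{\theta_{k(n)}}(X_{k(n),\theta_{k(n)}}, X_{\theta_{k(n)}}) \geq \varepsilon$ along the subsequence, a contradiction. The main delicate point is the passage from coordinatewise uniform convergence (hypothesis (i), which only controls one $h$ at a time uniformly in $\theta$) to the joint statement; the compactness supplied by (ii)--(iii) is exactly what lets us carry out the subsequential extraction so that the coordinatewise convergence can then be combined with the uniqueness theorem to pin down a single weak limit.
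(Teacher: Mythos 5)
Your proof is correct and takes essentially the same route as the paper's: argue by contradiction via Proposition~\ref{prop:alternative-definition-uniform-convergence}(i), use (ii) and (iii) to extract a subsequence along which both $Y_n$ and $Z_n$ converge weakly, apply the continuous mapping theorem to linear functionals $\langle \cdot, h\rangle$ together with hypothesis (i) to conclude $\langle Y, h\rangle \stackrel{d}{=} \langle Z, h\rangle$ for every $h$, invoke uniqueness of distributions from their one-dimensional projections to get $Y \stackrel{d}{=} Z$, and finish with the triangle inequality for $d_{\BL}$. The only cosmetic differences are that you phrase the double extraction as a diagonalisation and cite \citet{Vakhania1987} for the uniqueness step, whereas the paper performs the two extractions explicitly in sequence and cites \citet[Theorem~7.1.2]{Hsing2015}; these are inessential.
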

\begin{proof}
Let $(\theta_n)_{n \in \mathbb{N}} \subseteq \Theta$ and let $Y_n$ have distribution $X_{n, \theta}(\mathbb{P}_{\theta_n})$ and $Z_n$ have distribution $X_\theta(\mathbb{P}_{\theta_n})$ defined on a probability space $(\Omega, \mathcal{F}, \mathbb{P})$. Suppose for contradiction that 
\[
d_{\BL}^{\theta_n}(X_{n, \theta_n}, X_{\theta_n})  =  d_{\BL}(Y_n, Z_n)  \not \to 0
\]
as $n \to \infty$. Then there exists a subsequence of $Y_n$ and $Z_n$ and an $\varepsilon > 0$ such that for all $n$
\[
d_{\BL}(Y_{k(n)}, Z_{k(n)})\geq \varepsilon, 
\]
where $k: \mathbb{N} \to \mathbb{N}$ is a strictly increasing function. By sequential tightness of $(X_{n, \theta})_{n \in \mathbb{N}, \theta \in \Theta}$, there exists a subsequence of $(Y_{k(n)})_{n \in \mathbb{N}}$, represented by the index function $m = k \circ k'$ for a strictly increasing $k': \mathbb{N} \to \mathbb{N}$ such that the subsequence $(Y_{m(n)})_{n \in \mathbb{N}}$ converges weakly to some random variable $Y$. By uniform tightness of $X$ there exists a further subsequence of $(Z_{m(n)})_{n \in \mathbb{N}}$, represented by the index function $l = m \circ k''$ for a strictly increasing $k'': \mathbb{N} \to \mathbb{N}$ such that $(Z_{l(n)})_{n \in \mathbb{N}}$ converges weakly to some random variable $Z$. Note that since the range of $l$ is a subset of the range of $m$, $(Y_{l(n)})_{n \in \mathbb{N}}$ also converges to $Y$. 

We intend to show that the distributions of $Z$ and $Y$ are equal. The distribution of a Hilbertian random variable is completely determined by the distribution of the linear functionals \citep[][Theorem~7.1.2]{Hsing2015}. However, for any $h \in \mathcal{H}$ and any $n$,
\begin{align*}
d_{\BL}\left(\langle Y, h \rangle, \langle Z, h \rangle \right) \leq d_{\BL}\left(\langle Y, h \rangle, \langle Y_{l(n)}, h \rangle \right)  + d_{\BL}\left(\langle Y_{l(n)}, h \rangle, \langle Z_{l(n)}, h \rangle \right) + d_{\BL}\left(\langle Z_{l(n)}, h \rangle, \langle Z, h \rangle \right) .
\end{align*}

The first and third term of the right-hand side go to zero by definition and the middle term goes to zero by assumption (i). Now, 
\[
d_{\BL}(Y_{l(n)}, Z_{l(n)}) \leq d_{\BL}(Y_{l(n)},Z) + d_{\BL}(Z, Z_{l(n)}).
\]
Hence, we can choose $N$ making $l(N)$ large enough that the RHS is smaller than $\varepsilon/2$. This is a contradiction since we chose $k$ such that $ d_{\BL}(Y_{k(n)}, Z_{k(n)}) \geq \varepsilon$ for all $n \in \mathbb{N}$ but $(l(n))_{n \in \mathbb{N}} \subseteq (k(n))_{n \in \mathbb{N}}$. 
\end{proof}

We can now prove a uniform central limit theorem in Hilbert spaces.

\begin{proposition}
\label{prop:uniform-clt}
Let $(X_\theta)_{\theta \in \Theta}$ be Hilbertian random variables with $\mathbb{E}_\theta( X_\theta) =0$ for all $\theta$ and $\sup_ {\theta \in \Theta} \mathbb{E}_\theta(\lVert X_\theta \rVert^{2+\eta}) < K$ for some $K, \eta > 0$.  Denote $(\mathscr{C}_\theta)_{\theta \in \Theta}$ the family of covariance operators of $X_\theta$ under each $\mathbb{P}_\theta$, i.e.\ $\mathscr{C}_\theta = \mathbb{E}_\theta(X_\theta \otimes X_\theta)$. Let $(X_{n, \theta})_{n \in \mathbb{N}, \theta \in \Theta}$ be random variables such that for $\theta \in \Theta$ $(X_{n, \theta})_{n \in \mathbb{N}}$ is i.i.d.\ with the same distribution as $X_\theta$ under $\mathbb{P}_\theta$. Assume further that for some orthonormal basis $(e_k)_{k=1}^\infty$ of $\mathcal{H}$ 
\begin{equation}
\label{eq:uniform-covariance-condition}
\lim_{K \to \infty} \sup_{\theta \in \Theta} \sum_{k=K}^\infty \langle \mathscr{C}_\theta e_k, e_k \rangle = 0.
\end{equation}
Then 
\[
\frac{1}{\sqrt{n}} \sum_{i=1}^n X_{i, \theta} \overset{\mathcal{D}}{\rightrightarrows}  Z
\]
where the distribution of $Z$ under $\mathbb{P}_\theta$ is $\mathcal{N}(0, \mathscr{C}_\theta)$.
\end{proposition}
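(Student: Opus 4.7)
The plan is to apply Proposition~\ref{prop:conditions-for-uniform-convergence-in-distribution}, which reduces the task to three sub-claims: (i) uniform convergence in distribution of every linear functional, (ii) sequential tightness of $(S_{n,\theta})_{n,\theta}$ where $S_{n,\theta} := n^{-1/2}\sum_{i=1}^n X_{i,\theta}$, and (iii) uniform tightness of the family of Gaussian limits $Z_\theta \sim \mathcal{N}(0,\mathscr{C}_\theta)$.

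For (ii) and (iii), I would exploit the fact that in a separable Hilbert space a set of the form $\{v : \|v\|\leq M \text{ and } \|v - P_{K_j}v\|\leq 1/j \text{ for all } j\}$, with $P_K$ denoting the orthogonal projection onto $\mathrm{span}(e_1,\ldots,e_K)$, is compact. Both $S_{n,\theta}$ and $Z_\theta$ satisfy $\mathbb{E}_\theta \|\cdot\|^2 = \mathrm{tr}(\mathscr{C}_\theta) = \sum_k \langle\mathscr{C}_\theta e_k,e_k\rangle$, which is uniformly bounded since the $(2+\eta)$-moment hypothesis gives $\mathrm{tr}(\mathscr{C}_\theta) \leq K^{2/(2+\eta)}$. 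Moreover $\mathbb{E}_\theta \sum_{k>K}\langle S_{n,\theta},e_k\rangle^2 = \mathbb{E}_\theta \sum_{k>K}\langle Z_\theta, e_k\rangle^2 = \sum_{k>K}\langle\mathscr{C}_\theta e_k,e_k\rangle$, which tends to $0$ uniformly in $\theta$ by \eqref{eq:uniform-covariance-condition}. Choosing $M$ large and $K_j$ so that $\sup_\theta \sum_{k>K_j}\langle \mathscr{C}_\theta e_k,e_k\rangle \leq \varepsilon/(2^j j^2)$, then applying Markov's inequality and a union bound, produces a compact $K_\varepsilon$ that captures both families with probability at least $1-C\varepsilon$ uniformly in $(n,\theta)$.

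For (i), fix $h\in\mathcal{H}$ and set $Y_\theta := \langle X_\theta, h\rangle$, a mean-zero real random variable with $\sigma_\theta^2 := \mathrm{Var}_\theta(Y_\theta) = \langle \mathscr{C}_\theta h, h\rangle$ bounded above uniformly and $\sup_\theta \mathbb{E}_\theta|Y_\theta|^{2+\eta} \leq \|h\|^{2+\eta}K$ by Cauchy--Schwarz. I would verify uniform convergence through the sequential criterion of Proposition~\ref{prop:alternative-definition-uniform-convergence}: given any sequence $(\theta_n)$, pass to a subsequence along which $\sigma_{\theta_n}^2 \to \sigma^2 \in [0,\|h\|^2 K^{2/(2+\eta)}]$. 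If $\sigma^2 = 0$, then $\mathbb{E}_{\theta_n}|\langle S_{n,\theta_n},h\rangle|^2 = \sigma_{\theta_n}^2 \to 0$ forces $\langle S_{n,\theta_n},h\rangle \to 0$ in probability, as does the limit $\mathcal{N}(0,\sigma_{\theta_n}^2)$; hence $d_{\BL} \to 0$. If $\sigma^2 > 0$, the Lindeberg condition $\sigma_{\theta_n}^{-2}\mathbb{E}[Y_{\theta_n}^2 \mathbf{1}_{\{|Y_{\theta_n}|>\varepsilon \sqrt{n}\sigma_{\theta_n}\}}] \leq \sigma_{\theta_n}^{-2}(\varepsilon \sqrt{n}\sigma_{\theta_n})^{-\eta}\|h\|^{2+\eta}K$ tends to $0$, so the Lindeberg--Feller CLT yields $n^{-1/2}\sum_i Y_{i,\theta_n} \overset{\mathcal{D}}{\to} \mathcal{N}(0,\sigma^2)$, while also $\mathcal{N}(0,\sigma_{\theta_n}^2)\overset{\mathcal{D}}{\to}\mathcal{N}(0,\sigma^2)$ by continuity of the Gaussian family in its variance. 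A triangle inequality in $d_{\BL}$ concludes.

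The main obstacle is arranging the Hilbert-space tightness in (ii)--(iii): one needs condition \eqref{eq:uniform-covariance-condition} to be strong enough to provide summable tail controls along a single compact set that works simultaneously for every $\theta$ and $n$, and the characterisation of compactness in $\mathcal{H}$ via truncated projections is exactly what lets one convert the uniform covariance tail bound into uniform tightness. Step (i) is essentially a real-valued uniform CLT and step (iii) is handled by the same tightness argument as (ii), so once the tightness machinery is set up the remainder consists of standard arguments.
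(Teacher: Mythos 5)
Your proposal is correct and follows essentially the same route as the paper: both apply Proposition~\ref{prop:conditions-for-uniform-convergence-in-distribution}, verify condition (i) by passing to a subsequence along which the variances $\langle \mathscr{C}_{\theta_n} h, h\rangle$ converge and invoking the Lindeberg--Feller theorem (the paper via Lyapunov's condition, you via a direct Lindeberg bound), and verify conditions (ii) and (iii) from the uniform covariance-tail condition \eqref{eq:uniform-covariance-condition} together with the uniform second-moment bound. The only cosmetic difference is that you construct the common compact set by hand via truncated projections, whereas the paper cites the corresponding compactness/tightness characterisations in Bogachev; you also treat the degenerate case $\sigma^2=0$ explicitly, which the paper glosses over.
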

\begin{proof}
We intend to apply Proposition~\ref{prop:conditions-for-uniform-convergence-in-distribution} and thus check the conditions. For the first condition, let $h \in \mathcal{H}$ be given and let $Y_n = \langle X_n, h \rangle$ and let $Y$ be distributed as $\langle \mathcal{N}(0, \mathscr{C}_\theta), h \rangle$ under $\mathbb{P}_\theta$, i.e.\ as $\mathcal{N}(0, \langle \mathscr{C}_\theta h , h \rangle)$. Note that
\[
\left\langle \frac{1}{\sqrt{n}} \sum_{i=1}^n X_{i, \theta}, h \right\rangle = \frac{1}{\sqrt{n}} \sum_{i=1}^n Y_{i, \theta}
\]
hence by Proposition~\ref{prop:alternative-definition-uniform-convergence} it is sufficient for the first condition that for any $(\theta_n)_{n \in \mathbb{N}} \subset \Theta$
\[
\lim_{n \to \infty} d_{\BL}^{\theta_n}(Y_n, Y) = 0.
\]
Suppose for contradiction that there exists a sequence $(\theta_n)_{n \in \mathbb{N}}$ such that the limit does not equal $0$. Then there exists an $\varepsilon >0$ and a strictly increasing function $m: \mathbb{N} \to \mathbb{N}$ such that
\[
  d_{\BL}^{\theta_{m(n)}}(Y_n, Y)  \geq \varepsilon
\]
for any $n \in \mathbb{N}$. Denoting for $n \in \mathbb{N}$, $\sigma^2_{\theta_{m(n)}} := \langle \mathscr{C}_{\theta_{m(n)}} h, h \rangle$, we note that the sequence $\left(\sigma^2_{\theta_{m(n)}}\right)_{n \in \mathbb{N}}$ is bounded by assumption and hence by the Bolzano--Weierstrass theorem it has a convergent subsequence, i.e.\ there exists $\sigma^2 \geq 0$ and a strictly increasing $m' : \mathbb{N} \to \mathbb{N}$ such that letting $l = m' \circ m$, $\sigma^2_{\theta_{l(n)}} \to \sigma^2$. Letting $W$ denote a random variable with distribution $\mathcal{N}(0, \sigma^2)$ for any $\mathbb{P}_\theta$, by
Scheff\'{e}'s
lemma this implies that
\[
\lim_{n \to \infty} d_{\BL}^{\theta_{l(n)}}(Y, W)  = 0.
\]
Further, the Lindeberg--Feller theorem \citep[][Theorem~3.4.10]{Durrett2019} yields that
\[
\lim_{n \to \infty} d_{\BL}^{\theta_{l(n)}}(Y_n, W)= 0,
\]
since Lyapunov's condition is fulfilled by the uniform bound on the $(2+\eta)$th moment of $X_\theta$. Because the range of $l$ is contained in the range of $m$, this is a contradiction, hence the first condition is fulfilled.

The third condition follows immediately from the assumption in \eqref{eq:uniform-covariance-condition} by \citet[][Proposition~2.5.2, Lemma~2.7.20]{Bogachev2018}. Define $S_{n, \theta} := \frac{1}{\sqrt{n}} \sum_{i=1}^n X_{i, \theta} $ for $n \in \mathbb{N}$. The second condition follows by the same assumption and theorems by observing that $\mathbb{E}_\theta \lVert S_{n, \theta} \rVert^2$ is bounded by the same constant bounding $\mathbb{E}_\theta \lVert X_\theta \rVert^2$ and that 
\[
\Cov_\theta(S_{n, \theta}) = \frac{1}{n} \sum_{i=1}^n \Cov_\theta(X_{i, \theta}) =  \mathscr{C}_\theta .
\]
This shows that the family of measures $(X_{n, \theta}(\mathbb{P}_\theta))_{n \in \mathbb{N}, \theta \in \Theta}$ is tight which implies the second condition. 
\end{proof}

\section{Proofs of results in Sections~\ref{sec:GHCM}~and~\ref{sec:theory}}
\label{app:proofs}
This section contains the proofs of all results in Sections~\ref{sec:GHCM}~and~\ref{sec:theory} except Proposition~\ref{prop:gaussian-conditional-independence-testing-is-hard-when-dependence-is-never-in-tail} which is proven in Section~\ref{sec:hardness-proofs}. The proofs are self-contained, but readers new to the field may find the following references helpful. For general results about random variables on metric spaces (Slutsky's theorem, etc.) see \citet[][Chapter 1]{Billingsley1999}. For more specific results about Hilbertian random variables, Bochner integrals and operators on Hilbert spaces, see \citet[][Chapter 2, 4, 7]{Hsing2015}. For existence and construction of conditional expectations on Hilbert spaces, see \citet[Chapter 2]{scalora1961}. In this section, we sometimes omit the subscript $\dist$ when it is clear from the context.

\subsection{Derivation of (\ref{eq:T_n-inner-product-form})}
\label{sec:inner-product-version-of-test}
We first prove a small lemma. 
\begin{lemma}
  Let $x_1, \dots, x_n$ be elements of a Hilbert space $\mathcal{H}$. Then
  $$
  \left\| \sum_{i=1}^n x_i \right\|^2 = \sum_{i=1}^n \sum_{j=1}^n \langle x_i, x_j \rangle 
  $$
  and the non-zero eigenvalues of the operator
  $$
  \mathscr{A} := \sum_{i=1}^n x_i \otimes x_i
  $$
  equal the eigenvalues of the matrix $A$ with entries
  $$
  A_{ij} := \langle x_i , x_j \rangle .
  $$
\end{lemma}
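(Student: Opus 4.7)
The plan is as follows. The first identity is immediate from the bilinearity of the inner product: writing
\[
\Bigl\| \sum_{i=1}^n x_i \Bigr\|^2 = \Bigl\langle \sum_{i=1}^n x_i, \sum_{j=1}^n x_j \Bigr\rangle = \sum_{i=1}^n \sum_{j=1}^n \langle x_i, x_j \rangle,
\]
so nothing further is needed here.

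For the second claim, my approach is to realise both $\mathscr{A}$ and $A$ as the two ``sides'' of a single bounded linear operator. Define $T : \mathbb{R}^n \to \mathcal{H}$ by $T v := \sum_{i=1}^n v_i x_i$ for $v = (v_1,\ldots,v_n)^\top$, so that $T e_i = x_i$ where $(e_i)$ is the standard basis of $\mathbb{R}^n$. The adjoint $T^* : \mathcal{H} \to \mathbb{R}^n$ is then given by $T^* h = (\langle h, x_1\rangle, \ldots, \langle h, x_n\rangle)^\top$, since $\langle T v, h\rangle = \sum_i v_i \langle x_i, h\rangle$. Using the convention $(g \otimes h)(x) = \langle x, g\rangle h$ from Section~\ref{sec:prelim}, I then compute
\[
TT^* h = \sum_{i=1}^n \langle h, x_i \rangle x_i = \sum_{i=1}^n (x_i \otimes x_i) h = \mathscr{A} h,
\]
and $(T^*T)_{ij} = \langle T^*T e_j, e_i\rangle = \langle T e_j, T e_i \rangle = \langle x_j, x_i\rangle = A_{ij}$ (using symmetry of the real inner product).

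The conclusion then follows from the standard fact that $TT^*$ and $T^*T$ share the same non-zero eigenvalues with the same multiplicities: if $\lambda \neq 0$ and $\mathscr{A} v = TT^* v = \lambda v$, then $v \neq 0$ forces $T^* v \neq 0$ (else $\lambda v = 0$), and $(T^*T)(T^*v) = T^*(TT^* v) = \lambda (T^*v)$, giving an eigenvector of $A$ with the same eigenvalue; the reverse direction is symmetric, and a dimension-counting argument on the corresponding eigenspaces preserves multiplicities. This step is the only substantive one, but it is entirely classical and presents no real obstacle; the entire argument is short.
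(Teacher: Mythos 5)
Your proof is correct and is essentially the paper's own argument: the paper likewise factorises $\mathscr{A}$ as $\mathscr{B}^*\mathscr{B}$ and $A$ as $\mathscr{B}\mathscr{B}^*$ for the operator $\mathscr{B}\colon\mathcal{H}\to\mathbb{R}^n$, $\mathscr{B}h=(\langle x_1,h\rangle,\ldots,\langle x_n,h\rangle)^\top$, which is precisely your $T^*$, and then appeals to the standard fact that $\mathscr{B}^*\mathscr{B}$ and $\mathscr{B}\mathscr{B}^*$ share their non-zero spectrum. The only difference is that you spell out that last equivalence in detail where the paper simply cites it.
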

\begin{proof}
  The first claim is immediate, since
  $$
  \left\| \sum_{i=1}^n x_i \right\|^2 = \left\langle \sum_{i=1}^n x_i, \sum_{j=1}^n x_j \right\rangle  = \sum_{i=1}^n \sum_{j=1}^n \langle x_i, x_j \rangle.
  $$
  For the second claim, note that we can write the operator $\mathscr{A}$ as $\mathscr{B}^*\mathscr{B}$ where $\mathscr{B}: \mathcal{H} \to \mathbb{R}^n$ is an operator given by
  $$
  \mathscr{B} h = \begin{pmatrix}
      \langle x_1, h \rangle \\
      \vdots \\
      \langle x_n, h \rangle 
  \end{pmatrix} 
  $$
  with adjoint $\mathscr{B}^*$ given by
  $$
  \mathscr{B}^* v = \sum_{i=1}^n v_i  x_i .
  $$
  The result now follows since $A = \mathscr{B} \mathscr{B}^*$.
\end{proof}
Applying the first result of the lemma to the sequence $1/\sqrt{n} \mathscr{R}_i$ for $i=1, \dots, n$ viewed as Hilbert--Schmidt operators from $\mathcal{H}_X$ to $\mathcal{H}_Y$, we get that
$$
T_n = \frac{1}{n} \left\| \sum_{i=1}^n \mathscr{R}_i \right\|^2 = \frac{1}{n} \sum_{i=1}^n \sum_{j=1}^n \langle \mathscr{R}_i , \mathscr{R}_j  \rangle = \frac{1}{n} \sum_{i=1}^n \sum_{j=1}^n \langle \hat{\varepsilon}_i , \hat{\varepsilon}_j \rangle \langle \hat{\xi}_i , \hat{\xi}_j \rangle.
$$
Applying the second result of the lemma to the sequence $1/\sqrt{n-1} \mathscr{R}_i - \bar{\mathscr{R}}$, we get that the eigenvalues of
$$
\hat{\mathscr{C}} = \frac{1}{n-1} \sum_{i=1}^n  (\mathscr{R}_i - \bar{\mathscr{R}}) \otimes_{\HS}  (\mathscr{R}_i - \bar{\mathscr{R}})
$$
equal the eigenvalues of the matrix $A$ with entries
$$
A_{ij} := \frac{1}{n-1} \langle \mathscr{R}_i - \bar{\mathscr{R}} , \mathscr{R}_j - \bar{\mathscr{R}} \rangle
$$
Using bilinearity of the inner product, we can expand and see that 
$$
A = \Gamma - J\Gamma - \Gamma J + J\Gamma J 
$$
as desired.

\subsection{Derivation of (\ref{eq:OLStest})}
\label{sec:OLS-proof}
\begin{proof}
Fix $n \geq 2$ and write $p:=1 + a(n)$. Let $(\tilde{x}_i, \tilde{y}_i, \tilde{z}_i)_{i=1}^n$ denote mean-centred observations, so e.g.\ $\tilde{z}_i = z_i - \sum_{j=1}^nz_j/n$, and let $\tilde{X}^{(n)} = (\tilde{x}_1,\ldots,\tilde{x}_n)^\top  \in \R^n$.
Let $W_n \in \R^{n \times p}$ be the design matrix with $i$th row given by $(\tilde{y}_i, \tilde{z}_{i1},\ldots,\tilde{z}_{ia(n)})$, and let $\hat{\theta}_n \in \R$ be the first component of the coefficient vector from regressing $\tilde{X}^{(n)}$ onto $W_n$, so $\hat{\theta}_n := \{(W_n^\top  W_n)^{-1}W_n^\top  \tilde{X}^{(n)}\}_1$. Further, let $P_n \in \R^{n\times n}$ be the orthogonal projection onto the column space of $W_n$. Then
\[
\psi_n^{\OLS} = \ind_{\{|\hat{\theta}_n| \geq t_{n-p-1}(\alpha/2) \hat{\sigma}_{W,n} \|(I-P_n)\tilde{X}^{(n)}\|_2  / \sqrt{n-p-1}\}},
\]
where $t_{n-p}(\alpha/2)$ is the upper $\alpha/2$-point of a $t$ distribution on $n-p$ degrees of freedom, and
$\hat{\sigma}_{W,n}^2 := \{(W_n^\top  W_n)^{-1}\}_{11}$. Fix $Q \in \mathcal{Q}$; in the following we will suppress dependence on this for notational simplicity. Then there exists $r \in \mathbb{N}$ such that
\[
\theta := \frac{\Cov(Y, X \cond Z)}{\Var (X \cond Z)} = \frac{\Cov(Y, X \cond Z_1,\ldots,Z_r)}{\Var (X \cond Z_1,\ldots,Z_r)} >0,
\]
and so for $n$ such that $a(n) > r$, $\hat{\theta}_n \cond W_n \sim \mathcal{N}(\theta, \sigma^2 \hat{\sigma}_{W,n}^2)$ where
\[
\sigma^2 := \Var(X \cond Y, Z) = \Var(X \cond Y, Z_1,\ldots,Z_r)>0.
\] 

Note that $\|(I-P_n)\tilde{X}^{(n)}\|_2^2 / \sigma^2 \sim  \chi^2_{n-p-1}$, and so by the weak law of large numbers and the continuous mapping theorem, $\|(I-P_n)\tilde{X}^{(n)}\|_2 / \sqrt{n-p-1} \inprob \sigma$.
To show that $\pr(\psi_n^{\OLS} = 1) \to 1$, it therefore suffices to show that $\hat{\sigma}_{W,n}^2 \inprob 0$.

Now writing $\Sigma_n = \Cov(Y, Z_1,\ldots,Z_{a(n)})$, we have that $W_n^\top W_n$ has a Wishart distribution on $n-1$ degrees of freedom: $W_n^\top  W_n \sim W_p(\Sigma_n, n-1)$. Thus, $(\Sigma_n^{-1})_{11} / \hat{\sigma}_{W,n}^2 \sim \chi^2_{n-p}$ and $(\Sigma_n^{-1})_{11} = \Var(Y \cond Z_1,\ldots,Z_r) = \Var(Y \cond Z) < \infty$. We therefore see that as $n \to \infty$ and hence $n-p \to \infty$, we have $\hat{\sigma}_{W,n}^2 \inprob 0$ as required.
\end{proof}

\subsection{Proofs of results in Section~\ref{sec:size}}
In this section we provide proofs of Theorems~\ref{thm:level of test} and \ref{thm:asymptotic normality and covariance estimation}. The proofs rely heavily on the theory developed in Section~\ref{app:uniform-convergence}.

\subsubsection{Auxiliary lemmas}
We first prove some auxiliary lemmas that will be needed for the upcoming proofs.

\begin{lemma}
\label{lem: Convergence to zero if conditional convergence to zero}
Let  $(X_n)_{n \in \mathbb{N}}$ be a sequence of real-valued random variables defined on $(\Omega, \mathcal{F})$ equipped with a family of probability measures $(\mathbb{P}_\theta)_{\theta \in \Theta}$. Let $X$ be another real-valued random variable on the same space and let $(\mathcal{F}_n)_{n \in \mathbb{N}}$ be a sequence of sub-$\sigma$-algebras of $\mathcal{F}$. If $\mathbb{E}_{\theta}(|X_n| \cond \mathcal{F}_n)  \overset{P}{\rightrightarrows} 0$ then $X_n \overset{P}{\rightrightarrows} 0 $. 
\end{lemma}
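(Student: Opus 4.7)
The plan is to use the conditional Markov inequality to transfer the uniform convergence in probability of $\mathbb{E}_\theta(|X_n| \mid \mathcal{F}_n)$ to $0$ onto $|X_n|$ itself. Fix $\varepsilon > 0$; the goal is to show that $\sup_{\theta \in \Theta} \mathbb{P}_\theta(|X_n| \geq \varepsilon) \to 0$.

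First I would introduce a free parameter $\delta > 0$ and split the event $\{|X_n| \geq \varepsilon\}$ according to whether the conditional expectation $V_{n,\theta} := \mathbb{E}_\theta(|X_n| \mid \mathcal{F}_n)$ is below $\delta$ or not. On the event $\{V_{n,\theta} \geq \delta\}$ I simply use the hypothesis directly: this has $\mathbb{P}_\theta$-probability going to $0$ uniformly in $\theta$ by the assumed uniform convergence in probability. On the complementary event $\{V_{n,\theta} < \delta\}$, which is $\mathcal{F}_n$-measurable, I use the tower property together with the conditional Markov inequality:
\[
\mathbb{P}_\theta(|X_n| \geq \varepsilon,\, V_{n,\theta} < \delta)
= \mathbb{E}_\theta\!\left( \ind_{\{V_{n,\theta} < \delta\}} \mathbb{P}_\theta(|X_n| \geq \varepsilon \mid \mathcal{F}_n)\right)
\leq \mathbb{E}_\theta\!\left(\ind_{\{V_{n,\theta} < \delta\}} \frac{V_{n,\theta}}{\varepsilon}\right)
\leq \frac{\delta}{\varepsilon}.
\]
Crucially, this bound is deterministic in $\theta$, so it holds after taking the supremum.

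Combining these two pieces gives, for every $\delta > 0$,
\[
\limsup_{n \to \infty} \sup_{\theta \in \Theta} \mathbb{P}_\theta(|X_n| \geq \varepsilon)
\leq \frac{\delta}{\varepsilon} + \limsup_{n \to \infty} \sup_{\theta \in \Theta} \mathbb{P}_\theta(V_{n,\theta} \geq \delta) = \frac{\delta}{\varepsilon},
\]
where the last equality uses the hypothesis $V_{n,\theta} \overset{P}{\rightrightarrows} 0$. Letting $\delta \downarrow 0$ yields the claim.

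There is no real obstacle here; the argument is essentially a uniform-in-$\theta$ version of the standard reduction from $L^1$-convergence to convergence in probability. The only subtlety is making sure the bound on the first piece is deterministic (which comes from conditioning), so that the supremum over $\theta$ is cleanly controlled by $\delta/\varepsilon$ and the remaining supremum is absorbed into the uniform convergence assumption.
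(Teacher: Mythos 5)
Your argument is correct. It reaches the same conclusion as the paper's proof via the same underlying mechanism---Markov's inequality combined with a split on whether the conditional expectation $V_{n,\theta}=\mathbb{E}_\theta(|X_n|\cond\mathcal{F}_n)$ is small---but the decomposition is organised differently. The paper first applies the unconditional Markov inequality to the truncated variable $|X_n|\wedge\varepsilon$, establishes the pointwise bound $\mathbb{E}_\theta(|X_n|\wedge\varepsilon\cond\mathcal{F}_n)\leq \mathbb{E}_\theta(|X_n|\cond\mathcal{F}_n)\wedge\varepsilon$, and then uses the tower property and a split of the resulting expectation into the regions $\{V_{n,\theta}<\delta/2\}$ and $\{V_{n,\theta}\geq\delta/2\}$. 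You instead split the \emph{event} $\{|X_n|\geq\varepsilon\}$ according to the size of $V_{n,\theta}$ and apply the \emph{conditional} Markov inequality on the favourable piece, exploiting the $\mathcal{F}_n$-measurability of $\{V_{n,\theta}<\delta\}$ to pull the indicator inside the conditional expectation. Your route is somewhat more direct: it avoids the truncation of $|X_n|$ and the auxiliary inequality for the conditional expectation of the minimum, at the cost of invoking the conditional form of Markov's inequality (which is harmless here since $|X_n|\geq 0$, so the conditional expectation and the conditional Markov bound are always well defined, possibly with value $+\infty$). Both proofs deliver the same uniform bound $\delta/\varepsilon$ plus a term controlled by the hypothesis, and both conclude by letting $\delta\downarrow 0$.
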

\begin{proof}
Let $\epsilon > 0$ be given. By Markov's inequality
$$
\sup_{\theta \in \Theta} \mathbb{P}_\theta(|X_n| \geq \epsilon) \leq \sup_{\theta \in \Theta} \mathbb{P}_\theta(|X_n| \wedge \epsilon \geq \epsilon) \leq \sup_{\theta \in \Theta} \frac{\mathbb{E}_\theta(|X_n| \wedge \epsilon)}{\epsilon} .
$$
We will be done if we can show that $\sup_{\theta \in \Theta} \mathbb{E}_\theta(|X_n| \wedge \epsilon) \to 0$ as $n \to \infty$. Note that by monotonicity of conditional expectations, for each $\theta \in \Theta$ we have
$$
\mathbb{E}_\theta(|X_n| \wedge \epsilon \cond \mathcal{F}_n) \leq \mathbb{E}_\theta(\epsilon \cond \mathcal{F}_n) = \epsilon ,
$$
and
$$
\mathbb{E}_\theta(|X_n| \wedge \epsilon \cond \mathcal{F}_n) \leq \mathbb{E}_\theta(|X_n| \cond \mathcal{F}_n) .
$$
Combining both of the above expressions, we get
$$
\mathbb{E}_\theta(|X_n| \wedge \epsilon \cond \mathcal{F}_n) \leq  \mathbb{E}_\theta(|X_n| \cond \mathcal{F}_n) \wedge \epsilon .
$$
This lets us write by the tower property and monotonicity of integrals,
$$
\sup_{\theta \in \Theta} \mathbb{E}_\theta(|X_n| \wedge \epsilon) = \sup_{\theta \in \Theta} \mathbb{E}_\theta[\mathbb{E}_\theta(|X_n| \wedge \epsilon \cond \mathcal{F}_n)] \leq \sup_{\theta \in \Theta} \mathbb{E}_\theta[\mathbb{E}_\theta(|X_n| \cond \mathcal{F}_n) \wedge \epsilon] .
$$
Let $Y_n := \mathbb{E}_\theta(|X_n| \cond \mathcal{F}_n) \wedge \epsilon$ and let $\delta > 0$ be given. Then
\begin{align*}
\sup_{\theta \in \Theta} \mathbb{E}_\theta(Y_n) &\leq \sup_{\theta \in \Theta} \mathbb{E}_\theta \left(Y_n  \ind_{\{Y_n  < \delta / 2\}} \right) + \sup_{\theta \in \Theta} \mathbb{E}_\theta \left(Y_n \ind_{\{Y_n  \geq \delta / 2\}} \right) \\
&\leq \frac{\delta}{2} + \epsilon \sup_{\theta \in \Theta} 	 \mathbb{P}_\theta(Y_n \geq \delta/2) .
\end{align*}
By assumption, for any $\eta > 0$, we can choose $N \in \mathbb{N}$ so that for all $n \geq N$, we can make $\sup_{\theta \in \Theta}  \mathbb{E}_\theta(|X_n| \cond \mathcal{F}_n) \geq \delta/2) < \eta$. Thus, choosing $N$ to parry $\eta = \frac{\epsilon}{2\epsilon}$, we get
$$
\sup_{\theta \in \Theta} \mathbb{E}_\theta(|Y_n|) < \delta, 
$$
proving the desired result.
\end{proof}

\begin{lemma}
\label{lem: pulling out whats known}
Let $X$ and $Y$ be random variables defined on the probability space $(\Omega, \mathcal{F}, \mathbb{P})$ with values in a Hilbert space $\mathcal{H}$. Let $\mathcal{D}$ be a sub-$\sigma$-algebra of $\mathcal{F}$ so that $X$ is $\mathcal{D} $-measurable. Assume that $\mathbb{E}(\lVert X \rVert)$, $\mathbb{E}(\lVert Y \rVert)$ and $\mathbb{E}(\lVert X \rVert \lVert Y \rVert)$ all exist. Then
\[
\mathbb{E}(\langle X, Y\rangle \cond \mathcal{D}) = \langle X, \mathbb{E}(Y \cond \mathcal{D}) \rangle.
\]
\end{lemma}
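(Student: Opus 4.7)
The strategy is the classical ``simple-function approximation'' used for Bochner integrals, reducing the Hilbertian identity to the well-known scalar ``pull out what's known'' property after applying a continuous linear functional.

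First I would dispose of the rank-one case. For each fixed $h \in \mathcal{H}$, the map $Y \mapsto \langle h, Y \rangle$ is a bounded linear functional, and a standard fact about Bochner conditional expectations is that bounded linear operators commute with conditional expectation: $\mathbb{E}(\langle h, Y \rangle \cond \mathcal{D}) = \langle h, \mathbb{E}(Y \cond \mathcal{D}) \rangle$ almost surely. Together with the scalar pull-out property, this immediately gives the claim when $X = \ind_A h$ with $A \in \mathcal{D}$ and $h \in \mathcal{H}$, and by linearity whenever $X$ is a $\mathcal{D}$-measurable simple function $X = \sum_{i=1}^{k} \ind_{A_i} h_i$ with $A_i \in \mathcal{D}$ disjoint.

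Next I would pass from simple $X$ to general $\mathcal{D}$-measurable $X$ by a dominated convergence argument. Because $X$ is strongly $\mathcal{D}$-measurable into a separable Hilbert space, there exists a sequence $(X_n)$ of $\mathcal{D}$-measurable simple functions with $X_n \to X$ pointwise and $\lVert X_n \rVert \leq 2 \lVert X \rVert$ almost surely (the usual Pettis-type approximation). By the rank-one step already established,
\[
\mathbb{E}(\langle X_n, Y \rangle \cond \mathcal{D}) = \langle X_n, \mathbb{E}(Y \cond \mathcal{D}) \rangle \quad \text{a.s.}
\]
On the right, continuity of the inner product gives $\langle X_n, \mathbb{E}(Y \cond \mathcal{D}) \rangle \to \langle X, \mathbb{E}(Y \cond \mathcal{D}) \rangle$ almost surely. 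On the left, Cauchy--Schwarz yields $|\langle X_n, Y \rangle| \leq 2 \lVert X \rVert \lVert Y \rVert$, which is integrable by the assumption $\mathbb{E}(\lVert X \rVert \lVert Y \rVert) < \infty$; the conditional dominated convergence theorem then gives $\mathbb{E}(\langle X_n, Y \rangle \cond \mathcal{D}) \to \mathbb{E}(\langle X, Y \rangle \cond \mathcal{D})$ almost surely. Equating the limits yields the claim.

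The only mildly delicate step is the appeal to the scalar identity $\mathbb{E}(\langle h, Y \rangle \cond \mathcal{D}) = \langle h, \mathbb{E}(Y \cond \mathcal{D}) \rangle$, which in turn follows from the defining property of the Bochner conditional expectation: for any $A \in \mathcal{D}$, linearity and continuity of $\langle h, \cdot \rangle$ allow it to be pulled inside the integral, giving $\int_A \langle h, Y \rangle \, d\mathbb{P} = \langle h, \int_A Y \, d\mathbb{P} \rangle = \langle h, \int_A \mathbb{E}(Y \cond \mathcal{D}) \, d\mathbb{P} \rangle = \int_A \langle h, \mathbb{E}(Y \cond \mathcal{D}) \rangle \, d\mathbb{P}$. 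The integrability hypotheses $\mathbb{E}\lVert Y \rVert < \infty$ and $\mathbb{E}\lVert X \rVert < \infty$ ensure that both $Y$ and $X$ are Bochner integrable and that all the conditional expectations above are well-defined, while $\mathbb{E}(\lVert X \rVert \lVert Y \rVert) < \infty$ is exactly the hypothesis needed for the dominated convergence step; no further obstacles arise.
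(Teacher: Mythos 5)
Your proof is correct, but it takes a genuinely different route from the paper's. The paper fixes an orthonormal basis $(e_k)_{k\in\mathbb{N}}$ of $\mathcal{H}$, expands $\langle X, Y\rangle = \sum_k \langle X, e_k\rangle\langle Y, e_k\rangle$, and verifies the defining property of conditional expectation directly: it integrates over an arbitrary $D \in \mathcal{D}$, interchanges the sum and the integral (justified by $\sum_k|\langle X,e_k\rangle\langle Y,e_k\rangle| \le \lVert X\rVert\,\lVert Y\rVert$, which is where the hypothesis $\mathbb{E}(\lVert X\rVert\,\lVert Y\rVert)<\infty$ enters for them), and applies the scalar pull-out property plus $\mathbb{E}(\langle Y, e_k\rangle\cond\mathcal{D}) = \langle\mathbb{E}(Y\cond\mathcal{D}), e_k\rangle$ coordinatewise. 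You instead decompose $X$ rather than the inner product: rank-one and simple $\mathcal{D}$-measurable $X$ first, then a Pettis-type approximation $X_n \to X$ with $\lVert X_n\rVert \le 2\lVert X\rVert$ and conditional dominated convergence, with the same hypothesis $\mathbb{E}(\lVert X\rVert\,\lVert Y\rVert)<\infty$ entering as the domination. Both arguments ultimately rest on the same scalar fact that continuous linear functionals commute with the Bochner conditional expectation, which you correctly isolate and justify at the end. The paper's basis expansion is shorter and exploits separability of $\mathcal{H}$ directly; your argument is the generic Bochner-integral machinery, avoids choosing a basis, and would adapt to more general dual pairings. The only point worth being explicit about is that the a.s.\ identities for the countably many $X_n$ hold simultaneously off a single null set before you pass to the limit; this is routine and does not affect correctness.
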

\begin{proof}
To show the result, we need to show that $\langle X, \mathbb{E}(Y \cond \mathcal{D}) \rangle$ is $\mathcal{D}$-measurable and that integrals over $\mathcal{D}$-sets of $\langle X, Y\rangle$ and $\langle X, \mathbb{E}(Y \cond \mathcal{D})\rangle$ coincide.  $\langle X, \mathbb{E}(Y \cond \mathcal{D})\rangle$ is $\mathcal{D}$-measurable by continuity of the inner product and the fact that $X$ and $\mathbb{E}(Y \cond \mathcal{D})$ are $\mathcal{D}$-measurable by assumption and definition, respectively. By expanding the inner product in an orthonormal basis $(e_k)_{k \in \mathbb{N}}$ of $\mathcal{H}$, we get
\begin{align*}
&\int_D \langle X, Y \rangle \, \mathrm{d}\mathbb{P} =  \int_D \sum_{k=1}^\infty \langle X, e_k \rangle \langle Y, e_k\rangle \, \mathrm{d}\mathbb{P}   = \sum_{k=1}^\infty \int_D \mathbb{E}(\langle X, e_k\rangle \langle Y, e_k\rangle \cond \mathcal{D}) \, \mathrm{d}\mathbb{P} \\
& =\sum_{k=1}^\infty \int_D \langle X, e_k \rangle \langle\mathbb{E}( Y \cond \mathcal{D} ) , e_k\rangle \, \mathrm{d}\mathbb{P}  =  \int_D \sum_{k=1}^\infty \langle X, e_k \rangle \langle\mathbb{E}( Y \cond \mathcal{D} ) , e_k\rangle \, \mathrm{d}\mathbb{P}  = \int_D \langle X, \mathbb{E}( Y \cond \mathcal{D} ) \rangle \, \mathrm{d}\mathbb{P},
\end{align*}
by using the interchangeability of sums and integrals and the property 
$$
\mathbb{E}( \langle Y, e_i\rangle  \cond \mathcal{D} )  = \langle\mathbb{E}( Y \cond \mathcal{D} ) , e_i\rangle
$$
of conditional expectations on Hilbert spaces.

\end{proof}

\begin{lemma}
\label{lem:uniform-convergence-of-quantile-map}
Let $q$ denote the function that maps a self-adjoint, positive semidefinite, trace-class operator, $\mathscr{C}$ on a separable Hilbert space $\mathcal{H}$, to the $1-\alpha$ quantile of the $\lVert \mathcal{N}(0, \mathscr{C}) \rVert^2$ distribution. Then $q$ is continuous in trace norm and the restriction of $q$ to a bounded subset $\mathcal{C}$ of covariance operators satisfying 
\begin{equation}
\label{eq:covariance-compactness-lemma}
\lim_{N \to \infty} \sup_{\mathscr{C} \in \mathcal{C}} \sum_{k=K}^\infty \langle \mathscr{C} e_k, e_k \rangle = 0
\end{equation}
for some orthonormal basis $(e_k)_{k=1}^\infty$ of $\mathcal{H}$, is uniformly continuous in trace norm.
\end{lemma}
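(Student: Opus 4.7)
The plan is to prove continuity of $q$ first and then derive uniform continuity on $\mathcal{C}$ by establishing that $\mathcal{C}$ is relatively compact in trace norm and invoking the Heine--Cantor theorem.

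For continuity, suppose $\mathscr{C}_n \to \mathscr{C}$ in trace norm. By Mirsky's inequality for compact self-adjoint operators, the ordered eigenvalues satisfy $\sum_k |\lambda_k(\mathscr{C}_n) - \lambda_k(\mathscr{C})| \leq \|\mathscr{C}_n - \mathscr{C}\|_{\TR} \to 0$. Writing $\|\mathcal{N}(0,\mathscr{C}_n)\|^2 \eqdist S_n := \sum_k \lambda_k(\mathscr{C}_n) W_k$ for independent $W_k \sim \chi^2_1$, and similarly $S := \sum_k \lambda_k(\mathscr{C}) W_k$, we have $\E |S_n - S| \leq \sum_k |\lambda_k(\mathscr{C}_n) - \lambda_k(\mathscr{C})| \to 0$, so $S_n \to S$ in distribution. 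When $\mathscr{C} \neq 0$, the distribution of $S$ admits a density on $(0, \infty)$ (being a weighted sum of independent chi-squared variables with at least one positive weight), so its distribution function is continuous and the $1-\alpha$ quantile converges. When $\mathscr{C} = 0$, Markov's inequality gives $\pr(S_n > \varepsilon) \leq \tr(\mathscr{C}_n)/\varepsilon \to 0$ for every $\varepsilon > 0$, whence $q(\mathscr{C}_n) \to 0 = q(0)$.

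For relative compactness of $\mathcal{C}$, take any sequence $(\mathscr{C}_n) \subset \mathcal{C}$ and let $P_K$ be the orthogonal projection onto $\mathrm{span}(e_1, \ldots, e_K)$. For each $K$, the compressions $P_K \mathscr{C}_n P_K$ lie in a finite-dimensional space of operators and are uniformly bounded in trace norm, so a diagonal argument yields a subsequence (still indexed by $n$) such that $P_K \mathscr{C}_n P_K$ converges in trace norm for every $K$. Decomposing $\mathscr{C}_n = P_K \mathscr{C}_n P_K + P_K \mathscr{C}_n (I-P_K) + (I-P_K)\mathscr{C}_n P_K + (I-P_K)\mathscr{C}_n (I-P_K)$ and using positivity gives $\|(I-P_K)\mathscr{C}_n (I-P_K)\|_{\TR} = \sum_{k>K}\langle \mathscr{C}_n e_k, e_k\rangle$, while factoring through $\mathscr{C}_n^{1/2}$ and applying $\|AB\|_{\TR} \leq \|A\|_{\HS}\|B\|_{\HS}$ yields $\|P_K \mathscr{C}_n (I-P_K)\|_{\TR} \leq \sqrt{\tr(\mathscr{C}_n)}\sqrt{\sum_{k>K}\langle \mathscr{C}_n e_k, e_k\rangle}$. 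Boundedness of $\mathcal{C}$ in trace norm together with \eqref{eq:covariance-compactness-lemma} render both the tail and cross terms uniformly small for large $K$; fixing such $K$, the head contribution $\|P_K(\mathscr{C}_n - \mathscr{C}_m)P_K\|_{\TR}$ is small for large $n,m$ by construction. Hence the subsequence is Cauchy and converges in trace norm, so $\mathcal{C}$ is relatively compact. Since $q$ is continuous and $\overline{\mathcal{C}}$ is compact, $q$ is uniformly continuous on $\overline{\mathcal{C}} \supseteq \mathcal{C}$.

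The main obstacle is the bookkeeping in the second step: controlling the off-diagonal (cross) trace norms uniformly via the $\mathscr{C}_n^{1/2}$ factorisation, which crucially requires both the boundedness of $\mathcal{C}$ in trace norm and the uniform tail control of \eqref{eq:covariance-compactness-lemma}. The continuity step is by contrast a fairly direct consequence of Mirsky's inequality, $L^1$ convergence of the weighted chi-squared representations, and continuity of the limiting distribution function away from the degenerate case.
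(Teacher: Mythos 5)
Your proof is correct, and it reaches the same three-part skeleton as the paper (continuity of $q$, relative compactness of $\mathcal{C}$, Heine--Cantor), but it fills in both non-trivial ingredients with direct arguments where the paper cites external results. For continuity, the paper invokes \citet[Theorem~2.7.21]{Bogachev2018} to get weak convergence of $\mathcal{N}(0,\mathscr{C}_n)$ to $\mathcal{N}(0,\mathscr{C})$ from trace-norm convergence of the covariances, then applies the continuous mapping theorem and \citet[Lemma~21.2]{Vandervaart1998}; you instead work directly with the eigenvalues via Mirsky's inequality and the weighted-$\chi^2_1$ representation, obtaining an explicit $L^1$ coupling $\E|S_n - S| \le \lVert \mathscr{C}_n - \mathscr{C}\rVert_{\TR}$. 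This is arguably more informative (it is quantitative) and your separate treatment of the degenerate case $\mathscr{C}=0$ is a point the paper glosses over. One small tightening: for quantile convergence at $1-\alpha$ you need the quantile function of $S$ to be continuous there, which follows because the density of $S$ is strictly positive on $(0,\infty)$ (so $F_S$ is strictly increasing there) and $\pr(S=0)=0$; saying only that $S$ "admits a density" does not by itself rule out a flat stretch of $F_S$ at level $1-\alpha$. For relative compactness, the paper simply cites \citet[Proposition~2.5.2]{Bogachev2018}, whereas you prove it by a diagonal extraction on the compressions $P_K\mathscr{C}_nP_K$ together with the head/tail/cross decomposition; your trace-norm bound on the cross terms via the $\mathscr{C}_n^{1/2}$ factorisation and $\lVert AB\rVert_{\TR}\le\lVert A\rVert_{\HS}\lVert B\rVert_{\HS}$ is exactly the estimate that makes this work, and it correctly uses both hypotheses (trace-norm boundedness of $\mathcal{C}$ and \eqref{eq:covariance-compactness-lemma}). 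The trade-off is length versus self-containment: the paper's proof is shorter but leans on two black-box citations, while yours is elementary and checkable line by line.
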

\begin{proof}
Let $\mathscr{C}_n$ be a sequence of self-adjoint, positive semidefinite, trace-class operators converging to $\mathscr{C}$ in trace norm. Then by \citet[][Theorem~2.7.21]{Bogachev2018} $\mathcal{N}(0, \mathscr{C}_n) \overset{\mathcal{D}}{\to} \mathcal{N}(0, \mathscr{C})$ and by the continuous mapping theorem we have $\lVert \mathcal{N}(0, \mathscr{C}_n) \rVert^2 \overset{\mathcal{D}}{\to} \lVert \mathcal{N}(0, \mathscr{C}) \rVert^2$. This implies the convergence of the quantile functions by the Portmanteau theorem and \citet[][Lemma~21.2]{Vandervaart1998} and hence $q$ is continuous. 

By the Heine--Cantor theorem, the restriction of $q$ to the closure of $\mathcal{C}$ is uniformly continuous if $\mathcal{C}$ is relatively compact. Restricting $q$ further to $\mathcal{C}$ preserves the uniform continuity. \citet[][Proposition~2.5.2]{Bogachev2018} states that equation \eqref{eq:covariance-compactness-lemma} exactly characterises the relatively compact sets of trace class operators. 
\end{proof}

\begin{lemma}
\label{lem:uniform-absolute-continuity-of-gamma-family}
Let $\Theta \subseteq \mathbb{R}_+$ and let $(\mu_\theta)_{\theta \in \Theta}$ be the family of probability distributions on $\mathbb{R}$ where for each $\theta \in \Theta$, $\mu_\theta$ denotes the distribution of $\theta Z$ where $Z \sim \chi^2_1$.
If $\Theta$ is bounded away from $0$, the family is uniformly absolutely continuous with respect to the Lebesgue measure $\lambda$. 
\end{lemma}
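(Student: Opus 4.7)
The plan is to reduce the problem to a direct density bound, exploiting the fact that the densities of the $\mu_\theta$ have a singularity only at $0$, where $y^{-1/2}$ is locally Lebesgue-integrable. Set $\theta_0 := \inf \Theta > 0$. Starting from the $\chi^2_1$ density $\frac{1}{\sqrt{2\pi y}} e^{-y/2} \mathbf{1}_{y>0}$ and applying the change of variables $y \mapsto y/\theta$, the density of $\mu_\theta$ with respect to Lebesgue measure is
\[
f_\theta(y) = \frac{1}{\sqrt{2\pi \theta y}} \, e^{-y/(2\theta)} \, \mathbf{1}_{y > 0}.
\]
Using the trivial bounds $e^{-y/(2\theta)} \leq 1$ and $\theta \geq \theta_0$, we get the uniform pointwise majorant $f_\theta(y) \leq g(y) := (2\pi \theta_0 y)^{-1/2} \mathbf{1}_{y > 0}$.

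The main point is then that although $g$ is not integrable on $(0,\infty)$, it is integrable near $0$ because of the exponent $-1/2 > -1$. Concretely, for any Borel set $B$ with $\lambda(B) = h$, I would compare $\int_B g\,d\lambda$ with $\int_{(0,h]} g\,d\lambda$; since $g$ is decreasing on $(0,\infty)$, a standard rearrangement argument (or just the elementary observation that moving any portion of $B$ lying above $h$ into the interval $(0,h]\setminus B$ can only increase the integral) gives
\[
\mu_\theta(B) \leq \int_B g\,d\lambda \;\leq\; \int_0^{h} \frac{dy}{\sqrt{2\pi \theta_0 y}} \;=\; \sqrt{\frac{2h}{\pi \theta_0}}.
\]
This bound is uniform in $\theta \in \Theta$.

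Given $\varepsilon > 0$, choosing $\delta := \pi \theta_0 \varepsilon^2 / 2$ then guarantees $\sup_{\theta \in \Theta} \mu_\theta(B) < \varepsilon$ whenever $\lambda(B) < \delta$, which is exactly uniform absolute continuity with respect to Lebesgue measure. The only mild subtlety is the rearrangement step, which is why I would phrase the $y^{-1/2}$ bound as a comparison to the maximiser $(0,h]$ rather than trying to integrate over arbitrary $B$ directly; everything else is algebra. Note that the hypothesis $\theta_0 > 0$ is used only to bound $\theta^{-1/2} \leq \theta_0^{-1/2}$ in the prefactor, and the argument does not require $\Theta$ to be bounded above — the exponential factor, which could otherwise be problematic for large $\theta$, is simply discarded.
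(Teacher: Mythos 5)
Your proof is correct, and it takes a genuinely different route from the paper's. The paper verifies uniform absolute continuity indirectly: it invokes the equivalence with uniform integrability of the densities $f_\theta$ (Proposition~\ref{prop:uniform-absolute-continuity-and-uniform-integrability}) and establishes the latter via the $L^{1+r}$ criterion, computing $\int f_\theta^{3/2}\,\mathrm{d}\lambda = \Gamma(1/4)/\sqrt[4]{6\pi^3\theta^2}$ by recognising a Gamma density and noting this is uniformly bounded when $\theta$ is bounded away from $0$. You instead work directly from the definition: the uniform pointwise majorant $f_\theta(y) \leq (2\pi\theta_0 y)^{-1/2}\ind_{\{y>0\}}$, combined with the bathtub/rearrangement observation that a decreasing integrand is maximised over sets of measure $h$ by the interval $(0,h]$, yields the explicit bound $\sup_{\theta}\mu_\theta(B) \leq \sqrt{2\lambda(B)/(\pi\theta_0)}$ and hence an explicit $\delta(\varepsilon) = \pi\theta_0\varepsilon^2/2$. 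Your argument is more elementary (no appeal to the uniform-integrability equivalence or to Gamma-function identities) and quantitative, giving a concrete modulus of absolute continuity; the paper's approach is less hands-on but slots into a reusable general criterion that it applies elsewhere. The one step you flag as a subtlety --- comparing $\int_B g$ to $\int_{(0,h]} g$ for decreasing $g$ --- is indeed the only point needing care, and it is standard (e.g.\ via the layer-cake representation, $\lambda(B\cap\{g>t\}) \leq \min(h, \lambda(\{g>t\}))$ for every $t$); your handling of it is fine.
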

\begin{proof}
Note that the density $f_\theta$ of $\mu_\theta$ with respect to the Lebesgue measure is
\[
f_\theta(x) = \frac{1}{\sqrt{2\pi\theta}} \frac{e^{-\frac{1}{2\theta}x}}{\sqrt{x}}.
\]
We will apply Proposition~\ref{prop:uniform-absolute-continuity-and-uniform-integrability} by showing that $\sup_{\theta \in \Theta} \int f_\theta^{3/2} \, \mathrm{d}\lambda < \infty$, which is sufficient for uniform integrability by \citet[][Example~4.5.10]{Bogachev2007}. We see that
\[
\int f_\theta(x)^{3/2} \, \mathrm{d}\lambda = \frac{1}{\sqrt[4]{6\pi^3\theta^3}} \int  \frac{e^{-\frac{3}{4\theta}x}}{\sqrt[4]{x^3}} \, \mathrm{d}\lambda,
\]
and we recognise the final integral as the unnormalised density of a $\Gamma(1/4, 3/(4\theta))$ random variable. Thus,
\[
  \int f_\theta(x)^{3/2} \, \mathrm{d}\lambda  =  \frac{1}{\sqrt[4]{6\pi^3\theta^3}} \frac{\Gamma(1/4)\sqrt[4]{4\theta}}{\sqrt[4]{3}} = \frac{\Gamma(1/4)}{\sqrt[4]{6\pi^3 \theta^2}}.
\]
This is finite for all $\theta \in \Theta$ since $\Theta$ is bounded away from zero, proving the desired result.
\end{proof}

\begin{lemma}
\label{lem:uniform-absolute-continuity-preserved-under-square-root}
Let $X$ be a uniformly tight with respect to index family $\Theta$ (see Definition~\ref{def:tight}), real-valued and non-negative random variable that is uniformly absolutely continuous with respect to the Lebesgue measure. Then so is $\sqrt{X}$.
\end{lemma}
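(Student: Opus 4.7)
The plan is to reduce the problem to the bounded case using uniform tightness, then relate the Lebesgue measure of a set $B$ to that of its square $B^2 := \{b^2 : b \in B, b \geq 0\}$ via a change of variables on a bounded interval, and finally invoke the uniform absolute continuity of $X$.

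First, fix $\varepsilon > 0$. Uniform tightness of $X$ (together with non-negativity) provides $M > 0$ such that $\sup_{\theta \in \Theta} \mathbb{P}_\theta(X_\theta > M^2) < \varepsilon/2$, equivalently $\sup_{\theta \in \Theta} \mathbb{P}_\theta(\sqrt{X_\theta} > M) < \varepsilon/2$. Then for any Borel set $B \subseteq \mathbb{R}$, writing $B_M := B \cap [0, M]$ and using $\sqrt{X_\theta} \geq 0$,
\[
\mathbb{P}_\theta(\sqrt{X_\theta} \in B) \;\leq\; \mathbb{P}_\theta(\sqrt{X_\theta} > M) + \mathbb{P}_\theta(X_\theta \in B_M^2) \;<\; \varepsilon/2 + \mathbb{P}_\theta(X_\theta \in B_M^2),
\]
where $B_M^2 = \{b^2 : b \in B_M\}$ is Borel because $y \mapsto y^2$ is a homeomorphism from $[0, M]$ onto $[0, M^2]$.

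Next, the change-of-variables formula (with Jacobian $2b$) gives $\lambda(B_M^2) = \int_{B_M} 2b \, db \leq 2M \lambda(B_M) \leq 2M \lambda(B)$, so $B_M^2$ has Lebesgue measure controlled linearly in $\lambda(B)$. By the uniform absolute continuity of $X$ with respect to $\lambda$, there exists $\delta' > 0$ such that $\lambda(A) < \delta'$ implies $\sup_{\theta \in \Theta} \mathbb{P}_\theta(X_\theta \in A) < \varepsilon/2$. Setting $\delta := \delta'/(2M)$, the inequality $\lambda(B) < \delta$ yields $\lambda(B_M^2) < \delta'$ and hence $\sup_{\theta \in \Theta} \mathbb{P}_\theta(X_\theta \in B_M^2) < \varepsilon/2$. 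Combining with the display above gives $\sup_{\theta \in \Theta} \mathbb{P}_\theta(\sqrt{X_\theta} \in B) < \varepsilon$, which is the required uniform absolute continuity of $\sqrt{X}$ with respect to $\lambda$.

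There is no genuinely hard step here; the only mild subtlety is recognising that a naive bound on $\lambda(B^2)$ in terms of $\lambda(B)$ cannot hold uniformly over unbounded $B$ (since the map $y \mapsto y^2$ has arbitrarily large derivative), which is precisely why the uniform tightness hypothesis is needed to truncate to $[0, M]$.
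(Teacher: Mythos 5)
Your proof is correct and follows the same three-step architecture as the paper's: truncate via uniform tightness, bound the Lebesgue measure of the squared (bounded) set linearly in $\lambda(B)$, then apply the uniform absolute continuity of $X$. The one place you diverge is the measure bound: the paper covers $B$ by an open set $U$ with $\lambda(U)$ slightly larger, decomposes $U$ into countably many disjoint open intervals, and bounds $\lambda(I_j^2 \cap [0,M])$ term by term via the factoring $\min(M,b_j)^2 - a_j^2 \leq 2\sqrt{M}(b_j - a_j)$ (invoking outer regularity of Lebesgue measure and the interval structure of open sets); you instead apply the one-dimensional change-of-variables formula $\lambda(B_M^2) = \int_{B_M} 2b\,db \leq 2M\,\lambda(B_M)$ directly, which is shorter and avoids the regularity detour. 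Both are valid; the paper's argument is more elementary and self-contained, whereas yours leans on the (standard, but not trivial) fact that for a strictly increasing $C^1$ bijection the pushforward of Lebesgue measure on a compact interval has density given by the derivative. Your observation at the end — that a bound on $\lambda(B^2)$ in terms of $\lambda(B)$ cannot hold uniformly in $B$ without first truncating, which is exactly what the tightness hypothesis buys — correctly identifies why tightness is needed and is the same reason the paper introduces $M$.
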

\begin{proof}
Let $\epsilon > 0$ be given and let $\lambda$ denote the Lebesgue measure. We need to find $\delta > 0$ such that for any Borel measurable $B$,
\[
\lambda(B)< \delta \Longrightarrow \sup_{\theta \in \Theta} \mathbb{P}_\theta(\sqrt{X} \in B) < \epsilon.
\]
For each measurable $B$, we define $B^2 := \{b^2 \ : \ b \in \mathbb{R} \}$. Then $\mathbb{P}_\theta(\sqrt{X} \in B) = \mathbb{P}_\theta(X \in B^2)$ and by the uniform tightness of $X$, we can find $M > 0$ such that 
\[
\sup_{\theta \in \Theta} \mathbb{P}_\theta(X \in B^2) \leq  \sup_{\theta \in \Theta} \mathbb{P}_\theta(X \in B^2 \cap [0, M])  + \epsilon/2.
\]
By the uniform absolute continuity of $X$ with respect to $\lambda$, we can find $\delta'$ such that $\lambda(B)< \delta'$ implies $ \sup_{\theta \in \Theta} \mathbb{P}_\theta(X \in B) < \epsilon/2$. Note that for any such $B$, by the regularity of the Lebesgue measure, we can find an open set $U \supseteq B$ such that $\lambda(U \setminus B) < \delta' - \lambda(B)$. This implies that $\lambda(U) < \delta'$. For every open $U$, by \citet[][Theorem~4.6]{Carothers2000}, we can find a countable union of disjoint open intervals $(I_j)_{j=1}^\infty$, where $I_j = (a_j, b_j)$, such that $U = \bigcup_{j=1}^\infty I_j$. 
Note that $U^2$ also covers $B^2$ since if $x \in U$, $x$ is in at least one of the intervals $I_j$, and thus $x^2$ is in $I_j^2$. Combining these observations, we get that
\begin{align*}
\lambda(B^2 \cap [0, M]) &\leq \lambda(U^2 \cap [0, M]) = \sum_{j=1}^\infty \lambda(I_j^2 \cap [0, M]) = \sum_{j=1}^\infty (\min(M, b_j^2) - a_j^2)\\
&= \sum_{j=1}^\infty (\min(\sqrt{M}, b_j)  + a_j)( \min(\sqrt{M}, b_j)-a_j) \leq  2 \sqrt{M} \sum_{j=1}^\infty  b_j-a_j < 2 \sqrt{M} \delta'.
\end{align*}
Thus letting $\delta = \delta'/(2\sqrt{M})$, we see that for all $B$ with $\lambda(B) < \delta$, we also have $\lambda(B^2 \cap [0, M]) < \delta'$, and hence
\[
  \sup_{\theta \in \Theta} \mathbb{P}_\theta(X \in B^2 \cap [0, M]) < \epsilon /2,
\]
proving the statement.
\end{proof}

\begin{lemma}
\label{lem:banachian-tightness-sufficient-condition}
Let $(X_\theta)_{\theta \in \Theta}$ be Hilbertian random variables with values in $\mathcal{H}$. Assume that for every $\theta \in \Theta$, $\mathbb{E}_\theta(X_\theta) = 0$, $\sup_{\theta \in \Theta} \allowbreak \mathbb{E} \lVert X_\theta \rVert^2 < \infty$ and that there exists a basis $(e_k)_{k \in \mathbb{N}}$ of $\mathcal{H}$ such that 
\[
\lim_{K \to \infty} \sup_{\theta \in \Theta} \sum_{k=K}^\infty \mathbb{E}(\langle X_\theta, e_k \rangle^2) = 0.  
\]
Then the family $(X_\theta \otimes X_\theta)_{\theta \in \Theta}$ is uniformly tight when viewed as random variables in the Banach space of trace-class operators on $\mathcal{H}$. 

\begin{proof}
By \citet[][Proposition 3.1]{Fugarolas1983} $(e_k \otimes e_j)_{(k,j) \in \mathbb{N}^2}$ is a Schauder basis for the Banach space of trace-class operators on $\mathcal{H}$. Thus, \citet[][Theorem 2.7.10]{Bogachev2018} yields that we need to show that
\[
  \lim_{r \to \infty} \sup_{\theta \in \Theta} \mathbb{P}_\theta(\lVert X_\theta \otimes X_\theta \rVert_{\TR} > r) = 0
\]
and for all $\varepsilon > 0$
\[
  \lim_{K \to \infty} \sup_{\theta \in \Theta} \mathbb{P}_\theta(\lVert X_\theta \otimes X_\theta - P_K(X_\theta \otimes X_\theta) \rVert_{\TR} > \varepsilon) = 0,
\]
where $P_K$ denotes the projection onto the $K$ first basis vectors in the space of trace-class operators. An application of Markov's inequality yields immediately that
\[
  \sup_{\theta \in \Theta} \mathbb{P}_\theta(\lVert X_\theta \otimes X_\theta \rVert_{\TR} > r) \leq \frac{  \sup_{\theta \in \Theta} \mathbb{E}_\theta\lVert X_\theta\rVert^2}{r}
\]
hence the first condition is satisfied by the assumed uniform upper bound on $\mathbb{E}_\theta\lVert X_\theta\rVert^2$. For $K = m^2$, $m \in \mathbb{N}$, note that
\begin{align*}
\lVert X_\theta \otimes X_\theta - P_K(X_\theta \otimes X_\theta) \rVert_{\TR} &=\left\lVert \left(\sum_{j=m}^\infty \langle X_\theta , e_j \rangle e_j \right) \otimes \left(\sum_{k=m}^\infty \langle X_\theta , e_k \rangle e_k \right) \right\rVert_{\TR}  \\
&= \left\lVert \sum_{j=m}^\infty \langle X_\theta , e_j \rangle e_j \right\rVert^2 = \sum_{j=m}^\infty \langle X_\theta , e_j \rangle^2,
\end{align*}
where the final equality is by Parseval's identity. Using this, the second condition is satisfied by assumption, since, by Markov's inequality, for all $\epsilon >0$,
\[
   \sup_{\theta \in \Theta} \mathbb{P}_\theta\left(\sum_{j=m}^\infty \langle X_\theta , e_j \rangle^2 > \epsilon\right) \leq \frac{\sup_{\theta \in \Theta} \sum_{j=m}^\infty \mathbb{E}_\theta \left( \langle X_\theta , e_j \rangle^2 \right)}{\epsilon} .
\]

\end{proof}

\end{lemma}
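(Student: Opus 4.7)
The plan is to verify uniform tightness of $(X_\theta \otimes X_\theta)_{\theta \in \Theta}$ in the Banach space $\mathcal{S}_1(\mathcal{H})$ of trace-class operators on $\mathcal{H}$ by invoking the characterisation of (uniform) tightness for Banachian random variables whose state space admits a Schauder basis. Concretely, I would first note that $(e_k \otimes e_j)_{(k,j) \in \mathbb{N}^2}$, under a suitable enumeration, is a Schauder basis of $\mathcal{S}_1(\mathcal{H})$ by the result of Fugarolas (1983). Combined with Theorem~2.7.10 in Bogachev (2018), uniform tightness of $(X_\theta \otimes X_\theta)_{\theta \in \Theta}$ reduces to the two conditions
\[
\lim_{r \to \infty} \sup_{\theta \in \Theta} \mathbb{P}_\theta(\lVert X_\theta \otimes X_\theta \rVert_{\TR} > r) = 0,
\]
and, for every $\varepsilon > 0$,
\[
\lim_{K \to \infty} \sup_{\theta \in \Theta} \mathbb{P}_\theta(\lVert X_\theta \otimes X_\theta - P_K(X_\theta \otimes X_\theta) \rVert_{\TR} > \varepsilon) = 0,
\]
where $P_K$ denotes the canonical projection onto the span of the first $K$ basis elements.

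The first condition follows immediately from Markov's inequality together with the identity $\lVert X_\theta \otimes X_\theta \rVert_{\TR} = \lVert X_\theta \rVert^2$ (since the rank-one operator $X_\theta \otimes X_\theta$ has the single non-zero singular value $\lVert X_\theta\rVert^2$), and the assumed uniform bound $\sup_{\theta} \mathbb{E}_\theta \lVert X_\theta\rVert^2 < \infty$. So the substantive work is in the second condition.

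For the tail condition, I would enumerate the basis $(e_k \otimes e_j)_{(k,j) \in \mathbb{N}^2}$ in the natural square order so that $P_{m^2}$ projects onto $\mathrm{span}\{ e_k \otimes e_j : 1 \le k,j \le m\}$. Expanding $X_\theta = \sum_j \langle X_\theta, e_j\rangle e_j$ and using bilinearity of $\otimes$, one then identifies
\[
(I - P_{m^2})(X_\theta \otimes X_\theta) = \left(\sum_{j \geq m+1} \langle X_\theta, e_j\rangle e_j\right) \otimes \left(\sum_{k \geq m+1} \langle X_\theta, e_k\rangle e_k\right),
\]
which, again being rank one, has trace norm equal to $\sum_{j \geq m+1} \langle X_\theta, e_j\rangle^2$ by Parseval's identity. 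A single application of Markov's inequality then bounds the probability by $\varepsilon^{-1} \sup_\theta \sum_{j \geq m+1} \mathbb{E}_\theta \langle X_\theta, e_j\rangle^2$, which tends to zero uniformly in $\theta$ by hypothesis, yielding the second tightness condition along the subsequence $K = m^2$; this is sufficient because the tail norms are monotone in $K$.

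The main (essentially only) obstacle is the bookkeeping for the Schauder-basis enumeration: one must be careful to order $(e_k \otimes e_j)$ so that $P_K$ truncates in both indices simultaneously, which is why restricting to $K = m^2$ is natural, and to justify passage from this subsequence back to all $K$ via monotonicity of $\lVert (I-P_K)(X_\theta \otimes X_\theta)\rVert_{\TR}$ in $K$ under that enumeration. Everything else is a clean application of Markov's inequality and Parseval.
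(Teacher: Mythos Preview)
Your proposal is essentially identical to the paper's proof: the same Schauder-basis characterisation via Fugarolas and Bogachev, the same Markov bound for the first condition using $\lVert X_\theta \otimes X_\theta\rVert_{\TR}=\lVert X_\theta\rVert^2$, and the same computation for the second condition by restricting to $K=m^2$ and identifying the tail as a rank-one operator with trace norm $\sum_{j\geq m}\langle X_\theta,e_j\rangle^2$. Your added remark about monotonicity in $K$ to pass from the subsequence $K=m^2$ to all $K$ is a welcome clarification that the paper leaves implicit.
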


\subsubsection{Proof of Theorem~\ref{thm:asymptotic normality and covariance estimation}}
\begin{proof}
Throughout the proof we omit the subscript $\dist$ from $\varepsilon$, $\xi$, $f$, $g$. 

\paragraph*{Convergence of $\mathscr{T}_n$.}
We have that
\begin{align*}
\mathscr{T}_n = \frac{1}{\sqrt{n}}\sum_{i=1}^n \mathscr{R}_i &= \underbrace{\frac{1}{\sqrt{n}} \sum_{i=1}^n \varepsilon_i \otimes \xi_i}_{=:U_n} + \underbrace{\frac{1}{\sqrt{n}} \sum_{i=1}^n  (f(z_i)-\hat{f}(z_i)) \otimes (g(z_i)-\hat{g}(z_i))}_{=:a_n}\\
& + \underbrace{\frac{1}{\sqrt{n}} \sum_{i=1}^n (f(z_i)-\hat{f}(z_i)) \otimes \xi_i }_{=:b_n} + \underbrace{\frac{1}{\sqrt{n}} \sum_{i=1}^n (\varepsilon_i \otimes g(z_i)-\hat{g}(z_i))}_{=:c_n} .
\end{align*}
Since
\[
\mathbb{E}(\varepsilon_i \otimes \xi_i) =  \mathbb{E}((X-\mathbb{E}(X \cond Z)) \otimes (Y-\mathbb{E}(Y \cond Z))) = \mathbb{E}(\Cov(X, Y \cond Z)) = 0
\]
because $X \independent Y \cond Z$, Proposition~\ref{prop:uniform-clt} yields 
that $U_n$ converges uniformly in distribution to the desired Gaussian over $\tilde{\mathcal{P}_0}$. By Proposition~\ref{prop:uniform-slutsky}, if $a_n$, $b_n$ and $c_n$ all converge to $0$ uniformly in probability, we will have shown the desired result. We establish this by looking at the Hilbert--Schmidt norm of the sequences, since uniform convergence of the norms to $0$ implies uniform convergence of the sequences to $0$. For $a_n$, using properties of the Hilbert--Schmidt norm and the Cauchy--Schwarz inequality yields
\begin{align*}
\lVert a_n \rVert_{\HS} &= \left\lVert \frac{1}{\sqrt{n}} \sum_{i=1}^n  (f(z_i)-\hat{f}(z_i)) \otimes (g(z_i)-\hat{g}(z_i)) \right\rVert_{\HS}\\
 &\leq  \frac{1}{\sqrt{n}} \sum_{i=1}^n\lVert (f(z_i)-\hat{f}(z_i)) \otimes (g(z_i)-\hat{g}(z_i)) \rVert_{\HS}\\
  &=  \frac{1}{\sqrt{n}}  \sum_{i=1}^n \lVert (f(z_i)-\hat{f}(z_i)) \rVert \lVert g(z_i)-\hat{g}(z_i) \rVert  \\
  &\leq   \sqrt{\frac{1}{n} \sum_{i=1}^n\lVert (f(z_i)-\hat{f}(z_i)) \rVert^2 \sum_{i=1}^n \lVert g(z_i)-\hat{g}(z_i) \rVert^2}   = \sqrt{n M_{n, \dist}^f M_{n, \dist}^g}.
\end{align*}
By assumption $n M_{n, \dist}^f  M_{n, \dist}^g \overset{P}{\rightrightarrows} 0$ and Proposition~\ref{prop:uniform-convergence-in-probability-preserved-under-continuous-operations-when-tight} yields that the same is true for $\sqrt{n M_{n, \dist}^f  M_{n, \dist}^g}$. This implies that $\lVert a_n \rVert_{\HS} \overset{P}{\rightrightarrows} 0$ as desired.

To establish that $\lVert b_n \rVert_{\HS} \overset{P}{\rightrightarrows} 0$, we will instead show that the square of the Hilbert--Schmidt norm goes to $0$. This implies that $\lVert b_n \rVert_{\HS}\overset{P}{\rightrightarrows} 0$ by the same arguments about $x \mapsto \sqrt{x}$ as above. We will show that $\mathbb{E}_\dist(\lVert b_n \rVert_{\HS}^2 \cond X^{(n)}, Z^{(n)}) \overset{P}{\rightrightarrows} 0$, where $X^{(n)}=(x_1, \dots, x_n)$ and $Z^{(n)}=(x_1, \dots, x_n)$, which then implies the desired result by Lemma~\ref{lem: Convergence to zero if conditional convergence to zero}. 
For every $\dist \in \tilde{\mathcal{P}_0}$ we have
\begin{align}
& \mathbb{E}_\dist(\lVert b_n \rVert_{\HS}^2 \cond X^{(n)}, Z^{(n)}) = \frac{1}{n} \mathbb{E}_\dist\left( \left\lVert \sum_{i=1}^n  (f(z_i) - \hat{f}(z_i)) \otimes \xi \right\rVert^2_{\HS} \cond X^{(n)}, Z^{(n)} \right) \nonumber \\
&= \frac{1}{n}  \sum_{j=1}^n \sum_{i=1}^n \mathbb{E}_\dist\left(\langle (f(z_i) - \hat{f}(z_i)) \otimes \xi_i, (f(z_j) - \hat{f}(z_j)) \otimes \xi_j \rangle_{\HS} \cond X^{(n)}, Z^{(n)} \right) \nonumber \\
&= \frac{1}{n}  \sum_{j=1}^n \sum_{i=1}^n \mathbb{E}_\dist\left(\langle f(z_i) - \hat{f}(z_i), f(z_j) - \hat{f}(z_j) \rangle \langle \xi_i, \xi_j \rangle \cond X^{(n)}, Z^{(n)} \right) \nonumber \\
&= \frac{1}{n}  \sum_{j=1}^n \sum_{i=1}^n \langle f(z_i) - \hat{f}(z_i), f(z_j) - \hat{f}(z_j) \rangle\mathbb{E}_\dist\left(\langle \xi_i, \xi_j \rangle  \cond X^{(n)}, Z^{(n)} \right) \label{eq:b_n-decomposition},
\end{align}
where the penultimate equality uses the fact that for Hilbert--Schmidt operators $\langle x_1 \otimes y_1, x_2 \otimes y_2 \rangle_{\HS} = \langle x_1, x_2 \rangle \langle y_1, y_2 \rangle$. The final equality holds since the terms involving $f(z_i)-\hat{f}(z_i)$ are measurable with respect to the $\sigma$-algebra generated by $X^{(n)}$ and $Z^{(n)}$. 
The term $\langle \xi_i, \xi_j \rangle$ only depends on $Z_i$ and $Z_j$ of the conditioning variables, so we can omit the remaining variables from the conditioning expression. 
Recall that $\xi_i = Y_i - \mathbb{E}_\dist(Y_i \cond Z_i)$. 
For $i \neq j$, by using that $\mathbb{E}_\dist(Y_i \cond Z_i) = \mathbb{E}_\dist(Y_i \cond Z_i, Z_j)$ since $Z_j$ is independent of $(Y_i, Z_i)$ and Lemma~\ref{lem: pulling out whats known}, we get
\begin{align*}
\mathbb{E}_\dist\big[\langle \xi_i, \xi_j \rangle  \cond X^{(n)}, Z^{(n)} \big] &= \mathbb{E}_\dist\big[\langle Y_i, Y_j \rangle - \langle Y_i, \mathbb{E}_\dist(Y_j \cond Z_j) \rangle - \langle \mathbb{E}_\dist(Y_i \cond Z_i), Y_j \rangle \\
 &\qquad + \langle \mathbb{E}_\dist(Y_i \cond Z_i), \mathbb{E}_\dist(Y_j \cond Z_j) \rangle  \cond Z_i, Z_j \big]\\
 &= \mathbb{E}_\dist(\langle Y_i, Y_j \rangle \cond Z_i, Z_j) - \langle \mathbb{E}_\dist(Y_i  \cond Z_i, Z_j) , \mathbb{E}(Y_j  \cond Z_i, Z_j) \rangle.
\end{align*}
We will show that this is zero. By assumption $(Y_i, Z_i) \independent (Y_j, Z_j)$, so applying the usual laws of conditional independence, we get $Y_i \independent Y_j \cond (Z_i, Z_j)$. Take now some orthonormal basis for $\mathcal{H}_Y$, $(e_k)_{k \in \mathbb{N}}$, and expand $\langle Y_i, Y_j \rangle $ to get
$$
\mathbb{E}_\dist(\langle Y_i, Y_j \rangle \cond Z_i, Z_j) = \mathbb{E}_\dist\left( \sum_{k=1}^\infty \langle Y_i, e_k \rangle \langle Y_j, e_k \rangle \cond Z_i, Z_j\right) =  \sum_{k=1}^\infty  \mathbb{E}_\dist \left( \langle Y_i, e_k \rangle \langle Y_j, e_k \rangle \cond Z_i, Z_j\right).
$$
For all $k$, $\langle Y_i, e_k \rangle \independent \langle Y_j, e_k \rangle \cond (Z_i, Z_j)$, so $\mathbb{E}(\langle Y_i, e_k \rangle\langle Y_j, e_k \rangle \cond Z_i, Z_j)$ factorises, and we get
\begin{align*}
&\sum_{k=1}^\infty \mathbb{E}_\dist \left( \langle Y_i, e_k \rangle \langle Y_j, e_k \rangle \cond Z_i, Z_j\right) = \sum_{k=1}^\infty  \mathbb{E}_\dist ( \langle Y_i, e_k \rangle \cond Z_i, Z_j) \mathbb{E}_\dist( \langle Y_j, e_k \rangle \cond Z_i, Z_j )\\
& = \sum_{k=1}^\infty  \langle \mathbb{E}_\dist ( Y_i  \cond Z_i, Z_j) , e_k\rangle  \langle \mathbb{E}_\dist( Y_j \cond Z_i, Z_j), e_k  \rangle = \langle \mathbb{E}_\dist( Y_i  \cond Z_i, Z_j), \mathbb{E} ( Y_j  \cond Z_i, Z_j) \rangle ,
\end{align*}
where the second last equality follows from
$\mathbb{E}_\dist( \langle Y, e_k\rangle  \cond Z_i, Z_j )  = \langle\mathbb{E}_\dist( Y \cond Z_i, Z_j) , e_k\rangle$ by Lemma~\ref{lem: pulling out whats known}.
We can thus omit all terms from the sum in \eqref{eq:b_n-decomposition} where $i \neq j$ and get
\begin{align*}
&\mathbb{E}_\dist(\lVert b_n \rVert_{\HS}^2 \cond X^{(n)}, Z^{(n)}) = \frac{1}{n}  \sum_{i=1}^n \lVert f(z_i) - \hat{f}^{(n)}(z_i) \rVert^2_{X} \mathbb{E}_\dist\left(\lVert \xi_i \rVert_Y^2  \cond Z_i \right) = \tilde{M}_n^f \overset{P}{\rightrightarrows} 0 ,
\end{align*}
by assumption. An analogous argument can be repeated for $c_n$, thus proving the desired result.

\paragraph*{Convergence of $\hat{\mathscr{C}}$.}
For simplicity, we prove convergence where $\hat{\mathscr{C}}$ is instead defined as the estimate where we divide by $n$ instead of $n-1$ since this does not affect the asymptotics.

By the above and Proposition~\ref{prop:uniform-slutsky}, since $(\mathcal{N}(0, \mathscr{C}_\dist))_{\dist \in \tilde{\mathcal{P}}_0}$ is uniformly tight by \citet[][Proposition~2.5.2, Lemma~2.7.20]{Bogachev2018}, we have
\[
\frac{1}{n} \sum_{i=1}^n \mathscr{R}_i = \frac{1}{\sqrt{n}}  \mathscr{T}_n \overset{P}{\rightrightarrows} 0 .
\]
By Proposition~\ref{prop:uniform-convergence-in-probability-preserved-under-continuous-operations-when-tight}, this implies that the second term in the definition of $\hat{\mathscr{C}}$ 
converges to $0$ uniformly in probability since the mapping $(\mathscr{A},\mathscr{B}) \mapsto \mathscr{A} \otimes_{\HS} \mathscr{B}$ is continuous.
It remains to show that the first term in the definition of $\hat{\mathscr{C}}$ converges to $\mathscr{C}$. The proof is similar to the proof of Theorem~6 in \citep{GCM} and relies on expanding the first term $\frac{1}{n} \sum_{i=1}^n \mathscr{R}_i \otimes_{\HS} \mathscr{R}_i$ to yield
\[
\frac{1}{n} \sum_{i=1}^n [(f(z_i)-\hat{f}(z_i)) \otimes (g(z_i)-\hat{g}(z_i)) + (f(z_i)-\hat{f}(z_i)) \otimes \xi_i + \varepsilon_i \otimes (g(z_i)-\hat{g}(z_i)) + \varepsilon_i \otimes \xi_i]^{\otimes_{\HS}2},
\]
where $\mathscr{A}^{\otimes_{\HS}2} = \mathscr{A} \otimes_{\HS} \mathscr{A}$. Expanding this even further yields 16 terms of which 15 go to zero. The non-zero term is
\[
\RN{1}_n = \frac{1}{n} \sum_{i=1}^n (\varepsilon_i \otimes \xi_i)^{\otimes_{\HS}2} \overset{P}{\rightrightarrows} \mathbb{E}_{\dist} \left( (\varepsilon_i \otimes \xi_i)^{\otimes_{\HS}2} \right)=\mathscr{C},
\]
by Proposition~\ref{prop:uniform-lln} and Lemma~\ref{lem:banachian-tightness-sufficient-condition} and the assumed tightness condition. For the remaining 15 terms, we will argue by taking trace norms and applying the triangle inequality to reduce the number of cases. This leaves us with 8 terms and 5 cases (by symmetry of $f$ and $\varepsilon$, $g$ and $\xi$) that we need to argue converge to $0$ uniformly in probability.

The first case is
\begin{align*}
\RN{2}_n &= \left \lVert \frac{1}{n} \sum_{i=1}^n [(f(z_i)-\hat{f}(z_i)) \otimes (g(z_i)-\hat{g}(z_i))]^{\otimes_{\HS}2} \right\rVert_{\TR}\\
 &\leq \frac{1}{n} \sum_{i=1}^n \left\lVert [(f(z_i)-\hat{f}(z_i)) \otimes (g(z_i)-\hat{g}(z_i))]^{\otimes_{\HS}2} \right\rVert_{\TR}\\
 &= \frac{1}{n} \sum_{i=1}^n \left\lVert f(z_i)-\hat{f}(z_i) \otimes g(z_i)-\hat{g}(z_i) \right\rVert_{\HS}^2 = \frac{1}{n} \sum_{i=1}^n \lVert f(z_i)-\hat{f}(z_i) \rVert^2 \lVert g(z_i)-\hat{g}(z_i) \rVert^2\\
  &\leq n M_{n, \dist}^f M_{n, \dist}^g \overset{P}{\rightrightarrows} 0,
\end{align*} 
where the final inequality uses that for positive sequences $\sum a_n b_n \leq \sum a_n \sum b_n$, which can be seen by noting that every term on the left-hand side also appears on the right-hand side. For the second case we have, by applying the Cauchy--Schwarz inequality,
\begin{align*}
\RN{3}_n &= \left \lVert \frac{1}{n} \sum_{i=1}^n [(f(z_i)-\hat{f}(z_i)) \otimes \xi_i] \otimes_{\HS} [(g(z_i)-\hat{g}(z_i)) \otimes \varepsilon_i] \right\rVert_{\TR} \\
& \leq  \frac{1}{n} \sum_{i=1}^n \lVert f(z_i)-\hat{f}(z_i) \rVert \lVert g(z_i)-\hat{g}(z_i) \rVert \lVert \varepsilon_i \rVert \lVert \xi_i \rVert \\
& \leq  \sqrt{ \underbrace{\left( \frac{1}{n} \sum_{i=1}^n \lVert f(z_i)-\hat{f}(z_i) \rVert^2 \lVert g(z_i)-\hat{g}(z_i) \rVert^2 \right)}_{=:\tilde{a}_n} \underbrace{\left( \frac{1}{n} \sum_{i=1}^n \lVert \varepsilon_i \rVert^2 \lVert \xi_i \rVert^2 \right)}_{=:\tilde{U}_n} }. 
\end{align*}
By Cauchy--Schwarz, we have $\tilde{a}_n \leq n M_{n, \dist}^f M_{n, \dist}^g \overset{P}{\rightrightarrows} 0$. We have $\tilde{U}_n \overset{P}{\rightrightarrows} \lVert \mathscr{C} \rVert_{\TR}$ by Proposition~\ref{prop:uniform-hilbert-lln}. The family $(\lVert \mathscr{C} \rVert_{\TR})_{\dist \in \tilde{\mathcal{P}}_0}$ is uniformly tight by the assumption that $\mathbb{E}( \lVert \varepsilon_\dist \rVert^{2+ \eta} \lVert \xi_\dist \rVert^{2+\eta})$ is uniformly bounded, since this also yields a bound on $\mathbb{E}( \lVert \varepsilon_\dist \rVert^2 \lVert \xi_\dist \rVert^2) = \lVert \mathscr{C} \rVert_{\TR}$ thus Proposition~\ref{prop:uniform-convergence-in-probability-preserved-under-continuous-operations-when-tight} yields that $\sqrt{\tilde{a}_n \tilde{U}_n} \overset{P}{\rightrightarrows} 0$.

The remaining three cases have an $f$ and a $g$ variant where the roles of $f$ and $g$ and $\varepsilon$ and $\xi$ are swapped. We only show one variant of each, since the arguments are identical. The $f$-variant of the third case is 
\[
\RN{4}_n^f = \left \lVert \frac{1}{n} \sum_{i=1}^n [(f(z_i)-\hat{f}(z_i)) \otimes \xi_i]^{\otimes_{\HS}2} \right\rVert_{\TR} \leq \frac{1}{n} \sum_{i=1}^n \lVert f(z_i)-\hat{f}(z_i)\rVert^2 \lVert \xi_i \rVert^2 =: \tilde{b}_n.
\]
If we can show that $\mathbb{E}(\tilde{b}_n \cond X^{(n)}, Z^{(n)}) \overset{P}{\rightrightarrows} 0$, we have that $\tilde{b}_n \overset{P}{\rightrightarrows} 0$ by Lemma~\ref{lem: Convergence to zero if conditional convergence to zero} and hence $\RN{4}_n^f  \overset{P}{\rightrightarrows} 0$. This holds since
\[
\mathbb{E}_\dist ( \tilde{b}_n \cond X^{(n)}, Z^{(n)} ) = \frac{1}{n}  \sum_{i=1}^n \lVert f(z_i)-\hat{f}(z_i)\rVert^2   \mathbb{E}_\dist \left( \lVert \xi_i \rVert^2 \cond X^{(n)}, Z^{(n)} \right) = \tilde{M}^f_{n, \dist}  \overset{P}{\rightrightarrows} 0,
\]
by assumption.

The $f$-variant of the fourth case is, by applying the Cauchy--Schwarz inequality,
\begin{align*}
\RN{5}_n^f &=  \left \lVert \frac{1}{n} \sum_{i=1}^n [(f(z_i)-\hat{f}(z_i)) \otimes (g(z_i)-\hat{g}(z_i))] \otimes_{\HS} [(f(z_i)-\hat{f}(z_i)) \otimes \xi_i] \right\rVert_{\TR} \\
& \leq  \frac{1}{n} \sum_{i=1}^n \lVert f(z_i)-\hat{f}(z_i)\rVert^2 \lVert g(z_i)-\hat{g}(z_i) \rVert \lVert \xi_i \rVert \\
& \leq \sqrt{\underbrace{\left(\frac{1}{n} \sum_{i=1}^n \lVert f(z_i)-\hat{f}(z_i)\rVert^2 \lVert g(z_i)-\hat{g}(z_i) \rVert^2 \right)}_{\tilde{a}_n} \underbrace{\left( \frac{1}{n} \sum_{i=1}^n \lVert f(z_i)-\hat{f}(z_i)\rVert^2 \lVert \xi_i \rVert^2 \right)}_{\tilde{b}_n} } .
\end{align*}
We saw above that $\tilde{a}_n \overset{P}{\rightrightarrows} 0$ and $\tilde{b}_n \overset{P}{\rightrightarrows} 0$, hence by Proposition~\ref{prop:uniform-convergence-in-probability-preserved-under-continuous-operations-when-tight}, $\sqrt{\tilde{a}_n \tilde{b}_n} \overset{P}{\rightrightarrows} 0$.

For the $f$-variant of the fifth and final case, we get, by applying the Cauchy--Schwarz inequality again,
\begin{align*}
\RN{6}_n^f &=  \left \lVert \frac{1}{n} \sum_{i=1}^n [(f(z_i)-\hat{f}(z_i)) \otimes \xi_i] \otimes_{\HS} [\varepsilon_i \otimes \xi_i] \right\rVert_{\TR} \leq  \frac{1}{n} \sum_{i=1}^n \lVert f(z_i)-\hat{f}(z_i) \rVert \lVert \varepsilon_i \rVert  \lVert \xi_i \rVert^2 \\
& \leq  \sqrt{\underbrace{\left( \frac{1}{n} \sum_{i=1}^n \lVert f(z_i)-\hat{f}(z_i) \rVert^2 \lVert \xi_i \rVert^2 \right)}_{\tilde{b}_n}  \underbrace{\left( \frac{1}{n} \sum_{i=1}^n \lVert \varepsilon_i \rVert^2 \lVert \xi_i \rVert^2 \right)}_{\tilde{U}_n}}.
\end{align*}
We can repeat the arguments used above yielding $\sqrt{\tilde{a}_n \tilde{U}_n} \overset{P}{\rightrightarrows} 0$ to show that $\sqrt{\tilde{b}_n \tilde{U}_n} \overset{P}{\rightrightarrows} 0$ hence $\RN{6}_n^f \overset{P}{\rightrightarrows} 0$ as desired.
\end{proof}

\subsubsection{Proof of Theorem~\ref{thm:level of test}}
\begin{proof}
Let $W$ be distributed as $\lVert \mathcal{N}(0, \mathscr{C}_\dist) \rVert_{\HS}^2$ when the background measure is $\mathbb{P}_\dist$.
Recalling the notation from Lemma~\ref{lem:uniform-convergence-of-quantile-map}, since
$$
\mathbb{P}_\dist(\psi_n = 1) = \mathbb{P}_\dist(T_n > q(\hat{\mathscr{C}}))
$$
we need to show that
\[
\lim_{n \to \infty} \sup_{\dist \in \tilde{\mathcal{P}}_0} \left| \mathbb{P}_\dist(T_n  > q(\hat{\mathscr{C}}))  - \alpha \right| = 0,
\]
which amounts to finding, for each $\epsilon > 0$, an $N \in \mathbb{N}$, such that for all $n \geq N$,
\begin{equation}
  \label{eq:uniform-level-upper-bound-proof}
\sup_{\dist \in \tilde{\mathcal{P}}_0} \mathbb{P}_\dist(T_n > q(\hat{\mathscr{C}}))  <  \alpha + \epsilon
\end{equation}
and
\begin{equation}
  \label{eq:uniform-level-lower-bound-proof}
\inf_{\dist \in \tilde{\mathcal{P}}_0} \mathbb{P}_\dist(T_n > q(\hat{\mathscr{C}}))  >  \alpha - \epsilon .
\end{equation}

To show \eqref{eq:uniform-level-upper-bound-proof}, take $\delta > 0$ (to be fixed later). If $|q(\hat{\mathscr{C}}) - q(\mathscr{C}_\dist)| < \delta$ and $T_n  > q(\hat{\mathscr{C}})$, then $T_n  > q(\mathscr{C}_\dist) - \delta$, so
\[
 \mathbb{P}_\dist(T_n  > q(\hat{\mathscr{C}})) \leq  \mathbb{P}_\dist(T_n  > q(\mathscr{C}_\dist) - \delta) +  \mathbb{P}_\dist(|q(\hat{\mathscr{C}}) -  q(\mathscr{C}_\dist)| \geq \delta ).
\]
Taking suprema and rewriting, we get
\begin{align*}
\sup_{\dist \in \tilde{\mathcal{P}}_0} \mathbb{P}_\dist(T_n > q(\hat{\mathscr{C}}_\dist)) &\leq  \overbrace{\sup_{\dist \in \tilde{\mathcal{P}}_0} [ \mathbb{P}_\dist(T_n  > q(\mathscr{C}_\dist) - \delta) - \mathbb{P}_\dist(W > q(\mathscr{C}_\dist) - \delta)]}^{=:\RN{1}_n} \\
&+ \underbrace{\sup_{\dist \in \tilde{\mathcal{P}}_0} [ \mathbb{P}_\dist(W > q(\mathscr{C}_\dist) - \delta) - \alpha]}_{=:\RN{2}_n}   +  \underbrace{\sup_{\dist \in \tilde{\mathcal{P}}_0} \mathbb{P}_\dist(|q(\hat{\mathscr{C}}) -  q(\mathscr{C}_\dist)| \geq \delta )}_{=:\RN{3}_n}  + \alpha.
\end{align*}

We seek to show that, if $n$ is sufficiently large, we can make each of the terms $\RN{1}_n$, $\RN{2}_n$ and $\RN{3}_n$ less than $\epsilon/3$ such that 
\[
\sup_{\dist \in \tilde{\mathcal{P}}_0} \mathbb{P}_\dist(T_n  > q(\hat{\mathscr{C}})) < \alpha + \epsilon,
\]
as desired. 

We note first that
\begin{equation}
	\begin{aligned}
		\label{eq:thm3-rn1}
|\RN{1}_n| &\leq \sup_{\dist \in \tilde{\mathcal{P}}_0} | \mathbb{P}_\dist(T_n^{1/2}  > \{q(\mathscr{C}_\dist) - \delta\}^{1/2}) - \mathbb{P}_\dist(W^{1/2} > \{q(\mathscr{C}_\dist) - \delta\}^{1/2})|\\
&\leq \sup_{\dist \in \tilde{\mathcal{P}}_0} \sup_{x \in \mathbb{R}}| \mathbb{P}_\dist(T_n^{1/2}  > x) - \mathbb{P}_\dist(W^{1/2} > x)|.
	\end{aligned}
\end{equation}
For each $\dist \in \tilde{\mathcal{P}}_0$, $W$ has the same distribution as
\[
\sum_{k=1}^\infty \lambda_k^{\dist} V_k^2,
\]
where $\lambda_k^{\dist}$ is the $k$th eigenvalue of $\mathscr{C}_\dist$ and $(V_k)_{k \in \mathbb{N}}$ is a sequence of independent standard Gaussian random variables. We have assumed that the operator norm of $(\mathscr{C}_\dist)_{\dist \in \tilde{\mathcal{P}}_0}$ is bounded away from zero which implies that $\lambda_1^\dist$ is bounded away from zero. Thus, the family $(\lambda_1^{\dist} V_1^2)_{\dist \in \tilde{\mathcal{P}}_0}$ is uniformly absolutely continuous with respect to the Lebesgue measure by Lemma~\ref{lem:uniform-absolute-continuity-of-gamma-family}. Theorem~\ref{thm:independent-convolution-preserves-uniform-absolute-continuity} yields that $W$ is also uniformly absolutely continuous with respect to the Lebesgue measure and Lemma~\ref{lem:uniform-absolute-continuity-preserved-under-square-root} yields that the same is true for $W^{1/2}$, since $W$ is uniformly tight by the assumed uniform bound on $\mathbb{E}_\dist(\lVert \epsilon_\dist \rVert^2 \lVert \xi_\dist \rVert^2)$. Further, Corollary \ref{corollary:uniform-absolute-continuity-wrt-lebesgue-iff-wrt-gaussian} yields that $W^{1/2}$ is also uniformly absolutely continuous with respect to the standard Gaussian on $\mathbb{R}$. Proposition~\ref{prop:uniform-continuous-mapping-theorem} and Theorem~\ref{thm:asymptotic normality and covariance estimation} $T_n^{1/2} \overset{\mathcal{D}}{\rightrightarrows} W^{1/2}$ since $\lVert \cdot \rVert_{\HS}$ is Lipschitz. Finally since we argued that $W^{1/2}$ is uniformly absolutely continuous with respect to the standard Gaussian on $\mathbb{R}$, Proposition~\ref{prop:uniform-convergence-of-distribution-functions} yields that we can make the bound in \eqref{eq:thm3-rn1} less than $\epsilon/3$ for $n$ sufficiently large.

For the $\RN{2}_n$ term, recall that $\alpha =  \mathbb{P}_\dist(W > q(\mathscr{C}_\dist))$, and thus
\[
 \mathbb{P}_\dist(W > q(\mathscr{C}_\dist) - \delta) - \alpha =  \mathbb{P}_\dist(W \in [ q(\mathscr{C}_\dist) - \delta,  q(\mathscr{C}_\dist)]) .
\]
By the uniform absolute continuity of $W$ with respect to the Lebesgue measure $\lambda$, we may fix $\delta$ such that $\sup_{\dist \in \tilde{\mathcal{P}}_0}  \mathbb{P}_\dist(W \in B) < \epsilon/3$ whenever $\lambda(B) < 2\delta$. This implies that $\RN{2}_n < \epsilon/3$.

For the $\RN{3}_n$ term, Theorem~\ref{thm:asymptotic normality and covariance estimation} yields $\hat{\mathscr{C}} \overset{P}{\rightrightarrows} \mathscr{C}_\dist$ and since Lemma~\ref{lem:uniform-convergence-of-quantile-map} yields that $q$ is uniformly continuous, Proposition~\ref{prop:uniform-continuous-mapping-theorem} yields $q(\hat{\mathscr{C}}) \overset{P}{\rightrightarrows} q(\mathscr{C}_\dist)$. Thus, the third term is less than $\epsilon/3$ when $n$ is large enough.

To show \eqref{eq:uniform-level-lower-bound-proof}, note first that, as before, if $|q(\hat{\mathscr{C}}) - q(\mathscr{C}_\dist)| <\delta$ and $T_n  > q(\mathscr{C}_\dist) + \delta$, then $T_n  > q(\hat{\mathscr{C}}) $ and hence
\begin{equation}
	\begin{aligned}
		\label{eq:thm3-lower-bound}
\mathbb{P}_\dist(T_n  > q(\hat{\mathscr{C}})) &\geq \mathbb{P}_\dist((T_n  > q(\mathscr{C}_\dist) + \delta) \cap (|q(\hat{\mathscr{C}}) -  q(\mathscr{C}_\dist)| <\delta) )\\
 &\geq \mathbb{P}_\dist(T_n  > q(\mathscr{C}_\dist) + \delta) - \mathbb{P}_\dist(|q(\hat{\mathscr{C}}) -  q(\mathscr{C}_\dist)| \geq \delta).
	\end{aligned}
\end{equation}
The final step uses that for any measurable sets $A$ and $B$,
\[
\mathbb{P}(A \cap B) = \mathbb{P}(A) + \mathbb{P}(B) - \mathbb{P}(A \cup B) =  \mathbb{P}(A) - \mathbb{P}(B^c) + 1 - \mathbb{P}(A \cup B) \geq \mathbb{P}(A) - \mathbb{P}(B^c).
\]
This lets us continue using similar arguments as for \eqref{eq:uniform-level-upper-bound-proof}, proving the statement.
\end{proof}

\subsection{Proof of Theorem~\ref{thm: consistency of test}}

\begin{proof}
To argue that the modified GHCM satisfies \eqref{eq:level}, we can repeat the arguments of Theorem~\ref{thm:asymptotic normality and covariance estimation} and Theorem~\ref{thm:level of test} replacing conditioning on $X^{(n)}$ and $Z^{(n)}$ with conditioning on $Z^{(n)}$ and $A$ and conditioning on $Y^{(n)}$ and $Z^{(n)}$ with conditioning on $Z^{(n)}$ and $A$.

For the first claim that $\tilde{\mathscr{T}}_n \overset{\mathcal{D}}{\rightrightarrows} \mathcal{N}(0, \mathscr{C}_\dist)$, we can repeat the decomposition of the proof of Theorem~\ref{thm:asymptotic normality and covariance estimation} and write
\[
\frac{1}{\sqrt{n}} \sum_{i=1}^n (\mathscr{R}_i - \mathscr{K}_\dist) = \underbrace{\frac{1}{\sqrt{n}} \sum_{i=1}^n (\varepsilon_i \otimes \xi_i - \mathscr{K}_\dist)}_{=:U_n} + a_n + b_n + c_n,
\]
where $a_n$, $b_n$ and $c_n$ are as in the proof of Theorem~\ref{thm:asymptotic normality and covariance estimation}. We have $U_n \overset{\mathcal{D}}{\rightrightarrows} \mathcal{N}(0, \mathscr{C}_\dist)$ over $\mathcal{Q}$ by Proposition~\ref{prop:uniform-clt} $a_n {\overset{P}{\rightrightarrows}} 0$ over $\mathcal{Q}$ by the same argument as in the proof of Theorem~\ref{thm:asymptotic normality and covariance estimation}. The argument of the proof of Theorem~\ref{thm:asymptotic normality and covariance estimation} to show that $b_n {\overset{P}{\rightrightarrows}} 0$ and $c_n {\overset{P}{\rightrightarrows}} 0$ will also work here if we replace conditioning as we did for the first claim.

For the second claim that $\lVert \hat{\mathscr{C}} - \mathscr{C} \rVert_{\TR} \overset{P}{\rightrightarrows} 0$, note that by the $\tilde{\mathscr{T}}_n$ result, Proposition~\ref{prop:uniform-slutsky} and Proposition~\ref{prop:uniform-convergence-in-probability-preserved-under-continuous-operations-when-tight},
\[
\frac{1}{n} \sum_{i=1}^n \mathscr{R}_i = \frac{1}{\sqrt{n}} \cdot \frac{1}{\sqrt{n}} \sum_{i=1}^n (\mathscr{R}_i-\mathscr{K}_\dist) + \mathscr{K}_\dist \overset{P}{\rightrightarrows}_\mathcal{Q} \mathscr{K}_\dist .
\]
Hence, by Proposition~\ref{prop:uniform-convergence-in-probability-preserved-under-continuous-operations-when-tight},
\[
\left( \frac{1}{n} \sum_{i=1}^n \mathscr{R}_i \right) \otimes_{\HS} \left( \frac{1}{n} \sum_{i=1}^n \mathscr{R}_i \right) \overset{P}{\rightrightarrows}_\mathcal{Q} \mathscr{K}_\dist \otimes_{\HS} \mathscr{K}_\dist,
\]
since the mapping $(\mathscr{A},\mathscr{B}) \mapsto \mathscr{A} \otimes_{\HS} \mathscr{B}$ is continuous. We can now repeat the remaining arguments of the proof of Theorem~\ref{thm:asymptotic normality and covariance estimation} while again replacing conditioning as we did in the proof of the first claim to yield the desired result.

For the final claim that for large enough $n$ the GHCM has power greater than $\beta$ over alternatives where $\lVert \sqrt{n} \mathscr{K}_\dist \rVert_{\HS} > c$, let $W$ be distributed as $\lVert \mathcal{N}(0, \mathscr{C}_\dist) \rVert_{\HS}^2$ when the background measure is $\mathbb{P}_\dist$ for $\dist \in \mathcal{Q}$.
Let $q$ denote the mapping that sends a covariance operator $\mathscr{C}$ to the $1-\alpha$ quantile of the distribution of $\lVert \mathcal{N}(0, \mathscr{C} )\rVert_{\HS}^2$ as in Lemma~\ref{lem:uniform-convergence-of-quantile-map}. By similar arguments as \eqref{eq:thm3-lower-bound} in the proof of Theorem~\ref{thm:level of test}, we get that for any $\delta > 0$, $c >0$ and $n \in \mathbb{N}$,
\[
\inf_{\dist \in \mathcal{Q}_{c, n}} \mathbb{P}_\dist(T_n > q(\hat{\mathscr{C}})) \geq \inf_{\dist \in \mathcal{Q}_{c, n}}  \mathbb{P}_\dist(T_n > q(\mathscr{C}_\dist) + \delta) - \sup_{\dist \in \mathcal{Q}_{c, n}}  \mathbb{P}_\dist(|q(\hat{\mathscr{C}}) -  q(\mathscr{C}_\dist)| \geq \delta) .
\]
Defining $\tilde{T}_n^{1/2} := \lVert \tilde{\mathscr{T}}_n \rVert_{\HS}$, by the reverse triangle inequality
\[
T_n^{1/2} = \left\lVert \tilde{\mathscr{T}}_n + \sqrt{n} \mathscr{K}_\dist \right\rVert_{\HS}  \geq \left| \tilde{T}_n^{1/2} - \sqrt{n} \lVert \mathscr{K}_\dist \rVert_{\HS} \right|  \geq \sqrt{n} \lVert \mathscr{K}_\dist \rVert_{\HS} -  \tilde{T}_n^{1/2},
\]
and hence
\[
\inf_{\dist \in \mathcal{Q}_{c, n}}  \mathbb{P}_\dist(T_n > q(\mathscr{C}_\dist) + \delta) \geq \inf_{\dist \in \mathcal{Q}_{c, n}}  \mathbb{P}_\dist(\sqrt{n} \lVert \mathscr{K}_\dist \rVert_{\HS} - \tilde{T}_n^{1/2} > \{q(\mathscr{C}_\dist) + \delta\}^{1/2}).
\]
Now since we are taking an infimum over a set where $\sqrt{n} \lVert \mathscr{K}_\dist \rVert_{\HS} > c$, we have
\[
\inf_{\dist \in \mathcal{Q}_{c, n}}  \mathbb{P}_\dist(\sqrt{n} \lVert \mathscr{K}_\dist \rVert_{\HS} - \tilde{T}_n^{1/2} > \{q(\mathscr{C}_\dist) + \delta\}^{1/2}) \geq \inf_{\dist \in \mathcal{Q}_{c, n}}  \mathbb{P}_\dist(c - \tilde{T}_n^{1/2} > \{q(\mathscr{C}_\dist) + \delta\}^{1/2}),
\]
and thus combining all the above yields
\begin{align*}
\inf_{\dist \in \mathcal{Q}_{c, n}} \mathbb{P}_\dist(T_n > q(\hat{\mathscr{C}_\dist}))
& \geq \overbrace{\inf_{\dist \in \mathcal{Q}_{c, n}}  [\mathbb{P}_\dist(c - \tilde{T}_n^{1/2} > \{q(\mathscr{C}_\dist) + \delta\}^{1/2}) - \mathbb{P}_\dist(c - W^{1/2} > \{q(\mathscr{C}_\dist) + \delta\}^{1/2}) ]}^{=:\RN{1}_n} \\
& + \underbrace{\inf_{\dist \in \mathcal{Q}_{c, n}}  \mathbb{P}_\dist(c - W^{1/2} > \{q(\mathscr{C}_\dist) + \delta\}^2)}_{=:\RN{2}_n} - \underbrace{\sup_{\dist \in \mathcal{Q}_{c, n}}  \mathbb{P}_\dist(|q(\hat{\mathscr{C}}) -  q(\mathscr{C}_\dist)| \geq \delta)}_{=:\RN{3}_n} .
\end{align*}
If we can show that for $n$ sufficiently large we can make $\RN{1}_n + \RN{2}_n + \RN{3}_n \geq \beta$, we will be done.

For the $\RN{1}_n$ term, we can write
$$
\RN{1}_n \geq -\sup_{\dist \in \mathcal{Q}_{c, n}} \sup_{x \in \mathbb{R}}  |\mathbb{P}_\dist(\tilde{T}_n^{1/2} < x) - \mathbb{P}_\dist(W^{1/2} < x) |.
$$
By the first claim proven above and Proposition~\ref{prop:uniform-continuous-mapping-theorem}, $\tilde{T}_n^{1/2} \overset{\mathcal{D}}{\rightrightarrows} W^{1/2}$. We can therefore repeat the arguments used to deal with the $\RN{1}_n$ term in the proof of Theorem~\ref{thm:level of test} to see that for $n$ sufficiently large we have $\RN{1}_n \geq -(1-\beta)/3$.

For the $\RN{2}_n$ term, we can write
\[
\RN{2}_n = 1- \sup_{\dist \in \mathcal{Q}_{c, n}}  \mathbb{P}_\dist(W^{1/2} + \{q(\mathscr{C}_\dist) + \delta\}^{1/2} \geq c ).
\]
Hence, by uniform tightness of $(W^{1/2} + \{q(\mathscr{C}_\dist) + \delta\}^{1/2})_{\dist \in \mathcal{Q}}$ we can find $c$ such that $\sup_{\dist \in \mathcal{Q}_{c, n}}  \mathbb{P}_\dist(W^{1/2} + \{q(\mathscr{C}_\dist) + \delta\}^{1/2} \geq c ) < (1-\beta)/3 $ which implies $\RN{2}_n > 1-(1-\beta)/3$.

For the $\RN{3}_n$ term, we can repeat the arguments for the $\RN{3}_n$ term in the proof of Theorem~\ref{thm:level of test} to show that $\RN{3}_n \overset{P}{\rightrightarrows} 0$. Hence, for sufficiently large $n$, we have $\RN{3}_n > -(1-\beta)/3$.

Putting things together, we have for $n$ sufficiently large that
\[
\inf_{\dist \in \mathcal{Q}_{c, n}} \mathbb{P}_\dist(T_n > q(\hat{\mathscr{C}})) \geq \beta. \qedhere
\]
\end{proof}

\subsection{Proof of Theorem~\ref{thm:kernel-regression-ghcm} and related results}
\label{sec:4.3-proofs}

We first prove a representer theorem \citep{Kimeldorf1970,Schoelkopf2001} for scalar-on-function regression which we use to provide bounds on the in-sample error of the Hilbertian linear model in Lemma~\ref{lem:deterministic-regression-mse-bound}.

\begin{lemma}
\label{lem:representer-theorem}
Let $\mathcal{H}$ denote a Hilbert space with norm $\lVert \cdot \rVert$, $x_1, \dots, x_n \in \mathbb{R}$, $z_1, \dots, z_n \in \mathcal{H}$ and $\gamma > 0$ Let $K$ be an $n \times n$ matrix where $K_{i, j}:= \langle z_i, z_j \rangle$ and let $x = (x_1, \dots, x_n)^\top  \in \mathbb{R}^n$. Then $\hat{\beta}$ minimises
\[
  L_1(\beta) = \sum_{i=1}^n (x_i - \langle \beta, z_i \rangle)^2 + \gamma \lVert \beta \rVert^2
\]
over $\beta \in \mathcal{H}$ if and only if $\hat{\beta} = \sum_{i=1}^n \hat{\alpha}_i z_i$ and $\hat{\alpha} = (\hat{\alpha}_1, \dots, \hat{\alpha}_n)^\top  \in \mathbb{R}^n$ minimises
\[
  L_2(\alpha) =  \lVert x - K \alpha \rVert_2^2 + \gamma \alpha^\top  K \alpha
\]
over $\mathbb{R}^n$ where $\lVert \cdot \rVert_2$ denotes the standard Euclidean norm on $\mathbb{R}^n$.
\end{lemma}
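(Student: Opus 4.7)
The plan is to reduce the infinite-dimensional optimisation over $\mathcal{H}$ to a finite-dimensional one by exploiting the orthogonal decomposition $\mathcal{H} = V \oplus V^\perp$, where $V := \mathrm{span}(z_1, \ldots, z_n)$ is closed (being finite-dimensional) and $V^\perp$ denotes its orthogonal complement. Every $\beta \in \mathcal{H}$ admits a unique decomposition $\beta = \beta_V + \beta_\perp$ with $\beta_V \in V$ and $\beta_\perp \in V^\perp$.

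First I would observe that the data-fitting part of $L_1$ only sees the component of $\beta$ in $V$: since $\beta_\perp \perp z_i$ for every $i$, we have $\langle \beta, z_i \rangle = \langle \beta_V, z_i \rangle$, and therefore $\sum_{i=1}^n (x_i - \langle \beta, z_i \rangle)^2 = \sum_{i=1}^n (x_i - \langle \beta_V, z_i \rangle)^2$. The Pythagorean identity gives $\lVert \beta \rVert^2 = \lVert \beta_V \rVert^2 + \lVert \beta_\perp \rVert^2 \geq \lVert \beta_V \rVert^2$, with equality iff $\beta_\perp = 0$. Combining the two, $L_1(\beta) \geq L_1(\beta_V)$ for every $\beta$, and when $\gamma > 0$ the inequality is strict unless $\beta = \beta_V$. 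This forces every minimiser of $L_1$ to lie in $V$, so it can be written as $\hat{\beta} = \sum_{i=1}^n \hat{\alpha}_i z_i$ for some (not necessarily unique) $\hat{\alpha} \in \mathbb{R}^n$.

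Next I would perform the direct algebraic substitution. For $\beta = \sum_{i=1}^n \alpha_i z_i$ with $\alpha \in \mathbb{R}^n$, bilinearity of the inner product gives $\langle \beta, z_j \rangle = \sum_i \alpha_i \langle z_i, z_j \rangle = (K\alpha)_j$ and $\lVert \beta \rVert^2 = \sum_{i,j} \alpha_i \alpha_j \langle z_i, z_j \rangle = \alpha^\top K \alpha$, hence
\[
L_1\Bigl(\sum_{i=1}^n \alpha_i z_i\Bigr) = \lVert x - K\alpha \rVert_2^2 + \gamma \alpha^\top K \alpha = L_2(\alpha).
\]
The two directions of the equivalence now follow: if $\hat{\beta}$ minimises $L_1$, then by the previous paragraph $\hat{\beta} = \sum_i \hat{\alpha}_i z_i$ and for any other $\alpha \in \mathbb{R}^n$ setting $\beta' := \sum_i \alpha_i z_i$ gives $L_2(\hat{\alpha}) = L_1(\hat{\beta}) \leq L_1(\beta') = L_2(\alpha)$, so $\hat{\alpha}$ minimises $L_2$; conversely, if $\hat{\alpha}$ minimises $L_2$ and $\hat{\beta} := \sum_i \hat{\alpha}_i z_i$, then for any $\beta \in \mathcal{H}$ with $\beta_V = \sum_i \alpha_i z_i$ we obtain $L_1(\beta) \geq L_1(\beta_V) = L_2(\alpha) \geq L_2(\hat{\alpha}) = L_1(\hat{\beta})$.

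There is no genuine obstacle here; the only minor wrinkle is that when the $z_i$ are linearly dependent the coefficients $\alpha$ representing a given $\beta_V$ are not unique, but this does not affect either implication because the statement and proof only require the existence of some $\hat{\alpha}$ compatible with $\hat{\beta}$, and the value $L_1(\sum_i \alpha_i z_i)$ depends on $\alpha$ only through $\sum_i \alpha_i z_i$ and $\alpha^\top K \alpha$, both of which are invariant under adding a vector in the kernel of $K$.
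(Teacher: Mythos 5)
Your proof is correct and follows essentially the same route as the paper's: the orthogonal decomposition onto $\mathrm{span}(z_1,\ldots,z_n)$, the Pythagorean argument forcing any minimiser of $L_1$ into that span, the algebraic identity $L_1(\sum_i \alpha_i z_i) = L_2(\alpha)$, and the same two implications. Your closing remark on the non-uniqueness of $\hat{\alpha}$ when the $z_i$ are linearly dependent is a small clarification the paper leaves implicit, but the argument is the same.
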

\begin{proof}
Assume that $\hat{\beta}$ minimises $L_1$. Write $\hat{\beta} = u + v$ where $u \in \mathcal{U} := \textrm{span}(z_1, \dots, z_n)$ and $v \in \mathcal{U}^{\perp}$. Since 
\[
  \langle \hat{\beta}, z_i \rangle = \langle u , z_i \rangle,
\]
the first term of $L_1$ only depends on the quantity $u$. Also, by Pythagoras' theorem,
\[
\lVert \hat{\beta} \rVert^2 = \| u\|^2 + \|v\|^2 \geq \lVert u \rVert^2.
\]
Thus, $v=0$ by optimality of $\hat{\beta}$, and so $\hat{\beta}$ can be written
\[
  \hat{\beta} = \sum_{i=1}^n \hat{\alpha}_i z_i
\]
for some $\hat{\alpha} \in \R^n$. But now that $\hat{\beta}$ is known to have this form, it can be seen that $\hat{\alpha}^\top  K \hat{\alpha} = \lVert \hat{\beta} \rVert^2$ and 
\[
  \sum_{i=1}^n (x_i - \langle \hat{\beta}, z_i \rangle)^2 = \sum_{i=1}^n \left(x_i - \sum_{j=1}^n \hat{\alpha}_j \langle z_j, z_i \rangle \right)^2 =  \lVert x - K \hat{\alpha} \rVert_2^2,
\]
hence $\hat{\alpha}$ minimises $L_2$. 

Assume now that $\hat{\alpha} \in \R^n$ minimises $L_2$ and $\hat{\beta} = \sum_{i=1}^n \hat{\alpha}_i z_i$. Clearly, $L_2(\hat{\alpha}) = L_1(\hat{\beta})$. For any $\tilde{\beta} \in \mathcal{H}$, we can write $\tilde{\beta} = \tilde{u} + \tilde{v}$ with $\tilde{u} \in \mathcal{U}$ and $\tilde{v} \in \mathcal{U}^{\perp}$ as before. By similar arguments as above,
\[
L_1(\tilde{\beta}) \geq L_1(\tilde{u}) .
\]
However, $\tilde{u} = \sum_{i=1}^n \tilde{\alpha}_i z_i$, hence by optimality of $\hat{\alpha}$, we have
\[
  L_1(\tilde{u}) = L_2(\tilde{\alpha}) \geq L_2(\hat{\alpha}) = L_1(\hat{\beta}),
\]
proving that $\hat{\beta}$ minimises $L_1$ as desired.
\end{proof}

\begin{lemma}
\label{lem:deterministic-regression-mse-bound}
Let $n \in \mathbb{N}$ be fixed. Consider the estimator $\hat{\mathscr{S}}$ \eqref{eq:penalised-likelihood-criterion-hilbertian-linear-model} in the Hilbertian linear model which is a function of $x_1, \dots, x_n, z_1, \dots, z_n$ and let $\sigma^2 > 0$ be such that $\mathbb{E}( \lVert \varepsilon \rVert^2 \cond Z) \leq \sigma^2$ almost surely. Let $K$ be the $n \times n$ matrix where $K_{ij} := \langle z_i, z_j \rangle$ and let $(\hat{\mu}_i)_{i=1}^n$ denote the eigenvalues of $K$.
Then, letting $Z^{(n)} := (z_1, \dots, z_n)$,
\begin{equation} \label{eq:conditional-mse-bound}
\frac{1}{n}  \mathbb{E}\left( \sum_{i=1}^n \lVert \mathscr{S}(z_i) - \hat{\mathscr{S}}(z_i) \rVert^2 \cond Z^{(n)} \right) \leq \frac{\sigma^2}{\gamma } \frac{1}{n} \sum_{i=1}^n \min(\hat{\mu}_i/4, \gamma) + \lVert \mathscr{S} \rVert_{\HS}^2 \frac{\gamma}{4n} 
\end{equation}
almost surely. 
\end{lemma}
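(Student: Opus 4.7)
The plan is to reduce this to a familiar bias-variance computation after showing that the optimiser $\hat{\mathscr{S}}$ admits a representer-theorem expression. First, I would extend Lemma~\ref{lem:representer-theorem} to the Hilbertian-response case: by decomposing the response coordinate-wise in an orthonormal basis $(e_k)$ of $\mathcal{H}_X$, each scalar regression $\langle \hat{\mathscr{S}}(\cdot), e_k \rangle$ is obtained by minimising the scalar problem of Lemma~\ref{lem:representer-theorem} with the same Gram matrix $K$. Consequently the fitted vector $(\hat{\mathscr{S}}(z_i))_{i=1}^n \in \mathcal{H}_X^n$ equals $H (x_1,\ldots,x_n)^\top$ applied componentwise, where $H := K(K+\gamma I)^{-1}$ is the usual hat matrix. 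Writing $x_i = \mathscr{S}(z_i) + \varepsilon_i$, this gives the orthogonal decomposition (conditional on $Z^{(n)}$)
\begin{equation*}
\hat{\mathscr{S}}(z_i) - \mathscr{S}(z_i) \;=\; \underbrace{((H-I)\mathscr{S}_\cdot)_i}_{\text{bias}} \;+\; \underbrace{\sum_j H_{ij}\varepsilon_j}_{\text{variance}},
\end{equation*}
where $\mathscr{S}_\cdot := (\mathscr{S}(z_1),\ldots,\mathscr{S}(z_n))$ and the bias term is $Z^{(n)}$-measurable while the variance term has conditional mean zero.

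For the variance contribution, a direct expansion in $\mathcal{H}_X$ and the conditional bound $\mathbb{E}(\|\varepsilon_j\|^2\cond Z^{(n)})\leq \sigma^2$ give
\begin{equation*}
\sum_{i=1}^n \mathbb{E}\Bigl\|\sum_j H_{ij}\varepsilon_j\Bigr\|^2 \leq \sigma^2\,\mathrm{tr}(H^\top H) = \sigma^2 \sum_{i=1}^n \frac{\hat{\mu}_i^2}{(\hat{\mu}_i+\gamma)^2},
\end{equation*}
using the simultaneous diagonalisation of $K$ and $H$. The elementary AM--GM bound $(\hat{\mu}_i+\gamma)^2 \geq 4\hat{\mu}_i\gamma$ together with the trivial bound $\hat{\mu}_i^2/(\hat{\mu}_i+\gamma)^2\leq 1$ yields $\hat{\mu}_i^2/(\hat{\mu}_i+\gamma)^2 \leq \min(\hat{\mu}_i/(4\gamma),\,1) = \gamma^{-1}\min(\hat{\mu}_i/4,\gamma)$, delivering the first term of \eqref{eq:conditional-mse-bound} after dividing by $n$.

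The bias term is the step I expect to need most care. Using $H - I = -\gamma(K+\gamma I)^{-1}$ and expanding in a basis $(e_k)$ of $\mathcal{H}_X$, the total squared bias becomes $\gamma^2 \sum_k \mathscr{S}_{\cdot,k}^\top (K+\gamma I)^{-2} \mathscr{S}_{\cdot,k}$, where $\mathscr{S}_{i,k} = \langle z_i, \mathscr{S}^* e_k\rangle$. Introducing the sampling operator $T:\R^n\to\mathcal{H}_Z$ with $T\alpha := \sum_i \alpha_i z_i$ (so that $T^*T = K$ and $\hat{\Sigma} := TT^*$ acts on $\mathcal{H}_Z$), one has $\mathscr{S}_{\cdot,k} = T^* v_k$ with $v_k := \mathscr{S}^* e_k$. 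The push-through identity $(T^*T+\gamma I)^{-1}T^* = T^*(TT^*+\gamma I)^{-1}$, applied twice, gives $(T^*T+\gamma I)^{-2} T^* = T^*(\hat{\Sigma}+\gamma I)^{-2}$, so that
\begin{equation*}
\mathscr{S}_{\cdot,k}^\top (K+\gamma I)^{-2} \mathscr{S}_{\cdot,k} = \langle \hat{\Sigma} v_k,\, (\hat{\Sigma}+\gamma I)^{-2} v_k\rangle_{\mathcal{H}_Z} \leq \frac{1}{4\gamma}\|v_k\|^2,
\end{equation*}
where the last inequality uses the spectral decomposition of $\hat{\Sigma}$ together with $\lambda/(\lambda+\gamma)^2 \leq 1/(4\gamma)$. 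Summing over $k$ and using Parseval gives $\sum_k\|v_k\|^2 = \|\mathscr{S}^*\|_{\HS}^2 = \|\mathscr{S}\|_{\HS}^2$, so the total bias is bounded by $\gamma\|\mathscr{S}\|_{\HS}^2/4$, producing the second term after division by $n$. The main obstacle is verifying the push-through identity in the Hilbertian setting, but since $\hat{\Sigma}$ has finite rank $\leq n$ this reduces to a finite-dimensional computation via the singular value decomposition of $T$.
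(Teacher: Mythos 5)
Your proposal is correct and follows essentially the same route as the paper: coordinate-wise reduction via the representer theorem, the hat-matrix bias--variance decomposition conditional on $Z^{(n)}$, the variance bound $\sigma^2\sum_i \hat{\mu}_i^2/(\hat{\mu}_i+\gamma)^2 \leq \sigma^2\gamma^{-1}\sum_i\min(\hat{\mu}_i/4,\gamma)$, and the bias bound $\gamma\lVert\mathscr{S}\rVert_{\HS}^2/4$. The only (cosmetic) difference is in the bias step, where you use the push-through identity for the sampling operator and the spectral bound $\lambda/(\lambda+\gamma)^2\leq 1/(4\gamma)$ on $\hat{\Sigma}=TT^*$, whereas the paper works dually in $\R^n$ through the eigendecomposition of $K$ and the identity $\sum_{i:\hat{\mu}_i>0}\theta_{k,i}^2/\hat{\mu}_i=\alpha_k^\top K\alpha_k\leq\lVert\mathscr{S}^*e_k\rVert^2$; both hinge on the same elementary inequality $ab^2/(a+b)^2\leq b/4$.
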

\begin{proof}
Let $(e_k)_{k \in \mathbb{N}}$ denote a basis of $\mathcal{H}_X$ and write $\langle \cdot, \cdot \rangle_X$ and $\langle \cdot, \cdot \rangle_Z$ for the inner products and $\lVert \cdot \rVert_X$ and $\lVert \cdot \rVert_Z$ for the norms on $\mathcal{H}_X$ and $\mathcal{H}_Z$, respectively. Then
\begin{align}
  \sum_{i=1}^n \lVert \mathscr{S}(z_i) - \hat{\mathscr{S}}(z_i) \rVert_X^2 &= \sum_{k=1}^\infty \sum_{i=1}^n ( \langle \mathscr{S}(z_i), e_k \rangle_X - \langle \hat{\mathscr{S}}(z_i) , e_k \rangle_X)^2 \notag \\
  &= \sum_{k=1}^\infty \sum_{i=1}^n ( \langle z_i, \mathscr{S}^*(e_k) \rangle_Z - \langle z_i , \hat{\mathscr{S}}^*(e_k) \rangle_Z)^2  \label{eq:mse-decomposition}
\end{align}
and similarly we can rewrite the penalised square-error criterion in \eqref{eq:penalised-likelihood-criterion-hilbertian-linear-model} as
\begin{equation*} 
  \sum_{i=1}^n \lVert x_i - \tilde{\mathscr{S}}(z_i) \rVert_X^2 + \gamma \lVert \tilde{\mathscr{S}} \rVert_{\HS}^2 = \sum_{k=1}^\infty \left[ \sum_{i=1}^n  ( \langle x_i, e_k \rangle_X - \langle z_i , \tilde{\mathscr{S}}^*(e_k) \rangle_Z)^2 + \gamma \lVert \tilde{\mathscr{S}} e_k \rVert^2 \right] .
\end{equation*}
Since each of the terms in square brackets can be chosen independently of each other, we have
\[
\hat{\beta}_k := \hat{\mathscr{S}}^*_\gamma(e_k) = \argmin_{\beta \in \mathcal{H}_Z} \sum_{i=1}^n (\langle x_i , e_k \rangle_X - \langle z_i, \beta \rangle_Z )^2 + \gamma \lVert \beta \rVert_Z^2 .
\]
A bit of matrix calculus combined with Lemma \ref{lem:representer-theorem} yields that
\[
  (\langle z_1, \hat{\beta}_k \rangle_Z, \dots, \langle z_n , \hat{\beta}_k \rangle_Z)^\top  = K(K+\gamma I)^{-1}X^{(n)}_k,
\]
where $I$ is the $n \times n$ identity matrix and $X^{(n)}_k := (\langle x_1, e_k \rangle_X, \dots, \langle x_n, e_k \rangle_X)^\top $. Defining $\beta_k := \mathscr{S}^{*}(e_k)$, we can write $\beta_k = u_k + v_k$ where $u_k \in \mathcal{U} := \textrm{span}(z_1, \dots, z_n)$ and $v \in \mathcal{U}^{\perp}$. Writing $u_k = \sum_{j=1}^n \alpha_{k,j}z_j$ where $\alpha_k = (\alpha_{k, 1}, \dots, \alpha_{k, n})^\top  \in \mathbb{R}^n$, we have for $i \in \{1, \dots, n\}$,
\[
  \langle z_i , \beta_k \rangle_Z = \langle z_i, u_k \rangle_Z = \left\langle z_i , \sum_{j=1}^n \alpha_{k, j} z_j \right\rangle_Z = \sum_{j=1}^n \alpha_{k, j} \langle z_i, z_j \rangle_Z.
\]
This entails
\[
  (\langle z_1, \beta_k \rangle_Z, \dots, \langle z_n , \beta_k \rangle_Z)^\top  = K\alpha_k .
\]
Let $K = U D U^\top $ be the eigendecomposition of $K$, where $D_{ii} = 
\hat{\mu}_i$, and let $\theta_k := U^\top  K \alpha_k$. Let $\varepsilon^{(n)}_k := (\langle \varepsilon_1 , e_k \rangle_X, \dots ,  \langle \varepsilon_n , e_k \rangle_X)^\top  \in \R^n$ and note that $X^{(n)}_k = K\alpha_k + \varepsilon^{(n)}_k$. Letting $\lVert \cdot \rVert_2$ denote the Euclidean norm, $n$ times the left-hand side of equation (\ref{eq:conditional-mse-bound}) can now be written (using equation \eqref{eq:mse-decomposition})
\begin{align}
  &\mathbb{E} \left[ \sum_{k=1}^\infty \lVert K(K+\gamma I)^{-1}(U\theta_k + \varepsilon^{(n)}_k) - U \theta_k \rVert^2_2 \cond Z^{(n)} \right] \notag \\
  &= \mathbb{E} \left[ \sum_{k=1}^\infty \lVert DU^\top (UDU^\top +\gamma I)^{-1}(U\theta_k + \varepsilon^{(n)}_k) - \theta_k \rVert^2_2  \cond  Z^{(n)}  \right] \notag \\
  &= \mathbb{E} \left[ \sum_{k=1}^\infty \lVert D(D+\gamma I)^{-1}(\theta_k + U^\top \varepsilon^{(n)}_k) - \theta_k \rVert_2^2  \cond Z^{(n)} \right] \notag \\
  &=  \sum_{k=1}^\infty \lVert (D(D+\gamma I)^{-1}-I)\theta_k \rVert_2^2 
  + \mathbb{E} \left[ \sum_{k=1}^\infty \lVert D(D+\gamma I)^{-1} U^\top \varepsilon^{(n)}_k \rVert_2^2 \cond Z^{(n)} \right] \label{eq:decomp}
\end{align}
where the final equality uses that the first term is a function of $Z^{(n)}$ and the conditional expectation of the cross term in the sum of squares is $0$, since $\mathbb{E}(\varepsilon^{(n)}_k \cond Z^{(n)}  ) = 0$.

The second term of \eqref{eq:decomp} may be simplified as follows:
\begin{align*}
  &\mathbb{E} \left[ \sum_{k=1}^\infty \lVert D(D+\gamma I)^{-1} U^\top \varepsilon^{(n)}_k \rVert_2^2 \cond Z^{(n)} \right] \\
  &=  \mathbb{E} \left[ \sum_{k=1}^\infty  \textrm{tr} \left(  D(D+\gamma I)^{-1}  U^\top  \varepsilon^{(n)}_k(\varepsilon^{(n)}_k)^\top  U D(D+\gamma I)^{-1}  \right)  \cond Z^{(n)} \right] \\
  &=   \textrm{tr} \biggl(  D(D+\gamma I)^{-1}  U^\top  \underbrace{\mathbb{E} \left[ \sum_{k=1}^\infty \varepsilon^{(n)}_k(\varepsilon^{(n)}_k)^\top   \cond Z^{(n)} \right]}_{\Sigma_{\varepsilon|Z}} U D(D+\gamma I)^{-1}  \biggr),
\end{align*}
where we have used that only $\varepsilon^{(n)}_k$ is not a function of $Z^{(n)}$ and linearity of conditional expectations and the trace. Note that $\Sigma_{\varepsilon \cond Z}$ is a diagonal matrix with $i$th diagonal entry equal to 
\[
  \mathbb{E} \left [ \sum_{k=1}^\infty \langle \varepsilon_i, e_k \rangle_X^2 \cond Z^{(n)} \right] =   \mathbb{E} \left [ \lVert \varepsilon_i \rVert_X^2 \cond z_i \right] ,
\]
hence we can bound each diagonal term by $\sigma^2$ by assumption. This implies that 
\begin{align*}
  \textrm{tr} \biggl(  D(D+\gamma I)^{-1}  U^\top  \Sigma_{\varepsilon \cond Z} U D(D+\gamma I)^{-1}  \biggr) &\leq \sigma^2\textrm{tr} \biggl(  D(D+\gamma I)^{-1} D(D+\gamma I)^{-1}  \biggr)\\
    &= \sigma^2 \sum_{i=1}^n \frac{\hat{\mu}_i^2}{(\hat{\mu}_i+\gamma)^2}.
\end{align*}

The first term of \eqref{eq:decomp} can be dealt with by noting that
\begin{align*}
  \sum_{k=1}^\infty \lVert (D(D+\gamma I)^{-1}-I)\theta_k \rVert_2^2 &= \sum_{k=1}^\infty \sum_{i=1}^n \frac{\gamma^2 \theta_{k, i}^2}{(\hat{\mu}_i+\gamma)^2} = \sum_{k=1}^\infty \sum_{i: \hat{\mu}_i > 0} \frac{\gamma^2 \theta_{k, i}^2}{(\hat{\mu}_i+\gamma)^2} = \sum_{k=1}^\infty \sum_{i: \hat{\mu}_i > 0} \frac{ \theta_{k, i}^2}{\hat{\mu}_i} \frac{\gamma^2 \hat{\mu}_i}{(\hat{\mu}_i+\gamma)^2}\\
  &\leq \left( \max_{i \in 1, \dots, n} \frac{\gamma^2 \hat{\mu}_i}{(\hat{\mu}_i+\gamma)^2} \right) \sum_{k=1}^\infty \sum_{i: \hat{\mu}_i > 0} \frac{ \theta_{k, i}^2}{\hat{\mu}_i} \leq \frac{\gamma}{4} \sum_{k=1}^\infty \sum_{i: \hat{\mu}_i > 0} \frac{ \theta_{k, i}^2}{\hat{\mu}_i} .
\end{align*}
The second equality uses that $\theta_k = U^\top  K \alpha_k = D U^\top  \alpha_k$, hence $\theta_{k, i} = 0$ whenever $\hat{\mu}_i = 0$ and the final inequality uses that $ab^2/(a+b)^2 \leq b/4$. Let $D^+$ denote the generalised inverse of $D$, i.e. \ $D_{ii}^+ := \hat{\mu}_i^{-1} \ind_{\hat{\mu}_i > 0}$. Then 
\begin{align*}
  \sum_{i: \hat{\mu}_i > 0} \frac{ \theta_{k, i}^2}{\hat{\mu}_i} &= \lVert \sqrt{D^+}\theta_k \rVert_2^2 = \alpha_k^\top  K U D^+ U^\top  K \alpha_k =  \alpha_k^\top  U D D^+ D U^\top  \alpha_k = \alpha_k^\top  K \alpha_k\\
    &= \lVert u_k \rVert_Z^2 \leq  \lVert u_k \rVert_Z^2 +  \lVert v_k \rVert_Z^2 =  \lVert \beta_k \rVert_Z^2 .
\end{align*}
Putting things together, we have
\[
  \sum_{k=1}^\infty \lVert (D(D+\gamma I)^{-1}-I)\theta_k \rVert_2^2 \leq  \frac{\gamma}{4} \sum_{k=1}^\infty \lVert \beta_k \rVert_Z^2 = \frac{\gamma}{4} \lVert \mathscr{S} \rVert_{\HS}^2.
\]
Hence,
\[
  \frac{1}{n}  \mathbb{E}\left( \sum_{i=1}^n \lVert \mathscr{S}(Z_i) - \hat{\mathscr{S}}(Z_i) \rVert_Z^2 \cond Z^{(n)} \right) \leq \frac{\sigma^2}{n} \sum_{i=1}^n \frac{\hat{\mu}_i^2}{(\hat{\mu}_i+\gamma)^2} + \frac{\gamma}{4n} \lVert \mathscr{S} \rVert_{\HS}^2,
\]
and using that
\[
  \frac{\hat{\mu}_i^2}{(\hat{\mu}_i+\gamma)^2} \leq \min(1, \hat{\mu}_i^2/(4d_i \gamma)) = \min(\hat{\mu}_i/4, \gamma)/\gamma,
\]
we have shown equation \eqref{eq:conditional-mse-bound}. 
\end{proof}

To go from a conditional statement to an unconditional result, we first require the following lemma.

\begin{lemma} \label{lem:koltchinskii}
  Let $x_1,\ldots,x_n$ be i.i.d. observations of a centred Hilbertian random variable $X$ with $E \lVert X \rVert^2 < \infty$. Let $\mathscr{C}$ denote the covariance operator of $X$ with eigen-expansion
  \begin{equation} \label{eq:Mercer}
  \mathscr{C} = \sum_{k=1}^\infty \mu_{k} e_{k}\otimes e_{k}
  \end{equation}
  for an orthonormal basis $(e_{k})_{k=1}^\infty$, and summable eigenvalues $\mu_{1} \geq \mu_{2} \geq \cdots \geq 0$. Let the random matrix $K \in \R^{n \times n}$ have entries given by $K_{ij} = \langle x_i, x_j \rangle$ and denote the eigenvalues of $K/n$ by $\hat{\mu}_1 \geq \hat{\mu}_2 \geq \cdots \geq \hat{\mu}_n \geq 0$.  
  
  For all $r > 0$,
  \[
  \E\bigg(\sum_{k=1}^n \min(\hat{\mu}_k , r) \bigg) \leq \sum_{k=1}^\infty \min(\mu_k, r).
  \]
  \end{lemma}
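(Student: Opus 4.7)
The plan is to rewrite both sides as traces of $f(\cdot)$, with $f(t) := \min(t, r)$, applied to the relevant covariance operator, and then to reduce the claim to a Jensen-type bound via concavity of $A \mapsto \tr(f(A))$. Introduce the empirical covariance operator $\hat{\mathscr{C}} := n^{-1}\sum_{i=1}^n x_i \otimes x_i$. Writing $L : \mathcal{H} \to \R^n$, $Lh := n^{-1/2}(\langle x_i, h\rangle)_{i=1}^n$, one has $K/n = L L^*$ and $\hat{\mathscr{C}} = L^* L$, so $\hat{\mathscr{C}}$ and $K/n$ share the same nonzero eigenvalues; since $f(0) = 0$, this yields $\sum_{k=1}^n \min(\hat{\mu}_k, r) = \tr(f(\hat{\mathscr{C}}))$, while the spectral expansion \eqref{eq:Mercer} gives $\sum_{k=1}^\infty \min(\mu_k, r) = \tr(f(\mathscr{C}))$. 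Because $E[\hat{\mathscr{C}}] = \mathscr{C}$, the claim reduces to the Jensen-type inequality $E\,\tr(f(\hat{\mathscr{C}})) \leq \tr(f(E\hat{\mathscr{C}}))$.

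I will make the concavity of $\tr(f(\cdot))$ transparent via a variational formula. Writing $f(t) = t - (t-r)_+$, note that for any positive trace-class operator $A$ on $\mathcal{H}$ only finitely many eigenvalues $\mu_k(A)$ exceed $r$ (since $\mu_k(A) \to 0$), so $(A - rI)_+$ has finite rank and $\tr((A-rI)_+) = \sum_k (\mu_k(A) - r)_+$. The standard variational characterization of the positive part then gives
\[
\tr((A - rI)_+) = \sup_{V \in \mathcal{V}} \tr\bigl(V(A - rI)\bigr),
\]
where $\mathcal{V}$ denotes the set of finite-rank self-adjoint operators with $0 \leq V \leq I$, with the supremum attained by the spectral projection of $A$ onto the span of eigenvectors whose eigenvalues exceed $r$. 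Rearranging yields
\[
\tr(f(A)) = \inf_{V \in \mathcal{V}} \bigl\{\tr\bigl((I - V) A\bigr) + r\,\tr(V)\bigr\},
\]
which exhibits $\tr(f(\cdot))$ as an infimum of affine functionals of $A$, and is therefore concave.

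The inequality now follows by applying the variational formula to $\hat{\mathscr{C}}$, interchanging expectation with the infimum, and using linearity of expectation for each fixed $V$:
\[
E\,\tr(f(\hat{\mathscr{C}})) \leq \inf_{V \in \mathcal{V}} \bigl\{\tr\bigl((I - V)\mathscr{C}\bigr) + r\,\tr(V)\bigr\} = \tr(f(\mathscr{C})).
\]
The interchange of expectation and trace is legitimate for each fixed finite-rank $V$, since $\tr(V\hat{\mathscr{C}}) = n^{-1}\sum_i \langle V x_i, x_i\rangle$ is a sum of integrable real random variables with $E\langle Vx_i, x_i\rangle = \tr(V\mathscr{C})$, finite because $V$ is finite-rank and $\mathscr{C}$ is trace-class. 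The only subtlety worth flagging is the variational formula in infinite dimensions; this works precisely because the finite-rank property of $(A - rI)_+$, guaranteed by $A$ being trace-class, lets one write $\tr(V(A - rI)) = \tr(VA) - r\,\tr(V)$ termwise and reduce to the familiar finite-dimensional spectral identity in the eigenbasis of $A$.
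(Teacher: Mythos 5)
Your proof is correct, and it reaches the bound by a genuinely different technical route than the paper. Both arguments rest on the same core idea---the spectral functional $A \mapsto \sum_k \min(\lambda_k(A), r)$ is concave on positive operators, so a Jensen-type step applied to the empirical covariance, whose expectation is $\mathscr{C}$, yields the claim---but the execution differs substantially. The paper first truncates to a $d$-dimensional coordinate subspace, controls the truncation error via Weyl's inequality and the eigenvalue tail $\sum_{k>d}\mu_k$, proves concavity of the resulting functional on $\mathbb{S}^d_+$ by hand using Lidskii's inequality, and then applies the finite-dimensional Jensen inequality for matrix-valued random variables. You instead work directly with the infinite-dimensional empirical covariance operator, identify its nonzero spectrum with that of $K/n$ through the $LL^*$ versus $L^*L$ duality, and encode concavity via the variational representation $\tr(\min(A,r)) = \inf_V\{\tr((I-V)A) + r\,\tr(V)\}$ over finite-rank $0 \leq V \leq I$; the elementary interchange $\E \inf \leq \inf \E$ then replaces any operator-valued Jensen inequality, and the expectation of each affine functional is computed termwise from $\E\langle Vx_i, x_i\rangle = \tr(V\mathscr{C})$. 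This buys a cleaner argument with no truncation bookkeeping and no eigenvalue perturbation inequalities; the price is having to justify the variational formula in infinite dimensions, which you correctly reduce to the finite rank of $(A-rI)_+$ for trace-class $A$. If written out in full, the one half of that formula deserving a line of detail is the upper bound $\tr(V(A-rI)) \leq \tr((A-rI)_+)$: computing the trace of the finite-rank operator $V(A-rI)$ in the eigenbasis $(\phi_k)$ of $A$ gives $\sum_k(\lambda_k - r)\langle V\phi_k,\phi_k\rangle$ with $0 \leq \langle V\phi_k,\phi_k\rangle \leq 1$ and $\sum_k\langle V\phi_k,\phi_k\rangle = \tr(V) < \infty$, from which the bound is immediate.
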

  \begin{proof}
    It suffices to show that given any $\epsilon > 0$, we have
    \[
    \E\bigg(\sum_{k=1}^n \min(\hat{\mu}_k , r) \bigg) \leq \epsilon + \sum_{k=1}^\infty \min(\mu_k, r).
    \]
    Now let $d$ be such that
    \[
    \sum_{k=d+1}^\infty \mu_k < \epsilon/n.
    \]
    Let $\Phi \in \mathbb{R}^{n \times d}$ have entries given by
    $$
      \Phi_{ij} :=  \langle x_i, e_j \rangle,
    $$
    such that 
    $$
      (\Phi \Phi^\top )_{ij} := \sum_{k=1}^d  \langle x_i , e_k \rangle \langle x_j , e_k \rangle .
    $$
    From this, it is clear that
    $$
    (K-\Phi \Phi^\top )_{ij} = \sum_{k=d+1}^\infty \langle x_i , e_k \rangle \langle x_j , e_k \rangle.
    $$
    Thus for $v \in \mathbb{R}^d$
    $$
    v^\top  (K-\Phi \Phi^\top ) v = \sum_{i=1}^d \sum_{j=1}^d v_i   v_j  \sum_{k=d+1}^\infty \langle s_i, e_k \rangle \langle x_j, e_k \rangle = \sum_{k=d+1}^\infty \left\langle \sum_{i=1}^d v_i x_i, e_k \right\rangle^2  \geq 0,
    $$
    showing that $K-\Phi \Phi^\top $ is positive semi-definite. 
  
    Next let $\mathbb{S}^d_+$ be the cone of positive semi-definite $d \times d$ matrices, and for $A \in \mathbb{S}^d_+$ and $k=1,\ldots,d$, let $\lambda_k(A)$ denote the $k$th largest eigenvalue. Let $f : \mathbb{S}^d_+ \to \R$ be given by
    \[
    f(A) = \sum_{k=1}^d \min(\lambda_k(A), r).
    \]
    By Weyl's inequality, noting that the non-zero eigenvalues of $\Phi^\top \Phi$ and $\Phi \Phi^\top $ coincide, we have for all $k$,
    \[
    \hat{\mu}_k \leq \lambda_k(\Phi^\top \Phi/n) + \lambda_1(K - \Phi\Phi^\top /n)
    \]
    and so
    \[
    \min(\hat{\mu}_k , r) \leq \min(\lambda_k(\Phi^\top \Phi /n) , r) + \tr(K - \Phi\Phi^\top )/n.
    \]
    Thus,
    \begin{equation} \label{eq:eq1}
    \E\bigg(\sum_{k=1}^n \min(\hat{\mu}_k , r) \bigg) \leq \E f( \Phi^\top \Phi/n) + \E \tr(K - \Phi\Phi^\top ).
    \end{equation}
    Now by Fubini's theorem,
    \[
    \E \tr(K - \Phi\Phi^\top ) = \sum_{i=1}^n \sum_{k=d+1}^\infty  \E (\langle x_i, e_k \rangle^2 )= n \sum_{k=d+1}^\infty \mu_k  < \epsilon.
    \]
    We now claim that $f$ is concave, from which the result will follow. Indeed, then by Jensen's inequality,
    $\E f( \Phi^\top \Phi/n) \leq f(\E \Phi^\top \Phi/n)$ and
    \[
    \frac{1}{n}\big(\E \Phi^\top \Phi\big)_{kl} = \frac{1}{n} \sum_{i=1}^n \E (\langle x_i, e_k \rangle \langle x_i, e_l \rangle ) = \mu_k \ind_{\{k=l\}}.
    \]
    Thus,
    \[
    f(\E \Phi^\top \Phi/n) = \sum_{k=1}^d \min(\mu_k,r),
    \]
    and so returning to \eqref{eq:eq1} we would have
    \[
    \E\bigg(\sum_{k=1}^n \min(\hat{\mu}_k , r) \bigg) \leq \epsilon + \sum_{k=1}^\infty \min(\mu_k,r).
    \]
    
    We now show that $f$ is concave. Take $t \in (0, 1)$ and $A,B \in\mathbb{S}^d_+$. We will show that
    \begin{equation} \label{eq:concave}
    \sum_{k=1}^d (\lambda_k(tA + (1-t)B)-r)_+ \leq \sum_{k=1}^d \{t(\lambda_k(A)-r)_+ + (1-t)(\lambda_k(B)-r)_+ \},
    \end{equation}
    where $( \cdot)_+$ denotes the positive part. This will prove concavity of $f$ as
    \begin{align*}
    \sum_{k=1}^d \lambda_k(tA +(1-t)B) &= \tr(tA + (1-t)B)  \\
    & = t \tr (A) + (1-t)\tr(B) = \sum_{k=1}^d \{t\lambda_k(A) + (1-t)\lambda_k(B) \}, 
    \end{align*}
    so subtracting \eqref{eq:concave} yields $f(tA + (1-t)B) \geq tf(A) +(1-t)f(B)$ as desired.

Certainly
\eqref{eq:concave} holds when $r \geq \lambda_1(tA + (1-t)B) $.
Now by Lidskii's inequality, for each $j=1,\ldots,d$,
\begin{equation} \label{eq:lidskii}
	\sum_{k=1}^j\lambda_k(tA + (1-t)B) \leq \sum_{k=1}^j \{t\lambda_k(A) + (1-t)\lambda_k(B) \}.
\end{equation}
For convenience, let us set $\lambda_{d+1}(tA + (1-t)B)=0$. Then for any $j=1,\ldots,d$, if
$\lambda_{j+1}(tA + (1-t)B) \leq r \leq \lambda_j(tA + (1-t)B) $, we have
\begin{align*}
	\sum_{k=1}^d(\lambda_k(tA + (1-t)B) - r)_+ &= \sum_{k=1}^j(\lambda_k(tA + (1-t)B) - r) \\
	&\leq \sum_{k=1}^j \{t(\lambda_k(A)-r) + (1-t)(\lambda_k(B)-r) \} \\
	&\leq \sum_{k=1}^d \{t(\lambda_k(A)-r)_+ + (1-t)(\lambda_k(B)-r)_+ \},
\end{align*}
using \eqref{eq:lidskii} for the first inequality. We thus have that \eqref{eq:concave} holds whatever the value of $r$, and so $f$ is concave, which completes the proof.
  \end{proof}

Combining Lemma~\ref{lem:deterministic-regression-mse-bound} and Lemma~\ref{lem:koltchinskii} now yields the following bound on our regression estimator.

\begin{lemma}
  \label{lem:regression-gamma-hat-upper-bound}
Let $\mathcal{P}$ consist of a family of distributions of $(X, Z) \in \mathcal{H}_X \times \mathcal{H}_Z$ such that
$$
X = \mathscr{S}_\dist Z + \varepsilon_\dist,
$$
where we assume that $\sup_{\dist \in \mathcal{P}} \lVert \mathscr{S}_\dist \rVert_\HS < C$ and $\sup_{\dist \in \mathcal{P}} \E_P \lVert \varepsilon_\dist \rVert^2 < \sigma^2$. Suppose we are given $n$ i.i.d. observations $(x_i, z_i)_{i=1}^n$ of $(X, Z)$ and denote by $(\mu_{k, \dist})_{k \in \mathbb{N}}$ the non-negative eigenvalues of $\Cov_\dist(\varepsilon_P)$. Let $\mathscr{S}_\gamma$ be the estimator in \eqref{eq:penalised-likelihood-criterion-hilbertian-linear-model}. We have for each $P \in \mathcal{P}$, that
\begin{equation} \label{eq:gamma_res}
\frac{1}{n}  \mathbb{E}_P\left( \sum_{i=1}^n \lVert \mathscr{S}(z_i) - \hat{\mathscr{S}}_\gamma(z_i) \rVert^2 \right) \leq \frac{\sigma^2}{\gamma } \frac{1}{n} \sum_{k=1}^\infty \min(\mu_{k, \dist}/4, \gamma) + \lVert \mathscr{S}_P \rVert_{\HS}^2 \frac{\gamma}{4n}  .
\end{equation}
Further, if we use $\hat{\gamma}$ as in \eqref{eq:gamma_hat}, that is,
$$
\hat{\gamma} = \argmin_{\gamma > 0} \left( \frac{1}{\gamma n} \sum_{k=1}^n \min(\hat{\mu}_k/4, \gamma) + \frac{\gamma}{4} \right),
$$
to produce an estimate $\hat{\mathscr{S}} := \hat{\mathscr{S}}_{\hat{\gamma}}$ of $\mathscr{S}_P$, then 
\begin{equation} \label{eq:gamma_hat_res}
\sup_{\dist \in \mathcal{P}} \mathbb{E}_\dist \left( \frac{1}{n} \sum_{i=1}^n \lVert \mathscr{S}_P(Z_i) - \hat{\mathscr{S}}(Z_i) \rVert^2 \right) \leq \max(\sigma^2, C) \sup_{\dist \in \mathcal{P}} \inf_{\gamma > 0} \left( \frac{1}{\gamma n } \sum_{k=1}^\infty \min(\mu_{k, \dist} , \gamma) +  \gamma \right).
\end{equation}
\end{lemma}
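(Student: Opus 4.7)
The plan is to obtain \eqref{eq:gamma_res} by taking unconditional expectation in the conditional in-sample MSPE bound provided by Lemma~\ref{lem:deterministic-regression-mse-bound} and then controlling the only random term via Lemma~\ref{lem:koltchinskii}; and to obtain \eqref{eq:gamma_hat_res} by further exploiting the fact that $\hat{\gamma}$ is, by construction in \eqref{eq:gamma_hat}, the minimiser of the upper bound \emph{before} taking expectations.

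For \eqref{eq:gamma_res}, I would apply Lemma~\ref{lem:deterministic-regression-mse-bound} at a fixed $\gamma > 0$, with $\sigma^2$ dominating $\mathbb{E}_P(\lVert \varepsilon_P \rVert^2 \mid Z)$. Taking unconditional expectation on both sides, the only $Z^{(n)}$-dependent quantity on the right-hand side is $\sum_{k=1}^n \min(\hat{\mu}_k/4, \gamma)$. Lemma~\ref{lem:koltchinskii}, applied with $Z$ playing the role of $X$ and $r = \gamma$, bounds this expectation by $\sum_{k=1}^\infty \min(\mu_{k,P}/4, \gamma)$, where $(\mu_{k,P})_k$ are the eigenvalues of $\Cov_P(Z)$ (which I read as the intended object in the hypothesis of the lemma, consistent with Theorem~\ref{thm:kernel-regression-ghcm}(iii)). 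Substituting and rescaling constants delivers \eqref{eq:gamma_res}.

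For \eqref{eq:gamma_hat_res}, the key observation is that conditionally on $Z^{(n)}$ the parameter $\hat{\gamma}$ is deterministic, so Lemma~\ref{lem:deterministic-regression-mse-bound} applies with $\gamma = \hat{\gamma}$. Bounding the two summands of its right-hand side crudely by $\max(\sigma^2, \lVert \mathscr{S}_P \rVert_\HS^2)$ times the objective minimised in the definition \eqref{eq:gamma_hat}, the right-hand side equals
\[
\max(\sigma^2, \lVert \mathscr{S}_P \rVert_\HS^2)\,\inf_{\gamma > 0} g_n(\gamma; Z^{(n)}),
\]
where $g_n(\gamma; Z^{(n)})$ denotes that objective. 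Taking unconditional expectation and using the elementary inequality $\mathbb{E}_P[\inf_\gamma g_n(\gamma, \cdot)] \leq \inf_{\gamma > 0} \mathbb{E}_P[g_n(\gamma, \cdot)]$, together with Lemma~\ref{lem:koltchinskii} applied inside the expectation to replace $\hat{\mu}_k$ by $\mu_{k,P}$, yields the required bound once $\sup_{P \in \mathcal{P}}$ is taken and $\lVert \mathscr{S}_P \rVert_\HS \leq C$ is invoked.

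The chief bookkeeping obstacle is reconciling the several normalisations of $\hat{\mu}_k$ in the excerpt: Lemma~\ref{lem:deterministic-regression-mse-bound} uses eigenvalues of the unnormalised Gram matrix $K_{ij} = \langle z_i, z_j \rangle$, whereas Lemma~\ref{lem:koltchinskii} and the definition \eqref{eq:gamma_hat} both use eigenvalues of $K/n$. These are reconciled by absorbing factors of $n$ together with the constants $1/4$ of Lemma~\ref{lem:deterministic-regression-mse-bound} into a rescaling of the dummy variable $\gamma$ in the optimisation, which is what produces the precise form $\min(\mu_{k,P}, \gamma) + \gamma$ in the stated bound. A secondary subtlety is that Lemma~\ref{lem:deterministic-regression-mse-bound} is phrased with an almost-sure bound on $\mathbb{E}_P(\lVert \varepsilon_P \rVert^2 \mid Z)$, while only an unconditional bound is hypothesised here; inspection of the proof of that lemma shows that after integrating the trace term involving $\Sigma_{\varepsilon \mid Z}$ one only needs control of the unconditional second moment, so no strengthening of the hypothesis is required.
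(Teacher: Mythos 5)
Your proposal follows essentially the same route as the paper: both parts rest on the conditional in-sample bound of Lemma~\ref{lem:deterministic-regression-mse-bound}, applied at $\gamma = \hat{\gamma}$ for the second part (valid because $\hat{\gamma}$ is $Z^{(n)}$-measurable), followed by bounding the two summands by $\max(\sigma^2, \lVert \mathscr{S}_P\rVert_{\HS}^2)$ times the objective in \eqref{eq:gamma_hat}, interchanging $\mathbb{E}_\dist$ and $\inf_{\gamma>0}$, and invoking Lemma~\ref{lem:koltchinskii} to replace the empirical eigenvalues by the $\mu_{k,\dist}$. Your reading of $\mu_{k,\dist}$ as the eigenvalues of $\Cov_\dist(Z)$, and your handling of the $K$ versus $K/n$ normalisations by rescaling $\gamma$ and discarding the factors of $1/4$, are exactly what the paper does (tacitly).

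One remark in your proposal is wrong, however. You assert that the unconditional bound $\mathbb{E}_P\lVert\varepsilon_P\rVert^2 \leq \sigma^2$ suffices for Lemma~\ref{lem:deterministic-regression-mse-bound} because ``after integrating the trace term involving $\Sigma_{\varepsilon\mid Z}$ one only needs control of the unconditional second moment.'' That is not so. The relevant term is $\tr\bigl(D(D+\gamma I)^{-1}U^\top \Sigma_{\varepsilon\mid Z}\, U D(D+\gamma I)^{-1}\bigr) = \sum_{i,j} M_{ij}^2\, \mathbb{E}(\lVert\varepsilon_j\rVert^2\mid z_j)$ with $M := D(D+\gamma I)^{-1}U^\top$ a function of $Z^{(n)}$: the conditional variances enter weighted by $Z^{(n)}$-dependent quantities, and these weights may concentrate precisely where $\mathbb{E}(\lVert\varepsilon\rVert^2\mid Z=z)$ is large, so the unconditional moment gives only the trivial bound $n\sigma^2$ rather than $\sigma^2\sum_i \hat{\mu}_i^2/(\hat{\mu}_i+\gamma)^2$. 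The almost-sure conditional bound is genuinely needed; it is what condition (ii) of Theorem~\ref{thm:kernel-regression-ghcm} supplies, and the paper's proof uses it tacitly despite the lemma's hypothesis being phrased unconditionally. The rest of your argument is unaffected if you simply strengthen the hypothesis to the conditional form.
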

\begin{proof}
	Result \eqref{eq:gamma_res} follows immediately from Lemmas~\ref{lem:deterministic-regression-mse-bound} and \ref{lem:koltchinskii}. To show \eqref{eq:gamma_hat_res}, we argue as follows.
Let $(e_k)_{k \in \mathbb{N}}$ denote a basis of $\mathcal{H}_X$. Then
conditioning on $z_1, \dots, z_n$ and applying equation (\ref{eq:conditional-mse-bound}) in Lemma~\ref{lem:deterministic-regression-mse-bound}, we get that
\begin{align*}
&\sup_{\dist \in \tilde{\mathcal{P}}_0} \mathbb{E}_\dist \left( \frac{1}{n} \sum_{i=1}^n \lVert \mathscr{S}_P(z_i) - \hat{\mathscr{S}}(z_i) \rVert^2 \right) \leq \sup_{\dist \in \tilde{\mathcal{P}}_0} \mathbb{E}_\dist \left(\frac{\sigma^2}{\hat{\gamma}} \frac{1}{n} \sum_{k=1}^n \min(\hat{\mu}_k /4 , \hat{\gamma}) + \lVert \mathscr{S}_P \rVert_{\HS}^2 \frac{\hat{\gamma}}{4}  \right) \\
&\leq \max(\sigma^2, C) \sup_{\dist \in \tilde{\mathcal{P}}_0}  \mathbb{E}_\dist \left[ \min_{\gamma > 0} \left(\frac{1}{\gamma n } \sum_{k=1}^n \min(\hat{\mu}_k /4 , \gamma) +  \frac{\gamma}{4}  \right) \right] .
\end{align*}
Using the fact that the expectation of a minimum is less than the minimum of the expectation, we get that
\begin{align*}
\sup_{\dist \in \tilde{\mathcal{P}}_0}  \mathbb{E}_\dist \left[ \min_{\gamma > 0} \left(\frac{1}{\gamma n } \sum_{k=1}^n \min(\hat{\mu}_k /4 , \gamma) +  \frac{\gamma}{4}  \right) \right]  &\leq \sup_{\dist \in \tilde{\mathcal{P}}_0} \inf_{\gamma > 0}  \left[\mathbb{E}_\dist  \left(\frac{1}{\gamma n } \sum_{k=1}^n \min(\hat{\mu}_k /4 , \gamma) +  \frac{\gamma}{4}  \right) \right] \\
 &\leq \sup_{\dist \in \tilde{\mathcal{P}}_0} \inf_{\gamma > 0} \left( \frac{1 }{\gamma n } \sum_{k=1}^\infty \min(\mu_{k, \dist} , \gamma) +  \gamma \right), 
\end{align*}
where the second inequality is due to Lemma~\ref{lem:koltchinskii}.
\end{proof}

Finally, we can prove Theorem~\ref{thm:kernel-regression-ghcm}.

\begin{proof}
By Theorem~\ref{thm:level of test} and the assumptions of the Theorem it is sufficient to show that 
\begin{equation}
  \label{eq:thm-5-sufficient-equation}
\sup_{\dist \in \tilde{\mathcal{P}}_0} \sqrt{n}\mathbb{E}_\dist \left( \frac{1}{n} \sum_{i=1}^n \lVert \mathscr{S}^X_\dist(z_i) - \hat{\mathscr{S}}(z_i) \rVert^2 \right)  \to 0
\end{equation}
and similarly for the regression of $Y$ on $Z$. This can be seen by noting that an application of Cauchy--Schwarz and Markov's inequality yields that $nM_{n, \dist}^f M_{n, \dist}^g \overset{P}{\rightrightarrows} 0$ and, by the upper bound on $u_\dist$ and $v_\dist$ in assumption (ii), $\tilde{M}_{n, \dist}^f \overset{P}{\rightrightarrows} 0$ and $\tilde{M}_{n, \dist}^g\overset{P}{\rightrightarrows} 0$.

Lemma~\ref{lem:regression-gamma-hat-upper-bound} implies that it is sufficient to show that
\[
\sqrt{n} \sup_{\dist \in \tilde{\mathcal{P}}_0}  \inf_{\gamma > 0} \left( \frac{1}{\gamma n } \sum_{k=1}^\infty \min(\mu_{k, \dist} , \gamma) +  \gamma \right) \to 0 
\]
as $n \to \infty$ for \eqref{eq:thm-5-sufficient-equation} to hold. For each $\dist \in \tilde{\mathcal{P}}_0$, we let $\phi_\dist: \mathbb{R}_+ \to \mathbb{R}_+$
be given by
\[
\phi_\dist(\gamma) = \sum_{k=1}^\infty \min(\mu_{k, \dist}, \gamma)  .
\]
By assumption (iii), $\lim_{\gamma \downarrow 0} \sup_{\dist \in \tilde{\mathcal{P}}_0} \phi_\dist(\gamma) = 0$, hence for any $\epsilon > 0$ we can find $N \in \mathbb{N}$ such that for any $n \geq N$,  $\sup_{\dist \in \tilde{\mathcal{P}}_0}  \sqrt{\phi_\dist \left(n^{-1/2} \right)} < \epsilon/2$. Let $\gamma_{n, \dist} = n^{-1/2} \sqrt{\phi_\dist\left(n^{-1/2}  \right)}$. Then,
\begin{align*}
 &\sqrt{n} \sup_{\dist \in \tilde{\mathcal{P}}_0}  \inf_{\gamma > 0} \left( \frac{1}{\gamma n } \sum_{k=1}^\infty \min(\mu_{k, \dist} , \gamma) +  \gamma \right) = \sup_{\dist \in \tilde{\mathcal{P}}_0}  \inf_{\gamma > 0} \left( \frac{\phi_\dist(\gamma)}{\gamma \sqrt{n} }  +  \sqrt{n}\gamma \right)\\
  &\leq \sup_{\dist \in \tilde{\mathcal{P}}_0} \left( \frac{\phi_\dist(\gamma_{n, \dist})}{\gamma_{n, \dist} \sqrt{n} }  +  \sqrt{n}\gamma_{n, \dist} \right) = \sup_{\dist \in \tilde{\mathcal{P}}_0} \left( \frac{\phi_\dist \left(n^{-1/2} \sqrt{\phi_\dist\left(n^{-1/2}  \right)} \right)}{\sqrt{\phi_\dist\left(n^{-1/2}  \right)} }  +  \sqrt{\phi_\dist\left(n^{-1/2}  \right)}  \right).
\end{align*}
Assuming that $\epsilon \leq 2$ and using that $\phi_\dist$ is increasing, we get that for $n \geq N$,
\begin{align*}
&\sup_{\dist \in \tilde{\mathcal{P}}_0} \left( \frac{\phi_\dist \left(n^{-1/2} \sqrt{\phi_\dist\left(n^{-1/2}  \right)} \right)}{\sqrt{\phi_\dist\left(n^{-1/2}  \right)} }  +  \sqrt{\phi_\dist\left(n^{-1/2}  \right)}  \right) <\sup_{\dist \in \tilde{\mathcal{P}}_0} \left( \frac{\phi_\dist \left(n^{-1/2} \epsilon/2 \right)}{\sqrt{\phi_\dist\left(n^{-1/2}  \right)} }  +  \sqrt{\phi_\dist\left(n^{-1/2}  \right)}  \right)\\
 & < \sup_{\dist \in \tilde{\mathcal{P}}_0}  2\sqrt{\phi_\dist\left(n^{-1/2}  \right)}  < \epsilon,
\end{align*}
proving the result.
\end{proof}

\begin{corollary}\label{cor:exponential-decay-mse}
  Consider the setup of Lemma~\ref{lem:regression-gamma-hat-upper-bound} but with the additional assumption that for some $a, b >0$, we have $\mu_{k, \dist} \leq a e^{-b k}$ for all $\dist \in \mathcal{P}$. Then 
  $$
  \sup_{\dist \in \tilde{\mathcal{P}}_0} \mathbb{E}_\dist \left( \frac{1}{n} \sum_{i=1}^n \lVert \mathscr{S}_P(z_i) - \hat{\mathscr{S}}(z_i) \rVert^2 \right) = o(\log n / n)
  $$
\end{corollary}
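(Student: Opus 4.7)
The plan is to invoke Lemma~\ref{lem:regression-gamma-hat-upper-bound}, which reduces the supremum on the left-hand side to
\[
\max(\sigma^2, C) \sup_{P \in \tilde{\mathcal{P}}_0} \inf_{\gamma > 0} \left( \frac{1}{\gamma n} \sum_{k=1}^\infty \min(\mu_{k,P}, \gamma) + \gamma \right),
\]
so the only remaining task is to evaluate this infimum under the envelope $\mu_{k,P} \leq a e^{-bk}$. Because the envelope depends only on $(a,b)$ and not on $P$, uniformity across $\tilde{\mathcal{P}}_0$ is automatic once the deterministic calculation is in hand.

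The key step is a geometric split of the sum. For $\gamma \in (0, a]$, let $K_\gamma := \lceil b^{-1} \log(a/\gamma) \rceil$, so that $a e^{-bk} \leq \gamma$ exactly when $k \geq K_\gamma$. Then
\[
\sum_{k=1}^\infty \min(\mu_{k,P}, \gamma) \leq \sum_{k=1}^\infty \min(a e^{-bk}, \gamma) \leq K_\gamma \gamma + \sum_{k = K_\gamma + 1}^\infty a e^{-bk} \leq \gamma \left( \frac{\log(a/\gamma)}{b} + \Lambda_b \right),
\]
where $\Lambda_b := 1 + (e^b - 1)^{-1}$. Substituting into the infimum gives, uniformly in $P \in \tilde{\mathcal{P}}_0$,
\[
\inf_{\gamma > 0} \left( \frac{\log(a/\gamma)}{bn} + \frac{\Lambda_b}{n} + \gamma \right).
\]

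Taking $\gamma = \gamma_n := 1/n$ (valid for $n \geq 1/a$) then yields $\log(an)/(bn) + \Lambda_b/n + 1/n = (\log n)/(bn) \cdot (1 + o(1))$, which together with the $\max(\sigma^2, C)$ prefactor from Lemma~\ref{lem:regression-gamma-hat-upper-bound} delivers a bound of the required order uniformly over $\tilde{\mathcal{P}}_0$. The only substantive step is the envelope computation above; everything else is immediate from the lemma combined with the $P$-independence of $(a,b)$, and I do not anticipate any obstacle beyond this elementary calculation.
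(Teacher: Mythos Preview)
Your approach is essentially the same as the paper's: invoke Lemma~\ref{lem:regression-gamma-hat-upper-bound}, bound $\sum_k \min(a e^{-bk},\gamma)$ under the exponential envelope, and pick $\gamma$ of order $1/n$. The paper replaces the sum by an integral and optimises exactly to get $\gamma^*=1/(nb)$, while you use a discrete split with a geometric tail and plug in $\gamma=1/n$; both yield the bound $\tfrac{1}{bn}\log n\,(1+o(1))$. One remark worth flagging: this bound is $O(\log n/n)$, not $o(\log n/n)$ as the corollary states---the paper's own computation $\tfrac{1}{nb}(\log(anb)+2)$ has precisely the same discrepancy, so your argument matches the paper exactly, including this apparent slip in the statement.
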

\begin{proof}
Applying Lemma~\ref{lem:regression-gamma-hat-upper-bound}, we show that 
$$
\sup_{\dist \in \tilde{\mathcal{P}}_0}  \inf_{\gamma > 0} \left( \frac{1}{\gamma n } \sum_{k=1}^\infty \min(\mu_{k, \dist} , \gamma) +  \gamma \right) \leq  \inf_{\gamma > 0} \left( \frac{1}{\gamma n } \sum_{k=1}^\infty \min(a e^{-b k}, \gamma) +  \gamma \right) = o(\log n / n ).
$$
To that end, note that
$$
\frac{1}{\gamma n } \sum_{k=1}^\infty \min(a e^{-b k}, \gamma) +  \gamma  \leq -\frac{1}{ nb } \log(\gamma/a) + \frac{1}{n\gamma} \int_{-\log(\gamma/a)/b}^\infty a e^{-x b} \, \mathrm{d}x +  \gamma = -\frac{1}{ nb } \log(\gamma/a) + \frac{1}{nb} +  \gamma.
$$
The right-hand side is a strictly convex function in $\gamma$ hence it has a unique minimum at the unique root of the derivative function given by $\gamma^* := \frac{1}{nb}$ which yields a minimum of
\begin{equation*}
\frac{1}{ nb } (\log(anb) + 2) =o(\log n / n). \qedhere
\end{equation*}
\end{proof}

\section{Additional numerical results} \label{app: additional-sim-results}

Here we include additional results relating to the setups in Section~\ref{sec:experiments}.
Figures~\ref{fig:FPR-TPR-a-2}, \ref{fig:FPR-TPR-a-6} and \ref{fig:FPR-TPR-a-12} plot rejection rates against nominal significance levels for \texttt{pfr} and the GHCM, for the setups described in~\ref{sec:level-power-sim}.

Figure~\ref{fig:heavy-tailed} plots rejection rates for a subset of null settings considered in Section~\ref{sec:scalar-level-power-sim} but where the noise $N_Y$ in \eqref{eq:sim_alt} is $t$-distributed.

Figure~\ref{fig:rates} plots rejection rates for a subset of null settings considered in Section~\ref{sec:scalar-level-power-sim} but where instead of \eqref{eq:sim_alt}, the regression model for $Y$ is given by
\[
Y = \int_0^1 \alpha_a(t)Z(t) \mathrm{d}t + \sqrt{\frac{100}{n}} \int_0^1 \frac{\alpha_a(t)}{a}X(t) \mathrm{d}t + N_Y. 
\]
Note that when $n=100$, the model is identical to \eqref{eq:sim_alt}. In general however, $\|\E \Cov(X, Y \cond Z)\|_{\HS}$ scales with $1/\sqrt{n}$ here, and so Theorem~\ref{thm: consistency of test} suggests as $n$ changes, the power should not change much. This is confirmed by our empirical results where we observe that the power remains largely unchanged as $n$ changes, suggesting in particular that the GHCM has power against $1/\sqrt{n}$ alternatives.

Figure~\ref{fig:fdboost} plots rejection rates for the same settings considered in Section~\ref{sec:scalar-level-power-sim} but where we use the \texttt{FDboost} package for regressions instead of the \texttt{refund} package. We use default tuning parameters for the regression; it is possible that performance could improve with more careful tuning.

Figure~\ref{fig:irregular} plots rejection rates for the same settings considered in Section~\ref{sec:functional-level-power-sim} but where the $X$ and $Y$ curves are observed on an irregular grid with points sampled independently and uniformly on $[0,1]$. We consider a sparse grid of $4$ points as well as four unequal grid sizes sampled as the maximum of $4$ and a Poisson random variable with mean in $\{10, 25, 50, 100\}$.

Figure~\ref{fig:eeg-sim} plots rejection rates for a simulation based on the real data analysis in Section~\ref{sec:real-data}. For each of the two edges that had Benjamini--Hochberg-corrected $p$-values at most $5\%$ (O-L---PO-L and O-R---PO-R), we created artificial datasets as follows. We added independent Brownian motion noise to each of the estimated regression functions (note there were regression functions estimated for each variable in each of the two groups) thereby simulating a new $X$ and $Y$ conditional on the fixed $Z$. In these simulated datasets, the null of conditional independence does hold, and so we should expect the GHCM to deliver uniformly-distributed $p$-values. The results using the GHCM as described in Section~\ref{sec:real-data} and for varying standard deviation $\sigma$ of the Brownian motion noise for one set of regressions with the other set at $1$, are shown in Figure~\ref{fig:eeg-sim}. We see that even in the low $\sigma$ settings, which are expected to be the most challenging, the GHCM maintains level control.

\begin{figure}[H]
\centering
\includegraphics{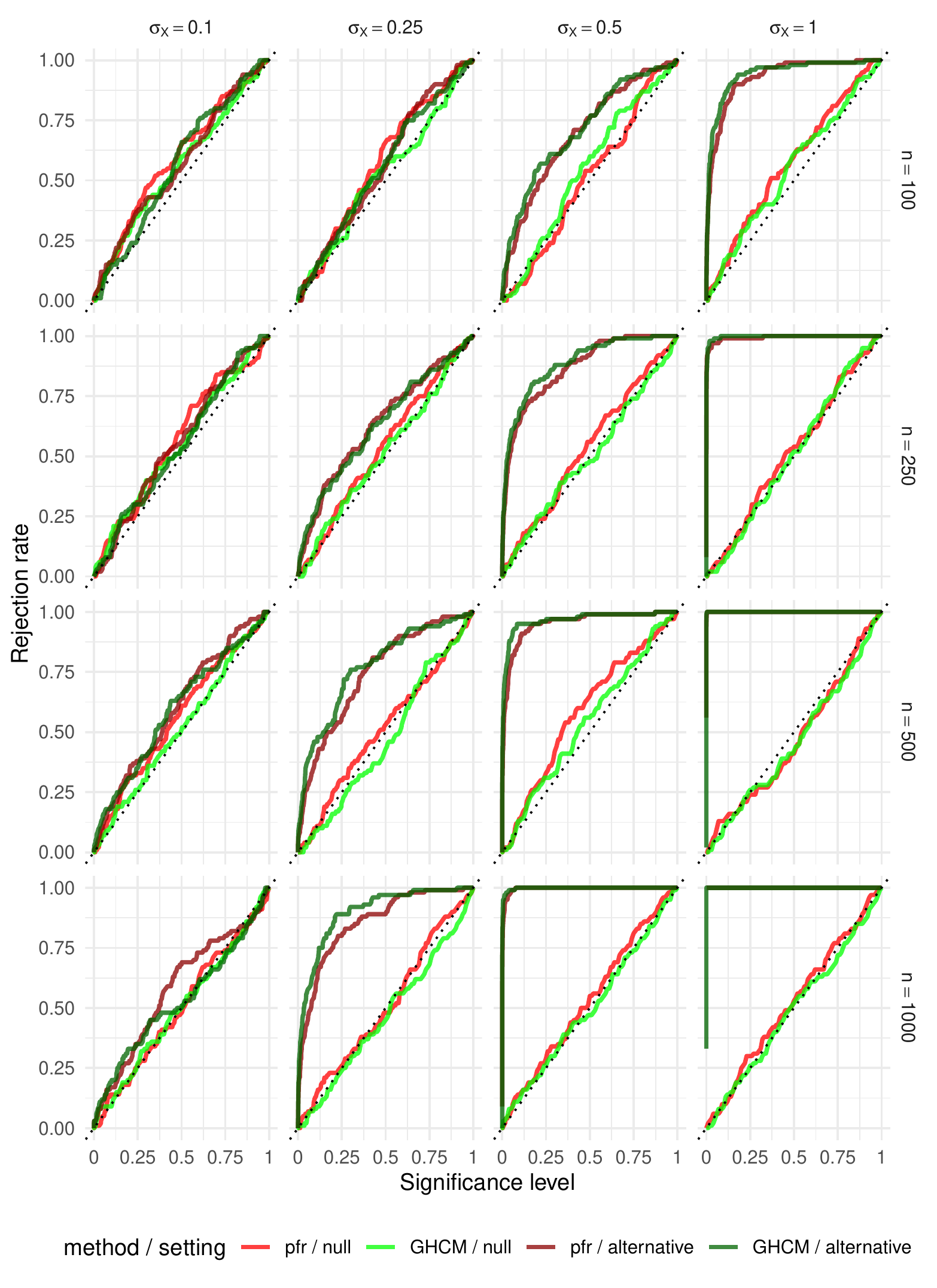}
\caption{Rejection rates against significance level $\alpha$ for the $\texttt{pfr}$ (red) and GHCM (green) tests under null (light) and alternative (dark) settings when $a=2$.}
\label{fig:FPR-TPR-a-2}
\end{figure}

\begin{figure}[H]
\centering
\includegraphics{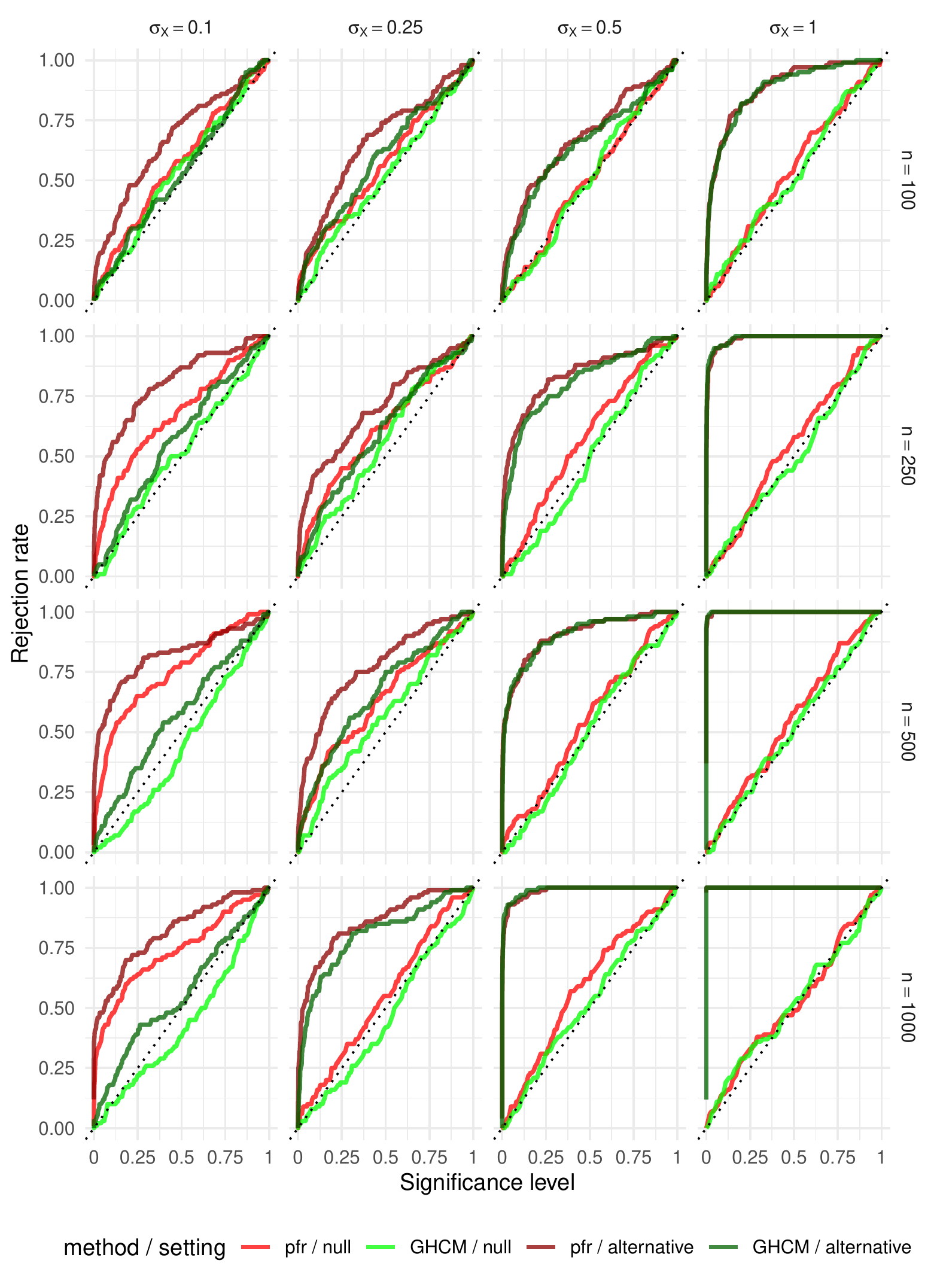}
\caption{Rejection rates against significance level $\alpha$ for the $\texttt{pfr}$ (red) and GHCM (green) tests under null (light) and alternative (dark) settings when $a=6$.}
\label{fig:FPR-TPR-a-6}
\end{figure}

\begin{figure}[H]
\centering
\includegraphics{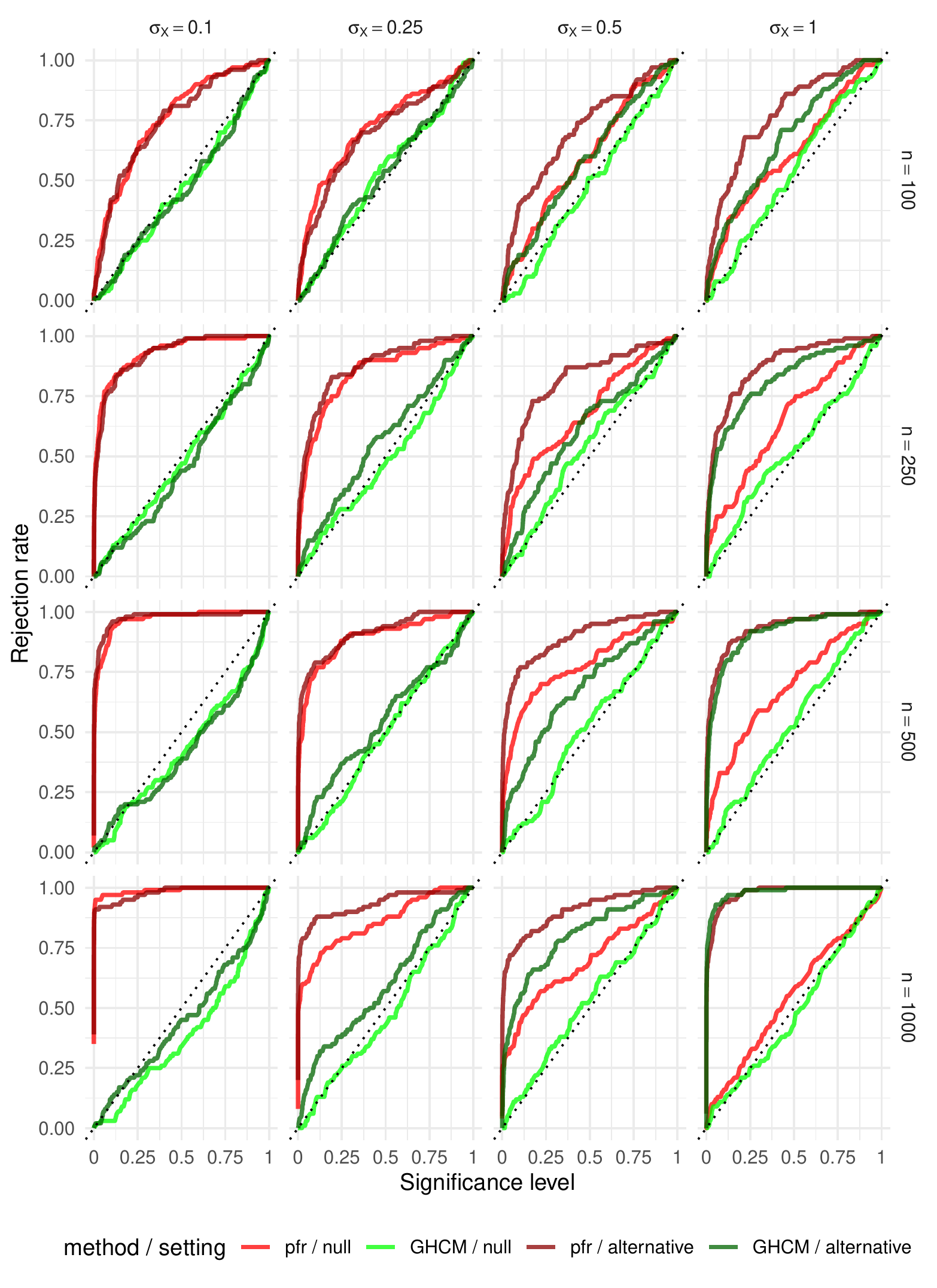}
\caption{Rejection rates against significance level $\alpha$ for the $\texttt{pfr}$ (red) and GHCM (green) tests under null (light) and alternative (dark) settings when $a=12$.}
\label{fig:FPR-TPR-a-12}
\end{figure}

\begin{figure}[H]
\centering
\includegraphics{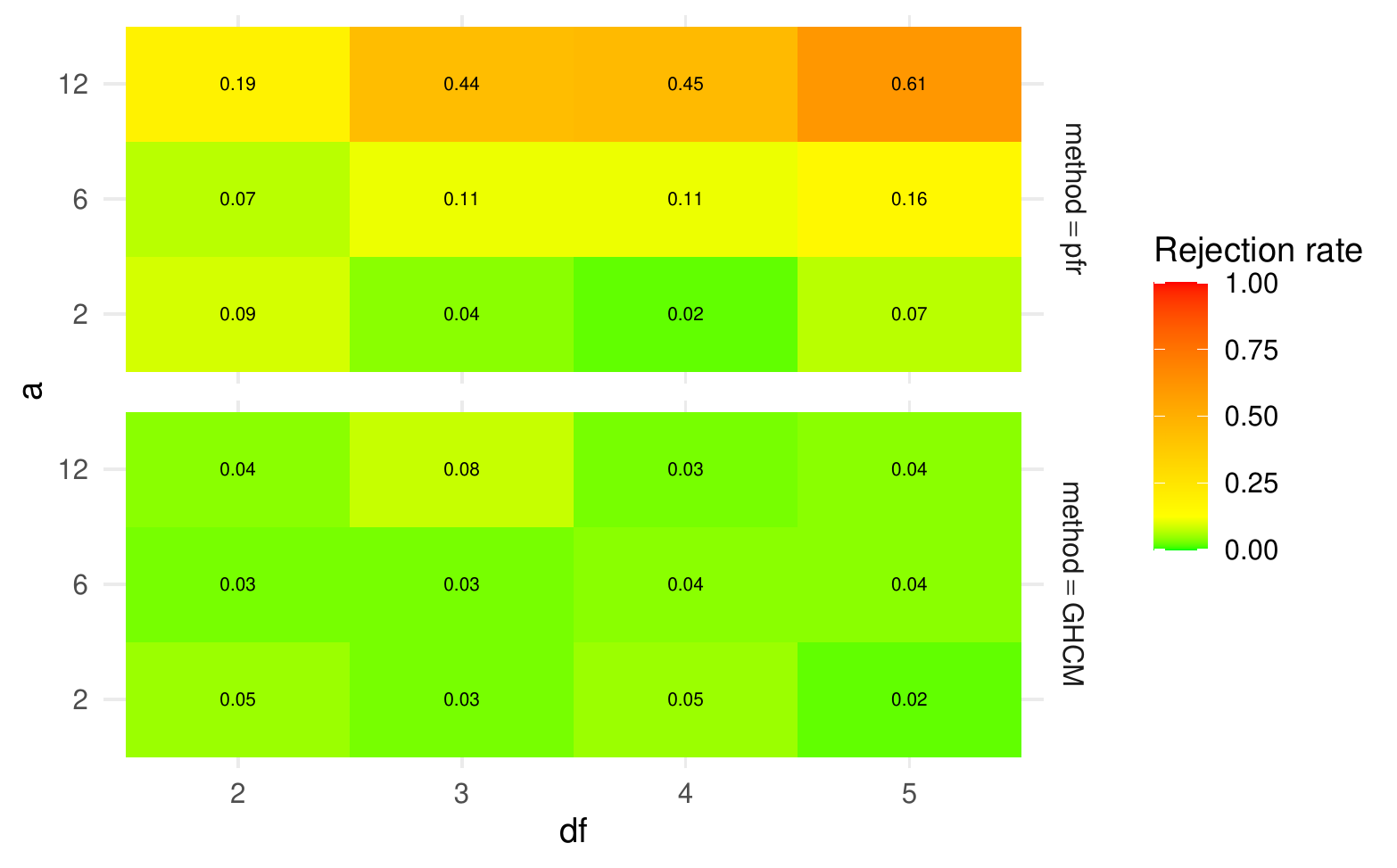}
\caption{Rejection rates in a subset of the null settings considered in Section~\ref{sec:scalar-level-power-sim} for the nominal 5\%-level \texttt{pfr} test (top) and GHCM test (bottom) where $\sigma_X = 0.25$ and $n=500$ and the noise $N_Y$ in \eqref{eq:sim_alt} is $t$-distributed with $\mathrm{df}$ degrees of freedom.}
\label{fig:heavy-tailed}
\end{figure}

\begin{figure}[H]
  \centering
  \includegraphics{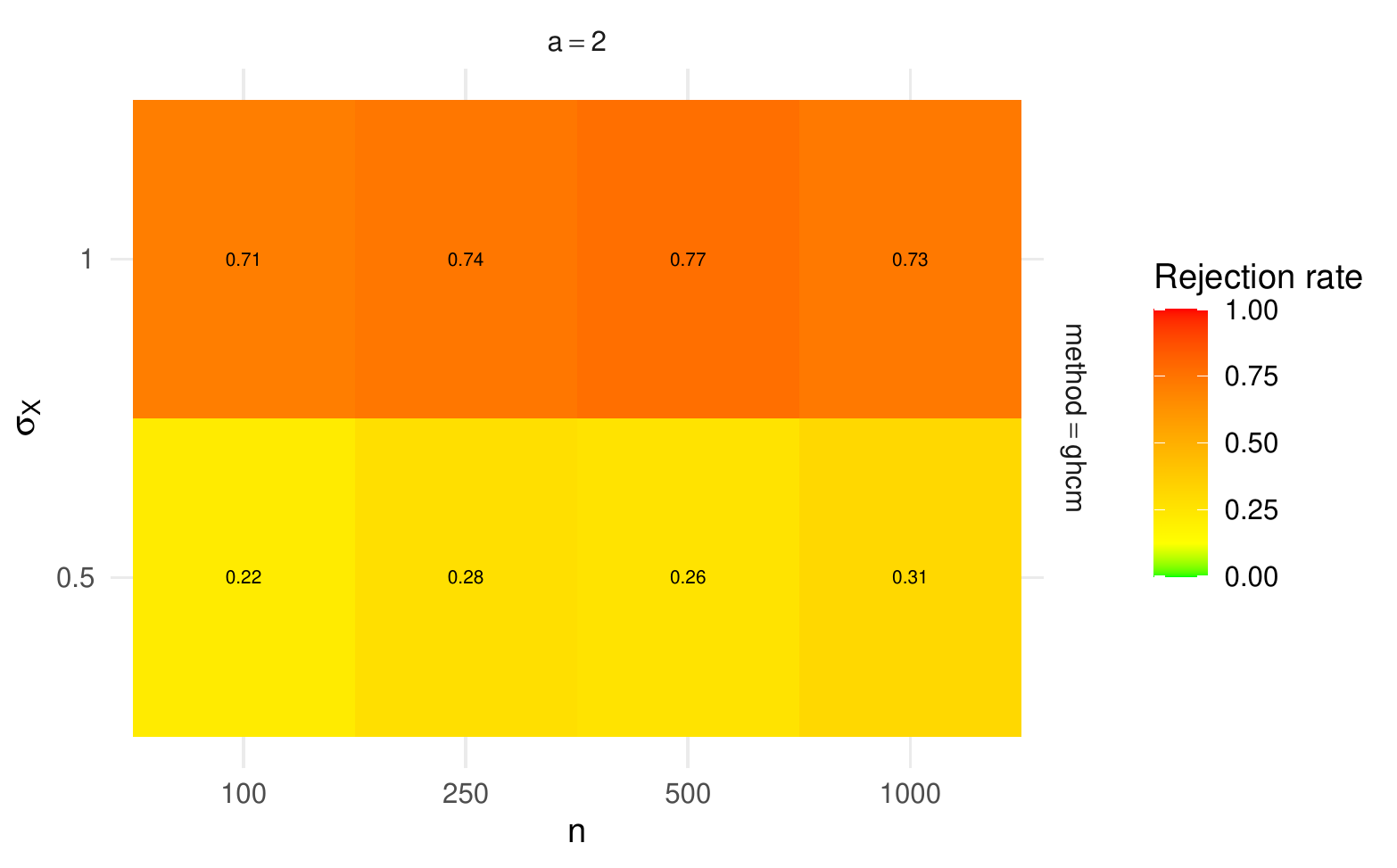}
  \caption{Rejection rates in a subset of the alternative settings considered in Section~\ref{sec:scalar-level-power-sim} for the nominal 5\%-level GHCM test where $a=2$ and $\alpha_a$ has been replaced with $(100/n)^{-1/2}\alpha_a$.}
  \label{fig:rates}
\end{figure}

\begin{figure}[H]
	\centering
	\includegraphics{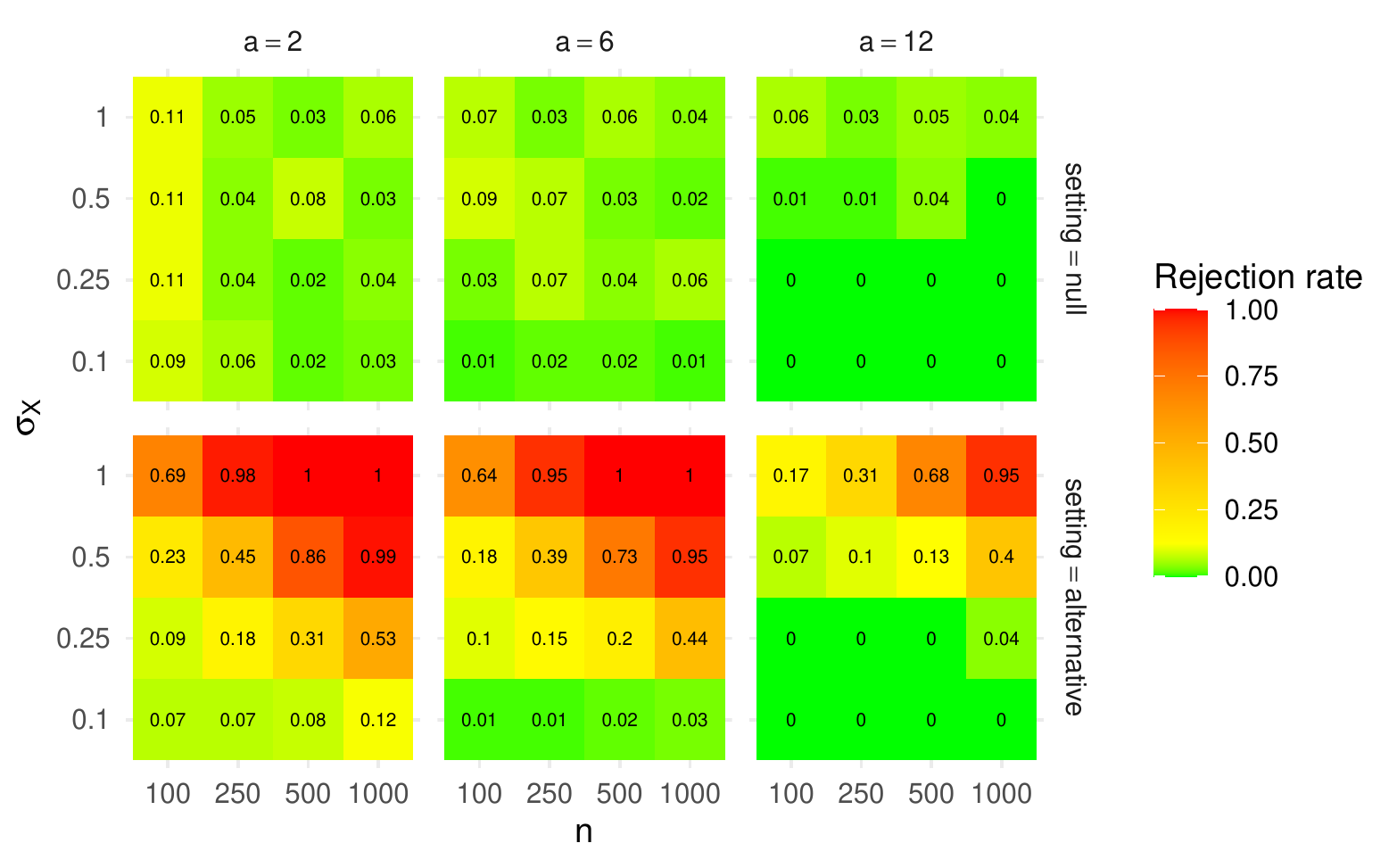}
	\caption{Rejection rates in the setting of Section~\ref{sec:scalar-level-power-sim}, replicating Figures~\ref{fig:level-rejection-rates} and \ref{fig:power-rejection-rates}, for the nominal 5\%-level GHCM test using \texttt{FDboost} package for regressions instead of the \texttt{refund} package.}
	\label{fig:fdboost}
\end{figure}

\begin{figure}[H]
	\centering
	\includegraphics{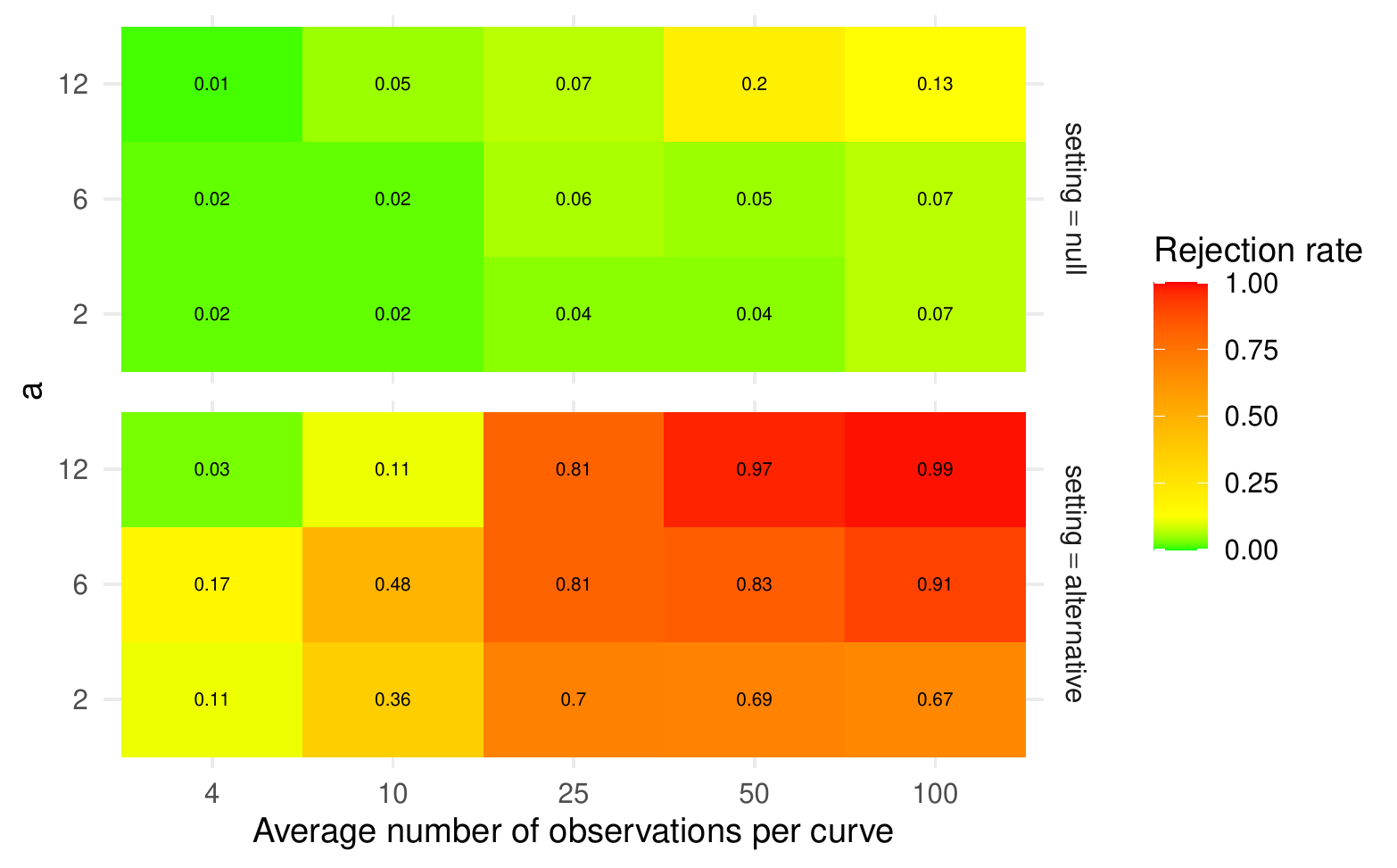}
	\caption{Rejection rates in the setting of Section~\ref{sec:functional-level-power-sim}, replicating Figure~\ref{fig:functional-rejection-rates}, for the nominal 5\%-level GHCM test where the $X$ and $Y$ curves are observed on irregular grids as described in the main text.}
	\label{fig:irregular}
\end{figure}

\begin{figure}[H]
	\centering
	\includegraphics[scale=0.75]{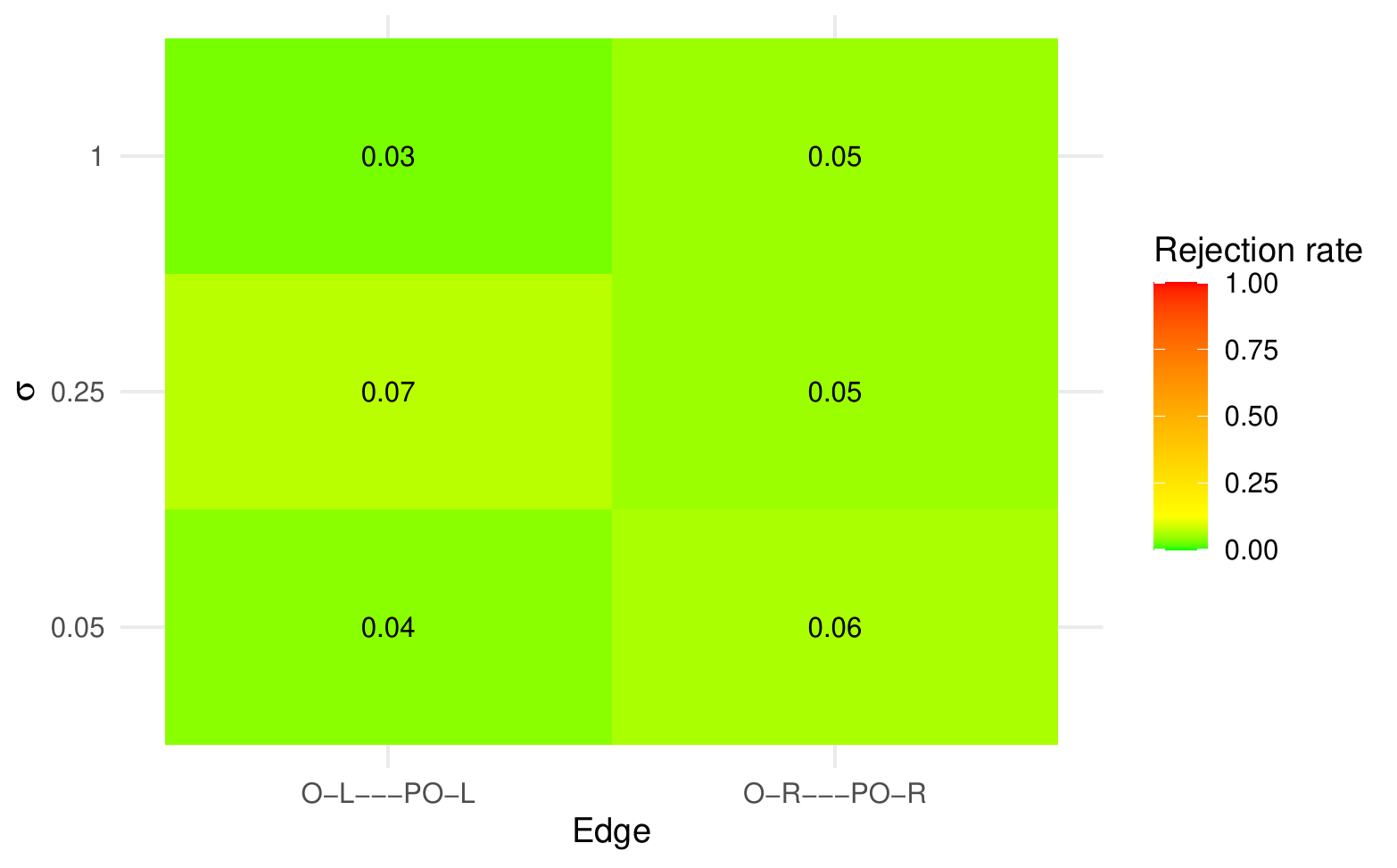}
	\caption{Rejection rates for nominal 5\%-level GHCM tests in simulation settings based on the EEG data studied in Section~\ref{sec:real-data}; see the main text for further details.}
	\label{fig:eeg-sim}
\end{figure}


\end{cbunit}

\begin{thebibliography}{61}
	\providecommand{\natexlab}[1]{#1}
	\providecommand{\url}[1]{\texttt{#1}}
	\expandafter\ifx\csname urlstyle\endcsname\relax
	\providecommand{\doi}[1]{doi: #1}\else
	\providecommand{\doi}{doi: \begingroup \urlstyle{rm}\Url}\fi
	
	\bibitem[Bai and Saranadasa(1996)]{bai1996effect}
	Z.~Bai and H.~Saranadasa.
	\newblock Effect of high dimension: by an example of a two sample problem.
	\newblock \emph{Statistica Sinica}, pages 311--329, 1996.
	
	\bibitem[Benatia et~al.(2017)Benatia, Carrasco, and Florens]{Benatia2017}
	D.~Benatia, M.~Carrasco, and J.-P. Florens.
	\newblock Functional linear regression with functional response.
	\newblock \emph{Journal of Econometrics}, 201\penalty0 (2):\penalty0 269--291,
	2017.
	
	\bibitem[Benjamini and Hochberg(1995)]{FDR}
	Y.~Benjamini and Y.~Hochberg.
	\newblock Controlling the false discovery rate: a practical and powerful approach to multiple testing.
	\newblock \emph{Journal of the Royal Statistical Society Series B}, 57\penalty0 (1):\penalty0 289--300, 1995.
	
	\bibitem[Brockhaus et~al.(2020)Brockhaus, Rügamer, and Greven]{FDboost}
	S.~Brockhaus, D.~Rügamer, and S.~Greven.
	\newblock Boosting functional regression models with fdboost.
	\newblock \emph{Journal of Statistical Software}, 94\penalty0 (10):\penalty0
	1--50, 2020.
	
	\bibitem[Cai and Hall(2006)]{cai2006}
	T.~T. Cai and P.~Hall.
	\newblock Prediction in functional linear regression.
	\newblock \emph{Annals of Statistics}, 34\penalty0 (5):\penalty0
	2159--2179, 2006.
	
	\bibitem[Chen and White(1998)]{chen1998central}
	X.~Chen and H.~White.
	\newblock Central limit and functional central limit theorems for
	hilbert-valued dependent heterogeneous arrays with applications.
	\newblock \emph{Econometric Theory}, pages 260--284, 1998.
	
	\bibitem[Chernozhukov et~al.(2018)Chernozhukov, Chetverikov, Demirer, Duflo,
	Hansen, Newey, and Robins]{chernozhukov2018double}
	V.~Chernozhukov, D.~Chetverikov, M.~Demirer, E.~Duflo, C.~Hansen, W.~Newey, and
	J.~Robins.
	\newblock {Double/debiased machine learning for treatment and structural
		parameters}.
	\newblock \emph{The Econometrics Journal}, 21\penalty0 (1):\penalty0 C1--C68,
	2018.
	
	\bibitem[Chiou et~al.(2004)Chiou, M{\"u}ller, and Wang]{chiou2004functional}
	J.-M. Chiou, H.-G. M{\"u}ller, and J.-L. Wang.
	\newblock Functional response models.
	\newblock \emph{Statistica Sinica}, pages 675--693, 2004.
	
	\bibitem[Constantinou and Dawid(2017)]{Constantinou2017}
	P.~Constantinou and A.~P. Dawid.
	\newblock Extended conditional independence and applications in causal
	inference.
	\newblock \emph{Annals of Statistics}, 45\penalty0 (6):\penalty0 2618--2653,
	2017.
	
	\bibitem[Crambes and Mas(2013)]{Crambes2013}
	C.~Crambes and A.~Mas.
	\newblock Asymptotics of prediction in functional linear regression with
	functional outputs.
	\newblock \emph{Bernoulli}, 19\penalty0 (5B):\penalty0 2627--2651, 2013.
	
	\bibitem[Delaigle and Hall(2012)]{delaigle2012methodology}
	A.~Delaigle and P.~Hall.
	\newblock Methodology and theory for partial least squares applied to
	functional data.
	\newblock \emph{Annals of Statistics}, 40\penalty0 (1):\penalty0 322--352,
	2012.
	
	\bibitem[Duchesne and {de Micheaux}(2010)]{QuadCompForm}
	P.~Duchesne and P.~L. {de Micheaux}.
	\newblock Computing the distribution of quadratic forms: Further comparisons
	between the liu-tang-zhang approximation and exact methods.
	\newblock \emph{Computational Statistics and Data Analysis}, 54:\penalty0
	858--862, 2010.
	
	\bibitem[Fan et~al.(2015{\natexlab{a}})Fan, James, and Radchenko]{Fan2015}
	Y.~Fan, G.~M. James, and P.~Radchenko.
	\newblock Functional additive regression.
	\newblock \emph{Annals of Statistics}, 43\penalty0 (5):\penalty0 2296--2325,
	2015{\natexlab{a}}.
	
	\bibitem[Fan et~al.(2015{\natexlab{b}})Fan, James, Radchenko,
	et~al.]{fan2015functional}
	Y.~Fan, G.~M. James, P.~Radchenko, et~al.
	\newblock Functional additive regression.
	\newblock \emph{Annals of Statistics}, 43\penalty0 (5):\penalty0
	2296--2325, 2015{\natexlab{b}}.
	
	\bibitem[Farebrother(1984)]{Farebrother1984}
	R.~W. Farebrother.
	\newblock Algorithm AS 204: The distribution of a positive linear combination
	of chi-squared random variables.
	\newblock \emph{Journal of the Royal Statistical Society Series C}, 33\penalty0 (3):\penalty0 332--339, 1984.
	
	\bibitem[Ferraty and Vieu(2006)]{ferraty2006}
	F.~Ferraty and P.~Vieu.
	\newblock \emph{Nonparametric Functional Data Analysis: Theory and Practice}.
	\newblock Springer Series in Statistics. Springer New York, 2006.
	
	\bibitem[Ferraty et~al.(2011)Ferraty, Laksaci, Tadj, and Vieu]{Ferraty2011}
	F.~Ferraty, A.~Laksaci, A.~Tadj, and P.~Vieu.
	\newblock Kernel regression with functional response.
	\newblock \emph{Electronic Journal of Statistics}, 5:\penalty0 159--171, 2011.
	
	\bibitem[Goldsmith et~al.(2011)Goldsmith, Bobb, Crainiceanu, Caffo, and
	Reich]{Goldsmith2011}
	J.~Goldsmith, J.~Bobb, C.~M. Crainiceanu, B.~Caffo, and D.~Reich.
	\newblock Penalized functional regression.
	\newblock \emph{Journal of Computational and Graphical Statistics}, 20\penalty0
	(4):\penalty0 830--851, 2011.
	
	\bibitem[Goldsmith et~al.(2020)Goldsmith, Scheipl, Huang, Wrobel, Di, Gellar,
	Harezlak, McLean, Swihart, Xiao, Crainiceanu, and Reiss]{refund}
	J.~Goldsmith, F.~Scheipl, L.~Huang, J.~Wrobel, C.~Di, J.~Gellar, J.~Harezlak,
	M.~W. McLean, B.~Swihart, L.~Xiao, C.~Crainiceanu, and P.~T. Reiss.
	\newblock \emph{refund: Regression with Functional Data}, 2020.
	\newblock URL \url{https://CRAN.R-project.org/package=refund}.
	\newblock \texttt{R}-package version 0.1-22.
	
	\bibitem[Greven and Scheipl(2017)]{Greven2017}
	S.~Greven and F.~Scheipl.
	\newblock A general framework for functional regression modelling.
	\newblock \emph{Statistical Modelling}, 17\penalty0 (1-2):\penalty0 1--35,
	2017.
	
	\bibitem[Gy{\"o}rfi et~al.(2002)]{gyorfi}
	L.~Gy{\"o}rfi, M.~Kohler and H.~Walk.
	\newblock A distribution-free theory of nonparametric regression.
	\newblock Springer New York, 2002.
	
	\bibitem[Hall and Hooker(2016)]{Hall2016}
	P.~Hall and G.~Hooker.
	\newblock {Truncated linear models for functional data}.
	\newblock \emph{Journal of the Royal Statistical Society Series B}, 78\penalty0
	(3):\penalty0 637--653, 2016.
	
	\bibitem[Hall and Horowitz(2007)]{hall2007methodology}
	P.~Hall and J.~L. Horowitz.
	\newblock Methodology and convergence rates for functional linear regression.
	\newblock \emph{Annals of Statistics}, 35\penalty0 (1):\penalty0 70--91,
	2007.
	
	\bibitem[Helwig(2018)]{Helwig2018}
	N.~E. Helwig.
	\newblock \emph{eegkit: Toolkit for Electroencephalography Data}, 2018.
	\newblock URL \url{https://CRAN.R-project.org/package=eegkit}.
	\newblock \texttt{R}-package version 1.0-4.
	
	\bibitem[Hoerl and Kennard(2000)]{hoerl1970ridge}
	A.~E. Hoerl and R.~W. Kennard.
	\newblock Ridge regression: Biased estimation for nonorthogonal problems.
	\newblock \emph{Technometrics}, 42\penalty0 (1):\penalty0 80--86, 2000.
	
	\bibitem[Imhof(1961)]{Imhof1961}
	J.~P. Imhof.
	\newblock Computing the distribution of quadratic forms in normal variables.
	\newblock \emph{Biometrika}, 48\penalty0 (3/4):\penalty0 419--426, 1961.
	
	\bibitem[Ingber(1997)]{Ingber1997}
	L.~Ingber.
	\newblock Statistical mechanics of neocortical interactions: Canonical momenta
	indicatorsof electroencephalography.
	\newblock \emph{Phys. Rev. E}, 55:\penalty0 4578--4593, 1997.
	
	\bibitem[Ingber(1998)]{Ingber1998}
	L.~Ingber.
	\newblock Statistical mechanics of neocortical interactions: Training and
	testing canonical momenta indicators of eeg.
	\newblock \emph{Mathematical and Computer Modelling}, 27\penalty0 (3):\penalty0
	33--64, 1998.
	
	\bibitem[Ivanescu et~al.(2015)Ivanescu, Staicu, Scheipl, and
	Greven]{Ivanescu2015}
	A.~E. Ivanescu, A.-M. Staicu, F.~Scheipl, and S.~Greven.
	\newblock Penalized function-on-function regression.
	\newblock \emph{Computational Statistics}, 30\penalty0 (2):\penalty0 539--568,
	2015.
	
	\bibitem[Koller and Friedman(2009)]{Koller2009}
	D.~Koller and N.~Friedman.
	\newblock \emph{Probabilistic Graphical Models: Principles and Techniques -
		Adaptive Computation and Machine Learning}.
	\newblock The MIT Press, 2009.
	
	\bibitem[Kraft(1955)]{Kraft1955}
	C.~Kraft.
	\newblock \emph{Some Conditions for Consistency and Uniform Consistency of
		Statistical Procedures}.
	\newblock University of California Press, 1955.
	
	\bibitem[Lauritzen(1996)]{lauritzen1996}
	S.~Lauritzen.
	\newblock \emph{Graphical Models}.
	\newblock Oxford Statistical Science Series. Clarendon Press, 1996.
	
	\bibitem[Liu et~al.(2009)Liu, Tang, and Zhang]{Liu2009}
	H.~Liu, Y.~Tang, and H.~H. Zhang.
	\newblock A new chi-square approximation to the distribution of non-negative
	definite quadratic forms in non-central normal variables.
	\newblock \emph{Computational Statistics \& Data Analysis}, 53\penalty0
	(4):\penalty0 853--856, 2009.
	
	\bibitem[Lundborg et~al.(2021)Lundborg, Shah, and Peters]{ghcm}
	A.~R. Lundborg, R.~D. Shah, and J.~Peters.
	\newblock \emph{ghcm: Functional Conditional Independence Testing with the
		GHCM}, 2021.
	\newblock URL \url{https://CRAN.R-project.org/package=ghcm}.
	\newblock \texttt{R}-package version 2.1.0.
	
	\bibitem[Morris(2015)]{Morris2015}
	J.~S. Morris.
	\newblock Functional regression.
	\newblock \emph{Annual Review of Statistics and Its Application}, 2\penalty0
	(1):\penalty0 321--359, 2015.
	
	\bibitem[Neykov et~al.(2020)Neykov, Balakrishnan, and
	Wasserman]{neykov2020minimax}
	M.~Neykov, S.~Balakrishnan, and L.~Wasserman.
	\newblock Minimax optimal conditional independence testing.
	\newblock \emph{arXiv preprint arXiv:2001.03039}, 2020.
	
	\bibitem[Pearl(2009)]{Pearl2009}
	J.~Pearl.
	\newblock \emph{Causality}.
	\newblock Cambridge University Press, 2009.
	
	\bibitem[Pearl(2014)]{pearl2014probabilistic}
	J.~Pearl.
	\newblock \emph{Probabilistic reasoning in intelligent systems: networks of
		plausible inference}.
	\newblock Elsevier, 2014.
	
	\bibitem[Peters(2014)]{Peters2014jci}
	J.~Peters.
	\newblock On the intersection property of conditional independence and its
	application to causal discovery.
	\newblock \emph{Journal of Causal Inference}, 3:\penalty0 97--108, 2014.
	
	\bibitem[Peters et~al.(2016)Peters, B\"uhlmann, and
	Meinshausen]{Peters2016jrssb}
	J.~Peters, P.~B\"uhlmann, and N.~Meinshausen.
	\newblock Causal inference using invariant prediction: identification and
	confidence intervals.
	\newblock \emph{Journal of the Royal Statistical Society Series B},
	78\penalty0 (5):\penalty0 947--1012, 2016.
	
	\bibitem[Peters et~al.(2017)Peters, Janzing, and Sch\"olkopf]{Peters2017}
	J.~Peters, D.~Janzing, and B.~Sch\"olkopf.
	\newblock \emph{Elements of Causal Inference: Foundations and Learning
		Algorithms}.
	\newblock MIT Press, Cambridge, MA, USA, 2017.
	
	\bibitem[Qiao et~al.(2019)Qiao, Guo, and James]{Qiao2019}
	X.~Qiao, S.~Guo, and G.~M. James.
	\newblock Functional graphical models.
	\newblock \emph{Journal of the American Statistical Association}, 114\penalty0
	(525):\penalty0 211--222, 2019.
	
	\bibitem[Qiao et~al.(2020)Qiao, Qian, James, and Guo]{Qiao2020}
	X.~Qiao, C.~Qian, G.~M. James, and S.~Guo.
	\newblock {Doubly functional graphical models in high dimensions}.
	\newblock \emph{Biometrika}, 107\penalty0 (2):\penalty0 415--431, 2020.
	
	\bibitem[Ramsay and Silverman(2005)]{Ramsay2005}
	J.~O. Ramsay and B.~W. Silverman.
	\newblock \emph{Functional Data Analysis}.
	\newblock Springer New York, 2005.
	
	\bibitem[Reiss and Ogden(2007)]{Reiss2007}
	P.~T. Reiss and R.~T. Ogden.
	\newblock Functional principal component regression and functional partial
	least squares.
	\newblock \emph{Journal of the American Statistical Association}, 102\penalty0
	(479):\penalty0 984--996, 2007.
	
	\bibitem[Reiss et~al.(2010)Reiss, Huang, and Mennes]{Reiss2010}
	P.~T. Reiss, L.~Huang, and M.~Mennes.
	\newblock Fast function-on-scalar regression with penalized basis expansions.
	\newblock \emph{The International Journal of Biostatistics}, 6\penalty0 (1),
	2010.
	
	\bibitem[Robins and Rotnitzky(1995)]{robins1995semiparametric}
	J.~M. Robins and A.~Rotnitzky.
	\newblock Semiparametric efficiency in multivariate regression models with
	missing data.
	\newblock \emph{Journal of the American Statistical Association}, 90\penalty0
	(429):\penalty0 122--129, 1995.
	
	\bibitem[Scharfstein et~al.(1999)Scharfstein, Rotnitzky, and
	Robins]{scharfstein1999adjusting}
	D.~O. Scharfstein, A.~Rotnitzky, and J.~M. Robins.
	\newblock Adjusting for nonignorable drop-out using semiparametric nonresponse
	models.
	\newblock \emph{Journal of the American Statistical Association}, 94\penalty0
	(448):\penalty0 1096--1120, 1999.
	
	\bibitem[Scheipl et~al.(2015)Scheipl, Staicu, and Greven]{Scheipl2015}
	F.~Scheipl, A.-M. Staicu, and S.~Greven.
	\newblock Functional additive mixed models.
	\newblock \emph{Journal of Computational and Graphical Statistics}, 24\penalty0
	(2):\penalty0 477--501, 2015.
	
	\bibitem[Shah and Peters(2020)]{GCM}
	R.~D. Shah and J.~Peters.
	\newblock The hardness of conditional independence testing and the generalised
	covariance measure.
	\newblock \emph{Annals of Statistics}, 48\penalty0 (3):\penalty0 1514--1538,
	2020.
	
	\bibitem[Shin(2009)]{Shin2009}
	H.~Shin.
	\newblock Partial functional linear regression.
	\newblock \emph{Journal of Statistical Planning and Inference}, 139\penalty0
	(10):\penalty0 3405 -- 3418, 2009.
	
	\bibitem[Spirtes et~al.(2000)Spirtes, Scheines, Glymour, Scheines, Richard,
	Heckerman, Meek, Cooper, and Richardson]{Spirtes2000}
	P.~Spirtes, P.~Scheines, C.~Glymour, R.~Scheines, S.~Richard, D.~Heckerman,
	C.~Meek, G.~Cooper, and T.~Richardson.
	\newblock \emph{Causation, Prediction, and Search}.
	\newblock Adaptive computation and machine learning. MIT Press, 2000.
	
	\bibitem[Ullah and Finch(2013)]{ullah2013applications}
	S.~Ullah and C.~F. Finch.
	\newblock Applications of functional data analysis: A systematic review.
	\newblock \emph{BMC medical research methodology}, 13\penalty0 (1):\penalty0
	43, 2013.
	
	\bibitem[Wang et~al.(2016)Wang, Chiou, and Müller]{Wang2016}
	J.-L. Wang, J.-M. Chiou, and H.-G. Müller.
	\newblock Functional data analysis.
	\newblock \emph{Annual Review of Statistics and Its Application}, 3\penalty0
	(1):\penalty0 257--295, 2016.
	
	\bibitem[Wood(2013)]{wood2013p}
	S.~N. Wood.
	\newblock On p-values for smooth components of an extended generalized additive
	model.
	\newblock \emph{Biometrika}, 100\penalty0 (1):\penalty0 221--228, 2013.
	
	\bibitem[Wood(2017)]{Wood2017}
	S.~N. Wood.
	\newblock \emph{Generalized Additive Models}.
	\newblock Chapman and Hall/{CRC}, 2017.
	
	\bibitem[Yao and Müller(2010)]{Yao2010}
	F.~Yao and H.-G. Müller.
	\newblock {Functional quadratic regression}.
	\newblock \emph{Biometrika}, 97\penalty0 (1):\penalty0 49--64, 2010.
	
	\bibitem[Yao et~al.(2005)Yao, M{\"u}ller, and Wang]{yao2005functional}
	F.~Yao, H.-G. M{\"u}ller, and J.-L. Wang.
	\newblock Functional linear regression analysis for longitudinal data.
	\newblock \emph{Annals of Statistics}, pages 2873--2903, 2005.
	
	\bibitem[Yuan and Cai(2010)]{Yuan2010}
	M.~Yuan and T.~T. Cai.
	\newblock A reproducing kernel hilbert space approach to functional linear
	regression.
	\newblock \emph{Annals of Statistics}, 38\penalty0 (6):\penalty0 3412--3444,
	2010.
	
	\bibitem[Yuan and Lin(2007)]{yuan2007model}
	M.~Yuan and Y.~Lin.
	\newblock Model selection and estimation in the gaussian graphical model.
	\newblock \emph{Biometrika}, 94\penalty0 (1):\penalty0 19--35, 2007.
	
	\bibitem[Zapata et~al.(2019)Zapata, Oh, and Petersen]{Zapata2019}
	J.~Zapata, S.-Y. Oh, and A.~Petersen.
	\newblock Partial separability and functional graphical models for multivariate
	gaussian processes.
	\newblock \emph{arXiv preprint arXiv:1910.03134}, 2019.
	
	\bibitem[Zhang et~al.(1995)Zhang, Begleiter, Porjesz, Wang, and
	Litke]{Zhang1995}
	X.~L. Zhang, H.~Begleiter, B.~Porjesz, W.~Wang, and A.~Litke.
	\newblock Event related potentials during object recognition tasks.
	\newblock \emph{Brain Research Bulletin}, 38\penalty0 (6):\penalty0 531--538,
	1995.
	
	\bibitem[Zhu et~al.(2016)Zhu, Strawn, and Dunson]{zhu2016bayesian}
	H.~Zhu, N.~Strawn, and D.~B. Dunson.
	\newblock Bayesian graphical models for multivariate functional data.
	\newblock \emph{Journal of Machine Learning Research}, 17\penalty0
	(1):\penalty0 7157--7183, 2016.
	
\end{thebibliography}

\begin{thebibliography}{22}
  \providecommand{\natexlab}[1]{#1}
  \providecommand{\url}[1]{\texttt{#1}}
  \expandafter\ifx\csname urlstyle\endcsname\relax
    \providecommand{\doi}[1]{doi: #1}\else
    \providecommand{\doi}{doi: \begingroup \urlstyle{rm}\Url}\fi
  
  \bibitem[Bengs and Holzmann(2019)]{Bengs2019}
  V.~Bengs and H.~Holzmann.
  \newblock Uniform approximation in classical weak convergence theory, 2019.
  
  \bibitem[Billingsley(1999)]{Billingsley1999}
  P.~Billingsley.
  \newblock \emph{Convergence of Probability Measures}.
  \newblock John Wiley {\&} Sons, Inc., 1999.
  
  \bibitem[Bogachev(2018)]{Bogachev2018}
  V.~Bogachev.
  \newblock \emph{Weak Convergence of Measures}.
  \newblock Mathematical Surveys and Monographs. American Mathematical Society,
    2018.
  
  \bibitem[Bogachev(2007)]{Bogachev2007}
  V.~I. Bogachev.
  \newblock \emph{Measure Theory}.
  \newblock Springer Berlin Heidelberg, 2007.
  
  \bibitem[Carothers(2000)]{Carothers2000}
  N.~L. Carothers.
  \newblock \emph{Real Analysis}.
  \newblock Cambridge University Press, 2000.
  
  \bibitem[Doob(1994)]{Doob1994}
  J.~L. Doob.
  \newblock \emph{Measure Theory}.
  \newblock Springer New York, 1994.
  
  \bibitem[Dudley(2002)]{Dudley2002}
  R.~M. Dudley.
  \newblock \emph{Real Analysis and Probability}.
  \newblock Cambridge Studies in Advanced Mathematics. Cambridge University
    Press, 2 edition, 2002.
  
  \bibitem[Durrett(2019)]{Durrett2019}
  R.~Durrett.
  \newblock \emph{Probability: Theory and Examples}.
  \newblock Cambridge Series in Statistical and Probabilistic Mathematics.
    Cambridge University Press, 5 edition, 2019.
  
  \bibitem[Fugarolas and Cobos(1983)]{Fugarolas1983}
  M.~Fugarolas and F.~Cobos.
  \newblock On schauder bases in the lorentz operator ideal.
  \newblock \emph{Journal of Mathematical Analysis and Applications}, 95\penalty0
    (1):\penalty0 235--242, 1983.
  
  \bibitem[Hsing and Eubank(2015)]{Hsing2015}
  T.~Hsing and R.~Eubank.
  \newblock \emph{Theoretical Foundations of Functional Data Analysis, with an
    Introduction to Linear Operators}.
  \newblock John Wiley {\&} Sons, Ltd, 2015.
  
  \bibitem[Kasy(2019)]{Kasy2019}
  M.~Kasy.
  \newblock Uniformity and the delta method.
  \newblock \emph{Journal of Econometric Methods}, 8\penalty0 (1):\penalty0
    1--19, 2019.
  
  \bibitem[Kimeldorf and Wahba(1970)]{Kimeldorf1970}
  G.~S. Kimeldorf and G.~Wahba.
  \newblock A correspondence between bayesian estimation on stochastic processes
    and smoothing by splines.
  \newblock \emph{The Annals of Mathematical Statistics}, 41\penalty0
    (2):\penalty0 495--502, 1970.
  
  \bibitem[Kraft(1955)]{Kraft1955}
  C.~Kraft.
  \newblock \emph{Some Conditions for Consistency and Uniform Consistency of
    Statistical Procedures}.
  \newblock University of California Press, 1955.
  
  \bibitem[LeCam(1973)]{LeCam1973}
  L.~LeCam.
  \newblock Convergence of estimates under dimensionality restrictions.
  \newblock \emph{Annals of Statistics}, 1\penalty0 (1):\penalty0 38--53, 1973.
  
  \bibitem[Li and Queffélec(2017)]{Li2017}
  D.~Li and H.~Queffélec.
  \newblock \emph{Introduction to Banach Spaces: Analysis and Probability},
    volume~1 of \emph{Cambridge Studies in Advanced Mathematics}.
  \newblock Cambridge University Press, 2017.
  
  \bibitem[Rønn-Nielsen and Hansen(2014)]{Roenn-Nielsen2014}
  A.~Rønn-Nielsen and E.~Hansen.
  \newblock \emph{Conditioning and Markov properties}.
  \newblock Department of Mathematical Sciences, University of Copenhagen, 2014.
  
  \bibitem[Scalora(1961)]{scalora1961}
  F.~S. Scalora.
  \newblock Abstract martingale convergence theorems.
  \newblock \emph{Pacific Journal of Mathematics}, 11\penalty0 (1):\penalty0
    347--374, 1961.
  
  \bibitem[Schilling(2017)]{schilling}
  R.~L. Schilling.
  \newblock \emph{Measures, Integrals and Martingales}.
  \newblock Cambridge University Press, 2017.
  
  \bibitem[Sch{\"o}lkopf et~al.(2001)Sch{\"o}lkopf, Herbrich, and
    Smola]{Schoelkopf2001}
  B.~Sch{\"o}lkopf, R.~Herbrich, and A.~J. Smola.
  \newblock A generalized representer theorem.
  \newblock In \emph{International Conference on Computational Learning Theory},
    pages 416--426. Springer, 2001.
  
  \bibitem[Shah and Peters(2020)]{GCM}
  R.~D. Shah and J.~Peters.
  \newblock The hardness of conditional independence testing and the generalised
    covariance measure.
  \newblock \emph{Annals of Statistics}, 48\penalty0 (3):\penalty0 1514--1538,
    2020.
  
  \bibitem[Vaart(1998)]{Vandervaart1998}
  A.~W. v.~d. Vaart.
  \newblock \emph{Asymptotic Statistics}.
  \newblock Cambridge Series in Statistical and Probabilistic Mathematics.
    Cambridge University Press, 1998.
  
  \bibitem[Vakhania et~al.(1987)Vakhania, Tarieladze, and
    Chobanyan]{Vakhania1987}
  N.~N. Vakhania, V.~I. Tarieladze, and S.~A. Chobanyan.
  \newblock \emph{Probability Distributions on Banach Spaces}.
  \newblock Springer Netherlands, 1987.
  
\end{thebibliography}
\end{document}